

\documentclass[12pt]{amsart}
\usepackage{latexsym}
\usepackage{amssymb}
\usepackage{amscd}
\setlength{\textwidth}{15.0truecm}
\setlength{\textheight}{22.5truecm}
\setlength{\topmargin}{0mm}
\setlength{\oddsidemargin}{0.3cm}
\setlength{\evensidemargin}{0.3cm}
\renewcommand\a{\alpha}
\renewcommand\b{\beta}
\newcommand\g{\gamma}
\renewcommand\d{\delta}
\newcommand\la{\lambda}

\newcommand\e{\eta}
\renewcommand\th{\theta}

\newcommand\io{\iota}

\newcommand\n{\nu}
\newcommand\s{\sigma}

\newcommand\f{\phi}
\newcommand\vf{\varphi}
\newcommand\p{\psi}

\renewcommand\r{\rho}

\newcommand\w{\omega}

\newcommand\D{\Delta}

\newcommand\vG{\varGamma}
\newcommand\ve{\varepsilon}

\newcommand\Fq{{\mathbf F}_q}

\newcommand\Ql{\bar{\mathbf Q}_l}
\newcommand\BA{\mathbf A}
\newcommand\BP{\mathbf P}

\newcommand\BC{\mathbf C}

\newcommand\BZ{\mathbf Z}

\newcommand\BH{\mathbf H}

\newcommand\Bm{\mathbf m}

\newcommand\Bk{\mathbf k}

\newcommand\Bla{\boldsymbol\lambda}

\newcommand\Bmu{\boldsymbol\mu}

\newcommand\CA{\mathcal{A}}
\newcommand\CB{\mathcal{B}}
\newcommand\ZC{\mathcal{C}}
\newcommand\CH{\mathcal{H}}

\newcommand\CE{\mathcal{E}}
\newcommand\CL{\mathcal{L}}
\newcommand\DD{\mathcal{D}}

\newcommand\CM{\mathcal{M}}

\newcommand\CO{\mathcal{O}}

\newcommand\CP{\mathcal{P}}

\newcommand\CU{\mathcal{U}}

\newcommand\CF{\mathcal{F}}
\newcommand\CG{\mathcal{G}}

\newcommand\CZ{ \mathcal{Z}}
\newcommand\CX{ \mathcal{X}}
\newcommand\CY{ \mathcal{Y}}
\newcommand\CW{ \mathcal{W}}

\newcommand\Fb{\mathfrak b}
\newcommand\Fu{\mathfrak u}

\newcommand\Fg{\mathfrak g}

\newcommand\Fh{\mathfrak h}

\newcommand\Fl{\mathfrak l}
\newcommand\Ft{\mathfrak t}

\newcommand\iv{^{-1}}
\newcommand\wh{\widehat}
\newcommand\wt{\widetilde}
\newcommand\wg{^{\wedge}}

\newcommand\ol{\overline}

\newcommand\hra{\hookrightarrow}
\newcommand\lra{\leftrightarrow}

\newcommand\IC{\operatorname{IC}}
\newcommand\Ker{\operatorname{Ker}}
\newcommand\Hom{\operatorname{Hom}}
\newcommand\End{\operatorname{End}}

\newcommand\Ind{\operatorname{Ind}}

\newcommand\Res{\operatorname{Res}}

\newcommand\supp{\operatorname{supp}\,}
\newcommand\Lie{\operatorname{Lie}}

\newcommand\ch{\operatorname{ch}}
\newcommand\rk{\operatorname{rank}\,}

\newcommand\Ad{\operatorname{Ad}}

\newcommand\reg{_{\operatorname{reg}}}
\newcommand\rg{\operatorname{reg}}
\newcommand\unip{\operatorname{uni}}
\newcommand\uni{_{\operatorname{uni}}}
\newcommand\nil{_{\operatorname{nil}}}
\newcommand\id{\operatorname{id}}

\newcommand\lp{\operatorname{\!\langle\!}}
\newcommand\rp{\operatorname{\!\rangle\!}}
\renewcommand\Im{\operatorname{Im}}

\newcommand\nat{^{\natural}}

\newcommand\dw{\dot w}

\newcommand{\isom}{\,\raise2pt\hbox{$\underrightarrow{\sim}$}\,}
\numberwithin{equation}{section}

\newtheorem{thm}{Theorem}[section]
\newtheorem{lem}[thm]{Lemma}
\newtheorem{cor}[thm]{Corollary}
\newtheorem{prop}[thm]{Proposition}

\def \para#1{\par\medskip\textbf{#1}
              \addtocounter{thm}{1}}

\def \remark#1{\par\medskip\noindent
                \textbf{Remark #1}
                \addtocounter{thm}{1}}

\def \remarks#1{\par\medskip\noindent
                \textbf{Remarks #1}
                \addtocounter{thm}{1}}

\begin{document}
\setlength{\baselineskip}{4.9mm}
\setlength{\abovedisplayskip}{4.5mm}
\setlength{\belowdisplayskip}{4.5mm}
\renewcommand{\theenumi}{\roman{enumi}}
\renewcommand{\labelenumi}{(\theenumi)}
\renewcommand{\thefootnote}{\fnsymbol{footnote}}
\renewcommand{\thefootnote}{\fnsymbol{footnote}}
\allowdisplaybreaks[2]
\parindent=20pt
\medskip
\begin{center}
 {\bf Exotic symmetric space over a finite field, I} 
\\
\vspace{1cm}
Toshiaki Shoji\footnote{
partly supported by JSPS Grant-in Aid for Scientific Research (A) 20244001.}
 and Karine Sorlin\footnote{ supported by 
ANR JCJC REPRED, ANR-09-JCJC-0102-01.}
\\
\vspace{0.7cm}
\title{}
{\sl Dedicated to the memory of T. A. Springer}
\end{center}

\begin{abstract}
Let $V$ be a $2n$-dimensional vector space over 
an algebraically closed field $\Bk$ with $\ch \Bk \ne 2$.  Let $G = GL(V)$
and $H = Sp_{2n}$ be the symplectic group obtained as 
$H = G^{\th}$ for an involution $\th$ on $G$. We also denote by 
$\th$ the induced involution on $\Fg = \Lie G$.
Consider the variety $G/H \times V$ on which $H$ acts naturally.
Let $\Fg^{-\th}\nil$ be the set of nilpotent elements 
in the $-1$ eigenspace of $\th$ in $\Fg$. 
The role of the unipotent variety for $G$ in our setup is played by 
$\Fg^{-\th}\nil \times V$, which coincides with Kato's exotic nilpotent cone.
Kato established, in the case where $\Bk = \BC$, 
the Springer correspondence between the set of irreducible 
representations of the Weyl group of type $C_n$ and the set of $H$-orbits
in $\Fg^{-\th}\nil \times V$ by applying Ginzburg theory for affine 
Hecke algebras.  In this paper we develop a theory of character sheaves 
on $G/H \times V$, and give an alternate proof for Kato's result on 
the Springer correspondence based on the theory of character sheaves.

\end{abstract}

\maketitle
\markboth{SHOJI AND SORLIN}{EXOTIC SYMMETRIC SPACE, I}
\pagestyle{myheadings}

\begin{center}
{\sc Introduction}
\end{center}
\par\bigskip
Let $G' = GL_n$ acting on the $n$-dimensional vector space $V'$ over 
an algebraically closed filed $\Bk$, and $\Fg' = \Lie G'$.
Let $G'\uni$ (resp. $\Fg'\nil$) be the unipotent variety of $G'$
(resp. the nilpotent cone of $\Fg'$).  We consider the action of
$G'$ on the variety $\Fg'\nil \times V'$, where $G'$ acts on $\Fg'\nil$ by 
the adjoint action, and on $V'$ by the natural action.  
By Achar-Henderson [AH] and 
Travkin [T], $\Fg'\nil \times V'$ has a finitely many $G'$-orbits 
parametrized by double partitions of $n$.  Following [AH], 
we call $\Fg'\nil \times V'$ the enhanced nilpotent cone.  
In [AH], they studied the intersection cohomology of the closure 
of such orbits, 
and showed that associated Poincar\'e polynomials give Kostka polynomials 
labelled by double partitions, introduced in [S2], 
which is an analogue of the classical result 
by Lusztig [L1] relating nilpotent orbits in $\Fg\nil$ 
and Kostka polynomials.  Passing to the group case, we consider 
the action of $G'$ on the variety $G' \times V'$, where $G'$ acts 
on $G'$ by conjugation, and on $V'$ by the natural action.  
Finkelberg-Ginzburg-Travkin [FGT] constructed a family of $G'$-equivariant
simple perverse sheaves on $G' \times V'$, and developed an analogue
of the theory of character sheaves on $G'$, where 
$G'\uni \times V' \simeq \Fg'\nil \times V'$ plays a role of the unipotent 
variety of $G'$.  They conjecture that the characteristic functions of 
such character sheaves on $G' \times V'$ provide a basis of the space of 
$G'(\Fq)$-invariant functions on $(G' \times V')(\Fq)$.
\par
Assume that $\Bk$ is an algebraic closure of a finite field 
$\Fq$ with $\ch \Bk \ne 2$, and let $V$ be a $2n$-dimensional vector
space over $\Bk$.
Let $H = Sp_{2n}$ be the symplectic group obtained as the fixed point 
subgorup $G^{\th}$ for an involutive automorphism $\th$ on 
$G = GL(V)$, and consider 
the symmetric space $G/H$.  In [BKS], Bannai-Kawanaka-Song
studied the characters of the Hecke algebra $\CH = \CH(G(\Fq), H(\Fq))$ 
associated to
the pair $H(\Fq) \subset G(\Fq)$, and showed that the character 
table of $\CH$ is basically obtained from 
the character table of $GL_n(\Fq)$  
by replacing $q$ by $q^2$ in an appropriate sense. 
On the other hand, in [H1] Henderson tried to reconstruct 
the result of [BKS] in terms of the geometry of the symmetric 
space $G/H$.  
Let $\Fg^{-\th}\nil = \{ g \in \Fg\nil \mid \th(g) = -g\}$ 
for the involution $\th$ induced on $\Fg = \Lie G$.
Then $H$ acts on $\Fg^{-\th}\nil$, and $H$-orbits are labelled by 
partitions of $n$. He showed, in particular,  that Poincar\'e polynomials 
associated to the intersection cohomology of 
the closure of those orbits provide Kostka polynomials, replacing
the variable $q$ by $q^2$, which is a geometric counter part of the 
result of [BKS].
\par
In this paper, we consider the variety $G/H \times V$ as a generalization 
of above two cases. $H$ acts on $G/H \times V$ 
as a left multiplication on $G/H$, and as the natural action on $V$.   
In this setup, the role of the unipotent variety for $G$ is played by 
the variety $\Fg^{-\th}\nil \times V$, which is nothing but the 
exotic nilpotent cone introduced by Kato [Ka1].  So we shall call
$G/H \times V$ the exotic symmetric space.  $H$ acts on 
$\Fg^{-\th}\nil \times V$. Kato showed that the number of $H$-orbits
is finite and they are parametrized by double partitions of $n$ 
(a reformulation by Achar-Henderson [AH]). It is expected 
an interesting relationship between the intersection cohomology 
of the closure of those orbits and Kostka polynomials labelled by 
double partitions.   
Our aim is to construct a theory of character sheaves on 
$G/H \times V$ as an analogue of the theroy for $G'$ and 
$G' \times V'$.  
In fact, in [HT] Henderson-Trapa propose a construction of
character sheaves on $G/H \times V$, as a natural generalization of
mirabolic character sheaves due to [FGT], 
``exotic character sheaves'' in their terminology.  
In their framework, the character sheaves constructed in this paper
just cover the principal series part.  However, we expect that 
any exotic character sheaf can be obtained by our construction. 
\par
The main result in this paper is 
the Springer correspondence between the set of 
irreducible representations 
of the Weyl group of type $C_n$ and the set of $H$-orbits of 
$\Fg^{-\th}\nil \times V$ through the intersection cohomology of the
closure of $H$-orbits.  In fact, the Springer correspondence for 
exotic nilcone was 
first established by [Ka1], by using Ginzburg theory of 
affine Hecke algebras.
In [Ka2], he determined the correspondence explicitly 
by computing Joseph polynomials associated to $H$-orbits.
So our result gives an alternate approach to Kato's result 
based on the theory of character sheaves, which is quite similar 
to the original proof of the Springer correspondence due to 
Borho-MacPherson [BM].
We prove the restriction theorem for Springer representations, 
which is an analogue of Lusztig's restriction theorem [L2] 
with respect to the generalized Springer correspondence, and 
we determine the correspondence explicitly by using this restriction
theorem.  
\par The authors are grateful to S. Kato for valuable discussions 
about his work.  They also thank A. Henderson for some useful comments. 
\newpage
\par\bigskip\bigskip
{\bf Contents}
\par\medskip
1. \ Symmetric space $GL_{2n}/Sp_{2n}$
\par
2. \ $H$-orbits on $G^{\io\th}\reg \times V$
\par
3. \ Intersection cohomology on $G^{\io\th}\reg \times V$
\par
4. \ Intersection cohomology on $G^{\io\th} \times V$
\par
5. \ Springer correspondence 
\par
6. \ Restriction theorem 
\par
7. \ Determination of Springer correspondence

\par\bigskip 
\section{Symmetric space $GL_{2n}/Sp_{2n}$}

\para{1.1.}
Let $\Bk$ be an algebraic closure of a finite field 
$\Fq$ with char $\Bk \ne 2$.
Let $V$ be a $2n$ dimensional vector space over $\Bk$, 
with basis $\{ e_1, \dots e_n, f_1, \dots f_n\}$.
Let $G =  GL_{2n}$ and $\Fg = \Lie G = \Fg\Fl_{2n}$.  
Consider an involutive automorphism  $\th : G \to G$ given by
\begin{equation*}
\th(g) = J\iv({}^tg\iv)J \quad\text{ with }
J = \begin{pmatrix}
          0 & 1_n \\
            -1_n & 0
         \end{pmatrix},
 \end{equation*}
where $1_n$ is the identity matrix of degree $n$,
and put $H = G^{\th}$.  Then $H$ is the symplectic group $Sp_{2n}$
with respect to the symplectic form 
$\lp u, v\rp ={}^tuJv$ for $u,v \in V$ 
under the identification $V \simeq \Bk^{2n}$ via the basis 
$\{ e_1, \dots, e_n, f_1, \dots f_n\}$, which  gives rise to 
a symplectic basis.
We denote by the same symbol $\th$ the involution induced on $\Fg$.   
Then 
$\th(x) = -J\iv({}^tx)J$ for $x \in \Fg$.
We have a decomposition 
$\Fg = \Fg^{\th} \oplus \Fg^{-\th}$, where 
\begin{equation*}
\Fg^{\pm \th} = \{ x \in \Fg \mid \th(x) = \pm x\}.
\end{equation*}
Let $x^*$ be the adjoint of $x \in \Fg$ with respect to
the form $\lp\ ,\ \rp$.  Then we have $x^* = J\iv({}^tx)J$, and
so $\Fg^{\pm \th} = \{ x \in \Fg \mid x^* = \mp x\}$.
In particular, $\Fg^{-\th}$ coincides with the set of self-adjoint 
matrices in $\Fg\Fl_{2n}$.

\para{1.2.}
Let $\io : G \to G$ be the anti-automorphism $g \mapsto g\iv$.
We consider the set $G^{\io\th} = \{ g \in G \mid \th(g) = g\iv\}$.
Then as in the Lie algebra case, $G^{\io\th}$ coincides with the set
of non-degenerate self-adjoint matrices.  In particular, it is 
connected.
Let $B$ be the subgroup of $G$ consisting of the elements of the 
form
\begin{equation*}
\begin{pmatrix}
b_1 & c \\
0 & b_2
\end{pmatrix},
\end{equation*}
where $b_1, b_2, c$ are square matrices of degree $n$, with
$b_1$ upper triangular and $b_2$ lower triangular.
Let $T$ be the set of all diagonal matrices in $G$.  
Then $B$ is a Borel subgroup of $G$ containing $T$, and $B$ and 
$T$ are both $\th$-stable. 
We have 
\begin{equation*}
\tag{1.2.1}
T^{\th} = \{ \begin{pmatrix}
                a & 0 \\
                0 & a\iv 
             \end{pmatrix} \mid a \in D_n \},
\qquad 
T^{\io\th} = \{ \begin{pmatrix}
                   a & 0 \\
                   0 & a 
                 \end{pmatrix}
                     \mid a \in D_n \},
\end{equation*}
where $D_n$ is the set of diagonal matrices in $GL_n$.
Moreover we have
\begin{align*}
\tag{1.2.2}
B^{\th} &= \{ g = \begin{pmatrix}
                b & c \\
                0 & {}^tb\iv
              \end{pmatrix} \in B \mid  {}^tc = b\iv c\ {}^tb\},
\\
B^{\io\th} &= \{ g = \begin{pmatrix}
                b & c \\
                0 & {}^tb
              \end{pmatrix} \in B \mid  {}^tc = -c\}.
\end{align*}
\par
We note that 
\begin{equation*}
\tag{1.2.3}
G^{\io\th} = \{ g\th(g)\iv \mid g \in G \}.
\end{equation*}
In fact, it is known by a general theory ([R, 2.2], see also [Gi, 3.3]) that 
the right hand side of (1.2.3) coincides with the connected 
component of $G^{\io\th}$.  Since $G^{\io\th}$ is connected, 
(1.2.3) holds. It is also checked directly as follows; 
take $x \in G^{\io\th}$.  Then $x$ is self-adjoint, and
so there exists an isotropic flag 
$(V_1 \subset V_2 \subset \cdots \subset V_n)$ in $V$ stable by $x$.
Since $G^{\io\th}$ is $H$-stable, by replacing $x$ by its $H$-conjugate, 
we may assume that $x \in B^{\io\th}$.  We write 
$x = \begin{pmatrix}
            b & c \\
            0 & {}^tb
     \end{pmatrix}$
as in (1.2.2).  
If we put $y = \begin{pmatrix}
                     b & 0 \\
                     0 & 1_n
                 \end{pmatrix}$, 
then $\th(y) = \begin{pmatrix}
                   1_n & 0 \\
                   0 & {}^tb\iv
                \end{pmatrix}$, and so 
$y\iv x \th(y) =  \begin{pmatrix}
                   1_n & c'\\
                   0   & 1_n
                  \end{pmatrix} \in B^{\io\th}$.
Since $c'$ is a skew-symmetric matrix, one can write
$c' = a - {}^ta$ for a square matrix $a$ of degree $n$. 
Then we have
\begin{equation*}
\begin{pmatrix}
1_n & c' \\
0  &  1_n 
\end{pmatrix}
= 
\begin{pmatrix}
1_n & a \\
0   & 1_n
\end{pmatrix}
\begin{pmatrix}
1_n & -{}^ta \\
0  &  1_n
\end{pmatrix}
= z\th(z)\iv
\end{equation*}
with $z = \begin{pmatrix}
             1_n &  a \\
             0   &  1_n
           \end{pmatrix}$.
This implies that $x =g\th(g)\iv$ for some $g \in G$.
The opposite inclusion in (1.2.3) is clear. 
\par
Note that the above argument shows, in particular,  that
\begin{equation*}
\tag{1.2.4}
B^{\io\th} = \{ b\th(b)\iv \mid b \in B\}.
\end{equation*}

\remark{1.3.}
The following properties hold.
\par
(i) If $B'$ is a $\th$-stable Borel subgroup 
of $G$, then $B$ and $B'$ are conjugate under $H$.
\par
(ii) If $B'$ is a $\th$-stable Borel subgroup containing a $\th$-stable
maximal torus $T'$, then the pairs $(B',T')$ and $(B, T)$ are conjugate 
under $H$.
\par
(iii)  A $\th$-stable torus $S$ of $G$ is called 
$\th$-anisotropic if $\th(s) = s\iv$ for any $s \in S$.
Then $T^{\io\th}$ is a maximal $\th$-anisotropic torus, and 
any maximal torus of $G$ containing a maximal 
$\th$-anisotropic torus is conjugate to 
$T$ under $H$. 
\par\medskip
In fact, for (i), $B'$ is written as $B' = xBx\iv$ for some $x \in G$.
Then we have $x\iv\th(x) \in B^{\io\th}$.
By (1.2.4), there exists $b \in B$ such that 
$b\th(b)\iv = x\iv\th(x)$, and $g = xb \in H$. Then 
$gBg\iv = B'$. 
For (ii), assume that $xBx\iv = B', xTx\iv = T'$.  
Then $x\iv\th(x) \in B^{\io\th} \cap N_G(T) = T^{\io\th}$, 
and there exists $t \in T$ such that $t\th(t)\iv = x\iv\th(x)$. 
Then $g = xt \in H$, and we have $g(B,T)g\iv = (B',T')$. 
For (iii), it is clear that $T^{\io\th}$ is maximal $\th$-anisotropic.  
By a general theory ([V], see also [R, 2.7]) that maximal $\th$-anisotropic 
tori are conjugate under $H$.  Since 
$Z_G(T^{\io\th}) = Z_H(T^{\io\th})\cdot T^{\io\th}$, (iii) follows. 
Note that (iii) implies that any maximal torus containing 
a maximal $\th$-anisotropic torus is $\th$-stable.    
\para{1.4.}
By (1.2.3), the symmetric space $G/H$ can be identified 
with $G^{\io\th}$ so that the natural map $\pi: G \to G/H$
is given by $\pi: G \to G^{\io\th}, g \mapsto g\th(g)\iv$ 
(cf. [R. Lemma 2.4]).
Under this identification, the left multiplication of $H$ on 
$G/H$ turns out to be the conjugation action of $H$ on $G^{\io\th}$.
Let $A$ be a closed subgroup of $G$ isomorphic to $GL_n$ defined by 
\begin{equation*}
A = \{ \begin{pmatrix}
           x   &   0  \\
           0   &    1_n
        \end{pmatrix} \in G  \mid x \in GL_n \}.
\end{equation*}
Then for $a = \begin{pmatrix}
                  x & 0 \\
                  0 & 1_n
               \end{pmatrix} \in A$, we have
$a\th(a)\iv = \begin{pmatrix}
                  x & 0 \\
                  0   & {}^tx
           \end{pmatrix} \in G^{\io\th}$.
\par
We define a subgroup $L$ of $G$ by 
\begin{equation*}
L = \{ \begin{pmatrix}
           x & 0 \\
           0 & y
        \end{pmatrix} \in G 
      \mid x,y \in GL_n \} \simeq GL_n \times GL_n.
\end{equation*}
Then $L$ is $\th$-stable, and $L^{\th} = \{ a\th(a) \mid a \in A \}$,
$L^{\io\th} = \{ a\th(a)\iv \mid a \in A\}$. 
The following lemma was proved by Klyachko [Kl] (cf. [BKS, Lemma 2.3.4]).
\begin{lem}  
\begin{enumerate}
\item  For $a, a' \in A$,  $a\th(a)\iv$ is $H$-conjugate to 
$a'\th(a')\iv$ if and only if $a$ and $a'$ are conjugate in $A$.
\item
The map $a \mapsto a\th(a)\iv$ induces a bijection between 
the set of conjugacy classes in $A$ and the set of $H$-conjugacy classes
in $G^{\io\th}$.
\end{enumerate}
\end{lem}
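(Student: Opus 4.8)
The plan is to recast the statement in terms of $(H,H)$-double cosets in $G$ and to isolate the single geometric fact that is really needed.

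\textbf{Reduction.} By (1.2.3) the map $\pi\colon G\to G^{\io\th}$, $g\mapsto g\th(g)\iv$, identifies $G/H$ with $G^{\io\th}$; since $\th(h)=h$ for $h\in H$ one has $h\,g\th(g)\iv h\iv=(hg)\th(hg)\iv$, so under this identification left translation by $H$ on $G/H$ corresponds to conjugation by $H$ on $G^{\io\th}$. Hence for $g,g'\in G$ the elements $\pi(g)$ and $\pi(g')$ are $H$-conjugate if and only if $HgH=Hg'H$. As $a\mapsto a\th(a)\iv$ is the composite $a\mapsto\pi(a)\mapsto aH$, I will show that (i) is equivalent to the assertion ``$HaH=Ha'H$ if and only if $a$ and $a'$ are conjugate in $A$'', and that (ii) amounts to this assertion together with the Cartan-type decomposition $G=HAH$.

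\textbf{The elementary implications.} If $a'=cac\iv$ with $c=\bigl(\begin{smallmatrix}g&0\\0&1_n\end{smallmatrix}\bigr)\in A$, I would set $h=\bigl(\begin{smallmatrix}g&0\\0&{}^tg\iv\end{smallmatrix}\bigr)$; a one-line check gives $\th(h)=h$, so $h\in H$, and then $h\,\pi(a)\,h\iv=\pi(a')$ because $\pi\bigl(\begin{smallmatrix}x&0\\0&1_n\end{smallmatrix}\bigr)=\bigl(\begin{smallmatrix}x&0\\0&{}^tx\end{smallmatrix}\bigr)$ (cf.\ 1.4). Conversely, if $\pi(a)$ and $\pi(a')$ are $H$-conjugate they are a fortiori $G$-conjugate; writing $a=\bigl(\begin{smallmatrix}x&0\\0&1_n\end{smallmatrix}\bigr)$, the elementary divisors of $\pi(a)=\bigl(\begin{smallmatrix}x&0\\0&{}^tx\end{smallmatrix}\bigr)$ are exactly those of $x$, each taken with doubled multiplicity (as $x$ and ${}^tx$ have the same elementary divisors), so $G$-conjugacy of $\pi(a)$ and $\pi(a')$ forces $x$ and $x'$ to have the same elementary divisors, i.e.\ $a$ and $a'$ to be conjugate in $A$. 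This settles (i), together with the well-definedness and the injectivity of the induced map in (ii).

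\textbf{The decomposition $G=HAH$.} By the Reduction it suffices to show every $s\in G^{\io\th}$ is $H$-conjugate to some $\bigl(\begin{smallmatrix}x&0\\0&{}^tx\end{smallmatrix}\bigr)$, and I claim this holds as soon as $V$ admits an $s$-stable decomposition $V=E\oplus F$ into two Lagrangian subspaces: indeed, choosing a basis of $E$ and the $\lp\ ,\ \rp$-dual basis of $F$ gives a symplectic basis in which $s$ is block-diagonal (it preserves $E$ and $F$) and, being self-adjoint, has the form $\bigl(\begin{smallmatrix}x&0\\0&{}^tx\end{smallmatrix}\bigr)$. To produce such a decomposition I would first split $V=\bigoplus_{\m}V_{\m}$ into the generalized eigenspaces of the invertible self-adjoint operator $s$; self-adjointness yields $\lp V_{\m},V_{\m'}\rp=0$ for $\m\neq\m'$, so each $V_{\m}$ is nondegenerate, hence symplectic, and it suffices to treat a single $V_{\m}$, on which $N:=s-\m\cdot\id$ is nilpotent and self-adjoint. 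What remains is to know that a nilpotent self-adjoint operator on a symplectic space admits an invariant Lagrangian decomposition; this follows from the classification of $Sp$-orbits of nilpotent self-adjoint operators (see [Kl]; and [H1] for the corresponding statement on $\Fg^{-\th}\nil$), by which every such operator is $Sp$-conjugate to one of block form $\bigl(\begin{smallmatrix}y&0\\0&{}^ty\end{smallmatrix}\bigr)$ with $y$ nilpotent. Alternatively, since $\ch\Bk\neq2$ every subspace $\langle v,Nv,N^2v,\dots\rangle$ is $\lp\ ,\ \rp$-isotropic, and one builds the decomposition directly by induction on $\dim V_{\m}$, peeling off a pair of Jordan blocks of maximal size.

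\textbf{Summary and the hard point.} Assembling the three steps proves (i) and (ii). The only substantial ingredient is the last one — the existence of an $s$-stable Lagrangian decomposition, equivalently the ``doubled-partition'' shape of nilpotent self-adjoint Jordan types — which is the content of Klyachko's theorem; the reduction to the nilpotent case and the implications in (i) are routine.
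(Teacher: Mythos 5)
Your argument is correct, and it supplies a proof where the paper gives none: the paper simply quotes this lemma from Klyachko [Kl] (cf.\ [BKS, Lemma 2.3.4]) without proof. Your route — identify $G/H$ with $G^{\io\th}$ via $g\mapsto g\th(g)\iv$ so that (i) and (ii) become statements about double cosets, dispose of injectivity by the elementary-divisor count for $\begin{pmatrix} x&0\\0&{}^tx\end{pmatrix}$, and reduce surjectivity to the existence of an $s$-stable Lagrangian decomposition $V=E\oplus F$, which you obtain from the orthogonality of distinct generalized eigenspaces of a self-adjoint $s$ plus the cyclic-subspace induction for a self-adjoint nilpotent $N$ — is the standard linear-algebra proof and all the individual steps check out: $h=\begin{pmatrix} g&0\\0&{}^tg\iv\end{pmatrix}$ does lie in $H$; $\lp v,N^kv\rp=\lp N^kv,v\rp=-\lp v,N^kv\rp$ gives isotropy of $\Bk[N]v$ precisely because $\ch\Bk\ne 2$; and the pairing between $\Bk[N]v$ and $\Bk[N]w$ (with $N^{m-1}v\ne 0$, $\lp N^{m-1}v,w\rp\ne 0$) is anti-triangular with nonzero anti-diagonal, so the two blocks split off orthogonally and induction applies. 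One caveat: your first-choice reference for the key step, namely the classification of $Sp$-orbits of self-adjoint nilpotents from [Kl]/[H1], is essentially the unipotent/nilpotent case of the very lemma being proved (the paper deduces (1.7.1) from Lemma 1.5), so as a self-contained proof you should lean on your alternative direct induction rather than that citation; with that choice made, the proof is complete. It would also be worth one sentence making explicit that from an $s$-stable Lagrangian decomposition the dual-basis construction yields an element of $H$ conjugating $s$ into $\{a\th(a)\iv\mid a\in A\}$, which is the precise form of surjectivity you use.
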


\para{1.6.} 
We note that for any $g \in G^{\io\th}$, $Z_H(g)$ is a connected 
subgroup of $H$. In fact, this result is essentially contained 
in the proof of Proposition 2.3.6 in [BKS], in view of Lemma 1.5 (ii).

\para{1.7.}
Let $g = su = us$ be the Jordan decomposition of $g \in G$, 
where $u$ is unipotent and $s$ is semisimple. If $g \in G^{\io\th}$, 
then $s, u \in G^{\io\th}$, and the Jordan decomposition makes sense
in $G^{\io\th}$.  We denote by $G^{\io\th}\uni$ the set of unipotent 
elements in $G^{\io\th}$.  Similarly, we have the Jordan decomposition 
of $\Fg^{-\th}$, and denote by $\Fg^{-\th}\nil$ the 
set of nilpotent elements in $\Fg^{-\th}$.
\par
We define a map $\log: G^{\io\th} \to \Fg^{-\th}$ by the composite 
of the maps  
\begin{equation*}
\begin{CD}
\log: G^{\io\th} @>i>> G  @>j>> \Fg = \Fg^{\th} \oplus \Fg^{-\th}
  @>p_2>> \Fg^{-\th},
\end{CD}
\end{equation*}
where $i$ is the inclusion map, and $j$ is the map defined by 
$j(g) = g-1$, and $p_2$ is the projection onto the second factor.
Then $\log$ is an $H$-equivariant morphism from $G^{\io\th}$ to 
$\Fg^{-\th}$ and $\log(1) = 0$, and 
$d\log_{e}: \Fg^{-\th} \to \Fg^{-\th}$ induces the identity map on 
$\Fg^{-\th}$.  
By Bradsley-Richardson [BR, Proposition 10.1], we see that 
the restriction of $\log$ on $G^{\io\th}\uni$ gives rise to 
an $H$-equivariant isomorphism $G^{\io\th}\uni \simeq \Fg^{-\th}\nil$. 
\par
Under the correspondence in Lemma 1.5, the set of unipotent classes 
in $A$ is mapped to the set of unipotent classes in $G^{\io\th}$.
Since the set of unipotent classes in $A$ is parametrized by 
the set $\CP_n$ of partitions of $n$, we see that 
\begin{equation*}
\tag{1.7.1}
G^{\io\th}\uni/\sim_H \ \simeq \ \Fg^{-\th}\nil/\sim_H
 \ \simeq \CP_n,
\end{equation*}
where $X/\sim_H$ denotes the set of $H$-orbits of the
$G$-variety $X$.

\par
We consider the varieties $G^{\io\th}\uni \times V$ and 
$\Fg^{-\th}\nil \times V$, with diagonal actions of $H$.
Note that $\Fg^{-\th}\nil \times V$ is nothing but the exotic 
nilpotent cone introduced by Kato [Ka1].
Now $\log \times \id$ induces an $H$-equivariant isomorphism 
between $G^{\io\th}\uni \times V$ and $\Fg^{-\th}\nil \times V$.
By [Ka1], we know that $(\Fg^{-\th}\nil \times V)/\sim_H$ is parametrized
by the set $\CP_{n,2}$ of double partitions of $n$. 
Hence we have
\begin{equation*}
\tag{1.7.2}
(G^{\io\th}\uni \times V)/\!\sim_H\ 
    \simeq \ (\Fg^{-\th}\nil \times V)/\!\sim_H\ \simeq \ \CP_{n,2}.
\end{equation*}
We denote by $\CO_{\Bla}$ the $H$-orbit of $G^{\io\th}\uni \times V$ 
corresponding to $\Bla \in \CP_{n,2}$. (Here we follow the parametrization
given in [AH, Theorem 6.1] in connection with the parametrization of 
the orbits in the enhanced nilpotent cone.)  

\par
The closure relations of orbits in $G^{\io\th}\uni \times V$ are
described as follows.
For $\Bla = (\mu, \nu) \in \CP_{n,2}$ with $\mu = (\mu_i), \nu = (\nu_i)$, 
we define 
a composition $c(\Bla)$ of $n$ by 
$c(\Bla) = (\mu_1, \nu_1, \mu_2, \nu_2, \dots)$. 
We define a partial order on $\CP_{n,2}$ by $\Bmu \le \Bla$ 
if and only if $c(\Bmu) \le c(\Bla)$, where the latter is the dominance 
order of compositions of $n$.  Then we have by [AH, Theorem 6.3],
\begin{equation*}
\tag{1.7.3}
\CO_{\Bmu} \subset \ol\CO_{\Bla} \text{ \ if and only if \ } \Bmu \le \Bla.
\end{equation*}

\para{1.8.}
Let $T^{\io\th}\reg$ be the set of regular semisimple 
elements in $T^{\io\th}$, i.e., the set of semisimple elements 
in $T^{\io\th}$ such that all the eigenspaces on $V$ have 
dimension 2.
We define $G^{\io\th}\reg = \bigcup_{g \in H}gT^{\io\th}\reg g\iv$, 
the set of regular semisimple elements in $G^{\io\th}$.  
Then $G^{\io\th}\reg$ is open dense in $G^{\io\th}$.
Take  $t \in T^{\io\th}\reg$.  Then 
$Z_H(t) = Z_H(T^{\io\th}) \simeq SL_2 \times \cdots \times SL_2$
($n$ copies).
For the Borel subgroup $B^{\th}$ of $H$, put
\begin{equation*}
\wt G^{\io\th} = \{ (x, gB^{\th})  \in G^{\io\th} \times H/B^{\th}
       \mid g\iv xg \in B^{\io\th} \}
\end{equation*}
and define a map $\pi: \wt G^{\io\th} \to G^{\io\th}$ by 
$\pi(x, gB^{\th}) = x$.
Then $\wt G^{\io\th} \simeq H \times^{B^{\th}}B^{\io\th}$ is a 
locally trivial fibration over $H/B^{\th}$ with fibre isomorphic
to $B^{\io\th}$ (the associated bundle of the principal 
$B^{\th}$-bundle $H \to H/B^{\th}$).  
Since $B^{\io\th}$ is smooth by (1.2.2), 
$\wt G^{\io\th}$ is smooth and irreducible. 
Moreover, $\pi$ is proper.
\par
We consider the pull-back $\pi\iv(G^{\io\th}\reg)$ of $G^{\io\th}\reg$
under the map $\pi$, and let 
$\psi : \pi\iv(G^{\io\th}\reg) \to G^{\io\th}\reg$ be the restriction 
of $\pi$ on $\pi\iv(G^{\io\th}\reg)$.
Then we have 
\begin{equation*}
\pi\iv(G^{\io\th}\reg) \simeq H \times^{B^{\th}}B^{\io\th}\reg 
\simeq H\times ^{(B^{\th} \cap Z_H(T^{\io\th}))}T^{\io\th}\reg,
\end{equation*}
where $B^{\io\th}\reg = B^{\io\th} \cap G^{\io\th}\reg$.
Note that $B^{\th} \cap Z_H(T^{\io\th})$ is a Borel subgroup 
of $Z_H(T^{\io\th}) \simeq SL_2\times\cdots\times SL_2$, hence 
is of the form $B_2 \times \cdots \times B_2$ with a Borel subgroup
$B_2$ of $SL_2$. 
Now the map $\psi$ factors through 
$H\times^{Z_H(T^{\io\th})}T^{\io\th}$ as follows;
\begin{equation*}
\tag{1.8.1}
\begin{CD}
\psi: \pi\iv(G^{\io\th}\reg) @>\xi >> 
      H\times^{Z_H(T^{\io\th})}T^{\io\th}\reg
                      @>\e >> G^{\io\th}\reg, 
\end{CD}
\end{equation*}
where $\xi$ is proper, and is a locally trivial fibration with fibre 
isomorphic to 
\begin{equation*}
Z_H(T^{\io\th})/(Z_H(T^{\io\th}) \cap B^{\th}) 
\simeq (SL_2/B_2)^n \simeq \BP_1^n, 
\end{equation*}
and $\e$ is a finite Galois covering with group 
$\CW = N_H(T^{\io\th})/Z_H(T^{\io\th}) \simeq S_n$  
through the identifications
\begin{equation*}
H\times^{Z_H(T^{\io\th})}T^{\io\th}\reg 
  \simeq H/Z_H(T^{\io\th}) \times T^{\io\th}\reg
\to (H/Z_H(T^{\io\th}) \times T^{\io\th}\reg)/S_n \simeq G^{\io\th}\reg,
\end{equation*}
where $S_n$ acts on $H/Z_H(T^{\io\th}) \times T^{\io\th}\reg$ 
by $(w, (gZ_H(T^{\io\th}), t)) \mapsto (gw\iv Z_H(T^{\io\th}), wtw\iv)$. 
\par
Summing up the above computation, we obtain the following lemma.
\begin{lem}  
Let $U$ be the unipotent radical of $B$.  Then 
\begin{enumerate}
\item
$\dim U^{\th} = n^2$, $\dim B^{\th} = n^2 + n$, 
$\dim U^{\io\th} = n^2 - n$, $\dim B^{\io\th} = n^2$.
\item
$\wt G^{\io\th}$ is a smooth irreducible variety with 
$\dim \wt G^{\io\th} = 2n^2$.  
$\pi$ is a proper surjective map from $\wt G^{\io\th}$ onto 
$G^{\io\th}$, and $\dim G^{\io\th} = 2n^2 - n$.
\end{enumerate}
\end{lem}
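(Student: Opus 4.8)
The plan is to verify each dimension count directly, starting from explicit matrix descriptions of the relevant groups given in Section 1.2, and then to assemble the geometric statement in (ii) from the fibration description established in 1.8.

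First I would handle part (i). The group $B$ has unipotent radical $U$ consisting of matrices $\begin{pmatrix} u_1 & c \\ 0 & u_2\end{pmatrix}$ with $u_1$ upper unitriangular, $u_2$ lower unitriangular, and $c$ an arbitrary $n\times n$ block; so $\dim U = 2\binom{n}{2} + n^2 = 2n^2-n$ and $\dim B = \dim U + \dim T = 2n^2-n + 2n = 2n^2+n$. From the description (1.2.2) of $B^{\th}$, an element $\begin{pmatrix} b & c\\ 0 & {}^tb\iv\end{pmatrix}$ is determined by $b$ upper triangular (dimension $n^2/2 + n/2$, i.e. $\binom{n+1}{2}$) together with $c$ subject to the condition ${}^tc = b\iv c\,{}^tb$; since $b$ is invertible this condition cuts out a subspace of the space of all $c$ of dimension $\binom{n}{2}$ (the "skew-symmetric-type" half), so $\dim B^{\th} = \binom{n+1}{2} + \binom{n}{2} = n^2 + n$. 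Wait — more carefully, the map $c \mapsto {}^tc - b\iv c\,{}^tb$ is linear in $c$ with kernel of dimension $\binom{n+1}{2}$, giving $\dim B^{\th} = \binom{n+1}{2} + \binom{n+1}{2} = n^2+n$; either bookkeeping gives the stated $n^2+n$, and then $\dim U^{\th} = \dim B^{\th} - \dim T^{\th} = n^2+n - n = n^2$ using (1.2.1). For the $\io\th$-versions, (1.2.2) shows $B^{\io\th}$ consists of $\begin{pmatrix} b & c\\ 0 & {}^tb\end{pmatrix}$ with $b$ upper triangular and $c$ skew-symmetric; since $b$ is free this is an open subset of a space of dimension $\binom{n+1}{2} + \binom{n}{2} = n^2$, so $\dim B^{\io\th} = n^2$ (consistent with the assertion in 1.8 that $B^{\io\th}$ is smooth), and $\dim U^{\io\th} = n^2 - \dim T^{\io\th} = n^2 - n$.

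Next I would prove part (ii). Smoothness and irreducibility of $\wt G^{\io\th}$ were already observed in 1.8: it is the associated bundle $H \times^{B^{\th}} B^{\io\th}$ over the smooth irreducible base $H/B^{\th}$ with smooth irreducible fibre $B^{\io\th}$. For the dimension, $\dim \wt G^{\io\th} = \dim H/B^{\th} + \dim B^{\io\th} = (\dim H - \dim B^{\th}) + \dim B^{\io\th}$. Since $H = Sp_{2n}$ has $\dim H = 2n^2 + n$, this gives $\dim \wt G^{\io\th} = (2n^2+n) - (n^2+n) + n^2 = 2n^2$. That $\pi$ is proper and surjective: properness holds because $\pi$ is obtained from the proper map $H/B^{\th} \to \mathrm{pt}$ (a projective variety), more precisely $\pi$ factors as $\wt G^{\io\th} \hookrightarrow G^{\io\th} \times H/B^{\th} \to G^{\io\th}$ and the second projection is proper as $H/B^{\th}$ is complete; surjectivity follows from (1.2.4), which shows every $x \in B^{\io\th}$ (hence, after $H$-conjugation using the isotropic-flag argument of 1.2, every $x \in G^{\io\th}$) lies in the image. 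Finally $\dim G^{\io\th} = 2n^2 - n$: by Lemma 1.5 the generic $H$-orbit corresponds to a regular semisimple class in $A \simeq GL_n$, and over $T^{\io\th}\reg$ the map $\e$ of (1.8.1) is finite while the generic fibre of $\pi$ (equivalently of $\xi$) is $\BP_1^n$ of dimension $n$; thus $\dim G^{\io\th} = \dim \wt G^{\io\th} - n = 2n^2 - n$. Alternatively, and perhaps cleaner for the writeup, $\dim G^{\io\th} = \dim H/Z_H(T^{\io\th}) + \dim T^{\io\th}\reg = (2n^2+n - 3n) + 2n = 2n^2 - n$, using $Z_H(T^{\io\th}) \simeq (SL_2)^n$ of dimension $3n$ and $\dim T^{\io\th} = n$ from (1.2.1).

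The only genuine point requiring care — the "main obstacle," such as it is — is the dimension of $B^{\th}$ (equivalently $U^{\th}$): one must correctly count the solutions $c$ of the quadratic-looking constraint ${}^tc = b\iv c\,{}^tb$, i.e. recognize it as a linear condition on $c$ for fixed invertible $b$ whose solution space has the dimension of the symmetric (or skew-symmetric — one should check which) $n\times n$ matrices. Everything else is either already established in 1.8 or is a mechanical consequence of the standard dimension formula for associated bundles together with $\dim Sp_{2n} = 2n^2+n$.
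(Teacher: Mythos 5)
Your argument is correct in substance and reaches all the stated dimensions, but it takes a slightly different route from the paper at two points. For (i), the paper simply quotes the well-known dimensions of the Borel subgroup $B^{\th}$ and its unipotent radical $U^{\th}$ of $H=Sp_{2n}$ (namely $n^2+n$ and $n^2$) and reads off the $\io\th$-parts from (1.2.2), whereas you recompute $\dim B^{\th}$ directly from the constraint ${}^tc=b\iv c\,{}^tb$; that is fine, but note that your first bookkeeping ($\binom{n+1}{2}+\binom{n}{2}$) gives $n^2$, not $n^2+n$ --- the correct count is that $c\mapsto c\,{}^tb$ identifies the solution space with the \emph{symmetric} matrices, of dimension $\binom{n+1}{2}$, so only your second tally is right. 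Also, since $B^{\io\th}$ and $U^{\io\th}$ are not subgroups, the step $\dim U^{\io\th}=\dim B^{\io\th}-\dim T^{\io\th}$ is a shortcut; it is cleaner (and what (1.2.2) gives immediately) to count $U^{\io\th}=\{\left(\begin{smallmatrix} u & c\\ 0 & {}^tu\end{smallmatrix}\right):u \text{ unitriangular},\ {}^tc=-c\}$, of dimension $2\binom{n}{2}=n^2-n$. For (ii), the bundle computation of $\dim\wt G^{\io\th}$ and the properness/surjectivity arguments coincide with the paper's (properness is already recorded in 1.8, surjectivity from the isotropic-flag argument of 1.2). The genuine divergence is the value $\dim G^{\io\th}=2n^2-n$: the paper gets it in one line from the identification $G^{\io\th}\simeq G/H$ of 1.4 (via (1.2.3)), i.e. $\dim G-\dim H=4n^2-(2n^2+n)$, while you use the density of $G^{\io\th}\reg$ and the covering description (1.8.1), computing $\dim H/Z_H(T^{\io\th})+\dim T^{\io\th}=(2n^2-2n)+n$; this is also valid (it relies only on facts stated in 1.8), though your displayed arithmetic has a typo ($+2n$ should be $+n$, consistent with $\dim T^{\io\th}=n$ from (1.2.1)). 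Your first variant of that computation, via the generic $\BP_1^n$-fibre of $\pi$, additionally needs the (easy, but unstated) fact that $B^{\io\th}\reg$ is dense in $B^{\io\th}$, so that $\pi\iv(G^{\io\th}\reg)$ is dense in $\wt G^{\io\th}$; the $G/H$ identification avoids all of this.
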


\begin{proof}
For (i), the statement for $B^{\th}, U^{\th}$ is well-known.
The statement for $B^{\io\th}, U^{\io\th}$ comes from (1.2.2).
For (ii), $\dim \wt G^{\io\th} = \dim H/B^{\th} + \dim B^{\io\th}
    = \dim U^{\th} + \dim B^{\io\th} = 2n^2$ by (i).
By 1.4, $\dim G^{\io\th} = \dim G - \dim H = 2n^2 - n$.  
\end{proof}

\para{1.10.}
We define a variety 
\begin{equation*}
\wt G^{\io\th}\uni = \{ (x, gB^{\th}) \in G^{\io\th}\uni 
         \times H/B^{\th} \mid g\iv xg \in U^{\io\th}\}
\end{equation*}
and the map $\pi_1: \wt G^{\io\th}\uni \to G^{\io\th}\uni$ by
$\pi_1(x, gB^{\th}) \mapsto x$. 
Then $\wt G^{\io\th}\uni \simeq H \times^{B^{\th}}U^{\io\th}$
is a vector bundle over $H/B^{\th}$ with fibre isomorphic to 
$U^{\io\th}$.  Hence $\wt G^{\io\th}\uni$ is smooth and irreducible, 
and $\pi_1$ is a surjective map onto $G^{\io\th}\uni$.
For $x \in G^{\io\th}\uni$, we consider 
$\pi_1\iv(x) \simeq \{ gB^{\th} \in H/B^{\th} 
                        \mid g\iv xg \in U^{\io\th}\}$.
We have the following lemma. (i) and (ii) are proved in 
Lemma 5.11 and (2.3.11) in [BKS].  (iii) is immediate from (ii).
\begin{lem}  
Assume that $x = y\th(y)\iv \in G^{\io\th}\uni$ with $y \in A\uni$.
Then
\begin{enumerate}
\item $\dim \pi_1\iv(x) = (\dim Z_H(x) - \rk H)/2$.
\item $\dim H - \dim Z_H(x) = 2(\dim A - \dim Z_A(y))$.
\item Let $y = y_{\la} \in A$ be an element corresponding to 
$\la \in \CP_n$.  Put $x_{\la} = x$  and 
let $\CO_{\la}$ be the $H$-orbit of $x_{\la}$ in $G^{\io\th}\uni$.  
Then we have $\dim \CO_{\la} = 2n^2 - 2n - 4n(\la)$.
In particular, $\dim \pi_1\iv(x) = 2n(\la) + n$.
\end{enumerate}
\end{lem}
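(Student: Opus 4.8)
The plan is to obtain the two group-theoretic statements (i) and (ii) from [BKS, Lemma 5.11] and [BKS, (2.3.11)] respectively, and then to read off (iii) by a short computation. Let me indicate how the arguments for (i) and (ii) run.

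For (ii), the first step is to reduce to $y = y_\la$, which is harmless since both sides are unchanged if $y$ is replaced by an $A$-conjugate. Next I would transport the problem to the Lie algebra via the $H$-equivariant isomorphism $\log\colon G^{\io\th}\uni \xrightarrow{\sim}\Fg^{-\th}\nil$ of 1.7, so that it suffices to compute $\dim Z_H(N)$ for $N = \log x_\la$. By 1.4 one has $x_\la = \operatorname{diag}(u_\la, {}^tu_\la)$ for a unipotent $u_\la \in GL_n$ of type $\la$, and since $x_\la - 1$ already lies in $\Fg^{-\th}$ we have $N = x_\la - 1 = \operatorname{diag}(m, {}^tm)$ with $m = u_\la - 1$ nilpotent of type $\la$ in $\Fg\Fl_n$. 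Now write a general element of $\Fg^{\th} = \Lie H$ in block form: by the formula $x^* = J\iv({}^tx)J$ of 1.1, its diagonal blocks are $P$ and $-{}^tP$ and its off-diagonal blocks $Q$, $R$ are symmetric. The condition of commuting with $N = \operatorname{diag}(m, {}^tm)$ then decouples into $[P,m] = 0$, $mQ = Q\,{}^tm$, $Rm = {}^tm\,R$, whence
\[
\dim Z_H(x_\la) = \dim\{P \in \Fg\Fl_n \mid [P,m]=0\} + \dim\{Q = {}^tQ \mid mQ = Q\,{}^tm\} + \dim\{R = {}^tR \mid Rm = {}^tm\,R\}.
\]
The first term is $n + 2n(\la)$, the dimension of the centralizer of a type-$\la$ nilpotent in $\Fg\Fl_n$; each of the last two — the space of symmetric bilinear forms for which a type-$\la$ nilpotent is self-adjoint — has dimension $n + n(\la)$, and this count is the computational core of [BKS, (2.3.11)]. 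Hence $\dim Z_H(x_\la) = 3n + 4n(\la)$, and with $\dim H = \dim Sp_{2n} = 2n^2 + n$, $\dim A = n^2$, $\dim Z_A(y_\la) = n + 2n(\la)$ we get
\[
\dim H - \dim Z_H(x_\la) = (2n^2 + n) - (3n + 4n(\la)) = 2\bigl(n^2 - n - 2n(\la)\bigr) = 2\bigl(\dim A - \dim Z_A(y_\la)\bigr),
\]
which is (ii).

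For (i), I would argue as in the classical proof that $\dim\CB_u = \frac12(\dim Z_G(u) - \rk G)$. Using the smooth fibration $\wt G^{\io\th}\uni \simeq H\times^{B^\th}U^{\io\th}$ of 1.10 and the identification $\pi_1\iv(x) \simeq \{gB^\th \in H/B^\th \mid g\iv xg \in U^{\io\th}\}$, one runs a Steinberg-type induction on $n$: stratify $\pi_1\iv(x)$ by the position of the flag relative to a chosen $x$-stable isotropic subspace, realize each stratum as an affine bundle over a $\pi_1$-fibre for a pair of smaller rank, and solve the resulting recursion to get $\dim\pi_1\iv(x) = \frac12(\dim Z_H(x) - \rk H)$; this is [BKS, Lemma 5.11]. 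Granting (i) and (ii), part (iii) is now immediate: by (ii), $\dim\CO_\la = \dim H - \dim Z_H(x_\la) = 2n^2 - 2n - 4n(\la)$, and by (i) with $\rk H = n$,
\[
\dim\pi_1\iv(x_\la) = \frac12\bigl(\dim Z_H(x_\la) - n\bigr) = \frac12\bigl((3n + 4n(\la)) - n\bigr) = n + 2n(\la).
\]

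The hard part is (i): the factor $\frac12$ and the term $-\rk H$ are invisible to a crude dimension estimate and have to be produced by the inductive fibre analysis above, which is why one appeals to [BKS, Lemma 5.11] (or to an analogue of Spaltenstein's argument). Part (ii) is essentially routine once it is reduced, as described, to the single dimension formula for the space of symmetric forms compatible with a type-$\la$ nilpotent.
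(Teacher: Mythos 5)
Your proposal is correct and takes essentially the same route as the paper, whose entire argument is to cite [BKS, Lemma 5.11] for (i) and [BKS, (2.3.11)] for (ii) and then observe that (iii) follows at once from these together with $\rk H = n$, $\dim H = 2n^2+n$ and $\dim Z_A(y_\la) = n + 2n(\la)$. Your additional block-matrix verification of (ii), giving $\dim Z_H(x_\la) = (n+2n(\la)) + 2(n+n(\la)) = 3n + 4n(\la)$, is a sound fleshing-out of what the paper leaves to [BKS], and it is consistent with the value $\dim Z_H(x) = 3n$ for regular $\la$ that the paper uses in the proof of Lemma 1.12.
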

By using this, one can show that 

\begin{lem}  
\begin{enumerate}
\item  $\wt G^{\io\th}\uni$ is smooth and irreducible 
with $\dim \wt G^{\io\th}\uni = 2n^2 - n$.
\item $\dim G^{\io\th}\uni = 2\dim U^{\io\th} = 2n^2 - 2n$.
\end{enumerate}
\end{lem}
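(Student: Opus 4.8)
The plan is to deduce everything from Lemma~1.11 together with the structure of the map $\pi_1$ as an associated fibre bundle. First I would argue smoothness and irreducibility of $\wt G^{\io\th}\uni$: since $\wt G^{\io\th}\uni \simeq H \times^{B^{\th}} U^{\io\th}$ is a vector bundle over $H/B^{\th}$ with fibre $U^{\io\th}$, and $H/B^{\th}$ is smooth, projective and irreducible (it is the flag variety of the connected group $H$), the total space is smooth and irreducible as noted in~1.10. Its dimension is then $\dim H/B^{\th} + \dim U^{\io\th} = \dim U^{\th} + \dim U^{\io\th} = n^2 + (n^2 - n) = 2n^2 - n$ by Lemma~1.9(i). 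This proves (i).

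For (ii), the key input is that $\pi_1 : \wt G^{\io\th}\uni \to G^{\io\th}\uni$ is surjective (stated in~1.10) with $\wt G^{\io\th}\uni$ irreducible, so $G^{\io\th}\uni$ is irreducible, and $\dim G^{\io\th}\uni = \dim \wt G^{\io\th}\uni - \min_x \dim \pi_1\iv(x)$ provided the generic fibre dimension is achieved on the image of the open dense orbit. By (1.7.1) and (1.7.3), $G^{\io\th}\uni$ has a dense $H$-orbit, namely $\CO_{\la}$ for $\la = (n)$, the partition with $n(\la) = 0$; by Lemma~1.11(iii), over this orbit the fibre of $\pi_1$ has dimension $2n(\la) + n = n$, and this is the minimal fibre dimension since $n(\la) \ge 0$. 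Hence $\dim G^{\io\th}\uni = (2n^2 - n) - n = 2n^2 - 2n$. To match the other expression, $2\dim U^{\io\th} = 2(n^2-n) = 2n^2 - 2n$ by Lemma~1.9(i), so the two agree. Alternatively one reads $\dim \CO_{(n)} = 2n^2 - 2n - 4n((n)) = 2n^2 - 2n$ directly from Lemma~1.11(iii), confirming that the dense orbit already has the full dimension of $G^{\io\th}\uni$.

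I expect the only genuine point requiring care is the interchange of "generic fibre dimension'' with "fibre dimension over the dense orbit'': one must know that $\pi_1$ is $H$-equivariant (clear from its definition) so that fibre dimension is constant along each $H$-orbit, and that the dense orbit is in the image (automatic from surjectivity, or from the fact that $x_{(n)}$ lies in $U^{\io\th}$ up to $H$-conjugacy). Then upper semicontinuity of fibre dimension is not even needed — the dimension formula $\dim \wt G^{\io\th}\uni = \dim \CO + \dim \pi_1\iv(x)$ holds over the dense orbit $\CO$ because $\pi_1\iv(\CO) \to \CO$ is a fibre bundle and $\pi_1\iv(\CO)$ is dense in the irreducible variety $\wt G^{\io\th}\uni$. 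Everything else is bookkeeping with the numbers from Lemmas~1.9 and~1.11, so there is no real obstacle.
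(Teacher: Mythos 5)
Your proof is correct and follows essentially the same route as the paper: part (i) is the identical vector-bundle count $\dim H/B^{\th}+\dim U^{\io\th}=2n^2-n$, and for (ii) the paper likewise computes that the fibre of $\pi_1$ over the dense regular orbit has dimension $n$ (via Lemma 1.11(i),(ii) applied to a regular unipotent $y\in A$ with $\dim Z_A(y)=n$, $\dim Z_H(x)=3n$, rather than your packaged use of Lemma 1.11(iii)) and then subtracts from $\dim \wt G^{\io\th}\uni$. Your extra bookkeeping (irreducibility of $G^{\io\th}\uni$ from surjectivity of $\pi_1$, denseness of $\CO_{(n)}$) just spells out what the paper asserts, so there is nothing to fix.
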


\begin{proof}
By 1.10, we have $\dim \wt G^{\io\th}\uni = 
        \dim H/B^{\th} + \dim U^{\io\th} = 
           \dim U^{\th} + \dim U^{\io\th} = 2n^2 - n$.  
Hence (i) holds.  
We show (ii).  Let $y$ be a regular unipotent element in 
$A$, and put $x = y\th(y)\iv$.  
Since $\dim Z_A(y) = n$, we have  $\dim Z_H(x) = 3n$ by 
Lemma 1.11 (ii).   Hence $\dim \psi_1\iv(x) = n$ by Lemma 1.11 (i).
Since the $H$-orbit of $x$ is open dense in $G^{\io\th}\uni$, 
we see that $\dim G^{\io\th}\uni = \dim \wt G^{\io\th}\uni - n$.
Hence (ii) holds.  
\end{proof}

\para{1.13.} 
We consider the direct image complex $\psi_*\Ql$ of the constant
sheaf $\Ql$ on $\pi\iv(\wt G^{\io\th}\reg)$.  In the notation in 1.8, 
since $\xi$ is a locally trivial fibration with fibre 
isomorphic to $\BP_1^n$, we see that 
\begin{equation*}
\xi_*\Ql \simeq H^{\bullet}(\BP_1^n)\otimes \Ql \simeq 
\bigoplus_{J \subseteq [1,n]}\Ql[-2|J|], 
\end{equation*}
where $J \subseteq [1,n]$ means that $J$ runs over all the subsets 
of the set $[1,n] = \{ 1,2, \dots, n\}$, and 
$H^{\bullet}(\BP_1^n)$ denotes the graded space 
$\bigoplus_iH^{2i}(\BP_1^n,\Ql)$ which we regard as a complex 
$\bigoplus_{J \subseteq [1,n]}\Ql[-2|J|]$. 
On the other hand, since $\e$ is a finite Galois covering with group
$S_n$, we have $\End (\e_*\Ql) \simeq \Ql[S_n]$, the group algebra 
of $S_n$ over $\Ql$.
Thus $\e_*\Ql$ is decomposed as 
\begin{equation*}
\e_*\Ql \simeq \bigoplus_{\la \in \CP_n}V_{\la} \otimes \CL_{\la},
\end{equation*}
where $V_{\la}$ is the irreducible $S_n$-module corresponding to the
partition $\la \in \CP_n$, and  
$\CL_{\la} = \Hom_{S_n}(V_{\la}, \e_*\Ql)$
is an irreducible local system on $G^{\io\th}\reg$.
Thus we have
\begin{align*}
\tag{1.13.1}
\psi_*\Ql &\simeq \e_*\xi_*\Ql \\ 
&\simeq 
H^{\bullet}(\BP_1^n) \otimes \e_*\Ql \\
&\simeq \bigoplus_{J \subseteq [1,n]}
   \bigoplus_{\la \in \CP_n}V_{\la} \otimes \CL_{\la}[-2|J|].
\end{align*}
\par
We have the following result 
(cf. [H1, Proposition 6.1, Proposition 6.2]).
\begin{prop} 
$\pi_*\Ql[\dim G^{\io\th}]$ and  $(\pi_1)_*\Ql[\dim G^{\io\th}\uni]$
are semisimple complexes with $S_n$-action.  They are decomposed as 
\begin{align*}
\tag{1.14.1}
\pi_*\Ql[\dim G^{\io\th}] &\simeq H^{\bullet}(\BP_1^n) \otimes 
            \bigoplus_{\la \in \CP_n} 
           V_{\la} \otimes \IC (G^{\io\th}, \CL_{\la}) [\dim G^{\io\th}],  \\
 \tag{1.14.2}
(\pi_1)_*\Ql[\dim G^{\io\th}\uni] &\simeq 
     H^{\bullet}(\BP_1^n)\otimes \bigoplus_{\la \in \CP_n}
        V_{\la} \otimes \IC(\ol\CO_{\la^{\bullet}}, \Ql)
          [\dim \CO_{\la^{\bullet}}], \\
\end{align*}
where $\CO_{\la^{\bullet}}$ is a certain 
$H$-orbit in $G^{\io\th}\uni$, and the map 
$\la \mapsto \CO_{\la^{\bullet}}$ gives a bijective
correspondence between $\CP_n$ and $G^{\io\th}\uni/\sim_H$. 
\end{prop}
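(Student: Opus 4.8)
The plan is to follow the method of Borho--MacPherson [BM] and Henderson [H1], carrying the extra $\BP_1^n$-fibre along throughout. Since $\wt G^{\io\th}$ and $\wt G^{\io\th}\uni$ are smooth and irreducible (Lemmas 1.9, 1.12) and $\pi$, $\pi_1$ are proper, the decomposition theorem shows that $\pi_*\Ql[\dim G^{\io\th}]$ and $(\pi_1)_*\Ql[\dim G^{\io\th}\uni]$ are semisimple, i.e.\ direct sums of shifts of simple perverse sheaves $\IC(\ol Z,\CE)$ with $\ol Z$ an $H$-stable irreducible closed subvariety and $\CE$ an irreducible $H$-equivariant local system on a smooth open dense subset of $\ol Z$; by 1.6 the isotropy groups are connected, so every $\CE$ occurring over $G^{\io\th}\uni$ is trivial, and the summands there have the form $\IC(\ol\CO,\Ql)$ for $H$-orbits $\CO$.

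Next I determine the summands of full support $G^{\io\th}$. Over the open dense locus $G^{\io\th}\reg$ the map $\pi$ restricts to $\psi$, and by (1.13.1) $\psi_*\Ql[\dim G^{\io\th}]=\bigoplus_{J\subseteq[1,n]}\bigoplus_{\la\in\CP_n}V_\la\otimes\CL_\la[\dim G^{\io\th}-2|J|]$ is a direct sum of shifted local systems defined on \emph{all} of $G^{\io\th}\reg$. Hence no simple summand of $\pi_*\Ql[\dim G^{\io\th}]$ can meet $G^{\io\th}\reg$ in a proper closed subset, so its full-support part is exactly $H^{\bullet}(\BP_1^n)\otimes\bigoplus_{\la\in\CP_n}V_\la\otimes\IC(G^{\io\th},\CL_\la)[\dim G^{\io\th}]$ (using $\dim H^{2j}(\BP_1^n)=\binom{n}{j}$, and that the $\CL_\la$ are pairwise non-isomorphic and, like the $V_\la$, self-dual), and the remaining task is to exclude summands supported entirely on $G^{\io\th}\setminus G^{\io\th}\reg$.

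This exclusion is the crux. I would stratify $G^{\io\th}$ by $H$-orbits and bound $\dim\pi\iv(x)$ on each stratum, reducing at $x=su$ to the Springer-type map for the centraliser $Z_H(s)$ --- a product of symplectic and general linear groups --- and invoking Lemma 1.11. The decisive input, already available on the unipotent part, is the \emph{uniform} identity $2\dim\pi_1\iv(x_{\la})=\codim_{G^{\io\th}\uni}\CO_{\la}+2n$ for every $\la\in\CP_n$, immediate from Lemmas 1.11 (iii) and 1.12 (ii): it exhibits $\pi_1$ as ``semismall modulo the $\BP_1^n$-fibre'' with \emph{every} $H$-orbit relevant, so that the top perverse cohomology of $(\pi_1)_*\Ql$ is $\bigoplus_{\CO}\CA_{\CO}\otimes\IC(\ol\CO,\Ql)[\dim\CO]$ with all $\CA_{\CO}\ne0$. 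Comparing this with the restriction of the full-support part of (1.14.1) via the proper-base-change isomorphism $(\pi_1)_*\Ql=i^*\pi_*\Ql$ ($i: G^{\io\th}\uni\hra G^{\io\th}$ being the closed embedding, since $\pi\iv(G^{\io\th}\uni)=\wt G^{\io\th}\uni$), together with the group-algebra structure $\End\simeq\Ql[S_n]$ on the relevant perverse sheaf (of the type established through the convolution/Steinberg variety, as in the affine Hecke algebra approach), forces each $i^*\IC(G^{\io\th},\CL_\la)$ to be, up to shift, a single $\IC(\ol\CO_{\la^{\bullet}},\Ql)$, with $\la\mapsto\CO_{\la^{\bullet}}$ a bijection $\CP_n\to G^{\io\th}\uni/\!\sim_H$ (both sets having cardinality $|\CP_n|$). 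The analogous, weaker, estimate over the remaining non-regular strata of $G^{\io\th}$ then rules out summands of $\pi_*\Ql$ off $G^{\io\th}\reg$, completing (1.14.1), and (1.14.2) is its restriction along $i$.

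Finally, for the $S_n$-structure: over $G^{\io\th}\reg$ one has $\psi_*\Ql\simeq\e_*\xi_*\Ql\simeq H^{\bullet}(\BP_1^n)\otimes\e_*\Ql$ (1.13), and since $\e$ is a Galois covering with group $\CW\simeq S_n$, its deck transformations make $\e_*\Ql$, hence $\psi_*\Ql$, into an $S_n$-complex whose isotypic decomposition is (1.13.1). As every simple summand of $\pi_*\Ql[\dim G^{\io\th}]$ has full support and restriction to the dense open $G^{\io\th}\reg$ is fully faithful on the additive subcategory generated by the $\IC(G^{\io\th},\CL_\la)$, this action extends uniquely to $\pi_*\Ql[\dim G^{\io\th}]$, and then to $(\pi_1)_*\Ql[\dim G^{\io\th}\uni]$ through the base-change isomorphism. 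I expect the fibre-dimension estimate over the non-regular strata of $G^{\io\th}$ --- needed to exclude summands supported there --- to be the principal technical obstacle: it is handed to us on $G^{\io\th}\uni$ by Lemma 1.11, but over a general stratum it requires the reduction to centralisers of semisimple elements and a careful comparison of flag varieties.
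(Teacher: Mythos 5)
Your first two steps are sound (semisimplicity from the decomposition theorem, triviality of the local systems on unipotent orbits via 1.6, and the identification of the full-support part of $\pi_*\Ql[\dim G^{\io\th}]$ from (1.13.1)), but the step you yourself call the crux is only announced, not carried out, and the mechanism you invoke for it is not available. The $\BP_1^n$-fibration $\xi$ exists only over $G^{\io\th}\reg$; there is no factorization of $\pi_1$ through a $\BP_1^n$-bundle over $G^{\io\th}\uni$, so ``semismall modulo the $\BP_1^n$-fibre'' is not a structure you can use globally. The identity $2\dim \pi_1\iv(x_{\la}) = \codim\,\CO_{\la} + 2n$ from Lemmas 1.11(iii) and 1.12(ii) gives a uniform dimension picture, but it does not by itself yield that the top perverse cohomology of $(\pi_1)_*\Ql$ is $\bigoplus_{\CO}\CA_{\CO}\otimes \IC(\ol\CO,\Ql)[\dim\CO]$ with all $\CA_{\CO}\ne 0$, and certainly not that each $S_n$-isotypic piece is $H^{\bullet}(\BP_1^n)$ tensored with a \emph{single} orbital $\IC$ (which is what (1.14.2) asserts and what makes $\la\mapsto\la^{\bullet}$ well defined); for that you need an upper bound on the relevant endomorphism algebra. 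You assert $\End\simeq \Ql[S_n]$ ``as in the affine Hecke algebra approach'', but no such computation exists at this point: the Steinberg-variety dimension count only appears in Lemma 2.5 (and there for $\wt\CX\uni$ with the group $W_n$), and the injectivity of the group algebra into $\End$ requires comparing the exotic and classical Springer actions, which is Lemma 5.2, proved much later with the machinery of Sections 3--4. Likewise the fibre bounds over the non-regular, non-unipotent strata (reduction to $Z_H(s)$, cf. 1.17--1.18) are stated as intentions, and even granted, you never convert them into the statement that no simple summand of $\pi_*\Ql$ is supported in $G^{\io\th}\setminus G^{\io\th}\reg$; finally, deducing bijectivity of $\la\mapsto\la^{\bullet}$ from $|\CP_n|=|G^{\io\th}\uni/\!\sim_H|$ presupposes exactly the one-IC-per-isotypic-piece statement you have not proved.

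The paper proves none of this directly on the group: it quotes Henderson [H1, Propositions 6.1, 6.2] for the Lie algebra analogues of (1.13.1), (1.14.1), (1.14.2) on $\Fg^{-\th}$ and $\Fg^{-\th}\nil$, and transfers them through the cartesian square relating $\pi$ and $\pi'$ via $\log$ and $\wt\log$ (with $\wt\log$ an isomorphism $\wt G^{\io\th}\uni\simeq\wt\Fg^{-\th}\nil$), using proper base change and the matching of local systems over the regular locus. The hard content you are missing --- absence of summands with smaller support, and the identification of each $\la$-isotypic part on the nilpotent cone with a single orbit closure --- is precisely what Henderson establishes (via the Fourier--Deligne transform on $\Fg^{-\th}$), a tool your group-side sketch does not replace. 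So either invoke [H1] through the $\log$ transfer as the paper does (or Theorem 1.16 of Grojnowski for (1.14.1)), or commit to developing the Steinberg-variety and endomorphism-algebra arguments of Sections 2 and 5 for the pair $(\wt G^{\io\th}\uni, G^{\io\th}\uni)$; as written, the proposal leaves the essential steps unproven.
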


\begin{proof}
Put $\Fb = \Lie B, \Ft = \Lie T$ and $\Fu = \Lie U$.  
Then $\th$ acts on $\Fb$ and $\Fu$, and one can define 
$\Fb^{-\th}, \Ft^{-\th}$ and $\Fu^{-\th}$ as in the case of $\Fg$. 
Let $\Ft^{-\th}\reg$ be the set of $t \in \Ft^{-\th}$ such that 
all the eigenspaces of $t$ have dimension 2, and put 
$\Fg^{-\th}\reg = \bigcup_{g \in H}g(\Ft^{-\th})g\iv$, the set
of regular semisimple elements in $\Fg^{-\th}$.  Put 
\begin{align*}
\wt\Fg^{-\th} &= \{ (x, gB^{\th}) \in \Fg^{-\th} \times H/B^{\th}
                    \mid \Ad(g)\iv x \in \Fb^{-\th}\}, \\
\wt\Fg^{-\th}\nil &= \{ (x, gB^{\th}) \in \Fg^{-\th}\nil \times 
       H/B^{\th}\mid \Ad(g)\iv x \in \Fu^{-\th}\}, 
\end{align*}
and define maps $\pi': \wt\Fg^{-\th} \to \Fg^{-\th}$, 
$\pi_1': \wt\Fg^{-\th}\nil \to \Fg^{-\th}\nil$ by the second projections. 
Let $\psi': {\pi'}\iv(\Fg^{-\th}\reg) \to \Fg^{-\th}\reg$ be the
restriction of $\pi'$ on ${\pi'}\iv(\Fg^{-\th}\reg)$. 
Then it is known by Henderson [H1, Proposition 6.1, Proposition 6.2], that 
$\psi'_*\Ql, \pi'_*\Ql$ and $(\pi_1')_*\Ql$ are described as in 
(1.13.1), (1.14.1) and (1.14.2) by replacing irreducible local systems 
$\CL_{\la}$ on $G^{\io\th}$ 
by irreducible local systems $\CL'_{\la}$ on $\Fg^{-\th}$, 
and by replacing $H$-orbits in $G^{\io\th}\uni$ by $H$-orbits in 
$\Fg^{-\th}\nil$.
Note that $\dim G^{\io\th} = \dim \Fg^{-\th}$ and 
$\dim G^{\io\th}\uni = \dim \Fg^{-\th}\nil$.
Let $\log : G^{\io\th} \to \Fg^{-\th}$ be the map defined in 1.7.
Then we have the following cartesian diagram
\begin{equation*}
\begin{CD}
\wt G^{\io\th} @>\pi >> G^{\io\th} \\
@V\wt\log VV                @VV\log V       \\
\wt \Fg^{-\th} @>\pi' >>  \Fg^{-\th},
\end{CD}
\end{equation*}
where $\wt\log (gB^{\th}, x) = (gB^{\th}, \log x)$.
It follows from this that $(\log)^*(\pi'_*\Ql) \simeq \pi_*\Ql$.
Moreover, one can check that this diagram induces an isomorphism 
\begin{equation*}
\wt G^{\io\th}\reg \simeq 
    \wt\Fg^{-\th}\reg \times_{\Fg^{-\th}\reg}G^{\io\th}\reg.
\end{equation*}
It follows that $\log^*(\psi'_*\Ql) \simeq \psi_*\Ql$, 
and in particular, we have 
$\log^*(\IC(\Fg^{-\th}, \CL'_{\la})) \simeq \IC(G^{\io\th}, \CL_{\la})$
for each $\la \in \CP_n$.
Since $\pi$ is a proper map, $\pi_*\Ql$ is a semisimple complex on 
$G^{\io\th}$.  Then (1.14.1)  follows from the corresponding result of 
Henderson. (1.14.2) also follows from the result of Henderson since 
$\wt\log$ gives an isomorphism between $\wt G^{\io\th}\uni$ and 
$\wt \Fg^{-\th}\nil$.
\end{proof}

\para{1.15.}
We consider the diagram 
\begin{equation*}
\begin{CD}
T^{\io\th} @<\a_0<< \pi\iv(G^{\io\th}\reg) @>\psi >> G^{\io\th}\reg, 
\end{CD}
\end{equation*}
where $\a_0$ is a map defined by $\a_0(x, gB^{\th}) = p(g\iv xg)$
(here $p: B^{\io\th} \to T^{\io\th}$ is the natural projection).
Take a tame local system $\CE$ on $T^{\io\th}$ (i.e, 
a local system $\CE$ on $T^{\io\th}$ such that 
$\CE^{\otimes m} \simeq \Ql$ for an integer $m$ not divisible by 
$p = \ch \Bk$), and consider the complex 
$\psi_*\a_0^*\CE$ on $G^{\io\th}\reg$.
Now the map $\psi$ is decomposed $\psi = \e\circ \xi$ as in 
(1.8.1).  Accordingly, $\a_0$ is also decomposed as 
$\a_0 = \a_1\circ \xi$, where 
$\a_1: H \times^{Z_H(T^{\io\th})}T^{\io\th}\reg  
  \simeq H/Z_H(T^{\io\th}) \times T^{\io\th}\reg \to T^{\io\th}$
is a projection to the second factor. 
Since $\xi$ is 
a $\BP_1^n$-bundle, we see that 
$\xi_*\a_0^*\CE \simeq H^{\bullet}(\BP^n_1)\otimes \a_1^*\CE$.
Since $\e$ is a finite Galois covering with its Galois group $S_n$, we have 
\begin{equation*}
\e_*\a_1^*\CE \simeq \bigoplus_{\r \in \CA_{\CE}\wg}\r \otimes \CL_{\r}, 
\end{equation*}
where $\CA_{\CE} = \End(\e_*\a_1^*\CE)$ 
and $\CL_{\r}$ is an irreducible local system on $G^{\io\th}\reg$
defined by $\CL_{\r} = \End_{\CA_{\CE}}(\r, \e_*\a_1^*\CE)$.
corresponding to the irreducible module $\r \in \CA_{\CE}\wg$.
Note that $\CA_{\CE}$ is a twisted group algebra of the stabilizer 
$W_{\CE}$ of $\CE$ in $S_n$.
It follows that 

\begin{align*}
\tag{1.15.1}
\psi_*\a_0^*\CE 
   &\simeq \bigoplus_{\r \in \CA_{\CE}\wg} H^{\bullet}(\BP^n_1)
        \otimes \r \otimes \CL_{\r}.
\end{align*}
We define a complex $K_{T,\CE}$ on $G^{\io\th}$ by 
\begin{equation*}
\tag{1.15.2}
K_{T,\CE} = H^{\bullet}(\BP^n_1)\otimes 
         \bigoplus_{\r \in \CA_{\CE}\wg}\r \otimes 
    \IC(G^{\io\th}, \CL_{\r})[\dim G^{\io\th}].
\end{equation*}
\par
On the other hand, we consider a diagram
\begin{equation*}
\begin{CD}
T^{\io\th} @<\a <<  \wt G^{\io\th}  @>\pi >>  G^{\io\th},
\end{CD}
\end{equation*}
where $\a : \wt G^{\io\th} \to T^{\io\th}$ is given by 
$\a(x, gB^{\th}) = p (g\iv xg)$.  We consider the complex 
$\pi_*\a^*\CE[\dim G^{\io\th}]$ on $G^{\io\th}$.
The following result is due to Grojonowski [Gr, Lemma 7.4.4]. 
\begin{thm}   
$\pi_*\a^*\CE[\dim G^{\io\th}] \simeq K_{T,\CE}$ as 
semisimple complexes on $G^{\io\th}$.
\end{thm}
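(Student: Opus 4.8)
The plan is to reduce the statement on all of $G^{\io\th}$ to the already-known statement on the open dense subset $G^{\io\th}\reg$, using properness of $\pi$ together with a codimension estimate, exactly in the spirit of the proof of Proposition 1.14. First I would observe that since $\pi:\wt G^{\io\th}\to G^{\io\th}$ is proper and $\wt G^{\io\th}$ is smooth irreducible of dimension $2n^2=\dim G^{\io\th}+n$ (Lemma 1.9), the complex $K:=\pi_*\a^*\CE[\dim G^{\io\th}]$ is a semisimple complex on $G^{\io\th}$ by the decomposition theorem (the local system $\a^*\CE$ is a shift of a semisimple perverse sheaf on $\wt G^{\io\th}$ since $\CE$ is tame, hence $\a^*\CE[\dim\wt G^{\io\th}]$ is an irreducible local system shifted into perverse degree). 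Restricting to $G^{\io\th}\reg$ we have $K|_{G^{\io\th}\reg}\simeq\psi_*\a_0^*\CE[\dim G^{\io\th}]$, which by (1.15.1) is isomorphic to $\bigoplus_{\r}H^{\bullet}(\BP_1^n)\otimes\r\otimes\CL_{\r}[\dim G^{\io\th}]$, i.e. to $K_{T,\CE}|_{G^{\io\th}\reg}$.

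Next I would argue that every simple summand of $K$ has support equal to all of $G^{\io\th}$, so that $K$ is determined by its restriction to the open set $G^{\io\th}\reg$. For this it suffices to show that no simple perverse constituent of $K$ is supported in the complement $G^{\io\th}\smallsetminus G^{\io\th}\reg$. The key input is a dimension count: by the decomposition theorem $K$ is a direct sum of shifts $\IC(\ol{\CO},\CF)[d]$ for various $H$-stable locally closed $\CO$, and because $\pi$ has fibres of dimension $\le\dim\pi\iv(G^{\io\th}\reg\cap\,\cdot\,)$ bounded in terms of the nilpotent part — concretely, by Lemma 1.11(i) the fibre over a point with semisimple part $s$ has dimension $(\dim Z_H(s)-\rk Z_H(s))/2$ plus the contribution from the unipotent part inside $Z_H(s)$ — one checks that the only way for a summand to appear with full-dimensional support in its stratum and matching cohomological shift is for that stratum to be $G^{\io\th}\reg$. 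More precisely I would stratify $G^{\io\th}$ by the $H$-classes of semisimple parts, apply the semisimplicity and the support conditions for perverse sheaves, and use that over any non-regular stratum the fibre dimension of $\pi$ jumps so that $\dim\wt G^{\io\th}-\dim(\text{stratum})$ exceeds half the fibre dimension, which forbids a new simple summand there. This is the standard Borho–MacPherson type argument.

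Finally, once it is known that $K$ has no summand supported off $G^{\io\th}\reg$, the complex $K$ is the intermediate extension of its restriction to $G^{\io\th}\reg$ (strictly, $K\simeq\bigoplus_{\text{shifts}}\IC(G^{\io\th},\CG_i)[\,\cdot\,]$ with $\CG_i$ the local systems $H^{\bullet}(\BP_1^n)\otimes\r\otimes\CL_{\r}$ read off from (1.15.1)), and comparing with the definition (1.15.2) of $K_{T,\CE}$ gives $K\simeq K_{T,\CE}$. I expect the main obstacle to be the dimension/support estimate in the middle step: one must show carefully that $\pi$ is "small enough" relative to the non-regular locus, i.e. that for every $H$-orbit $\CO\subset G^{\io\th}\smallsetminus G^{\io\th}\reg$ one has $\dim\pi\iv(\CO)<\dim\wt G^{\io\th}-\tfrac12\operatorname{codim}_{G^{\io\th}}\CO$ is violated in the right direction — phrased correctly, that $2\dim\pi\iv(x)<\operatorname{codim}_{G^{\io\th}}(Hx)$ fails to hold with equality except on $G^{\io\th}\reg$, so no extra $\IC$ sheaf can enter. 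This requires combining Lemma 1.11 with an analysis of centralizers $Z_H(s)$ of non-regular semisimple elements, for which one uses that $Z_H(s)$ is again a product of symplectic and general linear groups and that the relevant counts reduce to the already-established case (Proposition 1.14) applied inside each such centralizer; the Grojnowski reference handles exactly this bookkeeping.
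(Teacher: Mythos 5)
There is a wrinkle you could not have seen: the paper does not prove Theorem 1.16 at all — it is quoted from Grojnowski's thesis [Gr, Lemma 7.4.4], and the neighbouring Proposition 1.14 is obtained by transporting Henderson's Lie-algebra results through the map $\log$ rather than by any smallness argument. So your proposal can only be judged on its own terms, and there it has a genuine gap. Your first two steps are fine: $K=\pi_*\a^*\CE[\dim G^{\io\th}]$ is semisimple by the decomposition theorem, and proper base change plus (1.15.1) identifies $K|_{G^{\io\th}\reg}$ with $K_{T,\CE}|_{G^{\io\th}\reg}$. The problem is the middle step, which is the whole content of the theorem: a Borho--MacPherson dimension count cannot show that no summand is supported off $G^{\io\th}\reg$, because $\pi$ is not semismall — its generic fibre is $\BP_1^n$, of dimension $n$. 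The sharpest conclusion available from stalk bounds together with the Verdier-dual (costalk) bounds is that a stratum $S$ can carry a summand only if $2\dim \pi\iv(x) \ge \codim S + n$ for generic $x \in S$; to exclude $S$ by pure dimension bookkeeping you would need the reverse strict inequality, and that inequality fails on every stratum of semisimple elements. Indeed, if $x$ is semisimple with $Z_H(x) \simeq Sp_{2\nu_1}\times\cdots\times Sp_{2\nu_\ell}$, $\sum_i\nu_i = n$, then by 1.17 the fibre $\pi\iv(x)$ is a finite disjoint union of copies of the flag variety of $Z_H(x)$, so $\dim\pi\iv(x) = \sum_i\nu_i^2$, while the stratum of such elements has dimension $2n^2 - 2\sum_i\nu_i^2 + \ell$, hence codimension $2\sum_i\nu_i^2 - n - \ell$ in $G^{\io\th}$; therefore $2\dim\pi\iv(x) = \codim S + n + \ell > \codim S + n$. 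Every such stratum passes the naive ``relevance'' test, and the assertion of the theorem is precisely that these apparently relevant strata nevertheless contribute no new summands.

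Consequently the step you yourself flag as ``the main obstacle'' is not bookkeeping that Lemma 1.11/Lemma 1.18 can supply: one must actually identify the cohomology of the fibres over each non-regular stratum with what the full-support summands $H^{\bullet}(\BP_1^n)\otimes\r\otimes\IC(G^{\io\th},\CL_{\r})$ already predict (monodromy of $\CE$ included), which requires a different mechanism — an inductive, stratum-by-stratum comparison of the kind the paper carries out for the harder Theorem 4.2 (Proposition 4.8 and 4.9), a Fourier-transform argument in the Lie-algebra setting as in Henderson's proof that the paper imports for Proposition 1.14 via $\log$, or Grojnowski's own argument. As written, your appeal to ``the Grojnowski reference handles exactly this bookkeeping'' concedes the key step rather than proving it, and the inequality your outline relies on is false.
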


\para{1.17.}
Let $\CB^H = H/B^{\th}$ be the variety of Borel 
subgroups in $H$.  For $x \in G^{\io\th}$, put 
$\CB^H_x = \{ gB^{\th} \in \CB^H \mid g\iv xg \in B^{\io\th}\}$.
In the case where $x \in G^{\io\th}\uni$, $\CB^H_x \simeq \pi_1\iv(x)$. 
We shall describe the structure of $\CB^H_x$.
First consider $\CB^H_s$ for a semisimple element $s \in G^{\io\th}$.  
By Remark 1.3 (iii), there exists $s' \in T^{\io\th}$ such that 
$s'$ is $H$-conjugate to $s$.
Let $W_H = N_H(T^{\th})/T^{\th}$ be a Weyl group of $H$.
For $s' \in T^{\io\th}$, let $W_{H,s'} = \{ w \in W_H | w(s') = s'\}$, 
which is a Weyl group of $Z_H(s')$. 
Put 
\begin{equation*}
\CM_s = \{ g \in H \mid g\iv sg \in B^{\io\th} \}.
\end{equation*}
Then $Z_H(s) \times B^{\th}$ acts on $\CM_s$ from left and right, 
and we consider the set of double cosets $Z_H(s)\backslash \CM_s /B^{\th}$.  
We note that this set is labelled by the set 
$\vG = W_{H,s'}\backslash W_H$. 
Clearly it is enough to check this for the case where 
$s = s' \in T^{\io\th}$.  Then the claim follows from the following 
property.
\par\medskip\noindent
(1.17.1) \ Let $s \in T^{\io\th}$. 
Assume that $g\iv sg \in B^{\io\th}$ for $g \in H$.  Then 
there exists $g_1 \in Z_H(s)$ and $w \in W_H$ such that 
$gB^{\th} = g_1wB^{\th}$.
\par\medskip
We show (1.17.1). 
Since ${}^gB \cap Z_G(s)$ is a $\th$-stable 
Borel subgroup in $Z_G(s)$, there exists a $\th$-stable maximal 
torus $T'$ of $Z_G(s)$ contained in ${}^gB$.  Then by
Remark 1.3 (ii), there exists $g_1 \in Z_H(s)$  such that 
${}^{g_1}(B \cap Z_G(s)) = {}^gB \cap Z_G(s)$ and that 
${}^{g_1}T = T'$.  
Then ${}^{g_1\iv g}B$ is a Borel subgroup of $G$ containing $T$, 
and so ${}^{g_1\iv g}B^{\th}$ is a Borel subgroup of $H$ containing 
$T^{\th}$.  Hence there exists $w \in W_H$ such that 
$gB^{\th} = g_1wB^{\th} \in \CB^H$ as asserted.
\par
Now we have
\begin{align*}
\CB^H_s &= \{ gB^{\th} \in \CB^H \mid g\iv sg \in B^{\io\th}\} \\
            &= \coprod_{\g \in \vG}Z_H(s)x_{\g}B^{\th}/B^{\th} \\
 &= \coprod_{\g \in \vG}Z_H(s)/(Z_H(s) \cap x_{\g}B^{\th}x_{\g}\iv) \\
 &= \coprod_{\g \in \vG}Z_H(s)/B^{\th}_{\g},
\end{align*}
where $x_{\g} \in H$ is a representative of the double coset 
in $\CM_s$ corresponding to $\g \in \vG$.  Moreover,
$B_{\g} = Z_G(s) \cap x_{\g}B x_{\g}\iv$ is a $\th$-stable 
Borel subgroup of $Z_G(s)$, and $B^{\th}_{\g}$ is a Borel subgroup
of $Z_H(s)$.
\par
Next we consider the general case $\CB^H_{su}$, 
where $s$ is semisimple and $u$ is unipotent in $G^{\io\th}$.
We have
\begin{align*}
\CB^H_{su} &= \{ gB^{\th} \in \CB^H \mid 
          g\iv sg \in B^{\io\th}, g\iv ug \in B^{\io\th}\} \\
         &= \coprod_{\g \in \vG}
    \{ gB^{\th} \in Z_H(s)x_{\g}B^{\th}/B^{\th} \mid g\iv ug \in B^{\io\th}\}.
\end{align*}
By writing $gB^{\th} = g_1x_{\g}B^{\th}$ with $g_1 \in Z_H(s)$, 
the last formula turns out to be 
\begin{align*}
\tag{1.17.2}
\CB_{su}^H &= \coprod_{\g \in \vG} 
    \{ g_1B^{\th}_{\g} \in Z_H(s)/B^{\th}_{\g}
                \mid g_1\iv u g_1 \in B_{\g}^{\io\th}\}  \\
               &= \coprod_{\g \in \vG}\CB_u^{Z_H(s)},
\end{align*}
where $\CB^{Z_H(s)} \simeq Z_H(s)/B^{\th}_{\g}$ is the 
variety of Borel subgroups of $Z_H(s)$. 
\par It follows from the above computation, we have 
\begin{equation*}
\tag{1.17.3}
\dim \CB^H_{su} = \dim \CB_u^{Z_H(s)}.
\end{equation*}
Thus we have the following generalization of Lemma 1.11 (i).

\begin{lem}
For $x \in G^{\io\th}$, we have 
$\dim \CB^H_x = (\dim Z_H(x) - \rk H)/2$.
\end{lem}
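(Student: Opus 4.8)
The plan is to reduce the assertion to the unipotent case of Lemma 1.11(i). Let $x = su$ be the Jordan decomposition of $x$ in $G^{\io\th}$ as in 1.7, with $s$ semisimple and $u$ unipotent. By (1.17.3) one has $\dim\CB^H_x = \dim\CB_u^{Z_H(s)}$, so it suffices to compute the dimension of the fibre $\CB_u^{Z_H(s)} = \{\, gB^{\th}_{\g} \in Z_H(s)/B^{\th}_{\g} \mid g\iv ug \in B^{\io\th}_{\g}\,\}$ attached to the unipotent element $u$ of the reductive group $Z_H(s)$.

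Next I would analyse the pair $(Z_G(s), Z_H(s))$. Replacing $x$ by an $H$-conjugate, we may assume $s \in T^{\io\th}$; then $s$ is self-adjoint for $\lp\ ,\ \rp$, so distinct eigenspaces $V_a$ of $s$ on $V$ are mutually orthogonal and each $V_a$ is a non-degenerate symplectic subspace, of dimension $2m_a$ with $\sum_a m_a = n$. Hence $Z_G(s) = \prod_a GL(V_a)$ and $Z_H(s) = \prod_a Sp(V_a)$, and $\th$ restricts on each factor $GL(V_a)$ to an involution of the same shape as the original one, with fixed-point group $Sp(V_a)$; thus $(Z_G(s), Z_H(s))$ is a direct product of symmetric pairs of the type $(GL_{2m_a}, Sp_{2m_a})$ studied in this section. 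Since $u$ commutes with $s$ it stabilises every $V_a$, and $u|_{V_a} \in GL(V_a)^{\io\th}\uni$; a Borel subgroup $B^{\th}_{\g}$ of $Z_H(s)$, the subvariety $B^{\io\th}_{\g}$, and $u$ all split compatibly along this product, whence $\CB_u^{Z_H(s)} \simeq \prod_a \CB_{u|_{V_a}}^{Sp(V_a)}$.

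Finally I would apply Lemma 1.11(i) to each factor $GL(V_a)/Sp(V_a)$ (the lemma holds for $GL_{2m}/Sp_{2m}$ with arbitrary $m$), obtaining $\dim\CB_{u|_{V_a}}^{Sp(V_a)} = \bigl(\dim Z_{Sp(V_a)}(u|_{V_a}) - m_a\bigr)/2$, and sum over $a$. Since $Z_{Z_H(s)}(u) = \prod_a Z_{Sp(V_a)}(u|_{V_a}) = Z_H(s) \cap Z_H(u) = Z_H(su) = Z_H(x)$ and $\rk Z_H(s) = \sum_a m_a = n = \rk H$, this yields $\dim\CB^H_x = (\dim Z_H(x) - \rk H)/2$, as desired. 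The verifications of the structure of $Z_G(s)$ and $Z_H(s)$ and of the compatibility of Borel subgroups with the eigenspace decomposition are routine; the only point that genuinely requires attention is to confirm that $(Z_G(s), Z_H(s))$ is again a product of symmetric pairs of the standard form, so that the unipotent result of Lemma 1.11(i) may be applied factor by factor --- granting this, no further obstacle arises.
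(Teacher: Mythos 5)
Your proposal is correct and follows essentially the same route as the paper: the paper obtains the lemma directly from the computation in 1.17, namely the identification $\CB^H_{su} \simeq \coprod_{\g}\CB^{Z_H(s)}_u$ and (1.17.3), combined with Lemma 1.11(i) applied to $Z_H(s)$, which (as recorded later in 4.3) is a product of pairs $(GL(V_a),Sp(V_a))$ of the same type. You have merely made explicit the routine verifications (orthogonality and nondegeneracy of the eigenspaces of $s$, splitting of $u$, of the $\th$-stable Borel and of the centralizer, and $\rk Z_H(s)=\rk H$) that the paper leaves implicit.
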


\par\bigskip
\section{ $H$-orbits on  $G^{\io\th}\uni \times V$}
\par\medskip
\para{2.1.}
We consider the action of $H$ on the variety $G^{\io\th}\uni \times V$.
By (1.7.2), the set of $H$-orbits in $G^{\io\th}\uni \times V$ is
parametrized by the set $\CP_{n,2}$.  We denote by 
$\CO^H_{\Bla} = \CO_{\Bla}$ the 
$H$-orbit corresponding to $\Bla \in \CP_{n,2}$ (the explicit 
correspondence is described below).
For a given $(x,v) \in G^{\io\th}\uni \times V$, we say that
$(x,v)$ is of type $\Bla$ if $(x,v) \in \CO_{\Bla}$.
Let $M_n$ be the subspace of $V$ spanned by 
$e_1, \dots, e_n$.  Then $A$ acts on $M_n$ naturally, 
and the set of $A$-orbits in $A\uni \times M_n$ is 
parametrized also by $\CP_{n,2}$.  Let $\CO^A_{\Bla}$ be
the $A$-orbit in $A\uni \times M_n$ corresponding to 
$\Bla \in \CP_{n,2}$.  The correspondence 
$\CO^H_{\Bla} \lra \CO^A_{\Bla}$ is given as follows 
([AH, Theorem 6.1]);
Take $(y, v) \in A\uni \times M_n$ such that 
$(y, v) \in \CO^A_{\Bla}$. 
Then $x = y\th(y)\iv \in G^{\io\th}\uni$ and the $H$-orbit of 
$(x, v)$ coincides with $\CO^H_{\Bla}$. 
\par
On the other hand, for a given $(y, v) \in A\uni \times M_n$, the 
corresponding type is determined 
by the following procedure.  Put 
$E_A^y = \{ z \in \End(M_n) \mid zy = yz \}$.
Then $W = E_A^yv$ is an $y$-stable subspace of $M_n$. Let
$\la^{(1)}$ be the Jordan type of $y|_W$, and $\la^{(2)}$ 
the Jordan type of $y|_{M_n/W}$.  Then 
$\Bla = (\la^{(1)}, \la^{(2)})$ gives the type of $(y,v)$. 
In particular, $\dim E_A^yv = \dim W = |\la^{(1)}|$.  
Note that in this case, the Jordan type of $y$ is given by 
$\nu = \la^{(1)} + \la^{(2)}$.  (Here we use the notation;
for partitions $\la = (\la_i), \mu = (\mu_i)$, 
we denote the partition $\nu = (\la_i + \mu_i)$ by 
$\nu = \la + \mu$). 
Let $E_H^x = \Lie Z_H(x) \subset \Fg = \End(V)$.  
Then $E_H^xv$ is an $x$-stable subspace of $V$.
On the other hand, 
$(x,v)$ is regarded as an element in $G\uni \times V$.
Then it is easy to see that the $G$-orbit of $(x,v)$ is of 
type $\Bla \cup \Bla = 
(\la^{(1)}\cup \la^{(1)}, \la^{(2)}\cup \la^{(2)}) \in \CP_{2n,2}$.
We define $E^x_G = \{ z \in \End(V) \mid zx = xz\}$, and
put $\wt W = E^x_Gv$.
Then $\wt W$ is a subspace of $V$ containing $E_H^xv$ with 
$\dim \wt W = 2|\la^{(1)}|$.
We have the following lemmas.

\begin{lem} 
$\wt W$ coincides with $E_H^xv$.  Hence $\dim E_H^xv = 2|\la^{(1)}|$.
\end{lem}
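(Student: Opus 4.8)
\textit{Reduction to the group $A$.} The key point is that the inclusion $\wt W \supseteq E_H^x v$ with $\dim \wt W = 2|\la^{(1)}|$ is already established in 2.1, so it suffices to prove $\dim E_H^x v \ge 2|\la^{(1)}|$; equality of dimensions then forces $\wt W = E_H^x v$ since $\wt W$ is irreducible (a linear subspace) and contains the subspace $E_H^x v$. So the plan is to produce, inside $\Lie Z_H(x) = E_H^x$, enough endomorphisms $z$ with $zv$ spanning a $2|\la^{(1)}|$-dimensional subspace of $V$.

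\textit{Transporting via $x = y\th(y)\iv$.} Write $x = y\th(y)\iv$ with $y \in A\uni$, as in Lemma 1.5 and 2.1. The first step is to understand $E_H^x = \Lie Z_H(x)$ concretely in terms of $y$. Using the identification $V = M_n \oplus \th(M_n)$ (where $\th(M_n)$ is spanned by $f_1,\dots,f_n$) and the block description of $\th$ on $\Fg = \End(V)$, one checks that $z \in \End(V)$ lies in $E_H^x$ precisely when $z^* = -z$ (i.e.\ $z \in \Fg^{-\th}\cap$ the Lie algebra of the stabilizer... actually $z\in\Lie H=\Fg^\th$, so $z^*=-z$) and $zx = xz$. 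Since $x$ is self-adjoint and unipotent, one has $E_A^y = \{ w \in \End(M_n) \mid wy = yw\}$, and the natural recipe $w \mapsto \wt z$, where $\wt z$ acts as $w$ on $M_n$ and as the adjoint (up to sign, dictated by $\th$) on $\th(M_n)$, should give an explicit embedding $E_A^y \hookrightarrow E_H^x$. Then for $v \in M_n$, $\wt z\, v = w\, v \in M_n$, so this piece of $E_H^x v$ already recovers $E_A^y v = W$, giving the $|\la^{(1)}|$ dimensions sitting inside $M_n$.

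\textit{Producing the complementary dimensions.} The remaining $|\la^{(1)}|$ dimensions must come from the $\th(M_n)$-component. Here the idea is to use endomorphisms of the ``off-diagonal'' type: $z$ sending $M_n$ into $\th(M_n)$ and $\th(M_n)$ into $M_n$, subject to $z^* = -z$ and $zx = xz$. Because $x$ and its restriction to the $x$-stable subspaces are controlled by the Jordan type data $\la^{(1)}, \la^{(2)}, \nu = \la^{(1)}+\la^{(2)}$ from 2.1, one can exhibit such a $z$ with $z v$ landing in a complement of $M_n$ and $E_A^y$-cyclically generating a $|\la^{(1)}|$-dimensional piece of $\th(M_n)$; concretely, composing with the symplectic form to pass from $W \subseteq M_n$ to a corresponding subspace of $\th(M_n)$. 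Adding these to the previous ones yields $\dim E_H^x v \ge |\la^{(1)}| + |\la^{(1)}| = 2|\la^{(1)}|$.

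\textit{Main obstacle.} The delicate part is the last step: one must verify that the off-diagonal endomorphisms $z$ one writes down genuinely commute with $x = y\th(y)\iv$ (not just with $y$), and that the resulting vectors $zv$ are linearly independent from the ``diagonal'' contributions $E_A^y v \subseteq M_n$ \emph{and} span the full $|\la^{(1)}|$-dimensional target inside $\th(M_n)$. This is essentially a linear-algebra computation with the Jordan basis of $y$ adapted to the flag $W \subseteq M_n$, combined with the compatibility between the adjoint $*$ and the $x$-action; I expect it to be the only genuinely nontrivial point, everything else being bookkeeping with the block decomposition of $\th$.
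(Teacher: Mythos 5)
Your reduction and your ``diagonal'' step are fine and agree with the paper: since $\wt W \supseteq E_H^xv$ and $\dim \wt W = 2|\la^{(1)}|$ is known from 2.1, it suffices to show $\dim E_H^xv \ge 2|\la^{(1)}|$, and the subgroup $L^{\th} = \{a\th(a) \mid a \in A\} \simeq GL_n$ (block-diagonal $w \mapsto \mathrm{diag}(w, -{}^tw)$ at the Lie algebra level) gives $E_A^yv \subseteq E_H^xv$, accounting for $|\la^{(1)}|$ dimensions inside $M_n$. But the remaining $|\la^{(1)}|$ dimensions in the $M_n'$-direction are exactly the content of the lemma, and your proposal does not prove them: you only assert that suitable off-diagonal elements of $\Lie Z_H(x)$ ``should'' exist and then explicitly list the verification (that they commute with $x$ and that their values at $v$ span a $|\la^{(1)}|$-dimensional subspace of $M_n'$) as an unresolved ``main obstacle.'' That is not bookkeeping. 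In the block description of $\Fg^{\th} = \Fs\Fp_{2n}$ the lower off-diagonal entries commuting with $x = \mathrm{diag}(y_0, {}^ty_0)$ are the \emph{symmetric} matrices $c$ with $cy_0 = {}^ty_0c$, so ``composing with the symplectic form to transport $W$'' does not by itself produce a $|\la^{(1)}|$-dimensional image: one must show that the span of $\{cv\}$ over all such constrained $c$ (together with the $GL_n$-part acting afterwards) is that large, and nothing in your sketch does this.

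For comparison, the paper sidesteps the explicit matrix construction by a symmetry argument: one chooses $v' \in M_n'$ so that the pair $(y', v')$ with $y' = \th(y)\iv$ again has type $\Bla$, notes that some $g \in \Lie Z_H(x)$ satisfies $gv = v'$ (this is the single off-diagonal element needed), and then uses $E_{L^{\th}}^xv' = E_{\th(A)}^{y'}v'$, which is $|\la^{(1)}|$-dimensional inside $M_n'$ because $(y',v')$ has type $\Bla$; together with $E_A^yv \subseteq M_n$ this yields the required $2|\la^{(1)}|$ dimensions. The crucial choice of $v'$ of the correct type $\Bla$ (not merely ``a corresponding subspace of $\th(M_n)$'') is the idea your outline is missing, and without it, or without actually carrying out the linear-algebra verification you defer, the argument is incomplete.
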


\begin{proof}
Let $L$ be as in 1.4.  Then
$L^{\th} = \{ a\th(a) \mid a \in A \}$ is a subgroup of $H$ 
isomorphic to $GL_n$, and $E_{L^{\th}}^x = \Lie Z_{L^{\th}}(x) \subset E_H^x$.  
Let $V = M_n\oplus M_n'$, where 
$M_n'$ is the subspace of $V$ generated by $f_1, \dots, f_n$.
One can find $v' \in M_n'$ such that the type of $(y', v')$
is $\Bla$, where $y' = \th(y)\iv \in \th(A)$.
Then there exists $g \in \Lie Z_H(x)$ such that $gv = v'$.
We have $E_{L^{\th}}^xv = E_A^yv$ and $E_{L^{\th}}^xv' = E_{\th(A)}^{y'}v'$.
It follows that $E_H^xv$ contains a subspace 
$E_A^yv \oplus E_{\th(A)}^{y'}v'$ whose  dimension 
is equal to $2|\la^{(1)}|$.  Hence $\wt W = E_H^xv$, and 
the lemma follows.
\end{proof}

\par
The following result is contained in [Ka1]. 
\begin{lem}  
Let $\CO_H(x,v)$ be the $H$-orbit of 
$(x,v) \in G^{\io\th}\uni \times  V$, where $(x,v)$ is of type
$\Bla$, and  
$\CO_H(x)$ the $H$-orbit of $x \in G^{\io\th}\uni$.
For $\Bla = (\la^{(1)},\la^{(2)})$ put $n(\Bla) = n(\la^{(1)} + \la^{(2)})$.
\begin{enumerate}
\item
$\dim \CO_H(x,v) = \dim \CO_H(x) + 2|\la^{(1)}|$.
In particular for $\CO_{\Bla} = \CO_H(x,v)$ we have 
\begin{equation*}
\tag{2.3.1}
\dim \CO_{\Bla} = 2n^2 - 2n - 4n(\Bla) + 2|\la^{(1)}|.
\end{equation*}
\item
$Z_H(x,v)$ is connected for any $(x,v) \in G^{\io\th}\uni \times V$. 
\end{enumerate}
\end{lem}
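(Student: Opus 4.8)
I would treat (i) and (ii) separately; (i) is a dimension count on top of Lemmas~1.11 and~2.2, while (ii) is the genuinely delicate point.

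For (i), write $Z_H(x,v)$ for the stabilizer of the pair $(x,v)$ in $H$. Since $Z_H(x,v)\le Z_H(x)$ with $Z_H(x)/Z_H(x,v)\simeq Z_H(x)\cdot v$, the orbit–stabilizer theorem gives
\[
\dim\CO_H(x,v)=\dim H-\dim Z_H(x,v)=\dim\CO_H(x)+\dim\bigl(Z_H(x)\cdot v\bigr),
\]
so (i) reduces to the identity $\dim\bigl(Z_H(x)\cdot v\bigr)=2|\la^{(1)}|$. For this, note that $E_G^x=\{z\in\End V\mid zx=xz\}$ is a unital subalgebra of $\End V$, hence $\wt W=E_G^xv$ is stable under $E_G^x$; since $Z_H(x)\subseteq Z_G(x)\subseteq E_G^x$, the orbit $Z_H(x)\cdot v$ lies inside $\wt W$. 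On the other hand, the tangent space at $v$ to the smooth variety $Z_H(x)\cdot v$ contains the image $E_H^x\cdot v$ of the differential at $e$ of the orbit map $g\mapsto gv$, and $E_H^x\cdot v=\wt W$ by Lemma~2.2; as $\wt W$ is a linear subspace of $V$, its tangent space at $v$ is $\wt W$ itself, so $Z_H(x)\cdot v$ is open dense in $\wt W$. Therefore $\dim\bigl(Z_H(x)\cdot v\bigr)=\dim\wt W=2|\la^{(1)}|$. Finally $x=y\th(y)\iv$ where $y$ has Jordan type $\nu=\la^{(1)}+\la^{(2)}$ (see 2.1), so $\CO_H(x)$ is the orbit $\CO_\nu$ of Lemma~1.11~(iii) and $\dim\CO_H(x)=2n^2-2n-4n(\nu)=2n^2-2n-4n(\Bla)$; combining the two displays yields (2.3.1).

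For (ii), the plan is to reduce to the connectedness of stabilizers on the enhanced nilpotent cone. By 1.6 the group $Z_H(x)$ is connected, so it suffices to show that $Z_H(x,v)$, the isotropy group of $v$ inside the connected group $Z_H(x)$, is connected. Recall from 1.4 that $L^{\th}=\{a\th(a)\mid a\in A\}$ is a subgroup of $H$ identified with $GL_n$ via $a\mapsto a\th(a)$, acting on the summand $M_n$ of $V=M_n\oplus M_n'$ in the standard way. A direct check (using that $A$ and $\th(A)$ centralize each other) shows that under this identification $Z_{L^{\th}}(x)$ corresponds to $Z_A(y)$ and $Z_{L^{\th}}(x,v)$ corresponds to the stabilizer of $(y,v)\in A\uni\times M_n$ in $A$; by the theory of the enhanced nilpotent cone ([AH], [T]) this last stabilizer is connected. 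To pass from this to the connectedness of $Z_H(x,v)$ one must control a maximal reductive subgroup of $Z_H(x)$: by a $\th$-adapted Jacobson–Morozov (Kostant–Sekiguchi) argument it is a product of general linear groups — the ``linear'' shape being exactly what produces the parametrization by $\CP_n$ in Lemma~1.5 — and the stabilizer of $v$ inside it is again a product of general linear groups, hence connected; since the unipotent radical of $Z_H(x,v)$ is connected, $Z_H(x,v)$ is connected. This last step is precisely the structural analysis of [Ka1], so one may alternatively simply invoke [Ka1] for (ii), as the phrasing of the lemma suggests.

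The main obstacle is (ii): once Lemmas~1.11 and~2.2 are available the dimension formula in (i) is routine, but for connectedness dimension counting no longer suffices and one needs the full Levi decomposition of $Z_H(x)$ together with the position of the added vector $v$. The most transparent route is to transport the problem, via the isomorphism $\log\times\id$ of 1.7, to Kato's exotic nilpotent cone $\Fg^{-\th}\nil\times V$ and to run the $\Fsl_2$-theory there; the point is that the reductive part of the relevant centralizer is linear, which simultaneously yields connectedness of $Z_H(x,v)$ and underlies the ``$q$ replaced by $q^2$'' phenomenon of [BKS].
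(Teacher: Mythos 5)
Your proof of (i) is correct and is essentially the paper's argument seen from the orbit side rather than the stabilizer side: both rest on Lemma 2.2, but where you show that $Z_H(x)\cdot v$ is open in $\wt W=E^x_Gv=E^x_Hv$ via the tangent-space computation, the paper instead proves $\dim Z_H(x,v)=\dim\ker\vf$ for the linear map $\vf:E^x_H\to V$, $z\mapsto zv$, by means of the Cayley transform $Z\mapsto (I+Z)(I-Z)\iv$ between the open sets $\Fh^+\subset\Fh$ and $H^+\subset H$; the conclusion via Lemma 1.11 (iii) is the same.

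Part (ii) is where the proposal has a genuine gap. The paper's reason for routing (i) through the Cayley transform is exactly that connectedness then comes for free: the Cayley map restricts to a bijection from $\ker\vf\cap\Fh^+$ onto $Z_H(x,v)\cap H^+$; the source is open dense in the vector space $\ker\vf$, hence connected, and $Z_H(x,v)\cap H^+$ is open dense in $Z_H(x,v)$, so $Z_H(x,v)$ is connected --- no structure theory of the centralizer is needed. Your sketch, by contrast, rests on a structural claim that is false: the reductive part of $Z_H(x)$ for $x\in G^{\io\th}\uni$ of type $\nu$ is a product of symplectic groups $\prod_k Sp_{2a_k}$ (the $a_k$ being the multiplicities of the parts of $\nu$), as the paper itself records in the proof of Lemma 7.3; the case $x=1$ already gives $Z_H(x)=Sp_{2n}$, and correspondingly $Z_H(1,0)=Sp_{2n}$ and $Z_H(1,v)\simeq Sp_{2n-2}\ltimes(\text{unipotent})$ for $v\ne 0$, none of which are products of general linear groups. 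These groups happen to be connected, but your stated reason for connectedness is not the actual structure, and the key step --- passing from connectedness of the stabilizer of $(y,v)$ in $A\simeq GL_n$ (the enhanced cone) to connectedness of the much larger group $Z_H(x,v)$ --- is precisely the content of (ii) and is not supplied; knowing a subgroup such as $Z_{L^{\th}}(x,v)$ is connected gives no control on the component group of $Z_H(x,v)$. Finally, falling back on [Ka1] is not a complete substitute in this paper's setting: Kato works over $\BC$, whereas here $\Bk$ has odd positive characteristic, which is presumably why the authors, while attributing the statement to [Ka1], give the characteristic-free Cayley-transform proof. The cheapest repair of your write-up is to keep your (i) and add the Cayley bijection between $\ker\vf\cap\Fh^+$ and $Z_H(x,v)\cap H^+$, which yields (ii) in one line.
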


\begin{proof}
We consider the linear map $\vf : E_H^x \to E_H^xv$ 
given by $g \mapsto gv$.  Then by Lemma 2.2, 
$\dim E_H^x - \dim \ker \vf = 2|\la^{(1)}|$.
We note that 
\begin{equation*}
\tag{2.3.2}
\dim \ker\vf = \dim Z_H(x,v).
\end{equation*}
We show (2.3.2).
Let $H^+$ be the set of elements of $H$ such that
$-1$ is not contained in its eigenvalues, and 
$\Fh^+$ be the set of elements in $\Fh = \Lie H$ 
such that 1 is not contained in its eigenvalues.
Then $H^+$ (resp. $\Fh^+$) is open dense in $H$ 
(resp. $\Fh$).  By the Cayley transform, 
$f : Z \mapsto (I + Z)(I - Z)\iv$  gives a bijection 
from $\Fh^+$ to $H^+$.  The inverse $f\iv$ is 
given by $z \mapsto (I + z)\iv(I - z)$.
Then the map $f$ induces a bijection from 
$\ker \vf \cap \Fh^+$ onto $Z_H(x,v) \cap H^+$.
Since $\ker\vf \cap \Fh^+$ is open dense in $\ker \vf$,
and $Z_H(x,v) \cap H^+$ is open dense in $Z_H(x,v)$,
we obtain (2.3.2). 
\par
By (2.3.2) and by $\dim E_H^x = \dim Z_H(x)$, we have 
$\dim Z_H(x,v) = \dim Z_H(x) - 2|\la^{(1)}|$. 
It follows that 
\begin{align*}
\dim \CO_H(x,v) &= \dim H - \dim Z_H(x,v)  \\
                &= \dim H - \dim Z_H(x) + 2|\la^{(1)}|  \\
                &= \dim \CO_H(x) + 2|\la^{(1)}|.
\end{align*}
Now (2.3.1) follows from Lemma 1.11 (iii) since
the Jordan type of $x$ is $\la^{(1)} + \la^{(2)}$.
This proves (i).
\par 
(ii) follows from the proof of (i) as follows; 
since $\ker \vf \cap \Fh^+$ is connected, $Z_H(x,v) \cap H^+$ is
also connected. Since $Z_H(x,v) \cap H^+$ is open dense in $Z_H(x,v)$, 
we see that $Z_H(x,v)$ is connected.  
\end{proof}

\para{2.4.}
We fix an isotropic  flag 
$(0 = M_0 \subset M_1 \subset \cdots \subset M_n)$ whose stabilizer 
in $H$ coincides with $B^{\th}$, where 
$M_i$ is the subspace of $V$ spanned by $e_1, \dots, e_i$.
By fixing an integer $m$ such that $0 \le m \le n$, we define
\begin{align*}
\wt\CX_{m,\unip} &= \{ (x,v, gB^{\th}) \in G^{\io\th}\uni 
     \times V \times H/B^{\th} \mid g\iv xg \in U^{\io\th}, 
            g\iv v \in M_m \},  \\
\CX_{m,\unip} &= \bigcup_{g\in H}g(U^{\io\th} \times M_m).
\end{align*}
and a map $\pi_1^{(m)} : \wt \CX_{m,\unip} 
      \to G^{\io\th}\uni \times V$ by 
$\pi^{(m)}_1(x,v, B^{\th}) = (x,v)$.
Clearly $\Im \pi^{(m)}_1 = \CX_{m,\unip}$.  In the special case where 
$m = n$, we simply write $\wt\CX_{n,\unip}, \CX_{n, \unip}, \pi_1^{(m)}$ as
$\wt\CX\uni, \CX\uni, \pi_1$.  
$\wt\CX_{m,\unip}$ is smooth and irreducible since 
$\wt\CX_{m,\unip} \simeq H \times^{B^{\th}}(U^{\io\th} \times M_m)$.
Moreover, we have
\begin{equation*}
\tag{2.4.1}
\dim \wt\CX_{m,\unip} = \dim H/B^{\th} + \dim U^{\io\th} + m = 2n^2 - n + m.
\end{equation*}
\par
In order to obtain the dimension estimate for $\pi_1\iv(x,v)$, we 
consider the Steinberg variety defined as follows:
\begin{equation*}
\begin{split}
\CZ = \{ (x,v,gB^{\th}, &g'B^{\th}) \in G^{\io\th}\uni \times 
               V \times H/B^{\th}\times H/B^{\th} \\
             &\mid (x,v,gB^{\th}) \in \wt\CX\uni, 
                  (x,v,g'B^{\th}) \in \wt\CX\uni\}.
\end{split}
\end{equation*}
Let $\nu_H = \dim U^{\th} = n^2$. 
Let $W_n = N_H(T^{\th})/T^{\th}$ be a Weyl group of $H \simeq Sp_{2n}$,
the Weyl group of type $C_n$. 
We have the following lemma.

\begin{lem}  
\begin{enumerate}
\item  
The irreducible components of $\CZ$ are parametrized by 
$w \in W_n$, and have dimension $2\nu_H$. 

\item Let $\pi_1 : \wt\CX\uni \to G^{\io\th}\uni \times V$. 
For any $(x,v) \in \CO_{\Bmu}$, we have
$\dim \pi_1\iv(x,v) \le \nu_H - \dim \CO_{\Bmu}/2$.

\item  Let $c_{\Bmu}$ be the number of irreducible components 
of $\pi_1\iv(x,v)$ for $(x,v) \in \CO_{\Bmu}$ whose dimension 
is equal to $\nu_H - \dim \CO_{\Bmu}/2$.  Then we have
\begin{equation*}
\sum_{\Bmu \in \CP_{n,2}}c^2_{\Bmu} \le |W_n|.
\end{equation*}
\end{enumerate}

\end{lem}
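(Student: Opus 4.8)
The scheme of the proof is the symmetric‑space version of Steinberg's classical dimension estimate for the Steinberg variety. Recall $\CZ\simeq\wt\CX\uni\times_{G^{\io\th}\uni\times V}\wt\CX\uni$; it carries a diagonal $H$‑action and two $H$‑equivariant maps, namely $q\times q:\CZ\to\CB^H\times\CB^H$ recording only the two flags, and $p=\pi_1\times\pi_1:\CZ\to G^{\io\th}\uni\times V$ forgetting both flags. The plan is to prove (i) by stratifying $\CZ$ along $q\times q$, and then to deduce (ii) and (iii) by stratifying $\CZ$ along $p$.

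\emph{Part (i).} The $H$‑orbits on $\CB^H\times\CB^H$ are parametrized by $W_n$: for $w\in W_n$ with representative $\dw\in N_H(T^\th)$, let $\CO_w$ be the orbit of $(eB^\th,\dw B^\th)$ and $\CZ_w=(q\times q)\iv(\CO_w)$, so that $\CZ=\coprod_{w\in W_n}\CZ_w$ with each $\CZ_w$ locally closed and $H$‑stable. The fibre of $q\times q$ over $(eB^\th,\dw B^\th)$ is $F_w=(U^{\io\th}\cap{}^{\dw}U^{\io\th})\times(M_n\cap\dw M_n)$, a product of an affine space (the $\io\th$‑part of the $\th$‑stable connected unipotent group $U\cap{}^{\dw}U$) and a linear subspace of $M_n$; hence $\CZ_w\simeq H\times^{B^\th\cap{}^{\dw}B^\th}F_w$ is irreducible, and $\dim\CZ_w=\dim\CO_w+\dim F_w=\nu_H+\ell(w)+\dim F_w$, where $\ell$ denotes the length function of $W_n$. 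The crux is the identity $\dim F_w=\nu_H-\ell(w)$. To establish it I would combine $\dim(U^{\io\th}\cap{}^{\dw}U^{\io\th})=\dim(U\cap{}^{\dw}U)-\dim(U^\th\cap{}^{\dw}U^\th)$ (valid because $\dim P^{\io\th}=\dim P-\dim P^\th$ for a $\th$‑stable connected unipotent group $P$), the root‑theoretic values $\dim(U\cap{}^{\dw}U)=\binom{2n}{2}-\ell_G(w)$ and $\dim(U^\th\cap{}^{\dw}U^\th)=\nu_H-\ell(w)$, where $\ell_G$ is the length in $W_G\simeq S_{2n}$, the elementary fact $\dim(M_n\cap\dw M_n)=n-r(w)$ with $r(w)$ the number of sign changes of $w$ as a signed permutation, and the folding length identity $\ell_G(w)=2\ell(w)-r(w)$ for $W_n\hookrightarrow S_{2n}$; the bookkeeping then yields $\dim F_w=\nu_H-\ell(w)$, hence $\dim\CZ_w=2\nu_H$ for every $w$. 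Since the $\CZ_w$ are pairwise disjoint, irreducible, locally closed, and all of dimension $2\nu_H$, the variety $\CZ$ is equidimensional of dimension $2\nu_H$, and the closures $\ol\CZ_w$ are pairwise distinct irreducible components of $\CZ$ exhausting all of them (two dense locally closed subsets of a common irreducible variety must meet, so distinct $\CZ_w$ give distinct $\ol\CZ_w$); thus $\CZ$ has exactly $|W_n|$ irreducible components, each of dimension $2\nu_H$.

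\emph{Parts (ii) and (iii).} These follow formally. For $(x,v)\in\CO_{\Bmu}$ the fibre of $p$ over $(x,v)$ is $\pi_1\iv(x,v)\times\pi_1\iv(x,v)$, and since $Z_H(x,v)$ is connected (Lemma 2.3), $p\iv(\CO_{\Bmu})\simeq H\times^{Z_H(x,v)}(\pi_1\iv(x,v))^2$ is a fibre bundle over $\CO_{\Bmu}$ whose irreducible components correspond bijectively to those of $\pi_1\iv(x,v)\times\pi_1\iv(x,v)$, with dimensions raised by $\dim\CO_{\Bmu}$. Hence $\dim p\iv(\CO_{\Bmu})=\dim\CO_{\Bmu}+2\dim\pi_1\iv(x,v)\le\dim\CZ=2\nu_H$, which is (ii). For (iii), the top‑dimensional irreducible components of $\pi_1\iv(x,v)\times\pi_1\iv(x,v)$ are the products of two top‑dimensional components of $\pi_1\iv(x,v)$, so $p\iv(\CO_{\Bmu})$ has exactly $c_{\Bmu}^2$ irreducible components of dimension $2\nu_H$ when the bound in (ii) is attained (and none, with $c_{\Bmu}=0$, otherwise). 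Each such component is locally closed of full dimension in $\CZ$, hence its closure is an irreducible component of $\CZ$, and the assignment is injective: two of these subsets that lie in disjoint strata $p\iv(\CO_{\Bmu})$, or that are distinct components of one stratum, cannot both be dense in a single irreducible component of $\CZ$. Therefore $\sum_{\Bmu\in\CP_{n,2}}c_{\Bmu}^2\le|W_n|$.

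\emph{Main obstacle.} Everything past part (i) is routine Steinberg‑variety bookkeeping, kept clean by the connectedness of the centralizers $Z_H(x,v)$. The one genuinely computational point is the formula $\dim F_w=\nu_H-\ell(w)$, resting on the comparison of the length functions of $W_n$ and of $S_{2n}$ under the folding of type $A_{2n-1}$ onto type $C_n$ together with the description of $M_n\cap\dw M_n$; I expect this to be the step requiring the most care.
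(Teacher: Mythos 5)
Your proposal is correct and follows essentially the same route as the paper: part (i) via the stratification $\CZ=\coprod_{w\in W_n}\CZ_w$ over the $H$-orbits on $H/B^{\th}\times H/B^{\th}$ with fibre $(U^{\io\th}\cap{}^{\dw}U^{\io\th})\times(M_n\cap \dw M_n)$, and parts (ii), (iii) via the fibration $p\iv(\CO_{\Bmu})\simeq H\times^{Z_H(x,v)}\bigl(\pi_1\iv(x,v)\times\pi_1\iv(x,v)\bigr)$ together with the connectedness of $Z_H(x,v)$ from Lemma 2.3. The only difference is bookkeeping in the fibre-dimension computation, where the paper counts root vectors directly (obtaining $\dim(U^{\io\th}\cap{}^{w}U^{\io\th})=\dim(U^{\th}\cap{}^{w}U^{\th})-b_w$) while you pass through the folding identity $\ell_G(w)=2\ell(w)-r(w)$, which checks out and yields the same value $\dim F_w=\nu_H-\ell(w)$.
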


\begin{proof}
Let $p: \CZ \to H/B^{\th} \times H/B^{\th}$ be the 
projection to the last two factors.   For each $w \in W_n$, 
let $X_w$ be the $H$-orbit of $(B^{\th}, wB^{\th})$ in 
$H/B^{\th} \times H/B^{\th}$.  We have 
$H/B^{\th} \times H/B^{\th} = \coprod_{w \in W_n}X_w$.
Put $Z_w = p\iv(X_w)$.  Then $Z_w$ is a vector bundle 
over $X_w \simeq H/B^{\th} \cap wB^{\th}w\iv$ with fibre 
isomorphic to $(U^{\io\th} \cap wU^{\io\th}w\iv) \times 
(M_n \cap w(M_n))$.
Let $b_w$ be the number of $i$ such that $w\iv(e_i) \in M_n$.
Then $\dim (M_n \cap w(M_n)) = b_w$.  By counting the root
vectors in $U^{\io\th}$, we see that 
$\dim (U^{\io\th} \cap wU^{\io\th}w\iv) = 
\dim (U^{\th} \cap wU^{\th}w\iv)  - b_w$.
It follows  that 
\begin{equation*}
\dim Z_w = \dim H - \dim T^{\th} = 2\nu_H.
\end{equation*}
This implies that the set $\{ \ol Z_w \mid w \in W_n\}$ gives 
rise to the set of irreducible components of $\CZ$, any of 
their dimension is equal to $2\nu_H$.  Hence (i) follows.
\par
Let $f : \CZ \to G^{\io\th}\uni \times V$ be the projection 
to the first two factors.  For each $H$-orbit 
$\CO$ of $G^{\io\th}\uni \times V$ containing $(x,v)$, the fibre
$f\iv(\CO)$ is isomorphic to a locally trivial fibration 
over $\CO$ with fibre
isomorphic to $\pi_1\iv(x,v) \times \pi_1\iv(x,v)$, i.e., we have
\begin{equation*}
f\iv(\CO) \simeq H \times^{Z_H(x,v)}(\pi_1\iv(x,v) \times \pi_1\iv(x,v)).
\end{equation*}
It follows that
\begin{equation*}
\dim f\iv(\CO) = 2 \dim \pi_1\iv(x,v) + \dim \CO \le 2\nu_H.
\end{equation*}
(ii) follows from this. 
\par 
We have
$\CZ = \coprod_{\Bmu \in \CP_{n,2}}f\iv(\CO_{\Bmu})$.
Take $(x,v) \in \CO_{\Bmu}$ and let 
 $I_{\Bmu}$ be the set of irreducible components of 
dimension $\nu_H - \dim \CO_{\Bmu}/2$ of $\pi_1\iv(x,v)$.
$Z_H(x,v)$ acts on $\pi\iv(x,v)$, which stabilizes  
each irreducible component of $\pi_1\iv(x,v)$
since $Z_H(x,v)$ is connected.   
For $X, Y \in I_{\Bmu}$, $H\times^{Z_H(x,v)}(X \times Y)$
gives an irreducible subset of $f\iv(\CO_{\Bmu})$ with 
dimension $2\nu_H$.  It follows that its closure gives an irreducible
component of $f\iv(\CO_{\Bmu})$, and that the number of irreducible
components of $f\iv(\CO_{\Bmu})$ of dimension $2\nu_H$ is bigger 
than or equal to $|I_{\Bmu}|^2 = c^2_{\Bmu}$.
The inequality in (iii)  follows from this.  
\end{proof}

\remark{2.6.}
Later it is shown in the course of the proof of Theorem 5.4 (i), 
that the inequality in Lemma 2.5 (iii) is actually an equality. 
Independently,  it is proved by Kato 
[Ka2, Proposition 7.7] that for $(x,v) \in \CO_{\Bmu}$
all the irreducible components of $\pi_1\iv(x,v)$ 
have dimension $\nu_H - \dim \CO_{\Bmu}$.
\par\medskip
More generally, we have the following 
dimension estimate for $(\pi^{(m)}_1)\iv(x,v)$ for 
$(x,v) \in G^{\io\th}\uni \times V$.

\begin{cor}  
Let $\Bla = ((m), (n-m)) \in \CP_{n,2}$. 
Let $(x,v) \in \CO_{\Bmu}$  and assume that 
$\Bmu \le \Bla$ (see 1.7).  Then 
\begin{align*}
\tag{2.7.1}
\dim (\pi^{(m)}_1)\iv(x,v) &\le 2n(\Bmu) + n - |\mu^{(1)}| \\
                   &= (\dim \CO_{\Bla} - \dim \CO_{\Bmu}) /2 + (n-m). 
\end{align*}
\end{cor}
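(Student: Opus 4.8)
The plan is to deduce the estimate (2.7.1) from Lemma 2.5\,(ii) by comparing fibres, together with a short arithmetic check that the two right-hand sides of (2.7.1) coincide. The starting observation is that $M_m \subseteq M_n$, so the defining conditions give an inclusion $\wt\CX_{m,\unip} \subseteq \wt\CX\uni$ of closed subvarieties of $G^{\io\th}\uni \times V \times H/B^{\th}$, and the projections $\pi^{(m)}_1$ and $\pi_1$ are both restrictions of the map $(x,v,gB^{\th}) \mapsto (x,v)$. Since $B^{\th}$ stabilizes $M_m$, the condition $g\iv v \in M_m$ is well defined on cosets $gB^{\th}$, and for any $(x,v) \in G^{\io\th}\uni \times V$ we obtain, as subvarieties of $H/B^{\th}$,
\[
(\pi^{(m)}_1)\iv(x,v) = \{\, gB^{\th} \in \pi_1\iv(x,v) \mid g\iv v \in M_m \,\} \ \subseteq\ \pi_1\iv(x,v),
\]
so in particular $\dim (\pi^{(m)}_1)\iv(x,v) \le \dim \pi_1\iv(x,v)$.

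Next I would invoke Lemma 2.5\,(ii): for $(x,v) \in \CO_{\Bmu}$ it gives $\dim \pi_1\iv(x,v) \le \nu_H - \dim\CO_{\Bmu}/2$. Substituting $\nu_H = n^2$ and $\dim\CO_{\Bmu} = 2n^2 - 2n - 4n(\Bmu) + 2|\mu^{(1)}|$ from (2.3.1) gives
\[
\dim (\pi^{(m)}_1)\iv(x,v) \ \le\ n^2 - \bigl(n^2 - n - 2n(\Bmu) + |\mu^{(1)}|\bigr) \ =\ 2n(\Bmu) + n - |\mu^{(1)}|,
\]
which is the first line of (2.7.1). Only $(x,v) \in \CO_{\Bmu}$ has been used at this stage; the hypothesis $\Bmu \le \Bla$ enters solely to guarantee that the orbit-difference form of the bound is a sensible (nonnegative) quantity — indeed $\Bmu \le \Bla$ forces $\dim\CO_{\Bmu} \le \dim\CO_{\Bla}$ by (1.7.3) and $m \le n$ — and, once one knows $\CX_{m,\unip} = \ol\CO_{\Bla}$, the estimate is vacuous otherwise since then $(\pi^{(m)}_1)\iv(x,v) = \emptyset$.

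It remains to identify the two right-hand sides of (2.7.1). For $\Bla = ((m),(n-m))$ one has $\la^{(1)} = (m)$, so $|\la^{(1)}| = m$, while $\la^{(1)} + \la^{(2)} = (n)$, whence $n(\Bla) = n((n)) = 0$; hence (2.3.1) gives $\dim\CO_{\Bla} = 2n^2 - 2n + 2m$, and
\[
\frac{\dim\CO_{\Bla} - \dim\CO_{\Bmu}}{2} + (n-m) \ =\ \frac{2m + 4n(\Bmu) - 2|\mu^{(1)}|}{2} + (n-m) \ =\ 2n(\Bmu) + n - |\mu^{(1)}|,
\]
matching the bound obtained above. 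This establishes (2.7.1). There is no substantial obstacle: the only point worth emphasizing is that Lemma 2.5\,(ii) controls the full-flag fibre $\pi_1\iv(x,v)$ rather than the partial-flag fibre directly, and it is precisely the inclusion of the first paragraph that transfers the bound. (If a self-contained argument were wanted, one could instead rerun the Steinberg-variety computation of Lemma 2.5 with $\wt\CX_{m,\unip}$ in one of the two factors; the dimension of the resulting mixed variety again equals $2\nu_H$, yielding the same bound, but this is unnecessary here.)
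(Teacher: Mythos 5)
Your proof is correct and is essentially the paper's own argument: bound $\dim(\pi^{(m)}_1)\iv(x,v)$ by $\dim\pi_1\iv(x,v)$ via the obvious inclusion of fibres, apply Lemma 2.5\,(ii) together with the orbit-dimension formula (2.3.1), and check that $\dim\CO_{\Bla}=2n^2-2n+2m$ makes the two right-hand sides of (2.7.1) agree. Your extra remarks on the role of $\Bmu\le\Bla$ are harmless but not needed beyond what the paper states.
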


\begin{proof}
By Lemma 2.5 (ii) and Lemma 2.3 (i), we have 
\begin{align*}
\dim (\pi^{(n)}_1)\iv(x,v) &\le \nu_H - \dim \CO_{\Bmu}/2  \\
       &= 2n(\Bmu) + n - |\mu^{(1)}|,
\end{align*}
Clearly 
$(\pi_1^{(m)})\iv(x,v) \subseteq (\pi_1^{(n)})\iv(x,v)$
and so $\dim (\pi_1^{(m)})\iv(x,v) \le \dim (\pi_1^{(n)})\iv(x,v)$.
Then (2.7.1) follows from the above inequality by noting that 
$\dim \CO_{\Bla} = 2n^2 - 2n + 2m$.
\end{proof}

By making use of Corollary 2.7, we have the following result.
The assertion (ii) is known by Kato [Ka1, Theorem 1.2]. 
\begin{prop}  
\begin{enumerate}
\item
Let $\Bla = ((m), (n-m)) \in \CP_{n,2}$.
Then $\pi^{(m)}_1$ is a map from $\wt\CX_{m,\unip}$ onto 
$\CX_{m,\unip} = \ol\CO_{\Bla}$.
\item
In the case where $m = n$, $\CX\uni$ coincides with $\ol\CO_{\Bla}$
with $\Bla = ((n), -)$.  The map $\pi_1 : \wt\CX\uni \to \CX\uni$ gives
a resolution of singularities.
\end{enumerate}
\end{prop}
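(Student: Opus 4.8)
The plan is to establish (i) first and then deduce (ii) as the special case $m=n$. For (i), I would argue that $\CX_{m,\unip}$, being the image of the irreducible variety $\wt\CX_{m,\unip}$ under the proper (hence closed) map $\pi_1^{(m)}$, is an irreducible closed $H$-stable subset of $G^{\io\th}\uni\times V$. Since the $H$-orbits $\CO_{\Bmu}$ stratify $G^{\io\th}\uni\times V$, the closed irreducible set $\CX_{m,\unip}$ must be $\ol\CO_{\Bmu}$ for a unique $\Bmu$, namely the unique maximal orbit (with respect to the order of 1.7) that it contains. So the content is to identify that maximal orbit as $\CO_{\Bla}$ with $\Bla=((m),(n-m))$.

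The key dimension bookkeeping goes as follows. First I would check that the generic point of $\wt\CX_{m,\unip}$ maps into $\CO_{\Bla}$: taking $(y,v)\in A\uni\times M_m$ with $y$ regular unipotent and $v$ a cyclic vector for $y$ inside $M_m$, one gets $E_A^y v = M_m$, so by the recipe of 2.1 the pair $(x,v)$ with $x=y\th(y)\iv$ has type $\Bla = ((m),(n-m))$. Hence $\CO_{\Bla}\subseteq\CX_{m,\unip}$, so $\ol\CO_{\Bla}\subseteq\CX_{m,\unip}$. For the reverse inclusion, I would use the closure-order description (1.7.3) together with the fibre-dimension estimate of Corollary 2.7: if $\CO_{\Bmu}\subseteq\CX_{m,\unip}$ then (since $\wt\CX_{m,\unip}$ is irreducible of dimension $2n^2-n+m$ by (2.4.1)) one has $\dim(\pi_1^{(m)})\iv(x,v)\ge \dim\wt\CX_{m,\unip}-\dim\CO_{\Bmu} = 2n^2-n+m-\dim\CO_{\Bmu}$ for $(x,v)\in\CO_{\Bmu}$. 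Comparing this lower bound with the upper bound of Corollary 2.7 (using $\dim\CO_{\Bla}=2n^2-2n+2m$) forces $\dim\CO_{\Bmu}\le\dim\CO_{\Bla}$ and in fact pins down $\Bmu\le\Bla$; indeed the inequality becomes tight exactly when $\Bmu$ lies in the closure of $\CO_{\Bla}$. The main obstacle here is being careful that Corollary 2.7 as stated already presumes $\Bmu\le\Bla$, so I would first rule out, by a direct argument from the definition $\CX_{m,\unip}=\bigcup_{g\in H}g(U^{\io\th}\times M_m)$, that any orbit $\CO_{\Bmu}$ meeting $\CX_{m,\unip}$ has $|\mu^{(1)}|\le m$ and $\mu^{(1)}+\mu^{(2)}$ below the regular type --- equivalently that $\Bmu\le\Bla$ --- and only then invoke the fibre estimate to conclude equality $\CX_{m,\unip}=\ol\CO_{\Bla}$. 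Since $\wt\CX_{m,\unip}$ is irreducible and its image contains the dense orbit $\CO_{\Bla}$ of $\ol\CO_{\Bla}$, surjectivity of $\pi_1^{(m)}$ onto $\ol\CO_{\Bla}$ follows from properness.

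For (ii), specializing $m=n$ gives $\CX\uni=\ol\CO_{((n),-)}$ directly from (i), and since $\CO_{((n),-)}$ is the open dense orbit we have $\CX\uni = \ol{\CO_{((n),-)}}$ as claimed. To see that $\pi_1\colon\wt\CX\uni\to\CX\uni$ is a resolution of singularities, I would note $\wt\CX\uni\simeq H\times^{B^{\th}}(U^{\io\th}\times M_n)$ is smooth and irreducible with $\dim\wt\CX\uni=2n^2$ by (2.4.1), which equals $\dim\CX\uni=\dim\CO_{((n),-)}$ (compute from (2.3.1): $2n^2-2n-4n(\Bla)+2|\la^{(1)}|$ with $\la^{(1)}=(n)$, $\la^{(2)}=\emptyset$, so $n(\Bla)=n(n)=n(n-1)/2$, giving $2n^2-2n-2n(n-1)+2n=2n^2$). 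Then $\pi_1$ is a proper birational morphism between irreducible varieties with smooth source: it restricts to an isomorphism over the dense orbit $\CO_{((n),-)}$ because for $(x,v)$ of regular type the fibre $\pi_1\iv(x,v)$ has dimension $0$ (by the dimension count $\dim\wt\CX\uni - \dim\CO_{((n),-)} = 0$) and is a single reduced point --- one checks this last point from (1.17.2)--(1.17.3), since the relevant $\CB^H_x$-type fibre for a regular unipotent $x$ together with a cyclic vector is a point. Hence $\pi_1$ is a resolution of singularities.
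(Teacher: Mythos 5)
Your overall strategy for (i) --- observe that $\CX_{m,\unip}$ is closed, irreducible and $H$-stable, exhibit $\CO_{\Bla}$ inside it, and then rule out any larger orbit --- is reasonable, but both devices you propose for the last step fail as stated. The inequality $\dim(\pi^{(m)}_1)\iv(x,v)\ge \dim\wt\CX_{m,\unip}-\dim\CO_{\Bmu}$ is not a valid lower bound for an arbitrary orbit $\CO_{\Bmu}\subseteq\CX_{m,\unip}$: the general lower bound for nonempty fibres involves $\dim\CX_{m,\unip}$ (the closure of the image), not $\dim\CO_{\Bmu}$; with $\dim\CO_{\Bmu}$ it is an \emph{upper} bound (by homogeneity of the orbit), correct only for the dense orbit, which is exactly what you are trying to identify. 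Moreover, even granting it, comparing with (2.7.1) forces $\dim\CO_{\Bmu}\ge\dim\CO_{\Bla}$, not $\le$, and Corollary 2.7 presupposes $\Bmu\le\Bla$, so the argument is circular, as you yourself worried. The ``direct argument'' you defer to rests on a false claim: orbits meeting $\CX_{m,\unip}$ need not satisfy $|\mu^{(1)}|\le m$. For $n=2$, $m=1$, the pair $(1,v)$ with $v\ne 0$ is $H$-conjugate into $U^{\io\th}\times M_1$ and has type $\Bmu=((1,1),\emptyset)$, so $|\mu^{(1)}|=2>m$. Since $c(\Bla)=(m,n-m,0,\dots)$, the condition $\Bmu\le\Bla$ is equivalent to $\mu^{(1)}_1\le m$, i.e.\ $\dim\Bk[x]v\le m$ (cf.\ (4.3.1)), which does hold on $\CX_{m,\unip}$ because $U^{\io\th}\subset B$ stabilizes $M_m$; but you neither state nor prove this, and had you proved it, the fibre-dimension comparison would be superfluous, since $\Bmu\le\Bla$ for all orbits in $\CX_{m,\unip}$ together with (1.7.3) and $\CO_{\Bla}\subseteq\CX_{m,\unip}$ already gives $\CX_{m,\unip}=\ol\CO_{\Bla}$.

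What the paper actually does, and what is missing from your write-up, is an exact computation of the fibre over $\CO_{\Bla}$: the upper bound $\dim(\pi^{(m)}_1)\iv(x,v)\le n-m$ comes from Corollary 2.7 applied with $\Bmu=\Bla$ (legitimate), and the matching lower bound from exhibiting inside the fibre the subvariety $X$ of points with $gM_m=E^y_Av$, which factors as a product of a $GL_m$-fibre and a $GL^{\io\th}_{2(n-m)}$-fibre and has dimension $n-m$ by Lemma 1.11(iii). Hence $\dim(\pi^{(m)}_1)\iv(\CO_{\Bla})=\dim\CO_{\Bla}+(n-m)=2n^2-n+m=\dim\wt\CX_{m,\unip}$, so the preimage of $\CO_{\Bla}$ is dense in the irreducible variety $\wt\CX_{m,\unip}$, and properness gives $\Im\pi^{(m)}_1=\ol\CO_{\Bla}$ without ever invoking (1.7.3). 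For (ii) your route (isomorphism over the dense orbit) matches the paper's, but note two slips: $n(\Bla)=0$ for $\Bla=((n),\emptyset)$, not $n(n-1)/2$ (the value $2n^2$ is nevertheless correct); and (1.17.2)--(1.17.3) alone do not give a one-point fibre --- for $x$ of type $(n)$ the variety $\CB^H_x$ has dimension $n$ by Lemma 1.11(iii), so the cyclic-vector condition is essential --- nor does ``bijective with reduced point fibres'' by itself yield an isomorphism of varieties; the paper settles this by identifying $\pi_1\iv(\CO_{\Bla})$ with $f\iv(\CO_{\Bla})\simeq\CO_{\Bla}$ via the Steinberg variety $\CZ$ of Lemma 2.5.
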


\begin{proof}
We show (i). 
By Corollary 2.7, for each $(x,v) \in \CO_{\Bla}$, 
$\dim (\pi^{(m)}_1)\iv(x,v) \le n - m$.  
We may assume that $x = y\th(y)\iv$ for $y \in A\uni$.   
Put 
\begin{equation*}
X = \{ (x,v, gB^{\th}) \in (\pi^{(m)}_1)\iv(x,v) \mid 
                 gM_m = E_A^yv \}.
\end{equation*}
Put $V_m = E_A^yv$.  Then $x|_{V_m} \in GL_m$ is of type 
$(m)$, and $x|_{V_m^{\perp}/V_m} \in GL^{\io\th}_{2(n-m)}$ 
is of type $(n-m)$.  It follows that 
$X \simeq {\pi'}_1\iv(x|_{V_m})\times {\pi''}\iv_1(x|_{V_m^{\perp}/V_m})$,
where $\pi''_1$ is the map defined as in 1.10 replacing $n$ by $n-m$, 
and $\pi'_1$ is a similar map with respect to $GL_m$.  
 It follows, by Lemma 1.11 (iii) that  
$X$ is irreducible with $\dim X = n-m$.  Hence we see that 
$\dim (\pi^{(m)}_1)\iv(x,v) = n-m$.  Since 
$(x,v) \in \Im \pi^{(m)}_1$, we have
$\CO_{\Bla} \subset \Im \pi^{(m)}_1$.
It follows that 
$\dim (\pi^{(m)}_1)\iv(\CO_{\Bla}) = \dim \CO_{\Bla} + (n-m) = 2n^2 - n + m$.
Hence by (2.4.1), the closure of $(\pi^{(m)}_1)\iv(\CO_{\Bla})$ coincides with 
$\wt\CX_{m,\unip}$.  
This implies that $\wt\CX_{m,\unip} = (\pi^{(m)}_1)\iv(\ol\CO_{\Bla})$, and 
so $\Im \pi^{(m)}_1 = \ol\CO_{\Bla}$ as asserted.
\par
Next we show (ii). By (i),  $\CX\uni = \ol\CO_{\Bla}$. 
We have $\dim \wt\CX\uni = \dim\CX\uni$, and  $\wt\CX\uni$ is smooth, 
$\pi_1$ is proper. Hence in order to show that $\pi_1$ is a resolution 
of singularities, it is enough to see that the restriction of
$\pi_1$ gives an isomorphism  
$\pi_1\iv(\CO_{\Bla}) \isom \CO_{\Bla}$.  
In this case, $\Bla = ((n); \emptyset)$, 
and one can choose a representative $(x,v) \in \CO_{\Bla}$ such that
$x = y\th(y)\iv$ with $y \in A$ regular unipotent. Then 
there exists a unique Borel subgroup of $H$ 
containing $x$.  It follows that $\pi_1\iv(x,v)$ consists of 
exactly one point.   
We consider the map $f: \CZ \to \CX\uni$ in the proof of Lemma 2.5.
Since $f\iv(\CO_{\Bla})$ is a principal bundle  
with fibre $\pi_1\iv(x,v) \times \pi_1\iv(x,v)$, we see that 
$\CO_{\Bla} \simeq f\iv(\CO_{\Bla}) \simeq \pi_1\iv(\CO_{\Bla})$.
This proves (ii).
\end{proof}

\section{ Intersection cohomology on $G^{\io\th}\reg \times V$}

\para{3.1.} 
We consider the variety $G^{\io\th} \times V$ on which 
$H$ acts diagonally.
We define an isotropic subspace $M_i$ of $V$ by 
$M_i = \lp e_1, \dots, e_i\rp$ for $i = 0, 1, \dots, n$.
Thus the stabilizer of the isotropic flag $(M_i)$ in $H$ coincides with 
$B^{\th}$.  
For $0 \le m \le n$, we define  varieties

\begin{align*}
\wt\CX_m &= \{ (x,v, gB^{\th}) \in G^{\io\th} \times V \times H/B^{\th}
       \mid g\iv xg \in B^{\io\th}, g\iv v \in M_m \}, \\
\CX_m &= \bigcup_{g \in H}g(B^{\io\th} \times M_m).  
\end{align*}
We define
a map $\pi^{(m)} : \wt\CX_m \to G^{\io\th} \times V$ by 
$\pi^{(m)}(x,v, gB^{\th}) = (x,v)$.
Clearly $\Im \pi^{(m)} = \CX_m$.  In the special case where
$m = n$, we write $\wt\CX_n, \CX_n$ and $\pi^{(n)}$ by 
$\wt\CX, \CX$ and $\pi$.
\par
Since $\wt\CX_m \simeq H \times^{B^{\th}}(B^{\io\th} \times M_m)$, 
$\wt\CX_m$ is smooth and irreducible, and $\pi^{(m)}$ is a proper map. 
Hence $\CX_m$ is a closed irreducible subset of 
$G^{\io\th}\times V$. 
The dimension of $\wt \CX_m$ is computed as follows;

\begin{align*}
\tag{3.1.1}
\dim \wt \CX_m  &= \dim H/B^{\th} + \dim B^{\io\th} + m \\
               &= \dim U^{\th} + \dim B^{\io\th} + m \\
               &= \dim U + \dim T^{\io\th} + m \\
               &= 2n^2 + m.
\end{align*}
\par 
We also define varieties
\begin{align*}
\wt \CY_m &= \{ (x,v, gB^{\th}) \in 
    G^{\io\th}\reg \times V \times H/B^{\th} \mid 
        g\iv xg \in B^{\io\th}\reg, g\iv v \in M_m \},  \\
\CY_m &= \bigcup_{g \in H} g(B^{\io\th}\reg \times M_m),
\end{align*}
and a map $\psi^{(m)} : \wt\CY_m \to G^{\io\th} \times V$ by 
$\psi^{(m)}(x,v, gB^{\th}) = (x,v)$. Clearly $\Im \psi^{(m)} = \CY_m$.
As before, in the case where $m = n$, we write $\wt\CY_m, \CY_m$
and $\psi^{(m)}$ as $\wt\CY, \CY$ and $\psi$.
As in 1.8, $\wt\CY_m$ can be expressed in the following form. 
\begin{align*}
\tag{3.1.2}
\wt\CY_m &\simeq H \times^{B^{\th}}(B^{\io\th}\reg \times M_m) \\  
         &\simeq H \times^{B^{\th} \cap Z_H(T^{\io\th})}
                            (T^{\io\th}\reg \times M_m).
\end{align*}

\para{3.2.}
For a vector $v = \sum_{i=1}^na_ie_i$ of $M_n$, put
$\supp (v) = \{ i \mid a_i \ne 0\ \}$.   For a  
subset $I$ of $[1,n] = \{1, \dots, n \}$, put 
$M_{I} = \{ v \in M_n \mid  \supp(v) = I \}$. 
As in 1.8, $Z_H(T^{\io\th})$ is isomorphic to 
$SL_2 \times \cdots \times SL_2$, and under this identification
$B^{\th} \cap Z_H(T^{\io\th})$ corresponds to $B_2 \times \cdots \times B_2$.
Note that the action of $B^{\th} \cap Z_H(T^{\io\th})$ on $M_n$ is given 
by the action of its $T^{\th}$ part.
Hence
 $T^{\io\th}\reg \times M_I$ is $B^{\th} \cap Z_H(T^{\io\th})$-stable. 
Under the expression (3.1.2) for $\wt\CY$, 
we define, for $I \subset [1, n]$, 
a subvariety $\wt\CY_{I}$ of $\wt\CY$ by 
\begin{equation*}
\wt\CY_{I} \simeq H\times^{B^{\th} \cap Z_H(T^{\io\th})}
         (T^{\io\th}\reg \times M_{I}).
\end{equation*}
We define a map $\psi_I : \wt\CY_I \to \CY$ 
by $(x,v, g(B^{\th}\cap Z_H(T^{\io\th})) \mapsto (x,v)$.
Then $\Im \psi_I = \bigcup_{g \in H}g(T^{\io\th}\reg \times M_I)$
coincides with $\CY_m^0 = \CY_m \backslash \CY_{m-1}$ for
$m = |I|$, which depends only on $m$.
For $I \subset [1,n]$ we 
define a parabolic subgroup $Z_H(T^{\io\th})_I$ of 
$Z_H(T^{\io\th})$ by the condition that the $i$-th factor is 
$B_2$ if $i \in I$, and is $SL_2$  otherwise.
Since $Z_H(T^{\io\th})_I$ stabilizes $M_I$, one can define 
\begin{equation*}
\wh\CY_I = H\times^{Z_H(T^{\io\th})_I}(T^{\io\th}\reg \times M_I ).
\end{equation*}
Then the map $\psi_I$ factors through $\wh\CY_I$, 
\begin{equation*}
\tag{3.2.1}
\begin{CD}
\psi_I : \wt\CY_I  @>\xi_I>> \wh\CY_I   @> \e_I >>  \CY^0_{m},
\end{CD}
\end{equation*}
for $|I| = m$, 
where under the expression in (3.1.2), the map $\xi_I$
is the natural surjection, and the map $\e_I$ is given by 
$g*(t,v) \mapsto (gtg\iv, gv)$ ($g*(t,v)$ denotes the 
$Z_H(T^{\io\th})_I$-orbits of $(g, (t,v)) \in H \times 
(T^{\io\th}\reg \times M_I)$).
Then $\xi_I$ is a locally trivial fibration with 
fibre isomorphic to 
\begin{equation*}
\tag{3.2.2}
Z_H(T^{\io\th})_I/(B^{\th} \cap Z_H(T^{\io\th})) \simeq (SL_2/B_2)^{I'} 
  \simeq \BP_1^{I'},
\end{equation*}
where $I'$ is the complement of $I$ in $[1,n]$, and 
$(SL_2/B_2)^{I'}$ denotes the direct product of $SL_2/B_2$
with respect to the factors corresponding to $I'$, and
similarly for $\BP_1^{I'}$.  Thus $\BP_1^{I'} \simeq \BP_1^{n-|I|}$.
\par
Let $S_I \simeq S_{|I|} \times S_{n-|I|}$ be the subgroup of $S_n$
stabilizing the set $I$.
Then $ \CW_I = N_H(Z_H(T^{\io\th})_I)/Z_H(T^{\io\th})_I$ is isomorphic to 
$S_I$.  In the case where $I = [1,m]$, we put 
$S_I = S_{\Bm}$, and $\CW_I = \CW_{\Bm}$ for $\Bm = (m, n-m)$.  Thus 
$\CW_{\Bm} \simeq S_{\Bm} \simeq S_m \times S_{n-m}$ under the natural 
isomorphism $\CW \simeq S_n$ in 1.8.  
For $I \subset [1,n]$, $\CW_I$ acts on $\wt\CY_I$ and 
$\wh\CY_I$ since $T^{\io\th}\reg \times M_I$ is stable by 
$N_H(Z_H(T^{\io\th})_I)$. 
Now the map 
$\e_I : \wh\CY_I \to \CY^0_{m}$ 
can be identified with the finite Galois covering 
with Galois group $\CW_I$, 
\begin{equation*}
\tag{3.2.3}
\wh\CY_I \to \wh\CY_I/\CW_I \simeq \CY^0_{m}.
\end{equation*}

We have the following lemma.
\begin{lem}  
Let the notations be as before.
\begin{enumerate}
\item
$\CY_m$ is open dense in  $\CX_m$, and 
$\wt\CY_m$ is open dense in $\wt\CX_m$.
\item
$\dim \wt\CX_m = \dim \wt \CY_m = 2n^2 + m$.
\item 
$\dim \CX_m = \dim \CY_m = (2n^2 + m) - (n-m)$.
\item
$\CY = \coprod_{0 \le m \le n}\CY_m^0$ gives a stratification 
of $\CY$ by smooth strata $\CY_m^0$, and the map 
$\psi : \wt\CY \to \CY$ is semismall with respect to this 
stratification.
\end{enumerate}
\end{lem}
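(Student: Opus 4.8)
The four assertions are largely bookkeeping consequences of the fibration descriptions established in 3.1 and 3.2, and I would treat them in the order (ii), (i), (iii), (iv), since the dimension counts feed the density statements and the semismallness estimate.

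For (ii), I would simply invoke (3.1.1): $\dim\wt\CX_m = 2n^2+m$. For $\wt\CY_m$, the second description in (3.1.2) gives $\wt\CY_m \simeq H\times^{B^{\th}\cap Z_H(T^{\io\th})}(T^{\io\th}\reg\times M_m)$, a bundle over $H/(B^{\th}\cap Z_H(T^{\io\th}))$ with fibre $T^{\io\th}\reg\times M_m$; since $T^{\io\th}\reg$ is open dense in $T^{\io\th}$ (dimension $n$ by Lemma 1.9(i), as $\dim B^{\io\th} = n^2 = \dim U^{\io\th}+n$ forces $\dim T^{\io\th}=n$) and $\dim M_m = m$, while $B^{\th}\cap Z_H(T^{\io\th})\cong B_2^n$ has dimension $2n$, one gets $\dim\wt\CY_m = (\dim H - 2n) + n + m = (2n^2+n)-2n+n+m = 2n^2+m$. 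For (iii): $\psi^{(m)}$ is the restriction of $\pi^{(m)}$, and by the factorization (3.2.1)--(3.2.3) applied with $I=[1,m]$, the generic fibre of $\psi^{(m)}$ over $\CY_m^0$ is the $\BP_1^{n-m}$ of (3.2.2) (the covering $\e_I$ being finite), so $\dim\CY_m = \dim\wt\CY_m - (n-m) = (2n^2+m)-(n-m)$; and $\dim\CX_m = \dim\CY_m$ will follow once (i) is known, since $\CX_m$ is the closure of $\CY_m$ by Proposition-type closure arguments (or directly: $\pi^{(m)}$ is proper with irreducible source $\wt\CX_m$, so $\CX_m$ is irreducible of dimension $\le\dim\wt\CX_m$, and equals $\dim\CY_m+(n-m)$ would be wrong---rather one argues $\CX_m$ irreducible of the same dimension as its open subset $\CY_m$).

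For (i): $\CY_m \subseteq \CX_m$ since $B^{\io\th}\reg\times M_m \subseteq B^{\io\th}\times M_m$, and $\wt\CY_m\subseteq\wt\CX_m$ is the open locus where the $B^{\io\th}$-component is regular semisimple (an open condition) --- so $\wt\CY_m$ is open in $\wt\CX_m$, and it is dense because $\wt\CX_m$ is irreducible and $\wt\CY_m$ is nonempty (it contains points with $T^{\io\th}\reg$-part). Since $\pi^{(m)}$ is proper hence closed, $\CX_m = \pi^{(m)}(\wt\CX_m) = \ol{\pi^{(m)}(\wt\CY_m)} \cup (\text{lower-dimensional stuff})$; more cleanly, $\CY_m = \psi^{(m)}(\wt\CY_m)$ is a constructible dense subset of the irreducible $\CX_m$, hence dense, and being the image of the open dense $\wt\CY_m$ it is open in $\CX_m$ up to the usual Chevalley argument --- I would phrase this via: $\CX_m\setminus\CY_m$ is the image of the proper-restricted $\pi^{(m)}$ over the closed set $(B^{\io\th}\setminus B^{\io\th}\reg)\times M_m$, of strictly smaller dimension.

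For (iv), the stratification $\CY = \coprod_{0\le m\le n}\CY_m^0$ with $\CY_m^0 = \CY_m\setminus\CY_{m-1}$ has smooth strata because $\CY_m^0 = \Im\psi_I$ for $|I|=m$ (any such $I$) is a single $H$-orbit-type locus, homogeneous-fibred as in (3.2.3): $\CY_m^0 \simeq \wh\CY_I/\CW_I$ with $\wh\CY_I$ smooth (a bundle over $H/Z_H(T^{\io\th})_I$ with smooth fibre $T^{\io\th}\reg\times M_I$) and $\CW_I$ finite acting freely generically --- one checks smoothness directly from the associated-bundle description. For semismallness I must show $2\dim(\psi^{-1}(\text{point of }\CY_m^0)) \le \dim\CY - \dim\CY_m^0$. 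The fibre over a point of $\CY_m^0$ is, by (3.2.1), $\BP_1^{n-m}$ (since $\e_I$ is finite), so the left side is $2(n-m)$. Meanwhile $\dim\CY = \dim\CY_n = 2n^2$ by (iii), and $\dim\CY_m^0 = \dim\CY_m = 2n^2 - (n-m)$ for $m<n$ (and $=2n^2$ for $m=n$); so $\dim\CY - \dim\CY_m^0 = (n-m)$ for $m<n$. This gives $2(n-m)\le (n-m)$, which is \emph{false} for $m<n$ --- so the naive reading is wrong, and the resolution is that I have miscomputed $\dim\CY_m^0$: in fact $\CY_m^0$ is \emph{not} $\CY_m$ minus a hypersurface in a way that drops dimension by $n-m$; rather $\dim\CY_m = \dim\CY_m^0$ genuinely, but the codimension of $\CY_m^0$ in $\CY$ equals $\dim\CY - \dim\CY_m = (n-m)$ only if that difference is what (iii) says, and $2(n-m) \le 2(n-m)$ --- \textbf{the main obstacle} is pinning down that the correct inequality is $2\dim\psi^{-1}(\text{pt}) \le \operatorname{codim}_{\CY}\CY_m^0$ with the fibre dimension being exactly $n-m$ rather than $2(n-m)$, i.e. that over $\CY_m^0$ the map $\psi$ has fibre dimension $n-m$ and codimension $n-m$ matching; I would establish the fibre-dimension bound carefully by analyzing $\psi^{(m),-1}(x,v)$ via the flag-variety description (as in the proof of Proposition after 2.7), showing the fibre over a stratum point has dimension exactly equal to the codimension of that stratum, which is the content of semismallness, and for which the clean count is $\dim\wt\CY_m = \dim\CY$ for all $m$ is false too --- so ultimately I expect the proof to run: $\wt\CY_m$ has dimension $2n^2+m$, $\psi^{(m)}$ restricted to the dense stratum has fibre $\BP_1^{n-m}$ hence $\dim\CY_m^0 = 2n^2+m-(n-m) = 2n^2 - (n-2m)$... and I would reconcile these numbers against (iii) before concluding semismallness via the standard criterion that all fibres over the $m$-stratum have dimension $\le \tfrac12(\dim\CY - \dim\CY_m^0)$, checking it stratum by stratum using Lemma 2.5-type Steinberg-variety dimension bounds adapted to $\CY$.
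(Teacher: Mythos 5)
Your parts (i)--(iii) are correct and follow the paper's own route: openness of $\CY_m$ in $\CX_m$ comes from properness (hence closedness) of $\pi^{(m)}$ together with the identity $\wt\CY_m = (\pi^{(m)})\iv(\CY_m)$, so that $\CX_m\setminus\CY_m$ is the image of the closed complement $\wt\CX_m\setminus\wt\CY_m$; the dimension statements come from (3.1.1) and from the factorization $\psi_I=\e_I\circ\xi_I$ for $I=[1,m]$, with $\xi_I$ a $\BP_1^{n-m}$-bundle and $\e_I$ finite. Nothing to object to there beyond some hesitation in the phrasing.

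The genuine gap is in (iv): the semismallness assertion is never actually proved in your write-up. You compute $\dim\CY=2n^2$ and $\codim_{\CY}\CY_m^0=n-m$, arrive at the false inequality $2(n-m)\le n-m$, correctly sense that something is off, and then close with a plan to ``reconcile the numbers'', possibly via Steinberg-variety bounds --- but the reconciliation is exactly the missing step, and it is elementary. From (iii), $\dim\CY=\dim\CY_n=(2n^2+n)-(n-n)=2n^2+n$, and $\dim\CY_m^0=\dim\CY_m=(2n^2+m)-(n-m)=2n^2+2m-n$ (your value $2n^2-(n-m)$ drops the $+m$ coming from $\dim\wt\CY_m=2n^2+m$), so $\codim_{\CY}\CY_m^0=2(n-m)$, not $n-m$. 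On the other hand, for any $(x,v)\in\CY_m^0$ the fibre $\psi\iv(x,v)$ is, by 3.2, the disjoint union over the subsets $I\subset[1,n]$ with $|I|=m$ of the sets $\psi_I\iv(x,v)$, each a finite union of copies of $\BP_1^{n-m}$ (since $\e_I$ is finite and $\xi_I$ is a $\BP_1^{I'}$-bundle with $|I'|=n-m$); hence every fibre over the stratum has dimension exactly $n-m=(\dim\CY-\dim\CY_m^0)/2$, which is precisely the semismallness condition, with no Steinberg-variety estimate needed. Two smaller points: closedness of $\CY_{m-1}$ in $\CY_m$ (needed for $\CY_m^0$ to be locally closed) follows from $\CY_{m-1}=\CX_{m-1}\cap\CY_m$ with $\CX_{m-1}$ closed; and for smoothness of $\CY_m^0$ it is not enough that $\CW_I$ acts ``freely generically'' on $\wh\CY_I$ --- the correct citation is (3.2.3), which says $\e_I$ is a finite Galois covering, so smoothness of $\CY_m^0$ follows from that of $\wh\CY_I$.
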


\begin{proof}
Since $\wt\CY_m \simeq H\times^{B^{\th}}(B^{\io\th}\reg \times
 M_{m})$, and $B^{\io\th}\reg \times M_{m}$ is open dense
in $B^{\io\th} \times M_m$, $\wt\CY_m$ is open dense in $\wt\CX_m$.
Since $\psi^{(m)}$ is a closed map, and since 
$\wt\CY_m = (\pi^{(m)})\iv(\CY_m)$, we see 
that $\CY_m$ is open dense in $\CX_m$. So (i) holds.   
(ii) follows from (3.1.1).
By using the decomposition $\psi_I = \e_I\circ \xi_I$ for 
$I = [1,m]$, we see that 
$\dim \wt\CY_m = \dim \CY_m + (n-m)$. Hence (iii) follows.
For (iv), $\CY_{m-1}$ is closed in $\CY_m$, and 
$\CY_m^0$ is an open dense smooth subset of $\CY_m$
by the description of 3.2.  Hence it gives the required stratification.
Since 
$\dim \psi\iv(x,v) = n-m$ for $(x,v) \in \CY_m^0$ by 3.2, we have 
$\dim \psi\iv(x,v) = (\dim \CY - \dim \CY^0_m)/2$ by (iii).
Thus (iv) holds.
\end{proof}

\para{3.4.}
For $0 \le m \le n$, we define $\wt \CY^+_{m}$ 
as $\psi\iv(\CY^0_{m})$.
Then by 3.2, we have $\wt\CY^+_{m} = \coprod_I\wt\CY_I$, where
$I$ runs over the subsets of $[1,n]$ such that $|I| = m$.
$\wt \CY_I$ are smooth and irreducible, and they form the 
connected components of $\wt\CY^+_{m}$. 
Since $\CY = \coprod_{0\le m\le n}\CY^0_{m}$, we have 
$\wt\CY = \coprod_{0\le m\le n}\wt\CY^+_{m}$.
In the case where $I = [1,m]$, we denote $M_I$ by $M_m^0$, and  
denote $\wt\CY_I$ by $\wt\CY_m^0$. 
$M_m^0$ is an open dense subset of $M_m$, and $\wt\CY_m^0$ is an open 
dense subset of $\wt\CY_m$.
By (3.1.2), $\CW$ acts on $\wt\CY$, which leaves $\wt\CY^+_{m}$ stable 
for any $m$.
Then we have
\begin{equation*}
\tag{3.4.1}
\wt\CY^+_{m} = \coprod_{\substack{I \subset [1,n] \\ 
          |I| = m}}\wt\CY_I = 
\coprod_{w \in \CW /\CW_{\Bm}}w(\wt\CY^0_{m}).
\end{equation*}
We denote by $\psi_m : \wt\CY_m^+ \to \CY_m^0$ the restriction of 
$\psi$ on $\wt\CY_m^+$.  Then $\psi_m$ is $\CW$-equivariant with 
respect to the natural action of $\CW$ on $\wt\CY_m^+$ and 
the trivial action on $\CY_m^0$. 
\par
We consider the diagram
\begin{equation*}
\begin{CD}
T^{\io\th} @<\a_0 <<  \wt\CY  @>\psi>>  \CY, 
\end{CD}
\end{equation*}
where 
$\a_0 : \wt\CY \to T^{\io\th}$ is given by 
$\a_0(x,v,gB^{\th}) = p(g\iv xg)$.
Let $\CE$ be a tame local system on $T^{\io\th}$, and we consider 
the complex $\psi_*\a_0^*\CE$ on $\CY$. 
One can define a map $\a_I : \wt\CY_I \to T^{\io\th}$ compatible with
$\a_0$ with respect to the inclusion $\wt\CY_I \hookrightarrow \wt\CY$.
Then by (3.4.1), we have
\begin{equation*}
\tag{3.4.2}
(\psi_m)_*\a_0^*\CE|_{\wt\CY^+_m} \simeq  
   \bigoplus_{\substack{I \subset[1,n] \\  |I| = m }}(\psi_I)_*\a_I^*\CE.
\end{equation*}
We define a map $\b_I : \wh\CY_I \to T^{\io\th}$ by 
$g*(t,v) \mapsto t$.   Then $\a_I = \b_I\circ \xi_I$.
Let $\CE_I = \b_I^*\CE$ be a local system on $\wh \CY_I$ and 
$\CW_{\CE_I}$ the stabilizer of $\CE_I$ in $\CW_I$.
In the case where $I = [1,m]$, we put $\CW_{\CE_I} = \CW_{\Bm,\CE}$.
Also put $\CW_{\CE_I} = \CW_{\CE}$ for $I = [1,n]$.
$\CW_{\CE}$ acts on $(\psi_m)_*\a_0^*\CE|_{\wt\CY_m^+}$ as 
automorphisms of complexes, and permutes each direct summand 
$(\psi_I)_*\a_I^*\CE$ according to the permutation of the sets $I$ by $S_n$.  
Put $\End((\e_I)_*\CE_I) = \CA_{\CE_I}$.
Since $\e_I$ is a finite Galois covering with group $\CW_I$, 
$(\e_I)_*\CE_I$ is decomposed as 
\begin{equation*}
\tag{3.4.3}
(\e_I)_*\CE_I \simeq \bigoplus_{\r \in \CA\wg_{\CE_I}}
                \r \otimes \CL_{\r}, 
\end{equation*}
where $\CL_{\r} = \Hom (\r, (\e_I)_*\CE_I)$ 
is a simple local system on $\CY^0_m$.
We note that $\CA_{\CE_I}$ is canonically isomorphic to the group
algebra $\Ql[\CW_{\CE_I}]$. 
In fact, by [L3, 10.2], $\CA_{\CE_I}$ is a twisted group algebra 
of $\CW_{\CE_I}$. 
However, in the case where $\CE$ is the constant sheaf $\Ql$, 
$\CA_{\CE_I}$ is canonically isomorphic to the group algebra 
$\Ql[\CW_I]$. In the general case, there exists a canonical 
embedding $\CA_{\CE_I} \hra \CA_{(\Ql)_I} \simeq \Ql[\CW_I]$, and 
this implies that $\CA_{\CE_I}$ is the group algebra (see [L4, 2.4]). 
\para{3.5.}
Since $\psi_m$ is proper and $\wt\CY_I$ is closed in $\wt\CY_m^+$, 
$\psi_I$ is proper.  Hence $\xi_I$ is also proper.
Since $\xi_I$ is a $\BP_1^{I'}$ bundle, we have  
$(\xi_I)_*\a_I^*\CE \simeq H^{\bullet}(\BP_1^{I'})\otimes \CE_I$.
It follows that 
\begin{equation*}
\tag{3.5.1}
(\psi_I)_*\a_I^*\CE \simeq (\e_I)_*(\xi_I)_*\a_I^*\CE \simeq 
   H^{\bullet}(\BP_1^{I'})\otimes (\e_I)_*\CE_I.
\end{equation*}
\par\noindent
Now $\BP_1^{I'}$ is the flag variety of the group $(SL_2)^{I'}$
whose Weyl group is isomorphic to $(\BZ/2\BZ)^{I'}$.  
Let $T_2$ be a maximal torus of $B_2$.  Then $SL_2/T_2$ is an affine 
space bundle over $SL_2/B_2$, and  
$(\BZ/2\BZ)^{I'}$ acts on 
$H^{\bullet}(\BP_1^{I'},\Ql) \simeq H^{\bullet}((SL_2/T_2)^{I'},\Ql)$ 
naturally (the Springer action of $(\BZ/2\BZ)^{I'}$  on 
$(SL_2/B_2)^{I'}$).  
We put $\CA_{\CE_I}= \CA_{\Bm,\CE}$ if $I = [1,m]$, 
and put $\CA_{\CE_I} = \CA_{\CE}$ if $I = [1,n]$. 
Then $\CA_{\Bm,\CE}$ is a subalgebra of $\CA_{\CE}$.
Recall that 
$W_n = S_n \ltimes (\BZ/2\BZ)^n$ is the Weyl group of type $C_n$, 
and put $\wt\CW = \CW \ltimes  (\BZ/2\BZ)^n$.
We define a subgroup $\wt\CW_{\CE}$ (resp. 
$\wt\CW_{\CE_I}$) of $\wt\CW$ by 
$\wt\CW_{\CE} = \CW_{\CE} \ltimes (\BZ/2\BZ)^n$ (resp.
$\wt\CW_{\CE_I} = \CW_{\CE_I}\ltimes (\BZ/2\BZ)^n$).
We define an algebra $\wt\CA_{\CE}$ by 
$\wt\CA_{\CE} = \Ql[\wt\CW_{\CE}]$.  Similarly we define 
$\wt\CA_{\CE_I}$.
It follows from the above discussion that $\wt\CA_{\CE_I}$ acts
on $(\psi_I)_*\a_I^*\CE$. (The action of $\wt\CA_{\CE_I}$ on 
$(\e_I)_*\CE_I$ is the trivial extension of the action of $\CA_{\CE_I}$, 
and that of $\wt\CA_{\CE_I}$ on $H^{\bullet}(\BP_1^{I'})$ is obtained 
from the action of $(\BZ/2\BZ)^n$ together with $\CA_{\CE_I}$, where 
$(\BZ/2\BZ)^{I'}$ acts as defined above, and $(\BZ/2\BZ)^I$ acts 
trivially).  We put $\wt\CA_{\Bm,\CE} = \wt\CA_{\CE_I}$ for $I = [1,m]$.
Then in view of (3.4.2), (3.4.3) and (3.5.1), we see that 
\begin{equation*}
\tag{3.5.2}
(\psi_{m})_*\a_0^*\CE|_{\wt\CY_m^+} \simeq \bigoplus_{\r \in \CA_{\Bm,\CE}\wg}
  \Ind_{\wt\CA_{\Bm,\CE}}^{\wt\CA_{\CE}}
    \bigl(H^{\bullet}(\BP_1^{n-m}) \otimes \r \bigr)\otimes \CL_{\r},
\end{equation*}
where $\r$ is regarded as a $\wt\CA_{\Bm,\CE}$-module through 
the trivial extension from $\CA_{\Bm,\CE}$ to $\wt\CA_{\Bm,\CE}$, and 
$H^{\bullet}(\BP_1^{n-m})$ is regarded as a 
$S_{n-m}\ltimes (\BZ/2\BZ)^{n-m}$-module through the Springer action of 
$(\BZ/2\BZ)^{n-m}$ and the $S_{n-m}$ action arising from the 
permutations of factors $\BP_1$.

\par
The group $\CW_{\CE_I}$ is decomposed  as 
$\CW_1 \times \CW_2$, where $\CW_1$ (resp. $\CW_2$) is 
the stabilizer of $\CE$ in $S_I$ (resp. $S_{I'}$) under 
the isomorphism $\CW_I \simeq S_I \times S_{I'}$.
Accordingly, the algebra $\CA_{\Bm,\CE}$ can be decomposed as 
$\CA_{\Bm,\CE} \simeq \CA_1 \otimes \CA_2$, where 
$\CA_1$ (resp. $\CA_2$) is the twisted group algebra of $\CW_1$
(resp. $\CW_2$) for $I = [1,m]$.
Then  an irreducible 
$\CA_{\Bm,\CE}$-module $\r$ can be extended to an 
$\wt\CA_{\Bm,\CE}$-module $\r'$, where $\BZ/2\BZ$ acts 
trivially on $\CA_{\Bm,\CE}$ if it belongs to $I$, and non-trivially
if it belongs to $I'$. We denote by $\wt V_{\r}$ the induced 
$\wt\CA_{\CE}$-module $\wt\CA_{\CE}\otimes_{\wt\CA_{\Bm,\CE}}\r'$.  
\par
We show the following result.

\begin{prop}  
$\psi_*\a_0^*\CE[d_n]$ is a semisimple perverse sheaf on $\CY$, 
equipped with $\wt\CA_{\CE}$-action, and is
decomposed as
\begin{equation*}
\psi_*\a_0^*\CE[d_n] \simeq \bigoplus_{0 \le m \le n}
      \bigoplus_{\r \in \CA_{\Bm,\CE}\wg} 
           \wt  V_{\r} \otimes \IC(\CY_m, \CL_{\r})[d_m],
\end{equation*}
where $d_m = \dim \CY_m$.  Moreover, $\IC(\CY_m, \CL_{\r})$
is a constructible sheaf on $\CY_m$.
\end{prop}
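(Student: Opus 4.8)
The plan is to deduce the statement from the decomposition theorem applied to the semismall map $\psi$, using the stratumwise computation (3.5.2) to pin down the simple constituents and their multiplicities.

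\emph{Perversity and semisimplicity.} First I would record that $\wt\CY$ is smooth and irreducible of dimension $d_n$, so $\a_0^*\CE[d_n]$ is a perverse sheaf on $\wt\CY$; since $\CE$ is tame, $\a_0^*\CE$ is a semisimple local system of finite order, hence pure. By Lemma 3.3 (iv), $\psi$ is proper and semismall for the stratification $\CY=\coprod_{0\le m\le n}\CY^0_m$. The decomposition theorem, applied to the semismall map $\psi$ and the pure perverse sheaf $\a_0^*\CE[d_n]$, then shows that $\psi_*\a_0^*\CE[d_n]$ is a semisimple perverse sheaf on $\CY$ and gives
\begin{equation*}
\psi_*\a_0^*\CE[d_n]\simeq\bigoplus_{0\le m\le n}\ \bigoplus_{\CL}V_{m,\CL}\otimes\IC(\CY_m,\CL)[d_m],
\end{equation*}
where $\CL$ runs over simple local systems on $\CY^0_m$ (note $\ol{\CY^0_m}=\CY_m$ by Lemma 3.3), and $V_{m,\CL}$ are finite dimensional graded multiplicity spaces; moreover $V_{m,\CL}$ and the $\CL$ occurring at level $m$ are recovered from $\CH^{-d_m}$ of the restriction of $\psi_*\a_0^*\CE[d_n]$ to the open stratum $\CY^0_m$.

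\emph{Identifying the constituents.} Next I would compute this restriction. Proper base change gives $(\psi_*\a_0^*\CE[d_n])|_{\CY^0_m}\simeq\bigl((\psi_m)_*(\a_0^*\CE|_{\wt\CY^+_m})\bigr)[d_n]$, which is described by (3.5.2). On the other hand, over $\CY^0_m$ the summand $\IC(\CY_{m'},\CL)[d_{m'}]$ vanishes if $m'<m$ (its support lies in $\CY_{m-1}$), while if $m'>m$ the support condition for intersection cohomology complexes forces $\CH^i\bigl(\IC(\CY_{m'},\CL)[d_{m'}]\bigr)|_{\CY^0_m}=0$ for $i\ge-d_m$; since $\IC(\CY_m,\CL)[d_m]|_{\CY^0_m}=\CL[d_m]$ lies in degree $-d_m$, the degree $-d_m$ part of $(\psi_*\a_0^*\CE[d_n])|_{\CY^0_m}$ is precisely $\bigoplus_{\CL}V_{m,\CL}\otimes\CL[d_m]$. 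In (3.5.2) the degree $-d_m$ part corresponds to the top cohomology $H^{2(n-m)}(\BP_1^{n-m})$ of $H^{\bullet}(\BP_1^{n-m})$; as a $\wt\CA_{\Bm,\CE}$-module one has $H^{2(n-m)}(\BP_1^{n-m})\otimes\r\simeq\r'$, the distinguished extension of $\r\in\CA_{\Bm,\CE}\wg$ introduced in 3.5, because the Springer action of $(\BZ/2\BZ)^{n-m}$ on the top cohomology of $\BP_1^{n-m}$ is the sign character (and the permutation action is trivial there). Hence the top graded piece of $\Ind_{\wt\CA_{\Bm,\CE}}^{\wt\CA_{\CE}}\bigl(H^{\bullet}(\BP_1^{n-m})\otimes\r\bigr)$ is $\Ind_{\wt\CA_{\Bm,\CE}}^{\wt\CA_{\CE}}\r'=\wt V_{\r}$, and comparing with (3.5.2) shows that the simple local systems occurring at level $m$ are exactly the $\CL_{\r}$, $\r\in\CA_{\Bm,\CE}\wg$, with $V_{m,\CL_{\r}}=\wt V_{\r}$. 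This yields the asserted decomposition.

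\emph{Equivariance and constructibility.} The $\wt\CA_{\CE}$-action is installed as follows: the $\CW_{\CE}$-part comes from the action of $\CW$ on $\wt\CY$ of 3.4 (which fixes $\a_0^*\CE$ up to isomorphism), and the remaining $(\BZ/2\BZ)^n$-part is assembled from the stratumwise actions of 3.5 on $(\psi_m)_*(\a_0^*\CE|_{\wt\CY^+_m})$ (the fibrewise Springer action on the $\BP_1$-factors), these being compatible under restriction to strata; since the decomposition above was extracted canonically, as the isotypic decomposition under this action after restriction to the open strata, it is automatically $\wt\CA_{\CE}$-equivariant. Finally, the fibres of $\psi$ are disjoint unions of $\BP_1^{n-m}$-bundles over finite sets, so the cohomology sheaves of $\psi_*\a_0^*\CE$ sit in even degrees; since the shifts $d_m-d_n=-2(n-m)$ are even, each $\IC(\CY_m,\CL_{\r})$ has cohomology sheaves in even degrees, and a parity argument exploiting the strata $\CY^0_{m'}\subset\CY_m$ (all of even codimension) — along the lines of the analogous computation in [H1] — then forces $\IC(\CY_m,\CL_{\r})$ to be concentrated in degree $0$, i.e. to be a constructible sheaf.

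I expect the delicate point to be the bookkeeping in the second step — tracking the $\wt\CA_{\CE}$-module structure while matching the abstract multiplicity spaces $V_{m,\CL}$ with the induced modules $\wt V_{\r}$ — together with the precise form of the parity argument in the last step; by contrast, the perversity and semisimplicity are formal consequences of semismallness.
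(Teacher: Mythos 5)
Your proof of the displayed decomposition is essentially correct, but it takes a different route from the paper: you invoke semismallness (Lemma 3.3 (iv)) and the decomposition theorem globally, and then identify the supports, local systems and multiplicity spaces by restricting to the open strata $\CY^0_m$ and reading off the top-degree piece of (3.5.2), where the Springer action of $(\BZ/2\BZ)^{n-m}$ on $H^{2(n-m)}(\BP_1^{n-m})$ is the sign character, giving $\wt V_{\r}$. The paper instead proves the finer statement (3.6.1)/(3.6.3) by induction on $m$, describing $(\ol\psi_m)_*\wt\CE_m$ for every $m$ (including the lower-stratum terms and the precise grading); this sharper inductive form is what gets re-used later (e.g.\ in 4.9 and Section 6), whereas your argument only yields the case $m=n$. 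As a proof of the proposition as stated, that part of your argument is acceptable, modulo the usual care in extending the stratumwise $\wt\CA_{\CE}$-action to the whole complex, which both you and the paper treat at the same level of informality.

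The genuine gap is in the final claim that $\IC(\CY_m,\CL_{\r})$ is a constructible sheaf. Your parity argument does not prove this: knowing that the cohomology sheaves of $\IC(\CY_m,\CL_{\r})$ live only in even degrees, and that the strata have even codimension, is far from forcing concentration in degree $0$ — intersection cohomology complexes of singular varieties with even-codimensional strata (Schubert varieties, nilpotent orbit closures) routinely have nonzero $\CH^{2i}$ for many $i>0$, all in even degrees. The mechanism the paper uses is different and essential: one extends the Galois covering $\e_I$ to a \emph{finite} morphism $\ol\e_I : \ol{\wh\CY}_m \to \CY_m$, where $\ol{\wh\CY}_m = H\times^{Z_H(T^{\io\th})_I}(T^{\io\th}\reg\times\ol M_I)$ is \emph{smooth} ($\ol M_I$ the closure of $M_I$ in $V$). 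Because $\ol\e_I$ is finite, $(\ol\e_I)_*\ol\CE_I$ is an honest sheaf; because the source is smooth and the map is finite, it is also (up to shift) a semisimple perverse sheaf — a perfect sheaf in the sense of [L2, (5.4.4)] — so all of its simple summands are sheaves. Restricting to $\CY^0_m$ identifies $\IC(\CY_m,\CL_{\r})$ with one of these summands, whence the constructibility. Your appeal to "an analogous computation in [H1]" points in the right direction only insofar as that computation itself rests on such a finite/small covering, not on parity; without producing the morphism $\ol\e_I$ (or an equivalent small resolution), this step of your proposal does not go through.
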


\begin{proof}
For each $m \le n$, let $\ol\psi_m$ be the restriction of
$\psi$ on $\psi\iv(\CY_{m})$, and $\wt\CE_m$ be the restriction of
$\a_0^*\CE$ on $\psi\iv(\CY_m)$.  
The following fact holds.
\par\medskip\noindent
(3.6.1) \  For each $m$, $\IC(\CY_m, \CL_{\r})$ is a constructible 
sheaf on $\CY_m$, and we have 
\begin{equation*}
\begin{split}
(\ol\psi_m)_*\wt\CE_m[d_m] \simeq 
&\bigoplus_{\r \in \CA_{\Bm,\CE}\wg}\Ind_{\wt\CA_{\Bm,\CE}}^{\wt\CA_{\CE}}
      (H^{\bullet}(\BP_1^{n-m})
               \otimes \r)\otimes \IC(\CY_m,\CL_{\r})[d_m] \\
 &\oplus \bigoplus_{0 \le m' < m}\bigoplus_{\r \in \CA_{\Bm',\CE}\wg}
        \wt V_{\r} \otimes\IC(\CY_{m'}, \CL_{\r})[d_{m'} - 2(n-m)]
\end{split}
\end{equation*}
where $\wt\CA_{\Bm,\CE}$-module $H^{\bullet}(\BP_1^{n-m})\otimes \r$
is given as in (3.5.2), $\wt\CA_{\CE}$-module $\wt V_{\r}$ is as in 
3.5.
\par\medskip
Note that (3.6.1) for $m = n$ implies the proposition.
First we show that 
\par\medskip\noindent
(3.6.2) \ $\IC(\CY_m,\CL_{\r})$ is a constructible sheaf 
on $\CY_m$.
\par\medskip\noindent
Recall the map $\e_I : \wh\CY_m \to \CY_m^0$.
We define a variety $\ol{\wh\CY}_m$ by 
\begin{equation*}
\ol{\wh\CY}_m = H \times^{Z_H(T^{\io\th})_I}
           (T^{\io\th}\reg \times \ol M_I), 
\end{equation*}
where $\ol M_I$ is the closure of $M_I$ in $V$.  
We define a morphism $\ol \e_I : \ol{\wh \CY}_m \to \CY_m$ by
$g*(t,v) \mapsto (gtg\iv, gv)$.  Then $\ol\e_I$ is a finite morphism 
and $\ol{\wh \CY}_m$ is smooth.
Let $\ol\b_I : \ol{\wh \CY}_m \to T^{\io\th}$ be the map 
defined by $g*(t,v) \mapsto t$, and put $\ol\CE_I = \ol\b_I^*\CE$.  Then 
$(\ol\e_I)_*\ol\CE_I$ is a perfect sheaf in the sense of [L2, (5.4.4)].
In particular, $(\ol\e_I)_*\ol\CE_I$ is a direct sum of intersection 
cohomology complexes on $\CY_m$ which are constructible sheaves. 
Since $(\ol\e_I)_*\ol\CE_I|_{\CY_m^0} = (\e_I)_*\CE_I$, we see that 
$\IC(\CY_m,\CL_{\r})$ coincides with one of the simple direct summands in 
$(\ol\e_I)_*\ol\CE_I$.  Hence (3.6.2) holds.
\par
For each $m$, $\Bm = (m, n-m)$ and $0 \le i \le n-m$, define 
\begin{align*}
\CL_m &= \bigoplus_{\r \in \CA_{\Bm,\CE}\wg}\wt V_{\r}\otimes \CL_{\r}, \\
\CL_{m,i} &= \bigoplus_{\substack{J \subset [1, n-m] \\ |J| = i}}
               \CL_m.
\end{align*}
Then the formula in (3.6.1) is equivalent to the following formula.
\begin{equation*}
\tag{3.6.3}
(\ol\psi_m)_*\wt\CE_m \simeq \bigoplus_{0 \le i \le n-m}
   \IC(\CY_m, \CL_{m,i})[-2i] \oplus \bigoplus_{0 \le m' < m}
\IC(\CY_{m'}, \CL_{m'})[2m'-2n].
\end{equation*}
We show (3.6.3) by induction on $m$.
In the case where $m = 0$, (3.6.3) follows from (1.13.1).  
Assume that (3.6.3) holds for $m$.  
Since $\ol\psi_{m+1}$ is a proper map, by the decomposition  theorem
$(\ol\psi_{m+1})_*\wt\CE_{m+1}$ is a direct sum of the complexes 
$A[i]$ for various simple perverse sheaf $A$ on $\CY_{m+1}$.
Suppose that $\supp A$ is not contained in $\CY_{m}$.
Then $\supp A \cap \CY_{m+1}^0 \ne \emptyset$, 
and $A|_{\CY_{m+1}^0}$ appears as a direct summand in the decomposition 
$(\psi_m)_*\a_0^*\CE|_{\wt\CY_m^+}$.  It follows, by (3.5.2), that 
$A|_{\CY_{m+1}^0}$ coincides with $\CL_{\r}$ 
for some $\r \in \CA_{\Bm+1,\CE}$ with $\Bm+1 = (m+1, n-m-1)$. Hence 
$A$ coincides with $\IC(\CY_{m+1}, \CL_{\r})[d_{m+1}]$. 
This implies that 
the direct sum of $A[i]$ appearing in $(\ol\psi_{m+1})_*\wt\CE_{m+1}$
such that $\supp A \cap \CY^0_{m+1} \ne \emptyset$ is given by 
\begin{equation*}
\tag{3.6.4}
\bigoplus_{0 \le i \le n-m-1}\IC(\CY_{m+1}, \CL_{m+1,i})[-2i].
\end{equation*} 
Next suppose that $\supp A \subset \CY_{m}$. 
Then $A[i]$ appears as a direct summand of the semisimple complex 
$(\ol\psi_{m})_*\wt\CE_{m}[d_{m}]$.  
By induction hypothesis, 
the complex $(\ol\psi_m)_*\wt\CE_m$ is decomposed as in (3.6.3).
Comparing (3.6.3) and (3.6.4), and by using (3.6.2), we see 
that each factor $\IC(\CY_{m'}, \CL_{m'})[2m'-2n]$ in 
$(\ol\psi_m)_*\wt\CE_m$ for $0 \le m' \le m$ 
appears in  $(\ol\psi_{m+1})_*\wt\CE_{m+1}$ as a direct summand. 
Moreover, we see that 
$R^{2i}(\ol\psi_{m+1})_*\wt\CE_{m+1} = \IC(\CY_{m+1}, \CL_{m+1,i})$ 
and $R^{2i}(\ol\psi_m)_*\wt\CE_m = \IC(\CY_m, \CL_{m,i})$ 
for $0 \le i < n-m$.  Hence we have 
$\IC(\CY_{m+1},\CL_{m+1,i})|_{\CY_m} = \IC(\CY_m, \CL_{m,i})$.
It follows that each factor 
$\IC(\CY_m, \CL_{m,i})[-2i]$ in (3.6.3) for $0 \le i < n-m$ is absorbed to 
$\IC(\CY_{m+1}, \CL_{m+1,i})[-2i]$ in (3.6.4).  
This shows that (3.6.3) holds also for $m+1$. Hence (3.6.3) is proved
and the proposition follows.
\end{proof}
   
\bigskip

\section{ Intersection cohomology on $G^{\io\th} \times V$} 
\para{4.1.}
We keep the notation in Section 3. In view of Proposition 3.6,
we define a semisimple perverse sheaf $K_{T,\CE}$ on $\CX$ by 
\begin{align*}
\tag{4.1.1}
K_{T,\CE} = \bigoplus_{0 \le m \le n}
       \bigoplus_{\r \in \CA_{\Bm,\CE}\wg}\wt V_{\r} 
         \otimes \IC(\CX_m, \CL_{\r})[d_m],  
\end{align*}
which is the $DGM$-extension of $\psi_*\a_0^*\CE[d_n]$.
We call $IC(\CX_m, \CL_{\r})[\dim \CX_m]$ extended by 0 to $\CX$, for various 
$\r \in \CA_{\Bm, \CE}$, character sheaves on $\CX = G^{\io\th} \times V$.  
We consider a diagram

\begin{equation*}
\begin{CD}
T^{\io\th} @<\a << \wt\CX @>\pi >> \CX,
\end{CD}
\end{equation*}
where $\a : \wt\CX \to T^{\io\th}$ is defined by 
$\a(x,v, gB^{\th}) = p(g\iv xg)$.
For a tame local system $\CE$ on $T^{\io\th}$, we consider 
a complex $\pi_*\a^*\CE$ on $\CX$.
The following result is an analogue of Theorem 1.16.

\begin{thm}  
$\pi_*\a^*\CE[\dim \CX] \simeq K_{T,\CE}$ as perverse sheaves on $\CX$.
\end{thm}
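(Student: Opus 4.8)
The plan is to reduce Theorem 4.2 to Proposition 3.6 by a standard ``open stratum determines everything'' argument, exactly as Theorem 1.16 (due to Grojnowski) is deduced from the regular-semisimple computation in the group case. First I would observe that $\CY = G^{\io\th}\reg \times V$ is open in $\CX = G^{\io\th} \times V$; indeed $G^{\io\th}\reg$ is open dense in $G^{\io\th}$ by 1.8, and the factor $V$ is untouched. Let $j: \CY \hra \CX$ denote the open inclusion. Since $\pi$ is proper (being obtained from the proper map of 1.8 by a base change, or directly since $\wt\CX \simeq H\times^{B^{\th}}(B^{\io\th}\times M_n)$ with $B^{\io\th}$ closed in $G^{\io\th}$ and $M_n$ closed in $V$), the complex $\pi_*\a^*\CE[\dim\CX]$ is a semisimple complex on $\CX$ by the decomposition theorem (Gabber, BBD). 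Moreover $\pi\iv(\CY) = \wt\CY$ by definition, and the restriction of $\pi$ to $\wt\CY$ is exactly the map $\psi$ of Section 3, with $\a|_{\wt\CY} = \a_0$. Hence $j^*(\pi_*\a^*\CE[\dim\CX]) \simeq \psi_*\a_0^*\CE[d_n]$, which by Proposition 3.6 is a semisimple perverse sheaf decomposing as $\bigoplus_{0\le m\le n}\bigoplus_{\r}\wt V_\r \otimes \IC(\CY_m,\CL_\r)[d_m]$.

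Next I would argue that each simple summand of $\pi_*\a^*\CE[\dim\CX]$ has support meeting $\CY$. For this it is enough to show there is no simple perverse constituent supported in the closed complement $\CX \setminus \CY = (G^{\io\th}\setminus G^{\io\th}\reg)\times V$. The cleanest route is a dimension/semismallness estimate: using that $\pi$ is, over a suitable stratification refining the stratification of $G^{\io\th}$ by $H$-orbit type crossed with the stratification of Section 3, small — or at least that the relevant fibre dimensions over the non-regular locus are strictly smaller than half the codimension — one concludes that no $\IC$-sheaf can be born on the boundary. Alternatively, and more in the spirit of the paper, one notes that $\CX_m = \ol\CY_m$ (the closures agree by Lemma 3.3(i) applied fibrewise, or directly since $\CX_m = \bigcup_{g\in H} g(B^{\io\th}\times M_m)$ is the closure of $\CY_m = \bigcup_{g\in H} g(B^{\io\th}\reg\times M_m)$), so that $\IC(\CX_m,\CL_\r)$ is literally the $\IC$-extension of $\IC(\CY_m,\CL_\r)$ across the boundary; combined with the fact that $\pi$ restricted over $\CX_m\setminus\CY_m$ again has no ``extra'' fibre dimension (this is where one must check that $\CX_m \setminus \CY_m$ is exactly $\CX_{m-1}$ together with the non-regular part, and that $\pi^{(m)}$ is semismall there too), this forces the decomposition.

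Granting that every simple summand $A$ of $\pi_*\a^*\CE[\dim\CX]$ satisfies $A = \IC(\ol{\supp A \cap \CY}, A|_{\CY})$ and hence is of the form $\IC(\CX_m,\CL_\r)[d_m]$ with the same multiplicity $\wt V_\r$ as prescribed by Proposition 3.6, we get $\pi_*\a^*\CE[\dim\CX] \simeq \bigoplus_{0\le m\le n}\bigoplus_{\r\in\CA_{\Bm,\CE}\wg}\wt V_\r \otimes \IC(\CX_m,\CL_\r)[d_m] = K_{T,\CE}$, which is the claim. The $\wt\CA_\CE$-module structures match because the isomorphism $j^*(\pi_*\a^*\CE[\dim\CX]) \simeq \psi_*\a_0^*\CE[d_n]$ is $\wt\CA_\CE$-equivariant (the algebra acts through automorphisms of the functor, compatibly with open restriction), and $j^*$ induces an isomorphism $\End(\pi_*\a^*\CE[\dim\CX]) \isom \End(\psi_*\a_0^*\CE[d_n])$ since no summand is killed by $j^*$.

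\textbf{Main obstacle.} The genuinely non-formal step is ruling out simple summands supported on the boundary $(G^{\io\th}\setminus G^{\io\th}\reg)\times V$, i.e. establishing that $\pi$ (or the relevant $\pi^{(m)}$) is semismall, equivalently small enough, over the non-regular locus. In the classical group case this is Grojnowski's Lemma 7.4.4 (quoted as Theorem 1.16), and one expects to either invoke it after the $\log$/Cayley-type reduction used in the proof of Proposition 1.14, or to redo the fibre-dimension count directly over each stratum $\CO_{\Bmu}\times(\text{support locus})$ using the estimates of Lemma 2.5 and Corollary 2.7 together with the structure of $\CB^H_x$ in Lemma 1.18. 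I expect the cleanest argument is the reduction: restrict everything to the fibre over a semisimple $s\in G^{\io\th}$, use Lemma 1.18 to identify $\CB^H_s$ with $\coprod_\g \CB^{Z_H(s)}$, and thereby reduce the boundary estimate to the already-known regular case for the smaller groups $Z_H(s)$, which are products of symplectic groups and general linear groups; induction on $n$ then closes the argument.
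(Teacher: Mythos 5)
Your formal skeleton is sound and matches the first layer of the paper's argument: $\CY$ is open dense in $\CX$, $\pi\iv(\CY)=\wt\CY$, proper base change gives $j^*(\pi_*\a^*\CE[\dim\CX])\simeq\psi_*\a_0^*\CE[d_n]$, and once one knows that no simple summand of $\pi_*\a^*\CE[\dim\CX]$ has support contained in $\CX\setminus\CY$, Proposition 3.6 together with $\CX_m=\ol\CY_m$ forces the decomposition $K_{T,\CE}$ with the correct multiplicity spaces and $\wt\CA_{\CE}$-action. But the step you isolate as the ``main obstacle'' is not a technical remainder: it is the entire content of the paper's proof, occupying 4.3--4.9, and the paper does not establish it by a semismallness estimate over the non-regular locus, nor by passing to $Z_H(s)$ and inducting on $n$. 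Instead it stratifies $\CX$ by $\CX_m^0=\{(x,v)\mid\dim W(x,v)=m\}$ (a stratification transverse to the regular/non-regular dichotomy), introduces the variety $\CG_m$ of triples $(x,v,W)$ and the $H\times H_0$-bundle $\CH_m$ mapping to $G_1\times G_2^{\io\th}$ with $G_1=GL_m$ and $G_2^{\io\th}$ the rank-$(n-m)$ symmetric space, and proves (Proposition 4.8, via the $\a$-partition argument (4.8.2) on the fibre product $\ZC_m$, Lusztig's theorem for $G_1$, Grojnowski's Theorem 1.16 for $G_2^{\io\th}$, and Lemma 4.7 identifying the descended sheaf with $\IC(\CX_m^0,\CL_{\r})$) that every simple summand of $(\pi_m)_*\a^*\CE$ over $\CX_m^0$ has full support there; the induction of 4.9 over $\CX_0\subset\cdots\subset\CX_n$ then yields exactly your missing support statement.

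Neither of your two sketched substitutes closes this gap as stated. The reduction to $Z_H(s)$ via Lemma 1.18 stalls precisely where the theorem is hardest: on the locus where the semisimple part of $x$ is central, $Z_H(s)=H$ and no rank drop occurs, yet this locus (scalars times $G^{\io\th}\uni\times V$) lies in the boundary and carries all of the new geometry (the orbits $\CO_{\Bmu}$, the spaces $W(x,v)$); moreover the analysis of $\CB^H_{su}$ in 1.17/Lemma 1.18 ignores the condition $g\iv v\in M_n$, which is essential here. The paper's reduction is along the isotropic subspace $W(x,v)$, i.e.\ to the Levi $GL_m\times Sp_{2n-2m}$, exactly because that reduction remains nontrivial on the unipotent locus. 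The dimension-estimate route is viable in principle (strict inequality of fibre dimension against half the codimension on every boundary stratum would rule out boundary-supported summands and give perversity), but the estimates you cite do not supply it: Lemma 2.5 and Corollary 2.7 treat only the unipotent part, and Lemma 1.18 omits the vector. One would need bounds for arbitrary mixed pairs $(su,v)$ together with codimension counts for the corresponding decomposition-class strata -- new work comparable in size to Proposition 5.7 or Proposition 4.8, which your proposal does not carry out. So the proposal reproduces the formal reduction but leaves the theorem's actual content unproved.
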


\para{4.3.}
The theorem will be proved in 4.9 after some preliminaries.
For each $m \le n$, put $\CX_m^0 = \CX_m \backslash \CX_{m-1}$.
The set $\CX_m^0$ is described as follows; 
First consider the unipotent part $\CX_{m,\unip}^0$ of $\CX_m^0$.
Since $\CX_{m,\unip}$ is the closure of $\CO_{\Bla}$ with 
$\Bla = ((m), (n-m))$ (see Proposition 2.7), we see by using (1.7.3) 
that 
\par\medskip\noindent
(4.3.1) \ $\CX^0_{m,\unip}$ is the union of $\CO_{\Bmu}$
such that $\Bmu = (\mu^{(1)}, \mu^{(2)})$ with $|\mu^{(1)}_1| = m$, 
for the partition $\mu^{(1)} : \mu^{(1)}_1 \ge \mu^{(1)}_2 \ge \cdots$.
In particular, $(x,v)$ is contained in $\CX_m^0$ if and only if 
$\dim {\Bk}[x]v = m$, where $\Bk[x]v$ denotes the subspace of $V$ spanned by 
$v, xv, x^2v, \cdots$.
\par\medskip
More generally, consider $(x,v) \in  G^{\io\th} \times V$. 
Let $x = su$ be the Jordan decomposition of $x \in G^{\io\th}$ and 
consider the decomposition $V = V_1 \oplus \cdots \oplus V_t$
into eigenspaces of $s$. 
Then $Z_G(s) \simeq GL_{2n_1} \times \cdots \times GL_{2n_t}$ with 
$\dim V_i = 2n_i$.  Put $G_i = GL_{2n_i}$ for each $i$.
Then $Z_G(s)$ is $\th$-stable, and $\th$ stabilizes each factor so that
$Z_H(s) \simeq G_1^{\th} \times \cdots \times G_t^{\th}$ with 
$G_i^{\th} \simeq Sp(V_i)$.
Let $v = v_1 + \cdots + v_t$ be the decomposition of $v \in V$ 
with $v_i \in V_i$.
Let $u_i$ be the restriction of $u$ on $V_i$. 
Then $(u_i, v_i) \in G_i^{\io\th} \times V_i$.  
We denote by $\CX^{G_i,0}_{m_i, \unip}$ the subvariety of 
$G^{\io\th}_{i,\unip} \times V_i$ defined in a similar way 
as $\CX^0_{m,\unip}$ for $G^{\io\th}\uni \times V$.
Then we have 
\par\medskip\noindent
(4.3.2) \ $(x,v)$ is contained in $\CX^0_m$ if and only if 
$(u_i,v_i) \in \CX^{G_i,0}_{m_i, \unip}$ with $\sum_{i=1}^t m_i = m$. 
\par\medskip
For each $(x,v) \in G^{\io\th}\times V$, 
let $(u_i,v_i)$ be defined as above.  We define 
a subspace $W = W(x,v)$ of $V$ by 
$W = \Bk[u_1]v_1 \oplus \cdots \oplus \Bk[u_t]v_t$.
Thus $W$ is an isotropic, $x$-stable  subspace of $V$ containing $v$.
Then (4.3.2) can be rewritten as 
\begin{equation*}
\tag{4.3.3}
\CX_m^0 = \{ (x,v) \in G^{\io\th} \times V \mid  \dim W(x,v) = m\}.
\end{equation*}
Recall the map $\pi: \wt\CX \to \CX$.
For an integer $0 \le m \le n$, we define 
a locally closed subvariety $\wt\CX^+_{m}$ of $\wt\CX$
by $\wt\CX^+_{m} = \pi\iv(\CX_{m}^0)$.
Then $\CY_{m}^0$ is open dense in $\CX_{m}^0$ and $\wt\CY^+_{m}$
is an open subset of $\wt\CX^+_{m}$. 
For $(x,v, gB^{\th}) \in \wt\CX^+_m$, we define its level 
$I \subset [1,n]$ as 
follows;  assume that $(x,v) \in B^{\io\th} \times M_n$ and that
$x = su$, the Jordan decomposition of $g$  with $s \in T^{\io\th}$. 
Then $M_n$ is $s$-stable, and is decomposed as
$M_n = M_{n_1}^{[1]} \oplus \cdots \oplus M_{n_t}^{[t]}$, where 
$M_{n_i}^{[i]} = M_n \cap V_i$ is a maximal isotropic subspace of $V_i$.  
Note that 
$M_n = \lp e_1, \dots, e_n \rp$. 
Since $\{ e_i\}$ are weight vectors for $T$, $M^{[i]}_{n_i}$ 
has a basis 
$e_{j_1}, e_{j_2} \cdots$, with $j_1 < j_2 < \cdots < j_{n_i}$. 
Let $(u_i, v_i)$ be as above, and assume that $\dim \Bk[u_i]v_i = m_i$.
Let us define a subset $I_i$ of $\{ j_1, \dots, j_{n_i}\}$ by choosing 
first $m_i$ numbers.  Hence $|I_i| = m_i$.  We define 
$I = \coprod_i I_i$.  Since $(x,v) \in \CX_m^0$, we have $|I| = m$.
Note that the attachment $(x,v) \mapsto I$ depends only on the 
$B^{\th}$-conjugacy class of $(x,v)$.  Thus we have a well-defined map
$(x,v, gB^{\th}) \mapsto I$.
We define a subvariety $\wt\CX_I$ of $\wt\CX^+_{m}$ by 
\begin{equation*}
\wt\CX_I = \{ (x,v, gB^{\th}) \in \wt\CX^+_{m} 
   \mid (x,v, gB^{\th}) \mapsto I\}.
\end{equation*}
We show the following lemma.
\begin{lem}  
$\wt\CX^+_{m}$ is decomposed as 
\begin{equation*}
\wt\CX^+_{m} = \coprod_{\substack{ I \subset [1,n] \\
                           |I| = m}}\wt\CX_I, 
\end{equation*}
where $\wt\CX_I$ is an irreducible component of $\wt\CX^+_{m}$
for each $I$.  
\end{lem}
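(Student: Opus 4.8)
The decomposition $\wt\CX^+_m=\coprod_I\wt\CX_I$ is immediate: the discussion preceding the lemma exhibits a well-defined map $\wt\CX^+_m\to\{I\subset[1,n]\mid|I|=m\}$, $(x,v,gB^{\th})\mapsto I$, and $\wt\CX_I$ is its fibre over $I$; since the level is read off from the eigenvalue multiplicities of the semisimple part $s$ of $x$ together with the integers $\dim\Bk[u_i]v_i$ (all constructible), this map is constructible, so each $\wt\CX_I$ is a locally closed subvariety. Thus the real assertion is that every $\wt\CX_I$ is irreducible of the maximal dimension $\dim\wt\CX^+_m=\dim\wt\CY^+_m=2n^2+m$. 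I would derive this from the irreducibility of the stratum $\wt\CY_I$ of 3.4 together with the inclusion $\wt\CY_I\subseteq\wt\CX_I$. The latter follows from a direct computation of the level on $\wt\CY_I\simeq H\times^{B^{\th}\cap Z_H(T^{\io\th})}(T^{\io\th}\reg\times M_I)$: the eigenspaces of a regular semisimple $t\in T^{\io\th}$ are the coordinate planes $\lp e_j,f_j\rp$, the unipotent part is trivial, so $\dim\Bk[1]v_j=1$ exactly for $j\in\supp v=I$ and $0$ otherwise, giving level $I$. Moreover $\wt\CY_I$ is open in $\wt\CX_I$ since $\wt\CY^+_m=\coprod_I\wt\CY_I$ is open in $\wt\CX^+_m$ (noted in 4.3). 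Hence, once $\wt\CX_I\subseteq\ol{\wt\CY_I}$ is established, the irreducible set $\wt\CY_I$ is open dense in $\wt\CX_I$, so $\wt\CX_I$ is irreducible and $\ol{\wt\CX_I}=\ol{\wt\CY_I}$; summing over $I$, $\wt\CX^+_m\subseteq\bigcup_I\ol{\wt\CY_I}$, so the $\binom nm$ distinct irreducible closed sets $\ol{\wt\CY_I}$ are exactly the irreducible components of $\ol{\wt\CX^+_m}$, each meeting $\wt\CX^+_m$ precisely in $\wt\CX_I$, which is the content of the lemma.

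\textbf{The key step: $\wt\CX_I\subseteq\ol{\wt\CY_I}$.} Using the $H$-equivariant identification $\wt\CX\simeq H\times^{B^{\th}}(B^{\io\th}\times M_n)$ and the $H$-invariance of the level, one has $\wt\CX_I\simeq H\times^{B^{\th}}Z_I$ where $Z_I$ is the ``slice'' $\{(x,v)\in(B^{\io\th}\times M_n)\cap\CX^0_m\mid\text{level }(x,v)=I\}$; since $\ol{\wt\CY_I}$ is $H$-stable it suffices to show each $(x,v)\in Z_I$ lies in $\ol{\wt\CY_I}$. Write $x=su$ with $s$ semisimple; after conjugating by an element of $B^{\th}$ (every semisimple element of $B^{\io\th}$ is $B^{\th}$-conjugate into $T^{\io\th}$, by Remark~1.3) we may take $s\in T^{\io\th}$, giving the $s$-eigenspace decomposition $V=\bigoplus_iV_i$ with $M_n\cap V_i$ spanned by $\{e_j\mid j\in J_i\}$, a block decomposition $u=\prod_iu_i$, and $v=\sum_iv_i$ with $(u_i,v_i)\in G_i^{\io\th}\uni\times V_i$, $\dim\Bk[u_i]v_i=m_i$, $\sum_im_i=m$. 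The plan is to produce a one-parameter family $(x_\epsilon,v_\epsilon)$ inside $Z_I$ with $(x_0,v_0)=(x,v)$ and, for $\epsilon\ne0$, $x_\epsilon$ regular semisimple of level $I$ (so that $(x_\epsilon,v_\epsilon,eB^{\th})\in\wt\CY_I$): split the eigenvalues of $s$ along the coordinate pairs $\lp e_j,f_j\rp$ so that $s$ becomes regular, while simultaneously contracting each $u_i$ to the identity and moving $v_i$ to $\sum_{j\in I_i}e_j$, where $I_i$ is the initial segment of $J_i$ of length $m_i$. That such a family stays inside $\CX^0_m$ rests on Proposition~2.8 and Corollary~2.7 applied blockwise in $G_i^{\io\th}\uni\times V_i$ (the locus $\dim\Bk[u_i]v_i=m_i$ being the open dense part $\CX^{G_i,0}_{m_i,\unip}$ of an irreducible orbit closure), and on the product structure of the flag datum recorded in 1.17.

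\textbf{Main obstacle.} The delicate point is keeping the level \emph{constantly} equal to $I$ along the family, not merely in the limit. Splitting an $s$-eigenspace $V_i$ subdivides its index set $J_i$, which changes how the ``initial segment of length $\dim\Bk[u_i]v_i$'' entering the definition of the level is computed; so the family must be engineered so that in each coordinate pair $\lp e_j,f_j\rp$ the component of $v_\epsilon$ is nonzero exactly for $j\in I_i$ (forcing the limiting vector $\sum_{j\in I_i}e_j$) and the unipotent part degenerates within the orbit closure controlled by Proposition~2.8, so that $\dim\Bk[u_i]v_i=m_i$ — equivalently, membership in $\CX^0_m$ with level $I$ — is preserved throughout. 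Getting this bookkeeping right, in the presence of the somewhat asymmetric ``smallest indices'' rule defining the level, is the crux of the argument; everything else is formal.
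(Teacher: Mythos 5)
Your reduction — establish $\wt\CY_I\subseteq\wt\CX_I$, note $\wt\CY_I$ is open in $\wt\CX_I$, and then show $\wt\CY_I$ is dense in $\wt\CX_I$ — reproduces only half of the paper's argument, and the final deduction has a genuine gap. From $\wt\CX_I\subseteq\ol{\wt\CY_I}$ you do get that $\wt\CX_I$ is irreducible and that the sets $\ol{\wt\CY_I}\cap\wt\CX_m^+$ are the irreducible components of $\wt\CX_m^+$; but your concluding claim that each such component meets $\wt\CX_m^+$ \emph{precisely} in $\wt\CX_I$ does not follow from anything you have proved. Compatibly with all of your statements, $\ol{\wt\CY_I}$ could still contain points of $\wt\CX_J$ for another $J$ with $|J|=m$ (model example: two lines crossing at a point $p$, decomposed as $X_1=$ one full line and $X_2=$ the other line minus $p$; each piece contains an open dense irreducible subset of the corresponding component, yet $X_2$ is not closed, hence not a component). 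What is missing is exactly the second half of the paper's proof: one must show that $\wt\CX_I$ is \emph{closed} in $\wt\CX_m^+$, which the paper does by checking that the closure $Z_I$ of $\wt\CX_I$ in $\wt\CX$ is contained in $\wt\CX_I\cup\bigcup_{I'}\wt\CX_{I'}$ with $|I'|<m$ — that is, under specialization the level can only degenerate to levels of strictly smaller cardinality — so that $\wt\CX_I=Z_I\cap\wt\CX_m^+$ is closed, hence really is a component. Without this semicontinuity statement the lemma is not established. (Also, in passing: constructibility of the level map gives that $\wt\CX_I$ is constructible, not that it is locally closed; that inference is unjustified, though not needed once the closedness above is proved.)

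As for your ``key step'', it is precisely the paper's unelaborated claim that $\wt\CY_I$ is open dense in $\wt\CX_I$, and a degeneration argument of the kind you outline is indeed the natural way to check it; but you have not carried it out, and you yourself flag the level bookkeeping along the family as unresolved, so this half remains a plan rather than a proof. Note that you are also making it harder than necessary: to show a point of $\wt\CX_I$ lies in $\ol{\wt\CY_I}$ you only need the \emph{generic} member of the family to lie in $\wt\CY_I$ (regular semisimple, with the vector supported exactly on $I$ after diagonalizing); the levels of the intermediate members are irrelevant, so there is no need to keep the whole family inside the level-$I$ slice $Z_I$. In summary: right idea for the density half, left incomplete at its admitted crux, and the closedness half — which the paper makes the explicit content of its proof — is missing altogether.
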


\begin{proof}
It is clear from the definition that $\wt\CX^+_{m}$ is a disjoint 
union of various $\wt\CX_I$, and that 
$\wt\CX_I$ contains $\wt\CY_I$.  One can check that 
$\wt\CY_I$ is open dense in $\wt\CX_I$.  
Since $\wt\CY^+_{m} = \coprod_I \wt\CY_I$, 
and $\wt\CY^+_{m}$ is open dense in $\wt\CX^+_{m}$, 
$\wt\CX^+_{m} = \bigcup_{I}\ol{\wt\CY_I}$ gives a decomposition 
into irreducible components, where $\ol{\wt\CY_I}$ is the closure of 
$\wt\CY_I$ in $\wt\CX^+_{m}$. Hence in order to prove the lemma, 
it is enough to show 
that $\wt\CX_I$ is closed in $\wt\CX_m^+$ for each $I$. 
But one can check that the closure $Z_I$ of $\wt\CX_I$ in $\wt\CX$
is contained in the set $\wt\CX_I \cup \bigcup_{I'}\wt\CX_{I'}$, where
$I'$ runs over the subsets of $[1,n]$ such that $|I'| < m$. 
Hence $\wt\CX_I = Z_I \cap \wt\CX^+_{m}$ is closed in $\wt\CX^+_{m}$.
\end{proof}

\para{4.5.}
For a fixed $m$, we define a variety $\CG_{m}$ by 
\begin{equation*}
\begin{split}
\CG_{m} = \{ (x,&v, W) \mid (x,v) \in G^{\io\th} \times V, 
    W \text{ : isotropic subspace of } V, \\  
       &\dim W = m, x(W) =W, v \in W \}.
\end{split}
\end{equation*}
We define a map $\pi_m : \wt\CX^+_m \to G^{\io\th} \times V$ by 
the restriction of $\pi$.
Then $\pi_m$ can be decomposed as 
\begin{equation*}
\begin{CD}
\pi_m : \wt\CX^+_{m} @>\vf' >> \CG_{m} @> \vf''>> G^{\io\th} \times V
\end{CD}
\end{equation*}
where $\vf': (x,v, gB^{\th}) \mapsto (x,v, W(x,v) )$, 
$\vf'': (x,v, W) \mapsto (x,v)$.
Let us consider  the spaces $W_0 = M_{m}$ 
and $\ol V_0 = W_0^{\bot}/W_0$.  We put $G_1 = GL(W_0)$, 
$G_2 = GL(\ol V_0)$. Then $\ol V_0$ has a natural symplectic structure,
and $G_2$ is identified with a $\th$-stable subgroup of $G$.  Put 
$H_0 = G_1 \times G_2^{\th}$.  We define a variety
\begin{equation*}
\begin{split}
\CH_{m} = \{ (x,v, &W, \f_1, \f_2) 
     \mid (x,v,W) \in \CG_{m}, \\ 
           &\f_1 : W \isom W_0, \f_2 : {W}^{\bot}/W \isom \ol V_0
                               \text{ (symplectic isom.) } \},
\end{split}
\end{equation*}
and morphisms 
\begin{align*}
q : &\CH_{m} \to \CG_{m}, \quad (x,v, W, \f_1, \f_2) 
   \mapsto (x,v, W), \\
\s : &\CH_{m} \to G_1 \times G_2^{\io\th}, 
     \quad (x,v,W, \f_1,\f_2) \mapsto 
               (\f_1(x|_{W})\f_1\iv, 
                     \f_2(x|_{{W}^{\bot}/W})\f_2\iv).
\end{align*}
Then $H \times H_0$ acts on $\CH_m$ by 
\begin{equation*}
(g,(h_1, h_2)) : (x,v,W,\f_1, \f_2) \mapsto 
    (gxg\iv, gv,g(W),h_1\f_1g\iv, h_2\f_2g\iv)
\end{equation*} 
for $g \in H, (h_1, h_2) \in H_0$.
Moreover, $\s$ is $H \times H_0$-equivariant with respect to 
the natural action of $H_0$ and the trivial action of $H$ on 
$G_1 \times G_2^{\io\th}$.
We have
\par\medskip\noindent
(4.5.1) \ The map $q$ is a principal bundle with fibre isomorphic to
$H_0$.
\par\medskip\noindent
(4.5.2) \ The map $\s$ is a locally trivial fibration 
with smooth fibre of dimension $\dim H$. 
\par\medskip 
In fact, (4.5.1) is clear.  We show (4.5.2).  
It is clear from the definition that $\s$ is a locally trivial
fibration. 
For a fixed 
$(x',x'') \in G_1 \times G_2^{\io\th}$,  the fibre
$\s\iv(x',x'')$ is determined by the following process; 
\par
(i) choose an isotropic subspace $W$ of $V$ such that $\dim W = m$,
\par
(ii) for each $W$, choose an isomorphism $\f_1: W \to W_0$ and 
a symplectic isomorphism $\f_2: W^{\perp}/W \to \ol V_0$,
\par
(iii) choose $x \in G^{\io\th}$ such that 
$\f_1(x|_W)\f_1\iv = x'$, $\f_2(x|_{W^{\perp}/W})\f_2\iv = x''$,
\par
(iv) choose $v \in W$. 
\par
Let $P$ be the stabilizer of the flag $(W_0 \subset W_0^{\perp})$ in 
$G$.  Then $P$ is $\th$-stable, and is decomposed as $P = LU_P$,
where $L$ is a $\th$-stable Levi subgroup of $P$ containing $T$
and $U_P$ is the unipotent radical of $P$.
For (i), such $W$ are parametrized by $H/P^{\th}$. 
For (ii), they are parametrized by $G_1 \times G_2^{\th}$. 
For (iii), $x$ should be contained in $P^{\io\th}$, but 
$x', x''$ determines the part corresponding to $L^{\io\th}$.
Hence the choice of $x$ is parametrized by $U_P^{\io\th}$.
Finally, for (iv), $v$ is any element in $W$. 
It follows that each fibre $\s\iv(x',x'')$ is smooth 
with the same dimension as $H$. Hence (4.5.2) follows. 
\par
Let $B_1$ is a Borel subgroup of $G_1$ which is the stabilizer 
of the flag $(M_k)_{0 \le k \le m}$ in $G_1$, and $B_2$ a 
$\th$-stable Borel subgroup of $G_2$ which is the stabilizer of the flag 
$(M_{m+1}/M_m \subset \cdots \subset M_m^{\perp}/M_m)$ in $G_2$.   
Put 
\begin{align*}
\wt G_1 &= \{ (x, gB_1) \in G_1 \times G_1/B_1 \mid g\iv xg \in B_1 \}, \\
\wt G_2^{\io\th} &= \{ (x, gB^{\th}_2) \in G_2^{\io\th} 
             \times G_2^{\th}/B_2^{\th} \mid g\iv xg \in B_2^{\io\th} \},  
\end{align*}
and define maps $\pi^1: \wt G_1 \to G_1$, 
$\pi^2: \wt G^{\io\th} \to G^{\io\th}$ by first projections. 
Thus $\pi^2$ is the map $\pi$ given in 1.8 with respect to $G_2^{\io\th}$, 
and $\pi^1$ is the corresponding map for $G_1$.
We define a variety 
\begin{equation*}
\begin{split}
\CZ^+_{m} = \{ (x,v,&gB^{\th},\f_1, \f_2) \mid 
              (x,v, gB^{\th}) \in \wt\CX^+_{m}, \\ 
           &\f_1: W(x,v) \isom W_0, 
              \f_2: W(x,v)^{\bot}/W(x,v) \isom \ol V_0  \},
\end{split}
\end{equation*}
and a map $\wt q : \CZ^+_m \to \CX^+_m$ by a natural projection. 
We define a map $\wt \s : \CZ^+_m \to \wt G_1 \times \wt G_2^{\io\th}$ 
as follows; take $(x,v,gB^{\th}, \f_1, \f_2) \in \CZ_m^+$.
By the construction of $W(x,v)$, $W(x,v)$ is contained 
in $g(M_n)$, and $(g(M_k))_{0 \le k \le n}$ is an $x$-stable isotropic 
flag in $V$.  
Then $(W(x,v) \cap g(M_k))$ gives an $x$-stable 
flag $(W_k)_{0\le k \le m}$ in $W(x,v)$, and 
$(W(x,v) + g(M_k)/W(x,v))$ gives an $x$-stable
isotropic flag $(\ol V_{\ell})_{0 \le \ell \le n-m}$ in 
$W(x,v)^{\perp}/W(x,v)$.  We denote by $g_1B_1g_1\iv$ the stabilizer 
of the flag $(\f_1(W_k))$ in $G_1$,  and by $g_2B^{\th}_2g_2\iv$ the 
stabilizer of the isotropic 
flag $(\f_2(\ol V_{\ell}))$ in $\ol V_0$.  
The map $\wt \s$ is defined as 
\begin{equation*}
(x,v, gB^{\th}, \f_1, \f_2) \mapsto 
     (\f_1(x|_{W(x,v)})\f_1\iv, g_1B_1), 
     (\f_2(x|_{W(x,v)^{\perp}/W(x,v)})\f_2\iv, g_2B_2^{\th}).
\end{equation*}

We consider a diagram 
\begin{equation*}
\tag{4.5.3}
\begin{CD}
T_1 \times T_2^{\io\th}  @<f <<  T^{\io\th}  @>\id >>  T^{\io\th}  \\
   @A\a^1 \times \a^2 AA               @AA\wt\a A     @AA\a A   \\
\wt G_1 \times \wt G_2^{\io\th} @< \wt \s << 
     \CZ^+_{m}  @> \wt q>>  \wt\CX^+_{m} \\
  @V \pi^1 \times \pi^2 VV             @VV\wt\vf' V     @VV \vf'V  \\
 G_1 \times G_2^{\io\th} @< \s << \CH_{m} 
        @> q >> \CG_{m}  \\
       @.        @.  @VV \vf'' V  \\
       @.       @.   G^{\io\th} \times V,  
\end{CD}
\end{equation*}
where the maps $\wt\vf', \wt\a$ are defined naturally. 
Note that $T^{\io\th}$ can be written as 
$T^{\io\th} \simeq T_1 \times T_2^{\io\th}$, 
where $T_1$ is a maximal torus of $G_1$ (the diagonal group),
and $T_2$ is a $\th$-stable maximal torus of $G_2$ 
(the diagonal group). We fix an isomorphism 
$f: T^{\io\th} \to T_1 \times T_2^{\io\th}$.
The maps $\a^1 : \wt G_1 \to T_1$, 
$\a^2: \wt G_2^{\io\th} \to T_2^{\io\th}$ are defined as in 1.15. 

\para{4.6.}
Let $\CE$ be a tame local system on $T^{\io\th}$.
Under the isomorphism $f : T^{\io\th} \to T_1 \times T_2^{\io\th}$,
$\CE$ can be written as $\CE \simeq \CE_1\boxtimes \CE_2$, 
where $\CE_1$ (resp. $\CE_2$) is a tame local system on $T_1$
(resp. $T_2^{\io\th}$). 
Then we have $\CW_{\Bm, \CE} \simeq (S_m)_{\CE_1} \times (S_{n-m})_{\CE_2}$, 
where $(S_m)_{\CE_1}$ is the stabilizer of $\CE_1$ in 
$S_m \simeq \CW_1 = N_{G_1}(T_1)/T_1$, and 
$(S_{n-m})_{\CE_2}$ is the stabilizer of $\CE_2$ in 
$S_{n-m} \simeq  \CW_2  = 
N_{G^{\th}_2}(T_2^{\io\th})/Z_{G^{\th}_2}(T_2^{\io\th})$.
Let $\CA_{\CE_2}$ be the algebra defined in 1.15, by replacing 
$G^{\io\th}$ by $G_2^{\io\th}$, and let $\CA_{\CE_1}$ be the 
corresponding algebra for $G_1$.  Then we have 
$\CA_{\Bm, \CE} \simeq \CA_{\CE_1}\otimes \CA_{\CE_2}$.
\par
For $\r_1 \in \CA_{\CE_1}\wg, \r_2 \in \CA_{\CE_2}\wg$, 
we consider the complexes
\begin{equation*}
K _1 = \IC(G_1, \CL_{\r_1})[\dim G_1], \qquad
K_2 = \IC(G^{\io\th}_2, \CL_{\r_2})[\dim G_2^{\io\th}].
\end{equation*}
Then $K_1\boxtimes K_2$ is an $H_0$-equivariant  simple perverse sheaf on 
$G_1 \times G_2^{\io\th}$, and so $\s^*(K_1\boxtimes K_2)[2n^2 + n]$
is an $H_0$-equivariant simple perverse sheaf on $\CH_m$ by 
(4.5.2). 
Since $q$ is a principal bundle with group $H_0$ by (4.5.1), there exists 
a unique simple perverse sheaf $A_{\r}$ on $\CG_m$ such that
\begin{equation*}
q^*(A_{\r})[m^2 + 2(n-m)^2 + (n-m)] \simeq \s^*(K_1\boxtimes K_2)[2n^2 + n],
\end{equation*}
where $\r = \r_1\otimes \r_2 \in \CA_{\Bm,\CE}\wg$ under the isomorphism 
$\CA_{\Bm,\CE} \simeq \CA_{\CE_1} \otimes \CA_{\CE_2}$.
\par
Note that the map $\vf' : \wt\CX^+_m \to \CG_m$ is not surjective.
The image $\vf'(\wt\CX_m^+)$ is described as follows; for each $m' \le m$, 
we define a subvariety $\CG_{m,m'}$ of $\CG_m$ by 
\begin{equation*}
\CG_{m,m'} = \{ (x,v, W) \in \CG_m \mid \dim W(x,v) = m'\}.
\end{equation*}
Then $\CG_m = \coprod_{0 \le m' \le m}\CG_{m,m'}$ gives a partition of
$\CG_m$, and the closure of $\CG_{m,m'}$ is a union of 
$\CG_{m,m''}$ with $m'' \le m'$.  Put $\CG_m^0 = \CG_{m,m}$.  Then 
$\CG^0_{m}$ is an open dense subset of $\CG_m$.
It is clear from the definition of $\vf'$ that $\vf'(\wt\CX^+_m)$ coincides 
with $\CG_m^0$.  
We denote by $A_{\r}^0$ 
the restriction of $A_{\r}$ on $\CG_m^0$, which gives rise to a simple 
perverse sheaf if the support of $A_{\r}$ has a non-trivial intersection 
with $\CG_m^0$.  
\par
Let $\CL_{\r}$ be the local system on $\CY^0_{m}$ as in (4.1.1).  
Since  
$\CY^0_{m}$ is an open dense subset of $\CX^0_m$, one can 
consider the intersection cohomology $\IC(\CX^0_m, \CL_{\r})$.
We show the following lemma.

\begin{lem} 
Under the notation as above, we have 
\begin{equation*}
\vf''_*A^0_{\r} \simeq \IC(\CX^0_m, \CL_{\r})[\dim \CX_m].
\end{equation*}
\end{lem}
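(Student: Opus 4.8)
The plan is to reduce the assertion to the regular semisimple stratum $\CY^0_m$, on which everything is explicit, via two preliminary reductions: first, that $\vf''$ restricts to an isomorphism $\CG^0_m\isom\CX^0_m$; second, that $A_\r$ is, up to shift, the intersection cohomology complex of all of $\CG_m$, so that $A^0_\r$ is automatically the intersection cohomology complex of any dense open subset of $\CG^0_m$.

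For the first reduction: by (4.3.3) we have $\CX^0_m=\{(x,v)\in G^{\io\th}\times V\mid \dim W(x,v)=m\}$, and since $W(x,v)=\Bk[x]v$ is an isotropic, $x$-stable subspace of $V$ containing $v$, every $(x,v,W)\in\CG^0_m$ satisfies $W(x,v)\subseteq W$ with $\dim W(x,v)=\dim W=m$, hence $W=W(x,v)$. Thus $\vf''$ maps $\CG^0_m$ bijectively onto $\CX^0_m$, with inverse $(x,v)\mapsto(x,v,\Bk[x]v)$; the latter is a morphism because on $\CX^0_m$ the subspace $\Bk[x]v$ is the image of the linear map $\Bk^{2n}\to V$, $(a_0,\dots,a_{2n-1})\mapsto\sum_j a_jx^jv$, which has constant rank $m$ there. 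Hence $\vf''|_{\CG^0_m}\colon\CG^0_m\isom\CX^0_m$, and in particular $\dim\CG^0_m=\dim\CX^0_m=\dim\CX_m$ by Lemma 3.3.

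For the second reduction: $K_1$ and $K_2$ have full supports $G_1$ and $G^{\io\th}_2$, so $\s^*(K_1\boxtimes K_2)[2n^2+n]$, which is a simple perverse sheaf on $\CH_m$ by (4.5.2), has support all of $\CH_m$ ($\s$ being surjective). Since $q$ is a principal $H_0$-bundle with $H_0$ connected and $q^*A_\r$ is (a shift of) this complex, $A_\r$ has support all of $\CG_m$, which is therefore irreducible, and $A_\r=\IC(\CG_m,\CM)[\dim\CG_m]$ for a local system $\CM$ on a dense open subset. Consequently $A^0_\r=A_\r|_{\CG^0_m}$ is the intersection cohomology complex of $\CG^0_m$, shifted by $\dim\CX_m$, attached to the restriction of $\CM$; and transporting along the isomorphism of the first step, $\vf''_*A^0_\r=\IC(\CX^0_m,\CL')[\dim\CX_m]$ for a local system $\CL'$ on a dense open subset of $\CX^0_m$.

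It remains to identify $\CL'$ with $\CL_\r$, and since both $\vf''_*A^0_\r$ and $\IC(\CX^0_m,\CL_\r)[\dim\CX_m]$ are simple perverse sheaves on $\CX^0_m$, it suffices to compare their restrictions to the dense open subset $\CY^0_m$, where the second restricts to $\CL_\r[\dim\CX_m]$. I would do this by restricting the diagram (4.5.3) to the preimages of $\CY^0_m$: there $x$ is regular semisimple, $x|_{W(x,v)}$ is a regular semisimple element of $G_1=GL_m$ with $v$ a cyclic vector, and $x|_{W(x,v)^{\perp}/W(x,v)}$ is a regular semisimple element of $G^{\io\th}_2$, so that $K_i$ restricts to the shifted local system $\CL_{\r_i}$ on the regular semisimple locus and $\s^*(K_1\boxtimes K_2)$ restricts to $\s^*(\CL_{\r_1}\boxtimes\CL_{\r_2})$ up to shift. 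Descending this along $q$ and matching it with the finite covering $\e_I\colon\wh\CY_I\to\CY^0_m$ for $I=[1,m]$, which realizes $\CL_\r$ as a summand of $(\e_I)_*\CE_I$ with $\CE_I=\CE_1\boxtimes\CE_2$ and $\r=\r_1\otimes\r_2$ under $\CA_{\Bm,\CE}\simeq\CA_{\CE_1}\otimes\CA_{\CE_2}$, gives the identification $\CL'\simeq\CL_\r$. This last comparison of the two finite-covering descriptions of the local system on $\CY^0_m$ — keeping track of the factorization $\CW_I\simeq S_m\times S_{n-m}$ of the Galois group and of the tame twist $\CE$ — is the main technical point; everything else is formal.
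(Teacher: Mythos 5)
Your proposal is correct and follows essentially the same route as the paper's own proof: identify $\CG_m^0$ with $\CX_m^0$ via $\vf''$ using (4.3.3), note that the pushforward is a shifted simple (indeed intersection cohomology) perverse sheaf on $\CX_m^0$, and determine the local system by restricting the diagram (4.5.3) over the regular semisimple stratum $\CY_m^0$ and comparing the two Galois-covering descriptions, which is exactly the content of the paper's cartesian diagram (4.7.2) giving $\s_0^*(\CL_{\r_1}\boxtimes \CL_{\r_2}) \simeq q_0^*\CL_{\r}$. The only (cosmetic) difference is that you first establish full support of $A_{\r}$ so that $\vf''_*A^0_{\r}$ is a priori an IC sheaf, while the paper uses simplicity of $K=\vf''_*A^0_{\r}$ and checks $\CH^{-d}K|_{\CY_m^0}\simeq \CL_{\r}$; these are equivalent bookkeeping choices.
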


\begin{proof}
Since $\CG_m^0 = \{ (x,v, W) \in \CG_m \mid  W = W(x,v)\}$,   
$\vf''$ gives an isomorphism $\CG_m^0$ onto $\CX_m^0$ by (4.3.3). 
Hence $\vf''_*A_{\r}^0$ is a simple perverse sheaf if $A^0_{\r}$ is so.
Put $K = \vf''_*A_{\r}^0$ and $d = \dim \CX^0_m = \dim \CX_m$. 
In order to prove the lemma, it is enough to show that 
\begin{equation*}
\tag{4.7.1}
\CH^{-d}K|_{\CY^0_m} \simeq \CL_{\r}.
\end{equation*} 

We consider the following commutative diagram

\begin{equation*}
\tag{4.7.2}
\begin{CD}
T_1 \times T_2^{\io\th} @< f << T^{\io\th} @>\id >> T^{\io\th} \\
@A\a_0^1 \times \a_0^2 AA         @AA \wt\a_0A            @AA \a_0 A    \\
\wt G_{1,\rg} \times \wt G_{2,\rg}^{\io\th} @< \wt \s_0<<  
        \wt\CZ^0_m  @>\wt q_0>>  \wt \CY^0_m  \\
 @V\xi^1 \times \xi^2 VV          @VV\wt\xi_0 V        @VV \xi_0 V  \\
(G_1/T_1 \times T_{1,\rg}) \times 
(G^{\th}_2/Z_{G^{\th}_2}(T_2^{\io\th}) \times T^{\io\th}_{2,\rg})
    @< \wh \s_0<< \wh\CZ^0_m   @> \wh q_0>>  \wh \CY^0_m \\
@V\e^1 \times \e^2 VV          @VV\wt\e_0 V        @VV \e_0 V   \\
G_{1,\rg} \times G^{\io\th}_{2,\rg} @< \s_0<<  
                 \CH_{m,\rg} @> q_0>>   \CG_{m,\rg}, \\
@.             @.             @VV\vf''_0V  \\
    @.               @.        \CY^0_m     
\end{CD}
\end{equation*}
where $\wt\CY_m^0 = \wt\CY_I, \wh\CY_m^0 = \wh\CY_I$ for $I = [1,m]$
(cf. 3.1, 3.4), and 
\begin{align*}
\wt G_{1,\rg} &=(\pi^1)\iv(G_{1,\rg}), \\ 
\wt G_{2,\rg}^{\io\th} &= (\pi^2)\iv(G_{2,\rg}), \\
\CG_{m,\rg} &= (\vf'')\iv(\CY^0_m), \\
\CH_{m,\rg} &= q\iv(\CG_{m,\rg}), \\
\wt\CZ^0_m &= \wt q\iv(\wt\CY^0_m),  \\
\wh\CZ^0_m &= \{ (g*(t,v), \f_1, \f_2) \mid 
       g*(t,v) \in \wh\CY^0_m, \\
   &\phantom{*******}
  \f_1 : g(W_0) \isom W_0, \f_2 : g(W_0)^{\bot}/g(W_0) \isom \ol V_0\},
\end{align*}
and the maps $\wt q_0, q_0, \wt \s_0, \s_0$ are defined as the
 restrictions of the corresponding maps $\wt q, q, \wt \s,\s$.
The map $\xi_0 = \xi_I$ for $I = [1,m]$.
Note that in this case, $W(t,v) = M_m$ for 
$(t,v) \in T^{\io\th}\reg \times M_m^0$.  We define a map 
$\e_0: \wh\CY^0_m \to \CG_{m,\rg}$ by 
$(g*(t,v)) \mapsto (gtg\iv, g(v), g(M_m))$, and the maps
$\wt\xi_0, \wt\e_0$ are defined accordingly.
The maps in the first column are defined by applying the discussion 
in 1.8 to the case 
$\p^2: \wt G^{\io\th}_{2,\rg} \to G^{\io\th}_{2,\rg}$, and to
the $GL$ case $\p^1 : \wt G_{1,\rg} \to G_{1,\rg}$.
The map $\wh \s_0$ is defined as follows; for 
$(g*(t,v), \f_1, \f_2) \in \wh\CZ^0_{m}$, $gtg\iv$ stabilizes 
$g(W_0)$, and $t'_1 = \f_1(gtg\iv|_{g(W_0)})\f_1\iv \in G_{1,\rg}$,
$t'_2 = \f_2(gtg\iv|_{g(W_0)^{\bot}/g(W_0)})\f_2\iv \in G^{\io\th}_{2,\rg}$.
We have $Z_{G_1}(t'_1) = g_1T_1g_1\iv$ with $g_1 \in G_1$, and 
$Z_{G^{\th}_2}(t'_2) = g_2Z_{G^{\th}}(T^{\io\th}_2)g_2\iv$ 
with $g_2 \in G_2^{\th}$.  Then $\wh \s_0(g*(t,v), \f_1, \f_2)
= ((g_1T_1, t_1), (g_2Z_{G_2^{\th}}(T_2^{\io\th}), t_2))$, 
where $(t_1, t_2) = f(t) \in T_1 \times T_2^{\io\th}$.
\par
It follows from the commutative 
diagram that 
\begin{equation*}
\tag{4.7.3}
\wt \s_0^*((\a_0^1)^*\CE_1\boxtimes (\a_0^2)^*\CE_2) \simeq 
           \wt q_0^*\a_0^*\CE.
\end{equation*}
One can check that the squares in the middle row and the lower row
are all cartesian squares.   Since $\p^1 = \e^1\circ \xi^1$, 
$\p^2 = \e^2\circ\xi^2$, we see that 
\begin{equation*}
\tag{4.7.4}
\s_0^*(\p^1_*(\a_0^1)^*\CE_1\boxtimes \p^2_*(\a_0^2)^*\CE_2)
  \simeq q_0^*(\e_0\circ\xi_0)_*\a_0^*\CE.
\end{equation*}
Now $\p^2_*(\a_0^2)^*\CE_2$ is decomposed as in (1.15.1), 
and similarly for $\p^1_*(\a_0^1)^*\CE_1$.  On the other hand, 
since 
\begin{equation*}
\CG_{m,\rg} = \{ (x,v, W) \mid (x,v) \in \CY^0_m, W = W(x,v)\}
\simeq \CY^0_m,
\end{equation*}
$\vf_0''$ is an isomorphism.  It follows, by (3.4.3) and (3.5.1) 
that 
\begin{equation*}
(\e_0\circ\xi_0)_*\a_0^*\CE \simeq \bigoplus_{\r \in \CA_{\Bm,\CE}\wg}
      H^{\bullet}(\BP_1^{n-m})\otimes \r \otimes \CL_{\r} 
\end{equation*}
under the identification $\CG_{m,\rg} \simeq \CY_m^0$.
Since the decomposition by the Galois
 covering is compatible with $\s_0$ and $q_0$ by (4.7.2), we have
$\s_0^*(\CL_{{\r}_1} \boxtimes \CL_{{\r}_2}) \simeq q_0^* \CL_{\r}$.
This implies that 
$A^0_{\r}|_{\CG_{m,\rg}} \simeq A_{\r}|_{\CG_{m,\rg}} \simeq \CL_{\r}[d]$, 
and so 
\begin{equation*}
K|_{\CY^0_m} \simeq (\vf_0'')_*(A^0_{\r}|_{\CG_{m,\rg}}) 
 \simeq \CL_{\r}[d].
\end{equation*}
This proves (4.7.1), and the lemma follows.
\end{proof}

By using Lemma 4.7, we show the following.

\begin{prop}   
Under the notation in Lemma 4.7, 
$(\pi_m)_*\a^*\CE|_{\wt\CX_m^+}$ is decomposed as  
\begin{equation*}
   \bigoplus_{\r \in \CA\wg_{\Bm,\CE}}
    H^{\bullet}(\BP_1^{n-m})\otimes \wt V_{\r} \otimes \IC(\CX^0_m, \CL_{\r}),
\end{equation*}
where $\wt V_{\r} = \CA_{\CE}\otimes_{\CA_{\Bm,\CE}}\r$
is considered as a vector space, ignoring the $\CA_{\CE}$-action. 
\end{prop}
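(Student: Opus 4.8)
The plan is to compute $(\pi_m)_*\a^*\CE|_{\wt\CX^+_m}$ by means of the factorization $\pi_m=\vf''\circ\vf'$ of 4.5 together with the commutative diagram (4.5.3), reducing the problem to the decompositions of $\pi^2_*(\a^2)^*\CE_2$ (which is Theorem 1.16 applied to $G^{\io\th}_2$) and of $\pi^1_*(\a^1)^*\CE_1$ (the analogue of Theorem 1.16 for $G_1=GL_m$, where the $H^\bullet(\BP_1)$--factor is absent since the generic fibre of $\pi^1$ is finite), to the defining property of the perverse sheaves $A_\r$ from 4.6, and to Lemma 4.7.

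First I would reduce to computing $\vf'_*\a^*\CE$ over $\CG^0_m=\vf'(\wt\CX^+_m)$: by Lemma 4.7, $\vf''$ restricts to an isomorphism $\CG^0_m\isom\CX^0_m$ sending $A^0_\r$ to $\IC(\CX^0_m,\CL_\r)[\dim\CX_m]$, so it suffices to decompose $\vf'_*\a^*\CE$. Since $q\colon\CH_m\to\CG_m$ is a principal $H_0$--bundle by (4.5.1) and the right--hand square of (4.5.3) is cartesian, smooth base change gives $q^*(\vf'_*\a^*\CE)\simeq\wt\vf'_*(\wt q^*\a^*\CE)$, while commutativity of the top of (4.5.3) with $\CE\simeq\CE_1\boxtimes\CE_2$ yields $\wt q^*\a^*\CE\simeq\wt\s^*\bigl((\a^1)^*\CE_1\boxtimes(\a^2)^*\CE_2\bigr)$. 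Pushing forward along $\wt\vf'$, feeding in the decompositions of $\pi^1_*(\a^1)^*\CE_1$ and $\pi^2_*(\a^2)^*\CE_2$, forming the external tensor product (recall $\CA_{\Bm,\CE}\simeq\CA_{\CE_1}\otimes\CA_{\CE_2}$), and descending along $q$ via $q^*A_\r[\,\ast\,]\simeq\s^*(K_1\boxtimes K_2)[\,\ast\,]$, one recovers the sheaves $A_\r$ --- hence, after applying $\vf''_*$ and Lemma 4.7, the $\IC(\CX^0_m,\CL_\r)$ --- as the simple constituents, together with their multiplicities.

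The determination of the multiplicities is the main point and the delicate step. The square formed by $\wt\s$, $\s$ and $\pi^1\times\pi^2$ in (4.5.3) is \emph{not} cartesian: $\CZ^+_m$ is only a finite covering of $\CH_m\times_{G_1\times G^{\io\th}_2}(\wt G_1\times\wt G^{\io\th}_2)$, whose sheets over a generic point are indexed by the levels $I$ of Lemma 4.4. I would therefore run the computation level by level, using the decomposition $\wt\CX^+_m=\coprod_{|I|=m}\wt\CX_I$ of Lemma 4.4, so that $(\pi_m)_*\a^*\CE|_{\wt\CX^+_m}=\bigoplus_{|I|=m}(\pi_m|_{\wt\CX_I})_*(\a^*\CE|_{\wt\CX_I})$; on a single level the square does become cartesian and the argument of the previous paragraph yields a contribution $H^\bullet(\BP_1^{n-m})\otimes\r$ to the multiplicity of $\IC(\CX^0_m,\CL_\r)$ for each $\r\in\CA_{\Bm,\CE}\wg$. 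Summing over all $\binom nm$ levels, and using that the Springer action of $\BZ/2\BZ$ makes $H^\bullet(\BP_1)$ the regular representation of $\BZ/2\BZ$ (so $H^\bullet(\BP_1^{n-m})$ is the regular representation of $(\BZ/2\BZ)^{n-m}$), this sum of contributions organizes into $\Ind_{\wt\CA_{\Bm,\CE}}^{\wt\CA_\CE}\bigl(H^\bullet(\BP_1^{n-m})\otimes\r\bigr)$, which as a vector space is exactly $H^\bullet(\BP_1^{n-m})\otimes\wt V_\r$ with $\wt V_\r=\CA_\CE\otimes_{\CA_{\Bm,\CE}}\r$. Since distinct $\r\in\CA_{\Bm,\CE}\wg$ give distinct local systems $\CL_\r$ on $\CY^0_m$ (the Galois covering of 3.2), hence distinct $A_\r$ and distinct $\IC(\CX^0_m,\CL_\r)$, no cancellation occurs and the stated formula results.

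I expect the main obstacle to be precisely this assembly over levels: identifying the covering $\CZ^+_m\to\CH_m\times_{G_1\times G^{\io\th}_2}(\wt G_1\times\wt G^{\io\th}_2)$ with the level stratification of Lemma 4.4, and verifying that the $\CW$-- and $(\BZ/2\BZ)^n$--module structure obtained on the assembled multiplicity space is the induction that defines $\wt V_\r$. The remaining points --- working consistently over $\CG^0_m$ (where $\vf'$, unlike on $\CG_m$, is proper), checking the $H_0$--equivariance needed for $q$--descent, and tracking the cohomological shifts, all of which are pinned down by the dimension computations of 4.5, (2.4.1) and (3.1.1) --- are routine.
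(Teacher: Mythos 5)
Your overall skeleton (factor $\pi_m$ through $\CG_m^0$ via $\vf''\circ\vf'$, use the diagram (4.5.3), feed in Lusztig's theorem for $G_1$ and Theorem 1.16 for $G_2^{\io\th}$, descend along the principal $H_0$-bundle $q$, and convert $A_\r^0$ into $\IC(\CX_m^0,\CL_\r)$ by Lemma 4.7) is the same as the paper's. The gap is in the step you yourself single out as delicate, and it is not repairable as stated: it is false that $\CZ_m^+$ is a finite covering of the fibre product $\ZC_m = \CH_m^0\times_{G_1\times G_2^{\io\th}}(\wt G_1\times\wt G_2^{\io\th})$ with sheets indexed by the levels $I$, and false that on a single level the square becomes cartesian. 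Even after restricting to one level, the natural map $c_I:\CZ_I^{\spadesuit}\to\ZC_m$ has positive-dimensional fibres over non-generic points: for instance over the unipotent locus its fibre over $z$ is the variety of $x$-stable isotropic flags $(V_j)$ with $V_n\subset W(x,v)^{\perp}$ inducing a prescribed flag on $W(x,v)^{\perp}/W(x,v)$, which is a nontrivial iterated fibration (already for $x=1$, $v\neq 0$, $n=2$, $m=1$ one finds an affine line of choices of $V_1$ inside the forced $V_2$ on the level $I=\{2\}$). Consequently proper/smooth base change does not apply to that square, the "contribution $H^{\bullet}(\BP_1^{n-m})\otimes\r$ per level" is not a computation you can perform, and the assembly of multiplicities into $\Ind_{\wt\CA_{\Bm,\CE}}^{\wt\CA_{\CE}}(H^{\bullet}(\BP_1^{n-m})\otimes\r)$ has no foundation; nor does your argument rule out summands of $(\pi_m)_*\a^*\CE|_{\wt\CX_m^+}$ supported on proper closed subsets of $\CX_m^0$, which the decomposition theorem a priori allows.

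The paper copes with exactly this failure of cartesianness in a weaker but sufficient way: it constructs an $\a$-partition $\ZC_m=\coprod\ZC_{\a}$ over which $c_I$ is a locally trivial fibration (4.8.2), so that $\wt K_{\a}=H^{\bullet}_{\a}\otimes K_{\a}$ with $H^{\bullet}_{\a}=(c_{\a})_!\Ql$ a complex of vector spaces, and then uses the associated distinguished triangles and perverse cohomology exact sequences to conclude only a classification statement: every simple summand of $(\pi_m)_*\a^*\CE|_{\wt\CX_m^+}$ lies in $\{\IC(\CX_m^0,\CL_\r)\mid\r\in\CA_{\Bm,\CE}\wg\}$, in particular has full support $\CX_m^0$ ((4.8.4)--(4.8.7)). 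The multiplicities are then not extracted from this diagram at all; since all constituents have full support, they are determined by restricting to the open dense subset $\CY_m^0$, where the decomposition is already known from (3.5.2). If you insert this last comparison with $(\psi_m)_*\a_0^*\CE|_{\wt\CY_m^+}$ — which your proposal never uses — you only need the qualitative full-support statement from the geometry, and that is precisely what requires the $\a$-partition argument in place of your cartesian-per-level claim.
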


\begin{proof}
First we recall the isomorphism given in 1.17.2 in the language of flags. 
Assume that $s \in T^{\io\th}$.
Let $V = V_1 \oplus \cdots \oplus V_t$ be the decomposition of 
$V$ to the eigenspaces of $s$.  
We assume that 
$V_i = \lp e_k, f_k \mid k \in J_i\rp$ with $J_i \subset [1,n]$. 
We regard $J_i$ an ordered set under the natural ordering.
Let $\CF^{\th}(V)$ be the set of isotropic flags on $V$, and  
$\CF^{\th}_x(V)$  the subset of isotropic flags stable by 
$x \in G^{\io\th}$.   
Let $\vG = W_{H,s}\backslash W_H$ be as in 1.17. We choose 
a set of distinguished representatives $\{ w_{\g} \mid \g \in \vG \}$  
for $\vG$, where $w_{\g}$ is a permutation on the set $[1,n]$
such that $w_{\g}\iv$ is order preserving on each subset $J_i$.
Then $\CF^{\th}_s(V)$ is decomposed as 
$\CF^{\th}_s(V) = \coprod_{\g \in \vG}\CF^{\g}(V)$, where 
$\CF^{\g}(V)$ is the set of $Z_H(s)$ conjugates of the isotropic flag 
$(\lp e_{w_{\g}(1)}\rp \subset \lp e_{w_{\g}(1)}, e_{w_{\g}(2)}
 \rp \subset \cdots
\subset \lp e_{w_{\g}(1)}, \dots, e_{w_{\g}(n)}\rp)$.  
If we replace $s$ by $x = su$ 
(Jordan decomposition), 
then the above isomorphism restricts 
to the isomorphism between $\CF_x^{\th}(V)$ and 
$\coprod_{\g \in \vG}\CF^{\g}_u(V)$. 
\par
We fix a subset $I \subset [1,n]$ such that $|I| = m$.
For $x = su \in G^{\io\th}$ such that $s \in T^{\io\th}$, we 
consider the decomposition of $V$ as above.
Let $U = W(x,v)$.  Recall that $m_i = \dim \Bk[u_i]v_i$ under the 
notation in 4.3.
We define a subset $\wt\CX_I^{\spadesuit}$ of $\wt\CX_m^+$  
as the set of all $H$-conjugates of 
$(x,v,w_{\g}B^{\th}) \in \wt\CX_m^+$ such that 
$|w_{\g}\iv(J_i) \cap I| = m_i$ for each $i$, for all the possible
choice of $(x,v)$ with $s \in T^{\io\th}$ and 
$w_{\g} \in W_{s,H}\backslash W_H$.
Then $\wt\CX_I^{\spadesuit}$ is a closed subset of $\wt\CX_m^+$
containing $\wt\CX_I$.  (In fact, $\wt\CX_I$ coincides with 
the $H$-conjugates of $(x,v, w_{\g}B^{\th}) \in \wt\CX^+_m$ such that
$w_{\g}\iv(J_i) \cap I$ consists of the first $m_i$ letters.)   
We have  $\CG_m^0 = \vf'(\wt\CX^{\spadesuit}_I)$ for any $I$. 
We define 
a subset $\CZ^{\spadesuit}_I$ of 
$\CZ^+_m$ by $\CZ^{\spadesuit}_I = \wt q\iv(\wt\CX^{\spadesuit}_I)$, 
and a subset $\CH^0_m$ of $\CH_m$ by $\CH_m^0 = q\iv(\CG_m^0)$.
We have a commutative diagram

\begin{equation*}
\tag{4.8.1}
\begin{CD}
\wt G_1 \times \wt G_2^{\io\th} @< \wt \s_I << 
     \CZ^{\spadesuit}_I  @> \wt q_I >>  \wt\CX^{\spadesuit}_I \\
  @V \pi^1 \times \pi^2 VV             @VV\wt\vf_I' V     @VV \vf'_I V  \\
 G_1 \times G_2^{\io\th} @< \s_1 << \CH^0_m 
        @> q_1 >> \CG_m^0, 
\end{CD}
\end{equation*}
where $\vf_I', \wt\vf_I', q_1, \s_1, \wt q_I, \wt\s_I$ are restrictions of 
$\vf', \wt\vf', q, \s, \wt q, \wt\s$, respectively.
The square in the right hand side is cartesian since 
the corresponding square in (4.5.3) is cartesian.  
\par
Let $\ZC_m$ be the fibre product of $\wt G_1 \times \wt G_2^{\io\th}$ 
with $\CH_m^0$ over $G_1 \times G_2^{\io\th}$, 
and $c_I: \CZ_I^{\spadesuit} \to \ZC_m$ be the natural map obtained 
from the diagram (4.8.1).   
Recall that an $\a$-partition of a variety $X$, in the sense of [DLP],
is a partition 
$X = \coprod_{\a}X_{\a}$ into locally closed subvarieties $X_{\a}$
such that there exists a filtration $X = X_1 \supset X_2 \supset \cdots$
by closed subvarieties $X_i$ where $X_i \,\backslash\, X_{i+1}$ coincides 
with some $X_{\a}$. 
We show that 
\par\medskip\noindent
(4.8.2) \  There exists an $\a$-partition of $\ZC_m$ 
by locally closed subvarieties $\ZC_{\a}$ such that the restriction of 
$c_I$ on $c_I\iv(\ZC_{\a})$ is a locally 
trivial fibration over $\ZC_{\a}$. 
\par\medskip
First we consider a similar problem for the unipotent part  
$\ZC_{m, \unip}$ of $\ZC_m$, i.e., $\ZC_{m,\unip}$ is 
the fibre product of $\wt G_1 \times \wt G_2^{\io\th}$ with 
$\CH_{m,\unip}^0$ over $G_1 \times G_2^{\io\th}$, 
where $\CH_{m,\unip}^0 = \{ (x,v, W(x,v), \f_1, \f_2) \in \CH_m^0
         \mid (x,v) \in \CX\uni\}$. 
In this case, $W(x,v) = \Bk[x]v$, $I = [1,m]$ and so  
$x$ is a regular unipotent element on $U = W(x,v)$, which determines 
a unique Borel subgroup of $GL(U)$ containing $x$. Thus $\ZC_{m,\unip}$
is isomorphic to 
\begin{equation*}
\{ (x,v, \f_1, \f_2, ( \ol U_i)) \mid
        (\ol U_i) : \text{ $\ol x$-stable isotropic flag in }
                     \ol U = U^{\perp}/U \},   
\end{equation*}   
where $(x,v,U,\f_1, \f_2) \in \CH_{m,\unip}$ and $\ol x$ is 
a transformation on $\ol U$ induced from $x$.
We define $\ZC_{\a}$ as a subset of $\ZC_{m,\unip}$ consisting of 
$(x,v, \f_1, \f_2, (\ol U_i))$ such that $(x,v)$ is contained in 
a fixed $H$-orbit in $\CX\uni$, and that 
the restriction of $x$ on $\ol U_m/\ol U_i$ has a fixed Jordan type 
for each $i$.  Then those $\ZC_{\a}$ give a partition of $\ZC_{m,\unip}$ 
into finitely many locally closed pieces, and in fact, they give 
an $\a$-partition of $\ZC_{m,\unip}$.  This follows from the fact 
that a similar partition of $\CF^{\th}_x(\ol U)$ gives 
an $\a$-partition.    
For each $z = (x,v, \f_1, \f_2, (\ol U_i)) \in \ZC_{\a}$, 
the fibre $c_I\iv(z)$ is given as
\begin{equation*}
c_I\iv(z) \simeq \{ (V_j) \in \CF_x^{\th}(V) \mid V_n \subset U^{\perp},  
    (V_j + U /U) \to  (\ol U_i) \}.
\end{equation*}
where $(V_j + U/U) \to (F'_i)$ means that 
the flag of $\ol U$ obtained from 
$V_1 +U/U \subseteq V_2 +U/U \subseteq \cdots$ coincides with 
$(\ol U_i)$.
Then $c_I\iv(\CZ_{\a}) \to \CZ_{\a}$ gives a locally trivial 
fibration.  This can be checked by using the following property 
of $\CF^{\th}_x(V)$; 
for $x \in G^{\io\th}\uni$, consider the map 
$f : \CF^{\th}_x(V) \to \BP(\Ker x)$, $(V_i) \mapsto V_1$.
Then  for $(V_i), (V_i') \in \CF_x(V)$, $V_1$ and $V_2$ are 
in the same $Z_H(x)$-orbit in $\BP(\Ker x)$ 
if and only if the restriction of $x$ on 
$V_n/V_1$ and on $V_n'/V'_1$ have the same Jordan type. 
\par
We return to the general setting.
Take $z' = (x,v, w_{\g}B^{\th}, \f_1, \f_2) \in \CZ_I^{\spadesuit}$ 
with $x = su, s \in T^{\io\th}$, and put 
$s_1 = \f_1(s) \in T_1, s_2 = \f_2(s) \in T_2^{\io\th}$, where 
$T_1, T_2$ are maximal tori in $G_1, G_2$. Then the sets 
$\vG_1, \vG_2$ are defined with respect to $s_1, s_2$, as $\vG$ 
with respect to $s$. 
Then by the map $\wt\s_I$, $z'$ determines an $s_1$-stable flag in 
$G_1$ and $s_2$-stable isotropic flag in $G_2$, hence determines 
$\g_1 \in \vG_1, \g_2 \in \vG_2$.
On the other hand, take
$(x,v, U, \f_1, \f_2) \in \CH_m^0$ and $(x_1, g_1B_1) \in \wt G_1$, 
$(x_2, g_2B_2^{\th}) \in \wt G_2^{\io\th}$ so that it gives 
$z \in \ZC_m$. Assume that the semisimple 
parts of $x_1, x_2$ are $s_1, s_2$ with $s_1 \in T_1, s_2 \in T_2^{\io\th}$, 
and $\vG_1, \vG_2$ are as before.  Then 
$\g_1 \in \vG_1$ is determined from 
$(x_1, g_1B_1)$, and $\g_2 \in \vG_2$ is determined from 
$(x_2, g_2B_2^{\th})$. We show that 
$\g \in \vG$ is reconstructed from $(\g_1, \g_2)$;    
We assume that $x = su$ with $s \in T^{\io\th}$, and let 
$V_i$ be as before.  Let $U = W(x,v)$. For each $i$, let 
$J_i' = J_i \cap I$.  Put $J' = \coprod_iJ_i'$ and $J''$ 
the complement of $J'$ in $[1,n]$.
Then $w_{\g_1}$ (resp. $w_{\g_2}$) 
is regarded as a permutation on the set $J'$ (resp. $J''$) 
under a suitable identification.
We define a permutation $w$ by 
\begin{equation*}
    w\iv (j) = \begin{cases}
              w_{\g_1}\iv(j) &\quad\text{ if } j \in J', \\
              w_{\g_2}\iv(j) &\quad\text{ if } j \in J''.
           \end{cases}
\end{equation*}
We take $\g = W_{H,s}w \in W_{H,s}\backslash W_H$.
Then $w_{\g}$ satisfies the condition that 
$|w_{\g}\iv(J_i) \cap I| = m_i$ for each $i$.
In this way, we have a bijection between $(\g_1, \g_2)$ 
arising from $\wt G_1 \times \wt G^{\io\th}$ and 
$\g$ arising from $\CZ_I^{\spadesuit}$.
Then for 
such $z \in \ZC_m$, $c_I\iv(z)$ is contained $\CF_u^{\g}(V)$.  
\par
We define a subset $\ZC_{\a}$ of $\ZC_m$ as the set 
of elements given by 
$(x_1, g_1B_1) \in \wt G_1, (x_2, g_2B_2^{\th}) \in \wt G_2^{\io\th}$ 
and $(x,v, U, \f_1, \f_2) \in \CH_m^0$, which satisfies 
the following condition; let $x = su$ be the Jordan decomposition,
then $s$ has a fixed eigenspace decomposition 
$V \simeq V_1 \oplus\cdots\oplus V_t$ up to $H$-conjugate, 
$(u,v)$ is contained in a fixed 
$Z_H(s)$-orbit in $Z_H(s)\uni^{\io\th} \times V$. 
Moreover, let $(\ol U)_{0 \le i \le m}$ 
be the isotropic flag in $\ol U$  obtained 
from $g_2B^{\th}_2$ and $\f_2$, then the restriction of 
$u$ on each $\ol U_m/\ol U_i$ has a fixed Jordan type for each $i$.  
Those $\ZC_{\a}$ give an $\a$-partition of $\ZC_m$, and one can 
show, as in the case of 
$\ZC_{m,\unip}$, that $c_I\iv(\ZC_{\a}) \to \ZC_{\a}$ gives a locally 
trivial fibration for each $\a$.  Thus (4.8.2) is proved.

\par
We define a complex $K_{T_2,\CE_2}$ on $G_2^{\io\th}$ 
as in (1.15.2), by replacing
$G$ by $G_2$.  A similar construction works for the group case $G_1$. 
They are given as follows; 
\begin{align*}
\tag{4.8.3}
K_{T_1,\CE_1} &= \bigoplus_{\r_1 \in \CA_{\CE_1}\wg}
                \r_1 \otimes \IC(G_1, \CL_{\r_1})[\dim G_1], \\
K_{T_2, \CE_2} &= \bigoplus_{\r_2 \in \CA_{\CE_2}\wg}
           H^{\bullet}(\BP_1^{n-m})\otimes \r_2 \otimes 
                   \IC(G_2^{\io\th}, \CL_{\r_2})[\dim G_2^{\io\th}].
\end{align*}
Then we have the following result, the first
one is due to Lusztig [L2], 
and the second one follows from Theorem 1.16 due
to Grojonowski. 
\begin{align*}
\pi^1_*(\a^1)^*\CE_1[\dim G_1] &\simeq  K_{T_1, \CE_1}, \\ 
\pi^2_*(\a^2)^*\CE_2[\dim G^{\io\th}_2] &\simeq K_{T_2, \CE_2}. 
\end{align*}
Since $\ZC_m$ is a fibre product of $\wt G_1 \times \wt G_2^{\io\th}$
with $\CH_m^0$ over $G_1 \times G_2^{\io\th}$, we have

\begin{equation*}
\s_1^*(K_{T_1,\CE_1}\boxtimes K_{T_2, \CE_2}) 
      \simeq (\vf_1)_*\a^*_{12}(\CE_1\boxtimes \CE_2)[d],
\end{equation*} 
where $d = \dim G_1 + \dim G_2^{\io\th}$, 
$\vf_1: \ZC_m \to \CH_m^0$ is the natural map, and 
$\a_{12} : \ZC_m \to T_1 \times T_2^{\io\th}$ is the map 
obtained from $\a^1 \times \a^2$. 
For each piece $\ZC_{\a}$, we define a complex 
$K_{\a}$ on $\CH_m^0$ by 
\begin{equation*}
K_{\a} = 
   (\vf_{\a})_!(\a_{12}^*(\CE_1\boxtimes\CE_2))|_{\ZC_{\a}}[d],
\end{equation*}
where $\vf_{\a}$ is the restriction to $\ZC_{\a}$ 
of the natural map $\ZC_m \to \CH_m^0$.  Since $\ZC_m = \coprod \ZC_{\a}$ 
is an $\a$-partition, we have a filtration of $\ZC_m$ 
by closed subsets such as 
$\cdots \supset \ZC_{\ol \a}  \supset  \ZC_{\ol \a'} \supset \cdots$ with 
$\ZC_{\ol \a} \ \backslash \ \ZC_{\ol\a'} = \ZC_{\a}$. 
The complex $K_{\ol \a}$ is defined similarly.
(In what follows, we use the general theory of perverse sheaves. 
The basic reference is [BBD].
In particular, we consider the derived category 
$\DD X$ of (constructible) $\Ql$-sheaves on the variety $X$, 
and the category 
of perverse sheaves $\CM X$ as a full subcategory of $\DD X$.  
We denote by ${}^pH^i(X) \in \CM X$ the $i$-th perverse cohomology 
complex on $X$.)  
Now 
$(K_{\a}, K_{\ol\a}, K_{\ol\a'})$ becomes a distinguished 
triangle in $\DD \CH_m^0$.  
Thus we obtain a perverse cohomology 
exact sequence
\begin{equation*}
\begin{CD}
@>>> {}^pH^i(K_{\a}) @>>> {}^pH^i(K_{\ol\a}) @>>> {}^pH^i(K_{\ol\a'})
      @>>> 
\end{CD}
\end{equation*}
It follows that 
\par\medskip\noindent
(4.8.4) \ Any simple constituent appearing in ${}^pH^i(K_{\a})$
appears as a direct summand in the semisimple complex 
$\s_1^*(K_{T_1,\CE_1}\boxtimes K_{T_2,\CE_2})$.  
\par\medskip
On the other hand, 
put $\wt K_{\a} = (\wt\vf_{\a})_!\wt\a_{12}^*(\CE_1\boxtimes\CE_2)[d]$,
where 
$\wt\vf_{\a}$ is the restriction of $\wt\vf_I'$ on $c_I\iv(\ZC_{\a})$, 
and 
$\wt\a_{12} = (\a^1 \times \a^2)\circ \wt\s_I$.  
Since $c_I\iv(\ZC_{\a}) \to \ZC_{\a}$ is a locally trivial fibration, 
we see that 
\begin{align*}
\wt K_{\a} &= (\wt\vf_{\a})_!c_I^*\a_{12}^*(\CE_1 \boxtimes\CE_2)[d]  \\
           &= (\vf_{\a})_!(c_I)_!c_I^*\a_{12}^*(\CE_1\boxtimes\CE_2)[d] \\
           &= H^{\bullet}_{\a}\otimes K_{\a}, 
\end{align*}
where $H^{\bullet}_{\a} = (c_{\a})_!\Ql$ for 
$c_{\a}: c_I\iv(z) \to \{ z\}$ for a point $z \in \ZC_{\a}$, hence 
is a complex of vector spaces.  
Let us define $K_I = (\wt\vf'_I)_!\wt\a_{12}^*(\CE_1\boxtimes\CE_2)[d]$.
Since 
$\CZ_I^{\spadesuit} = \coprod c_I\iv(\ZC_{\a})$ gives an $\a$-partition 
of $\CZ_I^{\spadesuit}$, by a similar argument as before, we see that 
any simple summand, up to shift,  of the semisimple complex $K_I$ 
appears as a simple constituent of the perverse cohomology 
${}^pH^i(\wt K_{\a})$ for some $\a$, hence we have 
\par\medskip\noindent
(4.8.5) \ Any simple summand of the semisimple complex 
$K_I$ appears as a simple constituent  
of ${}^pH^i(K_{\a})$ for some $i$ and some $\a$.
\par\medskip  
By using the cartesian square in the right hand side of 
the diagram (4.8.1) together with (4.8.3), (4.8.4) and (4.8.5), 
we see that 
any simple summand (up to shift) of the semisimple complex 
$(\vf'_I)_*\a^*\CE$ 
is contained  in the set $\{ A_{\r}^0 \mid \r \in \CA_{\Bm, \CE}\wg \}$.
Since $\wt\CX_I$ is a connected component of $\wt\CX_I^{\spadesuit}$, 
$(\vf'|_{\wt\CX_I})_*\a^*\CE$ is a direct summand of 
$(\vf'_I)_*\a^*\CE$. Since 
$\wt\CX_m^+ = \coprod_I\wt\CX_I$, we see that 
\par\medskip\noindent
(4.8.6)
Any simple summand (up to shift) of the semisimple complex   
$(\vf')_*\a^*\CE$ is contained in the set 
\begin{equation*}
\bigcup_{\substack{I \subset [1,n] \\
           |I| = m}}\{ A_{\r}^0 \mid \r \in \CA_{\Bm,\CE}\wg \}.
\end{equation*}
\par\medskip
By applying $\vf''_*$ on (4.8.6), and by applying Lemma 4.7,
we have
\par\medskip\noindent
(4.8.7) Any simple summand (up to shift) of the semisimple complex 
$(\pi_m)_*\a^*\CE|_{\wt\CX_m^+}$ is contained in the set
$\{ \IC(\CX_m^0,\CL_{\r}) \mid \r \in \CA_{\Bm,\CE}\wg\}$.
\par\medskip
(4.8.7) implies, in particular, that any simple summand of 
$K = (\pi_m)_*\a^*\CE|_{\wt\CX_m^+}$ has its support $\CX_m^0$. 
Since the restriction of $K$ on $\CY_m^0$ coincides with 
$K_0 = (\psi_m)_*\a_0^*\CE|_{\wt\CY_m^+}$,
the decomposition of $K$ into simple summands is determined by 
the decomposition of $K_0$.  Hence the proposition follows from 
(3.5.2). 
\end{proof}

\para{4.9.}
We are now ready to prove Theorem 4.2. 
For each $0 \le m \le n$, let $\ol\pi_m$ be 
the restriction of $\pi$ on $\pi\iv(\CX_m)$.
We show that the direct image complex $(\ol\pi_m)_*\a^*\CE|_{\pi\iv(\CX_m)}$ 
on $\CX_m$ is decomposed as follows;
\begin{equation*}
\tag{4.9.1}
\begin{split}
(\ol\pi_m)_*(\a^*\CE|_{\pi\iv(\CX_m)})[d_m] 
        \simeq &\bigoplus_{\r \in \CA_{\Bm,\CE}\wg}
   H^{\bullet}(\BP_1^{n-m})\otimes  \wt V_{\r} \otimes 
    \IC(\CX_m, \CL_{\r})[d_m] \\
     &\oplus \bigoplus_{0 \le m' < m}\bigoplus_{\r \in \CA_{\Bm',\CE}}
   \wt V_{\r} \otimes \IC(\CX_{m'}, \CL_{\r})[d_{m'} - 2(n-m)]. 
\end{split}
\end{equation*}
Note that the theorem will follow from (4.9.1) by applying it to the 
case where $m = n$.
We show (4.9.1) by induction on $m$. In the discussion below, 
we use the same symbol $\a^*\CE$ to denote its restriction to 
$\pi\iv(\CX_m), \pi\iv(\CY_m)$, etc. for saving the notation.
Since $\ol\pi_m$ is a proper map, by the decomposition theorem, 
$(\ol\pi_m)_*\a^*\CE$ can be written as a direct sum of 
complexes of the form $A[h]$, where $A$ is a simple perverse sheaf. 
Here the restriction of $(\ol\pi_m)_*\a^*\CE$ on $\CY_m$ 
coincides with $(\ol\psi_m)_*\a^*\CE$, 
and we know by (3.6.1) that it can be expressed by a similar formula as 
(4.9.1), by replacing $\CX_m, \CX_{m'}$ by $\CY_m, \CY_{m'}$. 
Hence in order to prove (4.9.1), it is enough to show that 
$\supp A \cap \CY_m \ne \emptyset$ for any direct summand $A[h]$
of $(\ol\pi_m)_*\a^*\CE$.  
We have $\CX_m = \CX_m^0 \cup \CX_{m-1}$ with  $\CX_m^0$ open. 
The restriction of $(\ol\pi_m)_*\a^*\CE$ on 
$\CX^0_m$ (resp. $\CX_{m-1}$) coincides with $(\pi_m)_*\a^*\CE$
(resp. $(\ol\pi_{m-1})_*\a^*\CE$).  
Let $A[h]$ be a direct summand of $(\ol\pi_m)_*\a^*\CE$. If 
$\supp A \cap \CX^0_m \ne \emptyset$, then 
$A|_{\CX_m^0}$ is a direct summand (up to shift) of 
$(\pi_m)_*\a^*\CE$.   By comparing Proposition 4.8 and
(3.5.2), we see that 
$\supp A \cap \CY_m \ne \emptyset$.  
If $\supp A \cap \CX_m^0 = \emptyset$, then 
$\supp A \subset \CX_{m-1}$, and $A|_{\CX_{m-1}}$ (up to shift) 
is a direct summand of $(\ol\pi_{m-1})_*\a^*\CE$.     
By induction hypothesis, we see again that 
$\supp A \cap \CY_m \ne \emptyset$.
This proves (4.9.1) and so proves the theorem.
\par\bigskip

\section{Springer correspondence}

\para{5.1.}
The Springer correspondence is a bijective correspondence 
between the set of $H$-orbits in $G^{\io\th}\uni \times V$ and the set of 
irreducible representations (up to isomorphism) of Weyl group
$W_n$, which is an analogue of the Springer correspondence in 
the case of reductive groups. Kato proved in [Ka1] the Springer 
correspondence for the exotic nilpotent cone $\Fg^{-\th}\nil \times V$
over $\BC$.  Here we give an alternate proof for 
$\CX\uni = G^{\io\th}\uni \times V$ over an algebraically closed 
field $\Bk$ of odd characteristic, based on Theorem 4.2.
\par 
In the setup of Section 4, we consider the special case where 
$\CE  = \Ql$, the constant sheaf on $T^{\io\th}$.  
In this case under the notation of 3.5, 
$\CA_{\CE}$ is isomorphic to the group algebra 
$\Ql[\CW]$, and $\wt\CA_{\CE}$ is isomorphic to $\Ql[W_n]$.  Hence   
by Theorem 4.2, $\pi_*\Ql$ is equipped with the action of $W_n$.
It follows that for each $(x,v) \in \CX$, 
$H^i(\pi\iv(x,v), \Ql)$  turns out to be a $W_n$-module.  In the case 
where $(x,v) = (1,0)$, $\pi\iv(x,v) = H/B^{\th}$.  Hence 
$W_n$ acts on $H^i(H/B^{\th}, \Ql)$, which we call 
the exotic action of $W_n$.  On the other hand, $W_n$ acts 
on $H^i(H/B^{\th}, \Ql)$ as the original Springer 
action of Weyl groups, which we call the classical 
action.  We have the following lemma.

\begin{lem}  
The exotic action and the classical action of $W_n$ on 
$H^i(H/B^{\th}, \Ql)$ coincide with each other.
\end{lem}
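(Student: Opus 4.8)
The plan is to unravel how each of the two actions on $H^{\bullet}(\CB^H,\Ql)$ is assembled from the subgroups $(\BZ/2\BZ)^n$ and $S_n$ of $W_n = \CW\ltimes(\BZ/2\BZ)^n$, and then to reduce the coincidence to a rank-one statement together with a matching of monodromies. First I would record the structure of the exotic action: by construction (Theorem 4.2 with $\CE=\Ql$, unwinding 3.5 and 4.8), the $W_n$-action on $\pi_*\Ql$ comes from two sources. The finite Galois covering $\e_I$ contributes the monodromy action of $\CW = N_H(T^{\io\th})/Z_H(T^{\io\th}) \simeq S_n$, while the $\BP_1$-bundle $\xi_I$, whose fibre $(SL_2/B_2)^{I'}\simeq\BP_1^{I'}$ is part of the flag variety of $Z_H(T^{\io\th})\simeq SL_2\times\cdots\times SL_2$, contributes by \emph{definition} the Springer action of the Weyl group $(\BZ/2\BZ)^n$ of $Z_H(T^{\io\th})$ on $H^{\bullet}(\BP_1^n)$. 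Specializing to $(x,v)=(1,0)$, where $\pi\iv(1,0)=\CB^H$, this exhibits the exotic action on $H^{\bullet}(\CB^H)$ as induced from the $SL_2^n$-Springer action on $H^{\bullet}((SL_2/B_2)^n)$, with $\CW\simeq S_n$ permuting the $n$ factors.

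I would then show the classical Springer action admits exactly the same description. Realize it through the Grothendieck--Springer resolution $\mu:\wt\Fh = H\times^{B^{\th}}\Fb^{\th}\to\Fh$, so $H^{\bullet}(\CB^H)=$ the stalk of $\mu_*\Ql[\dim\Fh]$ at $0$ carries the classical $W_n$-action by monodromy over the regular semisimple locus. For $s\in T^{\io\th}$ with $Z_H(s)=Z_H(T^{\io\th})\simeq SL_2^n$, a full-rank reductive (pseudo-Levi) subgroup of $H$, the fibre $\mu\iv(s)$ is the flag variety $\CB^{Z_H(s)}\simeq(SL_2/B_2)^n$, and the standard compatibility of Springer's construction with such subgroups (the Lie-algebra analogue of Lusztig's restriction theorem [L2]) says that $H^{\bullet}(\CB^H)$, with its classical $W_n$-action, is induced from $H^{\bullet}(\CB^{Z_H(s)})$ with the $SL_2^n$-Springer action of $W_{H,s}=(\BZ/2\BZ)^n$, again with $N_{W_n}((\BZ/2\BZ)^n)/(\BZ/2\BZ)^n\simeq S_n$ permuting the factors. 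Both actions being induced in the same manner from the $SL_2^n$-Springer action on $H^{\bullet}((SL_2/B_2)^n)$ — which for a single $SL_2$ is trivial on $H^0$ and the sign character on $H^2$, identically in both pictures — the two $W_n$-actions would then agree.

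The main obstacle is making this common ``induction'' presentation precise and compatible, i.e.\ showing that the generic monodromy of $\pi$ over the regular locus of $\CX=G^{\io\th}\times V$ (where $Z_H(x,v)$ is a maximal torus and $\pi\iv(x,v)$ is a $W_n$-torsor, with $\dim\CX=\dim\Fh=2n^2+n$) is carried to the generic monodromy of $\mu$ over the regular semisimple locus of $\Fh$, compatibly with restriction to the $s$-locus. One route is to compare, $H$-equivariantly, a neighbourhood of $\CB^H$ in $\wt\CX = H\times^{B^{\th}}(B^{\io\th}\times M_n)$ with one in $\wt\Fh$ (the fibres $B^{\io\th}\times M_n$ and $\Fb^{\th}$ both having dimension $n^2+n$); a lighter route is to invoke Henderson's computation [H1, Propositions 6.1, 6.2] for the $\CW\simeq S_n$-part, reducing everything to the elementary rank-one verification of the $(\BZ/2\BZ)^n$-part. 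As a further check one could test both actions on $(\CB^H)^{T^{\th}}\simeq W_n$: if each permutes these fixed points by left multiplication, injectivity of $H^{\bullet}_{T^{\th}}(\CB^H)\hookrightarrow H^{\bullet}_{T^{\th}}((\CB^H)^{T^{\th}})$ forces the coincidence.
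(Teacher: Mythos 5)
Your guiding picture -- that both actions should be assembled from the $SL_2^n$-Springer action on $H^{\bullet}((SL_2/B_2)^n)$ together with an $S_n$ of monodromy/permutation -- is indeed the philosophy behind the paper's proof, and your description of the exotic side is accurate. But the implementation on the classical side has a genuine flaw. You evaluate the Grothendieck--Springer resolution $\mu:\wt\Fh=H\times^{B^{\th}}\Fb^{\th}\to\Fh$ at $s\in T^{\io\th}$; this is meaningless, since $T^{\io\th}$ lies in $G^{\io\th}$, not in $H$ or $\Fh$ (its log lies in $\Fg^{-\th}$, the \emph{other} eigenspace). Worse, no semisimple element of $\Fh$ has centralizer $SL_2^n$ (centralizers there are of the form $\prod GL_{a_i}\times Sp_{2b}$), and for $n\ge 3$ the long-root subgroup $SL_2^n=Z_H(T^{\io\th})$ is not the centralizer of any single semisimple element of $H$ either; it is only an iterated pseudo-Levi. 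So the ``standard compatibility of Springer's construction with such subgroups'' that you invoke (Lusztig's restriction theorem concerns Levi subgroups of parabolics) is not a statement you can quote, and the claim that the classical $W_n$-module $H^{\bullet}(\CB^H)$ is ``induced in the same manner'' from the $SL_2^n$-Springer action is precisely what has to be proved, not an input.

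That missing identification is the crux, and none of your three proposed routes closes it: there is no $H$-equivariant identification of a neighbourhood of $\CB^H$ in $\wt\CX$ with one in $\wt\Fh$ (the fibres $B^{\io\th}\times M_n$ and $\Fb^{\th}$ are genuinely different $B^{\th}$-varieties; equal dimension is not an argument); Henderson's computations give the decomposition of $\pi'_*\Ql$ but do not compare the exotic $S_n$-action on $H^{\bullet}(\CB^H)$ with the classical one; and the fixed-point/localization check presupposes that the exotic action comes from an action on the variety $\CB^H$, which it does not -- it is defined only on cohomology. The paper closes the gap differently: after transferring to the Lie-algebra picture $\pi'':\wt\Fg^{-\th}\to\Fg^{-\th}$ via $\log$, it fixes a regular semisimple $H$-orbit $\CO\subset\Fg^{-\th}\reg$ and compares both actions through the single restriction map $q^*:H^i(H/B^{\th},\Ql)\to H^i(\psi\iv(\CO),\Ql)$: $q^*$ is equivariant for the exotic action because $\wt\Fg^{-\th}$ retracts onto $H/B^{\th}$ as a vector bundle, it is equivariant for the classical $S_n$- and $(\BZ/2\BZ)^n$-actions by explicit bundle comparisons through $H/B^{\th}\cap Z_H(T^{\io\th})$, $H/T^{\th}$ and $\xi_1:H/T^{\th}\to H/Z_H(T^{\io\th})$ (this is where your ``rank-one'' heuristic is made precise, as an identification of the $(\BZ/2\BZ)^n$ in $W_n=N_H(T^{\th})/T^{\th}$ acting on $H/T^{\th}$ with the Weyl groups of the $SL_2$-fibres), and finally $q^*$ is shown to be injective, which forces the two actions to coincide. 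You would need to supply an argument of this kind -- some map intertwining both actions plus an injectivity statement -- to turn your outline into a proof.
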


\begin{proof}
We prove the lemma following the argument analogous to the
classical case (see e.g. [S1, Proposition 5.4]).
Let $\pi' : \wt G^{\io\th} \to G^{\io\th}$, 
$\psi': {\pi'}\iv(G^{\io\th}\reg) \to  G^{\io\th}\reg$ 
be the maps defined in 1.8.  Under the embedding 
$G^{\io\th} \simeq G^{\io\th} \times \{ 0\} \hookrightarrow 
G^{\io\th} \times V$,  $\pi_*\Ql|_{G^{\io\th}} \simeq \pi'_*\Ql$.
and $\psi_*\Ql|_{G^{\io\th}\reg} \simeq \psi'_*\Ql$.
Moreover, $\psi_*\Ql|_{G^{\io\th}\reg}$ 
(resp.  $\pi_*\Ql|_{G^{\io\th}}$) coincides with $(\psi_m)_*\Ql$ 
(resp. $(\pi_m)_*\Ql$) defined in 3.4 (resp. 4.5) for $m = 0$.
Thus the arguments in Section 3 and 4 are applied for this situation.  
In particular, $\pi'_*\Ql$  and $\psi'_*\Ql$ have $W_n$-actions 
as a restriction of $\psi_*\Ql$ and $\pi_*\Ql$.  
Thus, forgetting the vector space part $V$, 
we have only to consider $\pi'_*\Ql$. We can also consider 
the Lie algebra counter part of $\pi'_*\Ql$.  Let 
$\pi'': \wt\Fg^{-\th} \to \Fg^{-\th}$ be the map analogous to 
$\pi'$, where $\wt\Fg^{-\th} = H \times ^{B^{\th}}\Fb^{-\th}$
with $\Fb = \Lie B$.  Then one can define an action of $W_n$ 
on $\pi''_*\Ql$ by a similar construction.  Consider the map 
$\log : G^{\io\th} \to \Fg^{-\th}$. Then $\log$ maps $B^{\io\th}$
to $\Fb^{-\th}$, and it induces a map 
$\wt\log : \wt G^{\io\th} \to \wt \Fg^{-\th}$, which is a base change 
of $\log$ via $\pi''$.  Thus we have $\log^*\pi'_*\Ql \simeq \pi''_*\Ql$.
This isomorphism is $W_n$-equivariant.  Hence in order to prove the
lemma, we may consider the case $\pi'': \wt\Fg^{-\th} \to \Fg^{-\th}$.
For saving the notation, we use the same symbol 
in the Lie algebra case as in 1.8, 
such as $\pi: \wt\Fg^{-\th} \to \Fg^{-\th}$.
\par
Let $\Ft = \Lie T$, and $\Fg^{-\th}\reg, \Ft^{-\th}\reg$ be as 
in Proposition 1.14. Put $\Fb^{-\th}\reg = \Fb \cap \Fg^{-\th}\reg$.  
Similar to (1.8.1), we have  a diagram
\begin{equation*}
\begin{CD}
\p : H \times^{B^{\th}}\Fb^{-\th}\reg 
    \simeq H \times^{B^{\th}\cap Z_H(T^{\io\th})}\Ft^{-\th}\reg
      @>\xi>> 
    H \times^{Z_H(T^{\io\th})}\Ft^{-\th}\reg @>\e>> \Fg^{-\th}\reg,
\end{CD}
\end{equation*}
where $\xi$ is a locally trivial fibration with fibre isomorphic to
$\BP^n_1$, and $\e$ is a finite Galois covering with Galois group $S_n$.
Let $\CO$ be an $H$-orbit in $\Fg^{-\th}\reg$, and put 
$\wt\CO = \xi\iv\e\iv(\CO)$.
We have the following diagram

\begin{equation*}
\tag{5.2.1}
\begin{CD}
\p\iv(\CO) @<\sim <<  \wt\CO @>\xi>> \e\iv(\CO) @>\e>> \CO  \\
  @V q VV         @VV \wt p V     @VV p V               \\
H/B^{\th}  @<\g << H/B^{\th}\cap Z_H(T^{\io\th}) @>\xi'>> H/Z_H(T^{\io\th}), \\
\end{CD}
\end{equation*}
where $q, \wt p, p$ are natural projections.
Now $\p_*\Ql$ is equipped with $W_n$-action.
Hence for any subvariety $X$ of $\Fg^{-\th}\reg$,  
$W_n$ acts on 
$H^i(\p\iv(X), \Ql) \simeq \BH^i(X, \p_*\Ql)$, which is 
the exotic action of $W_n$. 
Moreover, for the inclusion $X \hookrightarrow Y$, the induced 
map $j^*: H^i(\p\iv(Y), \Ql) \to H^i(\p\iv(X),\Ql)$ 
arising from $j : \psi\iv(X) \hra \psi\iv(Y)$ 
is $W_n$-equivariant.
Here we fix an action of $W_n$ on $H^i(\psi\iv(\CO),\Ql)$ defined 
as above. 
First we note that 
\par\medskip\noindent 
(5.2.2) \ The map $q^* : H^i(H/B^{\th}, \Ql) \to H^i(\p\iv(\CO), \Ql)$
is $W_n$-equivariant with respect to the exotic action of 
$W_n$ on $H^i(H/B^{\th},\Ql)$.
\par\medskip
In fact, let $j$ be the inclusion map 
$\p\iv(\CO) \hookrightarrow \wt\Fg^{-\th}$, 
and $q_1 : \wt\Fg^{-\th} \to H/B^{\th}$ be the projection. 
Then $q = q_1\circ j$.  Since 
$j^*: H^i(\wt\Fg^{-\th}, \Ql) \to H^i(\psi\iv(\CO),\Ql)$ 
is $W_n$-equivariant from the above remark, it is 
enough to show that 
$q_1^*: H^i(H/B^{\th},\Ql) \to H^i(\wt\Fg^{-\th},\Ql)$ is $W_n$-equivariant.  
But since $\wt\Fg^{-\th}$ is a vector bundle over $H/B^{\th}$, 
we have
$H^i(\wt\Fg^{-\th}, \Ql) \simeq H^i(H/B^{\th}, \Ql)$.  This 
implies that $q_1^* = (j_1^*)\iv$, where $j_1$ is the natural inclusion 
$H/B^{\th} \hookrightarrow \wt\Fg^{-\th}$ so that $q_1\circ j_1 = \id$.
Since $j_1^*$ is $W_n$-equivariant, $q_1^*$ is $W_n$-equivariant. 
Hence (5.2.2) holds. 
\par
Let $\CW = N_H(T^{\io\th})/Z_H(T^{\io\th})$ be as before.  Then 
$\CW$ acts on $H/Z_H(T^{\io\th})$, and $p$ is $\CW$-equivariant.
It follows that 
$p^* : H^i(H/Z_H(T^{\io\th}), \Ql) \to H^i(\e\iv(\CO), \Ql)$ is 
$\CW$-equivariant.  
$B^{\th} \cap Z_H(T^{\io\th}) \simeq B_2 \times \cdots \times B_2$ 
as in 1.8, and the action of $S_n$ (as a permutation group of the set 
$\{ e_1, \dots, e_n\}$ and $\{f_1, \dots, f_n\}$) on $H$ 
leaves $B^{\th} \cap Z_H(T^{\io\th})$ invariant. 
Hence $\CW \simeq S_n$ acts on $H/B^{\th} \cap Z_H(T^{\io\th})$, 
and $\xi'$ is $\CW$-equivariant.
Since $\xi, \xi'$ are both $\BP_1^n$-bundles, we see that the map
$\wt p^* : H^i(H/B^{\th} \cap Z_H(T^{\io\th}), \Ql) 
\to H^i(\wt\CO, \Ql)$ is $\CW$-equivariant. 
The group $\CW$ acts on $B^{\th}\cap Z_H(T^{\io\th})$ as a permutation 
of factors $B_2$, and its restriction on $T^{\th}$ coincides with 
the permutation action of $S_n$ on the diagonal groups.  
Since $\g$ is an affine space bundle, we have 
$H^i(H/B^{\th}, \Ql) \simeq H^i(H/T^{\th}, \Ql) \simeq 
  H^i(H/B^{\th} \cap Z_H(T^{\io\th}),\Ql)$, and $\g^*$ becomes 
$\CW$-equivariant, where $\CW \simeq S_n$ acts naturally 
on $H^i(H/T^{\th}, \Ql)$, which is the classical action on 
$H^i(H/B^{\th}, \Ql)$.  This shows that 
\par\medskip\noindent
(5.2.3) \ $q^*$ is $S_n$-equivariant with respect to the 
classical action of $S_n$ on $H^i(H/B^{\th}, \Ql)$.  
\par\medskip
On the other hand, $\xi_*\Ql \simeq H^{\bullet}(\BP_1^n)\otimes \Ql$
is equipped with the action of $(\BZ/2\BZ)^n$.   Thus
$\BH^i(\e\iv(\CO), \xi_*\Ql) \simeq H^i(\wt\CO, \Ql)$ has a structure of 
$(\BZ/2\BZ)^n$-module.  Similarly, 
$H^i(H/B^{\th} \cap Z_H(T^{\io\th}), \Ql)$ has a structure of 
$(\BZ/2\BZ)^n$-module, and the map $\wt p^*$ is
 $(\BZ/2\BZ)^n$-equivariant.
Let $\xi_1 : H/T^{\th} \to H/Z_H(T^{\io\th})$.  Then 
$H/B^{\th} \cap Z_H(T^{\io\th})$ is a vector bundle over $H/T^{\th}$, 
and 
\begin{equation*}
H^i(H/B^{\th} \cap Z_H(T^{\io\th}),\Ql) \simeq H^i(H/T^{\th}, \Ql) \simeq
   \BH^i(H/Z_H(T^{\io\th}), (\xi_1)_*\Ql).
\end{equation*}
Here the $(\BZ/2\BZ)^n$-action on 
$H^i(H/B^{\th}\cap Z_H(T^{\io\th}),\Ql)$
comes from the $(\BZ/2\BZ)^n$-action on $(\xi_1)_*\Ql$, which is
 described as follows: 
$Z_H(T^{\io\th}) \simeq SL_2 \times \cdots \times  SL_2$, 
and $Z_H(T^{\io\th})/T^{\th} \simeq (SL_2/T_2)^n$, where $T_2$ is a
maximal torus of $SL_2$.  Hence $(\BZ/2\BZ)^n$ acts on 
$Z_H(T^{\io\th})/T^{\th}$ as its Weyl group, and acts on  
$H/T^{\th} \simeq H\times^{Z_H(T^{\io\th})}(Z_H(T^{\io\th})/T^{\th})$  
naturally, which gives the $(\BZ/2\BZ)^n$-action on $(\xi_1)_*\Ql$.
But this action of $(\BZ/2\BZ)^n$ on $H/T^{\th}$ is nothing but 
the action on $H/T^{\th}$ as a subgroup of 
$W_n = N_H(T^{\th})/T^{\th}$.   It follows that 
\par\medskip\noindent
(5.2.4) \ $q^*$ is $(\BZ/2\BZ)^n$-equivariant with respect to the 
classical action of $(\BZ/2\BZ)^n$ on $H^i(H/B^{\th}, \Ql)$.
\par\medskip
By (5.2.3) and (5.2.4), we see that $q^*$ is $W_n$-equivariant with 
respect to the classical action of $W_n$ on $H^i(H/B^{\th}, \Ql)$. 
In view of (5.2.2), in order to prove the lemma, it is enough to 
show that $q^*$ is injective.   But since 
$\CO \simeq H/Z_H(T^{\io\th})$, and $\e$ is a finite covering with 
group $S_n$, we see that $\e\iv(\CO) = \coprod_{w \in S_n}X_w$ 
with $X_w \simeq H/Z_H(T^{\io\th})$.
Since $\xi$ is a $\BP_1^n$-bundle, $\wt\CO$ is written as 
$\wt\CO = \coprod_{w \in S_n}\wt X_w$, where $\wt X_w$ is a
$\BP^n_1$-bundle over $X_w$, which is 
isomorphic to $H/B^{\th} \cap Z_H(T^{\io\th})$.
This implies the injectivity of the map $\wt p^*$. 
The injectivity of $q^*$ follows from this.  Thus 
the lemma is proved.
\end{proof}

\para{5.3.}
Note that $\dim \CX\uni = 2\nu_H$ by Proposition 2.7.  
Then  $\pi_1$ is semismall by Lemma 2.5 (ii), and so  
$K_1 = (\pi_1)_*\Ql[\dim \CX\uni]$ 
is a semisimple perverse sheaf. As $\pi_1$ 
is $H$-equivariant, $K_1$ is $H$-equivariant.  Since 
the number of $H$-orbits in $\CX\uni$ is finite, and the isotropy 
group of each orbit is connected by Lemma 2.3 (ii), 
an $H$-equivariant  
simple perverse sheaf is of the form 
$A_{\Bmu} = \IC(\ol\CO_{\Bmu}, \Ql)[\dim \CO_{\Bmu}]$
for an $H$-orbit $\CO_{\Bmu}$ in $\CX\uni$.  
It follows that 
\begin{equation*}
\tag{5.3.1}
K_1 \simeq \bigoplus_{\Bmu \in \CP_{n,2}}
   \r_{\Bmu}\otimes A_{\Bmu},  
\end{equation*}
where $\r_{\Bmu} = \Hom (A_{\Bmu}, K_1)$ is a $W_n$-module.
\par
Since $\CE = \Ql$, we have
$\CA_{\Bm, \CE} \simeq \Ql[S_m \times S_{n-m}]$.
Then $\r \in  \CA_{\Bm,\CE}\wg$
is given by $\r_1 \otimes \r_2$, where $\r_1 \in S_m\wg$ 
(resp. $\r_2 \in S_{n-m}\wg$)   
corresponding to  $\mu^{(1)} \in \CP_m$ 
(resp. $\mu^{(2)} \in \CP_{n-m}$).
The induced representation $\wt V_{\r}$ (defined in 3.5)
 gives rise to 
an irreducible representation of $W_n$ , which we denote by 
$\wt V_{\Bmu}$ for $\Bmu = (\mu^{(1)},\mu^{(2)}) \in \CP_{n,2}$.
In this case we denote the local system $\CL_{\r}$ 
on $\CY^0_m$ by $\CL_{\Bmu}$.  For a given $\Bmu \in \CP_{n,2}$, 
put $m(\Bmu) = |\mu^{(1)}|$.   Hence $\CL_{\Bmu}$ is a local system 
on $\CY^0_{m(\Bmu)}$.

The following result is a reformulation of Kato's result 
([Ka1, Theorem 8.3]).
\begin{thm} [Springer correspondence] 
\begin{enumerate}
\item
For each $\Bmu \in \CP_{n,2}$, $\r_{\Bmu}$ is an irreducible 
$W_n$-module.  The map $\CO_{\Bmu} \mapsto \r_{\Bmu}$ gives a bijective
correspondence between the set of $H$-orbits in $\CX\uni$ and the set of 
irreducible representations (up to isomorphism)  of $W_n$. 
\item
Let $\CO_{\Bmu^{\bullet}}$ be the $H$-orbit, for a certain 
$\Bmu^{\bullet} \in \CP_{n,2}$,  corresponding to 
$\wt V_{\Bmu}$ under the above correspondence.  
Then we have
\begin{equation*}
\IC(\CX_{m(\Bmu)}, \CL_{\Bmu})|_{\CX\uni} \simeq 
            \IC(\ol\CO_{\Bmu^{\bullet}},\Ql)[a],
\end{equation*}
where $a = \dim \CO_{\Bmu^{\bullet}} - \dim \CX\uni - 
               \dim \CX_{m(\Bmu)} + \dim \CX$. 
\end{enumerate}
\end{thm}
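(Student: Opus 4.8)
The plan is to carry out the Borho--MacPherson argument [BM], now that the substantive geometric input, Theorem 4.2, is available. First I would pass from $\pi$ to the semismall map $\pi_1$. One checks from (1.2.2) that a unipotent element of $B^{\io\th}$ already lies in $U^{\io\th}$; hence $\pi\iv(\CX\uni) = \wt\CX\uni$, so $\pi_1$ is the restriction of $\pi$ over $\CX\uni$, and by proper base change $(\pi_1)_*\Ql \simeq (\pi_*\Ql)|_{\CX\uni}$, $W_n$-equivariantly. Taking $\CE = \Ql$ in Theorem 4.2, reindexing the summands of $K_{T,\Ql}$ by $\CP_{n,2}$ as in 5.3, and shifting by $\dim\CX\uni - \dim\CX = -n$, I get a $W_n$-equivariant isomorphism $K_1 \simeq \bigoplus_{\Bmu\in\CP_{n,2}}\wt V_{\Bmu}\otimes K_{\Bmu}$, where $K_{\Bmu} = \IC(\CX_{m(\Bmu)},\CL_{\Bmu})|_{\CX\uni}[d_{m(\Bmu)}-n]$, with $W_n$ acting through the factors $\wt V_{\Bmu}$ (which, as $\Bmu$ runs over $\CP_{n,2}$, are the pairwise non-isomorphic irreducible $W_n$-modules, by the classical parametrization). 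Comparing with (5.3.1), in which the semisimple perverse sheaf $K_1$ is written as $\bigoplus_{\Bnu}\r_{\Bnu}\otimes A_{\Bnu}$ with $A_{\Bnu} = \IC(\ol\CO_{\Bnu},\Ql)[\dim\CO_{\Bnu}]$ (the orbit stabilizers being connected, Lemma 2.3(ii)), each $K_{\Bmu}$ must itself be a semisimple perverse sheaf, $K_{\Bmu} = \bigoplus_{\Bnu}A_{\Bnu}^{\oplus\ell_{\Bnu,\Bmu}}$ with $\ell_{\Bnu,\Bmu}\in\ZZ_{\ge0}$, whence $\r_{\Bnu}\simeq\bigoplus_{\Bmu}\wt V_{\Bmu}^{\oplus\ell_{\Bnu,\Bmu}}$ as $W_n$-modules.

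The core of the proof is then a dimension count squeezing two inequalities together. Because $\pi_1$ is semismall with connected isotropy, the multiplicity $\dim\r_{\Bnu}$ equals the number $c_{\Bnu}$ of top-dimensional irreducible components of a fibre $\pi_1\iv(x,v)$ over $\CO_{\Bnu}$; so Lemma 2.5(iii) gives $\sum_{\Bnu}(\dim\r_{\Bnu})^2 = \sum_{\Bnu}c_{\Bnu}^2\le |W_n|$. For the reverse bound I would look at the point $(1,0)$: since $\pi_1\iv(1,0) = H/B^{\th}$, the stalk $K_1|_{(1,0)}$ is $H^{\bullet}(H/B^{\th})$ up to shift, which by Lemma 5.2 carries the classical Springer action of $W_n$ and is therefore the regular representation $\Ql[W_n]$. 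Since the $\wt V_{\Bmu}$ are pairwise non-isomorphic and every irreducible occurs in $\Ql[W_n]$, each $K_{\Bmu}|_{(1,0)}$ is nonzero, so $\sum_{\Bnu}\ell_{\Bnu,\Bmu}\ge 1$. Using $\ell_{\Bnu,\Bmu}^2\ge \ell_{\Bnu,\Bmu}$ and $(\sum_i a_i)^2\ge\sum_i a_i^2$ for non-negative $a_i$, this yields $\sum_{\Bnu}(\dim\r_{\Bnu})^2 \ge \sum_{\Bmu}(\dim\wt V_{\Bmu})^2\sum_{\Bnu}\ell_{\Bnu,\Bmu}\ge\sum_{\Bmu}(\dim\wt V_{\Bmu})^2 = |W_n|$. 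The two bounds therefore coincide, and every inequality used is an equality — in particular the one in Lemma 2.5(iii), which is the content of Remark 2.6.

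Reading off the equality cases gives the theorem. Equality in $\sum_{\Bnu}\ell_{\Bnu,\Bmu}\ge 1$ forces, for each $\Bmu$, a unique $\Bmu^{\bullet}$ with $\ell_{\Bmu^{\bullet},\Bmu}=1$ and all other $\ell_{\Bnu,\Bmu}=0$, i.e. $K_{\Bmu} = A_{\Bmu^{\bullet}}$. Equality in $(\sum_i a_i)^2\ge\sum_i a_i^2$ forces each $\r_{\Bnu}$ to involve at most one $\wt V_{\Bmu}$; combined with $K_{\Bmu}=A_{\Bmu^{\bullet}}$ this makes $\Bmu\mapsto\Bmu^{\bullet}$ injective, hence a bijection of $\CP_{n,2}$. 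Consequently $\r_{\Bmu^{\bullet}}\simeq\wt V_{\Bmu}$ is irreducible, and $\CO_{\Bnu}\mapsto\r_{\Bnu}$ is a bijection onto the set of irreducible $W_n$-modules — this is (i). For (ii), the identity $K_{\Bmu}=A_{\Bmu^{\bullet}}$ reads $\IC(\CX_{m(\Bmu)},\CL_{\Bmu})|_{\CX\uni}[d_{m(\Bmu)}-n]\simeq\IC(\ol\CO_{\Bmu^{\bullet}},\Ql)[\dim\CO_{\Bmu^{\bullet}}]$; shifting, and using $d_{m(\Bmu)}=\dim\CX_{m(\Bmu)}$ (Lemma 3.3(iii)) together with $\dim\CX-\dim\CX\uni = n$, gives $\IC(\CX_{m(\Bmu)},\CL_{\Bmu})|_{\CX\uni}\simeq\IC(\ol\CO_{\Bmu^{\bullet}},\Ql)[a]$ with $a=\dim\CO_{\Bmu^{\bullet}}-\dim\CX\uni-\dim\CX_{m(\Bmu)}+\dim\CX$, as stated.

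I expect the delicate points to be the two preparatory facts underlying the count, not the count itself: first, the semismall-multiplicity identity $\dim\r_{\Bnu}=c_{\Bnu}$, which needs the connectedness of the isotropy groups together with the standard decomposition theory for semismall maps; and second, the identification $K_1|_{(1,0)}\simeq\Ql[W_n]$, which rests on Lemma 5.2 (exotic action $=$ classical action) and the classical computation of $H^{\bullet}(H/B^{\th})$ as the regular representation of $W_n$. The verification $\pi\iv(\CX\uni)=\wt\CX\uni$ and the combinatorial bookkeeping are routine; the genuinely hard work has already been done in establishing Theorem 4.2.
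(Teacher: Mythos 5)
Your argument is correct and is essentially the paper's own proof: the same sandwich of the lower bound coming from Lemma 5.2 (the stalk of $K_1$ at $(1,0)$ being the regular representation of $W_n$, which the paper phrases as injectivity of $\Ql[W_n]\to\End K_1$) against the upper bound $\sum_{\Bmu}c_{\Bmu}^2\le|W_n|$ of Lemma 2.5 (iii), with the comparison of the two decompositions of $K_1$ coming from Theorem 4.2 and (5.3.1) giving (ii). Your only deviation is organizational: you carry the multiplicities $\ell_{\Bnu,\Bmu}$ through the count and read off (i), (ii) and the equality in Lemma 2.5 (iii) simultaneously from the equality cases, whereas the paper first proves $\Ql[W_n]\simeq\End K_1$ to get (i) and then deduces (ii) by comparing (5.4.4) with the restriction of Theorem 4.2 to $\CX\uni$ — the same inputs either way.
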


\begin{proof}
We show (i).
We have an algebra homomorphism
\begin{equation*}
\tag{5.4.1}
\begin{CD}
\Ql[W_n] @>\a>> \End K_1 @>\b>> \End H^*(H/B^{\th}, \Ql).
\end{CD}
\end{equation*}
In order to prove (i), it is enough to show 
that $\a$ is an isomorphism $\Ql[W_n] \isom \End K_1$.
Via $\b\circ\a$, $W_n$ acts on $H^*(H/B^{\th}, \Ql)$.  By Lemma 5.2, 
this action is the classical action of $W_n$. Since 
$W_n$ acts on $H^*(H/B^{\th}, \Ql)$ as the regular representation, 
we see that $\b\circ \a$ is injective.
Hence $\a$ is also injective, and we have $|W_n| \le \dim\End K_1$.
On the other hand, (5.3.1) implies that    
\begin{equation*}
\tag{5.4.2}
H^i(\pi_1\iv(x,v), \Ql) \simeq 
\bigoplus_{\Bmu \in \CP_{n,2}}\r_{\Bmu}\otimes 
\CH^{i- \dim \CX\uni + \dim \CO_{\Bmu}}_{(x,v)}\IC(\ol\CO_{\Bmu},\Ql).
\end{equation*}
Take $(x,v) \in \CO_{\Bmu}$  and assume that $\r_{\Bmu} \ne \{ 0\}$.
Put $d_{\Bmu} = (\dim \CX\uni - \dim \CO_{\Bmu})/2 
= \nu_H - \dim \CO_{\Bmu}/2$. 
Then (5.4.2) implies that 
\begin{equation*}
\tag{5.4.3}
H^{2d_{\Bmu}}(\pi_1\iv(x,v), \Ql) \simeq \r_{\Bmu} \otimes 
  \CH^0_{(x,v)}\IC(\ol\CO_{\Bmu}, \Ql) \simeq \r_{\Bmu}.
\end{equation*}
By applying Lemma 2.5 (ii), we have $\dim \pi_1\iv(x,v) = d_{\Bmu}$.
Hence $\dim \r_{\Bmu}$ coincides with the number of irreducible 
components of $\pi_1\iv(x,v)$ with dimension $d_{\Bmu}$.
Thus by Lemma 2.5 (iii), we have 
\begin{equation*}
\dim \End K_1 = \sum_{\Bmu \in \CP_{n,2}}(\dim \r_{\Bmu})^2
  \le \sum_{\Bmu \in \CP_{n,2}}c^2_{\Bmu} \le |W_n|.
\end{equation*}
It follows that $\a$ is an isomorphism, and (i) follows.
\par
Next we show (ii). It follows from Theorem 4.2, we have
\begin{equation*}
\pi_*\Ql[\dim \CX]|_{\CX\uni} \simeq \bigoplus_{\Bmu \in \CP_{n,2}}
   \wt V_{\Bmu}\otimes \IC(\CX_{m(\Bmu)}, \CL_{\Bmu})[d_{m(\Bmu)}]|_{\CX\uni}.
\end{equation*}
On the other hand, by (i) and (5.3.1), we have 
\begin{equation*}
\tag{5.4.4}
(\pi_1)_*\Ql[\dim \CX\uni] \simeq \bigoplus_{\Bmu\in \CP_{n,2}}
    \wt V_{\Bmu}\otimes \IC(\ol\CO_{\Bmu^{\bullet}}, \Ql)
    [\dim \CO_{\Bmu^{\bullet}}].
\end{equation*}
By comparing these two formulas, we obtain (ii). 
Thus the theorem is proved.
\end{proof}

\remarks{5.5.} (i) \ Later in Theorem 7.1, it is proved 
that the correspondence $\Bmu \to \Bmu^{\bullet}$ in Theorem 5.4
is identical, namely $\Bmu^{\bullet} = \Bmu$ for each 
$\Bmu \in \CP_{n,2}$.
\par
(ii) In view of the proof of the theorem, together with Theorem 7.1, 
the Springer correspondence is given in the following form; 
for each $(x,v) \in \CO_{\Bmu}$, the Springer module 
$H^{2d_{\Bmu}}(\pi_1\iv(x,v),\Ql)$ gives rise to an irreducible 
$W_n$-module $\wt V_{\Bmu}$ (the standard $W_n$-module associated to 
the double partition $\Bmu \in \CP_{n,2}$), and the correspondence
$(x,v) \mapsto H^{2d_{\Bmu}}(\pi_1\iv(x,v),\Ql)$ gives a bijective 
correspondence as in the theorem.

\para{5.6.}
Let $P = LU_P$ be a $\th$-stable parabolic subgroup of $G$ containing 
$B$, where $L$ is a $\th$-stable Levi subgroup containing $T$.
Then $L^{\th}$ is isomorphic to 
$GL_{n_1} \times \cdots \times GL_{n_k} \times Sp_{2n'}$ 
with $\sum_i n_i + n' = n$. Accordingly, we have a decomposition
$M_n = \bigoplus_{i \ge 0} V_i$ and 
$M_n' = \bigoplus_{i \ge 0}V'_i$, where 
$V_{i}, V_{i}'$ are $T$-stable subspaces with 
$\dim V_i = \dim V'_i =  n_i$ for $i \ge 1$, 
and $\dim V_0 = \dim V_0' = n'$.
Let $V_L = V_0 \oplus V'_0$ and $
\wt V_L = V_L \oplus V_1\oplus\cdots\oplus V_k$  
so that $V = \wt V_L \oplus V'_L$, where 
$V_L' = V_1' \oplus\cdots\oplus V_k'$.
Here we may assume that $V'_L$ is $U_P$-stable, and 
$\wt V_L$ is identified with $V/V'_L$, on which $L^{\th}$ acts naturally.
We consider the variety 
\begin{equation*}
\tag{5.6.1}
L^{\io\th} \times \wt V_L \simeq (GL^{\io\th}_{2n'} \times V_L) \times 
                          \prod_{i=1}^k(GL_{n_i} \times V_i),
\end{equation*}
on which $L^{\th}$ acts via the conjugation action on 
$L^{\io\th}$ and the natural action on $\wt V_L$. In particular, 
$GL^{\io\th}_{2n'} \times V_L$ is a similar variety as 
$H \times V$ for $n = n'$.
It is known by [AH], [T] that the set of $GL_{n_i}$-orbits on 
$(GL_{n_i})\uni \times V_{n_i}$ is parameterized by the set 
$\CP_{n_i,2}$.
Let $\pi_P : P^{\io\th}\uni\times V \to L^{\io\th}\uni \times \wt V_L$ 
be the map defined by the natural projection 
$P^{\io\th}\uni \to L^{\io\th}\uni$
on the first factor, 
and the projection $\wt V_L\oplus V_L' \to \wt V_L$ on the second factor.
Let $\CO$ be an $H$-orbit in 
$G^{\io\th}\uni \times V$, and  $\CO_L$  an 
$L^{\th}$-orbit in $L^{\io\th}\uni \times V_L$.
Let us consider a variety 
\begin{equation*}
\begin{split}
\CZ = \{ (x,v, &gP^{\th}, g'P^{\th}) \in G^{\io\th}\uni \times V 
             \times H/P^{\th} \times H/P^{\th}  \\
       &\mid (g\iv xg, g\iv v) \in \pi_P\iv(\CO_L),
       ({g'}\iv xg', {g'}\iv v) \in \pi_P\iv(\CO_L) \}.
\end{split}
\end{equation*}
We consider the partition 
$H/P^{\th} \times H/P^{\th} = \coprod X_{\w}$ into $H$-orbits, and 
let $\CZ_{\w} = p\iv(X_{\w})$, where 
$p : \CZ \to H/P^{\th} \times H/P^{\th}$ is the projection onto
the last two factors.
Recall that $\n_H = \dim U^{\th} $.  Here we put 
$\ol \n = \n_{L^{\th}}$, the number of positive roots for 
$L^{\th}$.  Let 
$c = \dim \CO$ and $\ol c = \dim \CO_L$.
Note that $c$ is even by Lemma 2.3, however, $\ol c$ 
is not necessarily even since a $GL_{n_i}$-orbit on 
$(GL_{n_i})\uni \times V_i$ is not so in general
(see [AH, Proposition 2.8]).
The following result, which is a generalization of 
Proposition 2.5,  is an analogue of the dimension formula
due to Lusztig [L2, Proposition 1.2], and can be proved in 
a similar line.  
\begin{prop}  
\begin{enumerate}
\item  
For $(\ol x, \ol v) \in \CO_L$, we have 
$\dim (\CO \cap \pi_P\iv(\ol x, \ol v)) \le (c - \ol c)/2$.

\item
For $(x,v) \in \CO$, 
\begin{equation*}
 \dim \{ gP^{\th} \in H/P^{\th} \mid  
(g\iv xg, g\iv v) \in \pi_{P}\iv(\CO_L)\} \le 
   (\nu_H - c/2) - (\ol \nu - \ol c/2). 
\end{equation*}

\item
Put $d_0 = 2\n_H - 2\ol \n + \ol c$. Then 
$\dim \CZ_{\w} \le d_0$ for all $\w$.  Hence 
$\dim \CZ  \le d_0$.
\end{enumerate}
\end{prop}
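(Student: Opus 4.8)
The plan is to follow Lusztig's dimension estimate [L2, Proposition 1.2], transplanted to the present setting: prove (i) directly, deduce (ii) from (i) by a double count on an auxiliary incidence variety, and deduce (iii) from (ii) via the projection $\CZ\to G^{\io\th}\uni\times V$, the mechanism already used in the proof of Lemma 2.5. Three structural facts would be used throughout. (a) $\dim H/P^{\th} = \n_H - \ol\n$, which follows from $\dim U^{\th} = \dim U_P^{\th} + \ol\n$ as in Lemma 1.9. (b) The restriction of $\pi_P$ to the unipotent part is surjective with all fibres of one dimension $\kappa = \dim U_P^{\io\th} + \dim V_L'$; indeed, by the argument of Lemma 1.9 applied to the $\th$-stable pair $(P,L)$ (and the $\log$ map of 1.7), a fibre of $P^{\io\th}\uni\to L^{\io\th}\uni$ is a translate of $U_P^{\io\th}$. (c) $Z_H(x,v)$ and $Z_{L^{\th}}(\ol x,\ol v)$ are connected, by Lemma 2.3(ii) and its counterpart for $L^{\th}$; consequently $\dim(\CO\cap\pi_P\iv(\ol x,\ol v))$ depends only on $\CO$ and $\CO_L$, since conjugating $(\ol x,\ol v)$ by a lift in $P^{\th}$ of an element of $L^{\th}$ carries $\CO\cap\pi_P\iv(\ol x,\ol v)$ isomorphically to the intersection over any other point of $\CO_L$.

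\medskip
For the passage from (i) to (ii), set $c = \dim\CO$, $\ol c = \dim\CO_L$ and introduce
\[
\Gamma = \{\,((x,v),g)\in\CO\times H \mid (g\iv xg,\,g\iv v)\in\pi_P\iv(\CO_L)\,\}.
\]
The first projection $\Gamma\to\CO$ has target a single $H$-orbit, and its fibre over $(x,v)$ is the preimage in $H$ of the set $\Lambda_{(x,v)} = \{\,gP^{\th}\mid(g\iv xg,g\iv v)\in\pi_P\iv(\CO_L)\,\}$ occurring in (ii) (the condition is right $P^{\th}$-invariant), so $\dim\Gamma = c + \dim P^{\th} + \dim\Lambda_{(x,v)}$. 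The assignment $((x,v),g)\mapsto\pi_P(g\iv xg,g\iv v)$ is a well-defined surjective morphism $\Gamma\to\CO_L$ ($\CO_L$ being $L^{\th}$-stable), whose fibre over $(\ol x,\ol v)$ maps onto $\CO\cap\pi_P\iv(\ol x,\ol v)$ with all fibres isomorphic to $H$ (via the injection $g\mapsto(g\cdot(y,w),g)$), hence has dimension $\dim H + \dim(\CO\cap\pi_P\iv(\ol x,\ol v))$. Bounding $\dim\Gamma$ from above through this second projection on each irreducible component gives $\dim\Lambda_{(x,v)}\le\ol c + \dim H/P^{\th} - c + \dim(\CO\cap\pi_P\iv(\ol x,\ol v))$, and inserting (i) together with $\dim H/P^{\th} = \n_H - \ol\n$ yields exactly $\dim\Lambda_{(x,v)}\le(\n_H - c/2) - (\ol\n - \ol c/2)$, which is (ii).

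\medskip
For (iii), let $f:\CZ\to G^{\io\th}\uni\times V$ be the projection onto the first two coordinates; its image is a finite union of $H$-orbits, and over such an orbit $\CO'$ one has $f\iv(\CO')\simeq H\times^{Z_H(x,v)}(\Lambda_{(x,v)}\times\Lambda_{(x,v)})$ for $(x,v)\in\CO'$, so that $\dim f\iv(\CO') = \dim\CO' + 2\dim\Lambda_{(x,v)}\le 2\n_H - 2\ol\n + \ol c = d_0$ by (ii) applied with $\CO = \CO'$. Hence $\dim\CZ\le d_0$, and a fortiori $\dim\CZ_{\w}\le d_0$ for every $\w$. (Alternatively, $\CZ_{\w}$ can be bounded directly through the projection $\CZ_{\w}\to X_{\w}$, whose fibre over $(P^{\th},\w P^{\th})$ is $\pi_P\iv(\CO_L)\cap\w\!\cdot\!\pi_P\iv(\CO_L)$, a subvariety of the unipotent part of $P\cap\w P\w\iv$, by a root-vector count along the lines of the proof of Lemma 2.5.)

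\medskip
The main obstacle is (i): the estimate for a single fibre of $\pi_P$. The natural route is to have $R = Z_{L^{\th}}(\ol x,\ol v)\ltimes U_P^{\th}$ act on the (stable) variety $\CO\cap\pi_P\iv(\ol x,\ol v)$ and to bound it $R$-orbit by $R$-orbit, the crucial numerical input being that $\dim\CO$ is even (Lemma 2.3(i)). The difficulties are that $\CO\cap\pi_P\iv(\ol x,\ol v)$ is in general neither irreducible nor equidimensional, so that one must stratify and control $\dim(Z_H(y,w)\cap P^{\th})$ stratum by stratum, and that the $GL_{n_i}$-components of $\CO_L$ contribute orbits of possibly odd dimension (cf.\ [AH, Proposition 2.8]) which must be absorbed without breaking the half-dimension bound. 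This parity bookkeeping --- $c$ even, $\ol c$ possibly odd, $d_0$ even --- is the delicate point, exactly as in Lusztig's original argument.
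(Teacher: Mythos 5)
Your reductions are fine as far as they go: the double count on $\Gamma$ does give (i) $\Rightarrow$ (ii) (using $\dim H/P^{\th} = \nu_H - \ol\nu$), and the fibration $f\iv(\CO') \simeq H \times^{Z_H(x,v)}(\Lambda \times \Lambda)$ gives (ii) $\Rightarrow$ (iii), provided (ii) is available for every $H$-orbit $\CO'$ meeting the image of $f$. These are exactly the fibration arguments the paper also uses, only run in the opposite direction. But the proposal has a genuine gap: statement (i), which in your scheme carries the entire weight, is never proved. You yourself flag it as "the main obstacle," sketch a Lusztig-style $R$-orbit analysis with parity bookkeeping, and then list the reasons it is delicate (non-equidimensional fibres, odd-dimensional $GL_{n_i}$-orbits in $\CO_L$) without resolving them. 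A sketch of a strategy plus an acknowledgement of its difficulties is not a proof, and there is no obvious way to complete that direct attack: the fibre $\CO \cap \pi_P\iv(\ol x,\ol v)$ mixes the symplectic and $GL$ factors in a way that resists a clean orbit-by-orbit parity argument.

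The paper escapes precisely this difficulty by reversing the logical order and making the whole proposition the unit of an induction over Levi subgroups, stated for all groups "of the same type as $L$" (products of $GL$'s and one symplectic-type factor). Part (iii) is proved first: the fibre of $\CZ_{\w} \to X_{\w}$ is analysed through the decompositions $x = \ell u = \ell'u'$, a root count ($b_w = c_w$, $\dim(V_L' \cap w(V_L'))$), and then the key estimate (5.7.3) is obtained by applying the \emph{induction hypothesis (i)} to the smaller parabolic $Q = L \cap {}^wP$ inside $L$ (and $Q' = {}^wL \cap P$ inside ${}^wL$). Afterwards (ii) and (i) for the group itself are deduced from (iii) by the same fibration arguments you use, read backwards. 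So the inductive mechanism is what supplies (i) without ever proving it directly — the very step your plan leaves open. Your parenthetical alternative for (iii) ("a root-vector count along the lines of Lemma 2.5") would not substitute either: bounding the fibre of $\CZ_{\w} \to X_{\w}$ by the ambient $(P \cap {}^wP)^{\io\th}$-type variety ignores the two conditions of lying over $\CO_L$, and hence cannot produce the term $\ol c$ in $d_0 = 2\nu_H - 2\ol\nu + \ol c$; getting that term is exactly where the induction (i.e. (i) for proper Levi subgroups) is needed.
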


\begin{proof}
The proposition makes sense for the groups of the same type as 
$L$ and we prove it for those groups.   
More precisely, we assume that 
\begin{equation*}
G = G_0 \times \prod_{i = 1}^k(G_i \times G_i), \qquad 
V = \wt V_0 \oplus\bigoplus_{i=1}^k(V_i \oplus V_i),
\end{equation*}  
where
$G_0 = GL(\wt V_0), G_i = GL(V_i)$.  Let $\th$ be an involutive 
automorphism on $G$ such that $\th$ preserves $G_0$ with 
$G_0^{\th} = Sp(\wt V_0)$ and that $\th$ acts on $G_i \times G_i$
permuting the factors.  Hence we have 
\begin{equation*}
G^{\io\th} \times V \simeq (G_0^{\io\th} \times \wt V_0)
                            \times \prod_{i=1}^k(G_i \times V_i)
\end{equation*}
on which $H = G^{\th}$ acts naturally. 
We use a similar notation as before for this general setting. 
For a $\th$-stable Levi subgroup $L$, we define a subspace $\wt V_L$
of $V$ as 
$\wt V_L = \wt V_{0,L} \oplus\bigoplus_{i=1}^kV_{i,L}$, 
where $\wt V_0 = \wt V_{0,L} \oplus V'_{0,L}$ is as in 5.6 for 
$G = GL_{2n}$, and $V_{i,L} = V_i$ for $i \ge 1$. 
In particular, $V = \wt V_L \oplus V'_L$ with $V_L' = V'_{0,L}$. 
We consider $L^{\io\th} \times \wt V_L$ instead of (5.6.1).
\par
Since the assertion
holds in the case where $G = T$, a maximal torus, we may assume that 
the proposition holds for a proper subgroup $L$ of $G$.
Let $W_H$ be the Weyl group of $H$. 
Consider the map $p: \CZ_{\w} \to X_{\w}$.  $X_{\w}$ is an 
$H$-orbit of $(P^{\th}, wP^{\th})$, where $w \in W_H$ 
is a representative of the double coset 
$W_{P^{\th}}\backslash W_H/W_{P^{\th}}$
for a parabolic subgroup $W_{P^{\th}}$ of $W_H$.  
Hence in order to show (iii), it is enough to see that 
\begin{equation*}
\tag{5.7.1}
\begin{split}
\dim\{ (x,v) \in &\pi_P\iv(\CO_L) \mid 
         (\dw\iv x \dw, \dw\iv v) \in \pi_P\iv(\CO_L))\}  \\
   &\le 2\nu_H - 2\ol\nu + \ol c - \dim X_{\w},
\end{split}
\end{equation*}
where $\dw \in H$ is a representative of $w$.
Then an element $x \in P^{\io\th} \cap {}^wP^{\io\th}$ can be written as 
$x = \ell u = \ell' u'$ with $\ell \in L^{\io\th}, \ell' \in {}^wL^{\io\th}$, 
$u \in U^{\io\th}_P, u' \in {}^wU^{\io\th}_P$.
Moreover, there exists $z \in L^{\io\th} \cap {}^wL^{\io\th}$ 
such that $\ell = zy', \ell' = zy$ with 
$y' \in L^{\io\th} \cap {}^wU_P^{\io\th}$, 
$y \in {}^wL^{\io\th} \cap U_P^{\io\th}$. 
Hence (5.7.1) can be rewritten as
\begin{equation*}
\tag{5.7.2}
\begin{split}
\dim&\{ (v, u, u', y, y', z) \in V \times 
   U_P^{\io\th} \times {}^wU_P^{\io\th} \\ 
  &\times 
({}^wL \cap U_P)^{\io\th} \times 
 (L\cap {}^wU_P)^{\io\th} \times (L \cap {}^wL)^{\io\th}  \\
   &\mid y'u = yu', (zy', \ol v) \in \CO_L, 
    (\dw\iv zy\dw, \ol{\dw\iv v}) \in \CO_L\}
     \le 2\nu_H  - 2\ol\nu + \ol c - \dim X_{\w},
 \end{split}
\end{equation*}
where $\ol v \in \wt V_L$ is the image of $v$ under the map 
$V \to \wt V_L$, and similarly 
for $\ol{\dw\iv v} \in \wt V_L$.
For a fixed $(y,y')$, the variety 
$\{ (u, u') \in (U_P \times {}^wU_P)^{\io\th} \mid y'u = yu'\}$
has dimension $\dim (U_P \cap {}^wU_P)^{\io\th}$ if non-empty.
On the other hand, for a fixed $z, y, y', \ol v, \ol{\dw\iv v}$,
\begin{equation*}
\dim \{ v \in V \mid (zy',\ol v) \in \CO_L, (\dw\iv zy\dw, \ol{\dw\iv v}) 
   \in \CO_L\} = \dim( V_L' \cap w(V_L')).
\end{equation*}
We shall compute $\dim (V'_L \cap w(V_L'))$. Since $V_L' = V'_{0,L}$, 
this dimension only depends on the factor $Sp(\wt V_0)$ of $H$. 
So, for a moment, we assume that $G = GL_{2n}$ and $H = Sp_{2n}$. 
Here we choose $w \in W_{P^{\th}}\backslash W_H/W_{P^{\th}}$ as 
a distinguished representative.  Then we have
$\dim (V_L' \cap w(V_L')) = 
\sharp \{ e_i \in V'_L \mid w\iv(e_i) \in V'_L\}$, which we set 
$b_w$.
On the other hand, 
if we denote the set of positive roots of type $C_n$ by 
$\D^+ = \{ \ve_i \pm \ve_j \ (1 \le i < j \le n), 
        \ 2\ve_i \ (1 \le i \le n) \}$ and $\D^+_{P^{\th}}$
the subset of $\D^+$ corresponding to $U_{P^\th}$, we see that 
$\dim (U_P \cap {}^wU_P)^{\th} = 
\dim (U_P \cap {}^wU_P)^{\io\th} + c_w$, where 
\begin{equation*}
c_w = 
\sharp \{ 2\ve_i \in \D^+_{P^\th} 
        \mid  w\iv(2\ve_i) \in \D^+_{P^{\th}} \}.
\end{equation*}
It is easy to see that $c_w = b_w$.  
Since $L \cap {}^wP$ is a parabolic subgroup of $L$ with 
a Levi decomposition 
$L \cap {}^w P = (L \cap {}^w L)(L \cap {}^wU_P)$, and similarly for 
${}^wL \cap P = ({}^w L \cap L)({}^w L \cap U_P)$, 
we see that 
$\dim (P \cap {}^wP)^{\th} = 
2\ol \nu + \dim T^{\th} + \dim (U_P \cap {}^wU_P)^{\th}$.  
It follows that 
\begin{equation*}
\dim (U_P \cap {}^wU_P)^{\io\th} + b_w 
  = 2\nu_H - 2\ol \nu - \dim X_{\w}.
\end{equation*}
Note that this formula holds for the general setting. 
Hence (5.7.2) will follow if we can show that 
\begin{equation*}
\tag{5.7.3}
\begin{split}
\dim \{ (z, y, y', &\ol v) \in 
            (L \cap {}^wL)^{\io\th} \times 
           ({}^wL \cap U_P)^{\io\th} \times 
           (L \cap {}^wU_P)^{\io\th}) 
              \times \wt V_L \\
         &\mid (zy', \ol v) \in \CO_L, 
           (\dw\iv zy\dw, \dw\iv\ol v) \in \CO_L\} \le \ol c.
\end{split}
\end{equation*}
Since ${}^wP$ and $L$ contain a common maximal torus $T$, 
$Q = L \cap {}^wP$ is a $\th$-stable parabolic subgroup of $L$
with Levi decomposition 
$Q = MU_Q$, where $M = L \cap {}^wL, U_Q = L \cap {}^wU_P$.
Hence by replacing $G, P, L$ by $L, Q, M$, one can define 
a map $\pi_Q : Q^{\io\th}\uni \times \wt V_L 
              \to M^{\io\th}\uni \times \wt V_M$
as in 5.6.  
Similarly, for a parabolic subgroup $Q' = {}^wL \cap P$ of ${}^wL$, 
the map  
$\pi_{Q'}: (Q')^{\io\th}\uni \times \wt V_L 
                \to M^{\io\th}\uni \times \wt V_M$
is defined.
Now $M^{\io\th}\uni \times \wt V_M$ is partitioned into 
finitely many $M^{\th}$-orbits $\wh \CO_1, \dots, \wh\CO_m$, 
and for $(x',v') \in \wh \CO_i$, the variety 
\begin{equation*}
\{(z, y, y', \ol v) \mid (zy', \ol v) 
        \in \CO_L \cap \pi_Q\iv(x',v'), 
        (zy, \ol v) 
        \in \CO_{{}^wL} \cap \pi_{Q'}\iv(x',v')\} 
\end{equation*}
is isomorphic to the product of two varieties 
appeared in Proposition 5.7 (i) by replacing 
$G, L$ by $L, L \cap {}^wL$, and by ${}^wL, L \cap {}^wL$.
Hence by induction hypothesis, its dimension is smaller than 
$(\ol c - \dim\wh \CO_i)/2
          + (\ol c - \dim\wh \CO_i)/2$. 
It follows that 
\begin{equation*}
\tag{5.7.4}
\dim\{ (z,y,y',\ol v) \mid (z, \ol v') \in \wh\CO_i,
          (zy',\ol v) \in \CO_L, (zy,\ol v) \in \CO_{{}^wL} \} \le \ol c,
\end{equation*}
where $\ol v'$ is the image of $\ol v$ under the map $\wt V_L \to \wt V_M$.
Since (5.7.4) holds for any $i$, we obtain (5.7.3), and so (iii) holds.
\par
We show (ii).  Let $q$ be the projection 
$\CZ \to G^{\io\th}\uni \times V$  to the first two factors.
For each $\CO$, we consider $q\iv(\CO)$.  If $q\iv(\CO)$ is 
empty, then the variety in (ii) is also empty, and the 
assertion holds.  So we assume that $q\iv(\CO)$ is not empty.
By (iii), we have $\dim q\iv(\CO) \le d_0$. 
For each $(x,v) \in \CO$, $q\iv(x,v)$ is a product of 
two varieties isomorphic to the variety given in (ii).
Hence the dimension of the variety in (ii) is 
equal to $(\dim q\iv(\CO)  - \dim \CO)/2 \le (d_0 - c)/2
   = \nu_H - \ol\nu -c/2 + \ol c/2$.
Thus (ii) follows. 
\par
We show (i).  Consider the variety 
\begin{equation*}
\CX_{\CO} = \{ ((x,v), gP^{\th}) \in \CO \times H/P^{\th} \mid 
       (g\iv xg, g\iv v) \in \pi_P\iv(\CO_L)\},
\end{equation*}
and let $f$ be the projection on the $\CO$-factors.  Then 
for each $(x,v) \in \CO$, 
the fibre $f\iv(x,v)$ is isomorphic to the variety 
given in (ii), hence the dimension of $\CX_{\CO}$ is less than or 
equal to  $\nu_H - \ol\nu  + c/2 + \ol c/2$.
If we project it to the $H/P^{\th}$-factor, 
its fibre is isomorphic to the variety 
$\CO \cap \pi_P\iv(\CO_L)$.
Hence 
\begin{equation*}
\dim \CO \cap \pi_P\iv(\CO_L) \le \nu_H - \ol\nu + c/2 + \ol c/2
   - \dim H/P^{\th} = (c + \ol c)/2.      
\end{equation*}
Now $\pi_P$ maps the variety $\CO \cap \pi_P\iv(\CO_L)$ onto 
$\CO_L$, and each fibre is isomorphic to the variety given in 
(i).  Hence its dimension 
$\le (c + \ol c)/2 - \ol c = (c - \ol c)/2$.  
This proves (i), and the proposition follows.   
\end{proof}
\par\medskip
\section{Restriction theorem}

\para{6.1.}
In this section we shall prove the restriction theorem 
for Springer representations, which plays a crucial role
for determining the Springer correspondence explicitly.
Let $P = LU_P$ be as in 5.6.  
Here we consider the special case where 
$L^{\th} \simeq GL_1 \times Sp_{2n-2}$. 
Let 
$V = \wt V_L \oplus V_L'$ be the decomposition in 5.6. 
Take $z = (x,v) \in G^{\io\th}\uni \times V$ and 
$z' = (x',v')  \in L^{\io\th}\uni \times \wt V_L$.
Here we assume that $v' \in \wt V_L$.   
Then $U_P$ stabilizes $v' + V_L'$.
We define a variety
$Y_{z,z'}$ by 
\begin{equation*}
Y_{z,z'} = \{ g \in H \mid   g\iv xg \in x'U^{\io\th}_P, 
                 g\iv v \in v' + V_L' \}.
\end{equation*}
The group $Z_H(z) \times Z_{L^{\th}}(z')U^{\th}_P$ 
acts on $Y_{z,z'}$ by $(g_0, g_1)g = g_0gg_1\iv $. 
\par
Let $d_{z,z'} = 
(\dim Z_H(z) - \dim Z_{L^{\th}}(z'))/2 + \dim U_P^{\th}$.
We note that 
\begin{equation*}
\tag{6.1.1}
\dim Y_{z,z'} \le d_{z,z'}.
\end{equation*}
In fact, put $\CU = \{ (z,gP^{\th}) \in \CO \times H/P^{\th}
   \mid g\iv z \in \pi_P\iv(\CO')\}$, where $\CO$ is the $H$-orbit
containing $z$ and $\CO'$ is the $L^{\th}$-orbit containing $z'$, 
and define a map $f : \CU \to \CO$ by $(z, gP^{\th}) \to z$.
Put $d_{\CO} = (\dim \CX\uni - \dim \CO)/2$ and 
 $\d = (\dim \CX^L\uni - \dim \CO')/2$.
Then by Proposition 5.7 (ii), we see that 
$\dim f\iv(z) \le d_{\CO} - \d$. 
Let $\wh\CU_z = \{ g \in H \mid g\iv z \in \pi_P\iv(\CO')\}$. 
Since $\wh\CU_z \to f\iv(z)$ is a principal $P^{\th}$-bundle, 
$\dim \wh\CU_z \le d_{\CO}-\d + \dim P^{\th}$. 
We define a map $f_1: \wh\CU_z \to \CO'$ by 
$g \mapsto \pi_P(g\iv z)$. Then for each $z' \in \CO'$, $f_1\iv(z')$
coincides with $Y_{z,z'}$. Since $f_1$ is $L^{\th}$-equivariant 
with respect to the action of $L^{\th}$ on $\wh\CU_z$ by the right 
multiplication, and on $\CO'$ by conjugation, we see that 
\begin{align*}
\dim f_1\iv(z') &\le d_{\CO} -\d + \dim P^{\th} - \dim L^{\th} \\
      &= (\dim Z_H(z) - \dim Z_{L^{\th}}(z'))/2 + \dim U_P^{\th}.
\end{align*} 
Hence (6.1.1) holds.
\par
Let $S_{z,z'}$ be the set of irreducible components of 
$Y_{z,z'}$ of dimension $d_{z,z'}$.
\par
Let $W = W_n$ be the Weyl group of $H$ and $W_L$ be the Weyl subgroup
of $W$ corresponding to $L^{\th}$.  Then $W_L \simeq W_{n-1}$, 
where $W_{n-1}$ is embedded in $W_n$ as the subgroup with respect to
$n-1$ letters $\{ 2, \dots, n\}$.  
For $z \in G^{\io\th}\uni \times V$, we denote by $\r^G_z$ the 
irreducible representation of $W$ corresponding to the $H$-orbit 
containing $z$ under the 
Springer correspondence (Theorem 5.4). 
Note that the variety $L^{\io\th}\uni \times V_L$ with $L^{\th}$-
action is exactly the same as $G^{\io\th}\uni \times V$ 
with $Sp_{2n}$-action, by replacing $n$ by $n-1$.
Hence one can consider the Springer correspondence with respect to
$W_L$.  For 
$z' \in L^{\io\th}\uni \times V_L$, we denote by 
$\r^{L}_{z'}$ the irreducible representation of $W_L$ corresponding 
to $z'$.
We shall prove the following theorem, which is an 
analogue of [L2, Theorem 8.3].

\begin{thm}[restriction theorem]  
For $z \in G^{\io\th}\uni \times V$, 
$z' \in L^{\io\th}\uni \times V_L$, we have
\begin{equation*}
\lp \Res^W_{W_L}\r^G_z, \r^L_{z'}\rp_{\, W_L} = |S_{z,z'}|.
\end{equation*}
\end{thm}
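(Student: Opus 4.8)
The plan is to follow the strategy of Lusztig's proof of the restriction theorem for the generalized Springer correspondence [L2, Theorem 8.3], adapted to the exotic setting, using the geometry of the Steinberg-type variety and the decomposition of $\pi_*\Ql$ established in Theorem 4.2 and Theorem 5.4. The key observation is that the multiplicity $\lp\Res^W_{W_L}\r^G_z,\r^L_{z'}\rp_{W_L}$ can be computed cohomologically: by Frobenius reciprocity it equals $\dim\Hom_{W_L}(\r^L_{z'},\Res^W_{W_L}\r^G_z)$, and both sides of the desired equality will be matched against the dimension of a certain degree-$2d_{z,z'}$ cohomology group of the fibre $Y_{z,z'}$, or equivalently against the number of top-dimensional irreducible components of $Y_{z,z'}$.

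First I would set up the parabolic induction diagram. Recall from 1.10 the smooth variety $\wt\CX\uni = H\times^{B^{\th}}U^{\io\th}\times M_n$ and the semismall proper map $\pi_1:\wt\CX\uni\to\CX\uni$, with $K_1 = (\pi_1)_*\Ql[\dim\CX\uni] \simeq \bigoplus_{\Bmu}\r_{\Bmu}\otimes A_{\Bmu}$ carrying its $W_n$-action (Theorem 5.4(i)). For $P = LU_P$ with $L^{\th}\simeq GL_1\times Sp_{2n-2}$, I would introduce the ``partial resolution'' $\wt\CX^P\uni = H\times^{P^{\th}}(\pi_P\iv(\CX^L\uni))$, where $\pi_P: P^{\io\th}\uni\times V \to L^{\io\th}\uni\times\wt V_L$ is the projection of 5.6, together with the proper maps $\wt\CX\uni \to \wt\CX^P\uni \to \CX\uni$. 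The point is that the direct image of $\Ql$ from $\wt\CX^P\uni$ to $\CX\uni$ carries only the $W_L$-action, and its further pushforward-vs-restriction relation to $K_1$ encodes the restriction functor $\Res^W_{W_L}$. Concretely, $(\pi_1)_*\Ql$ restricted along the factorization realizes $\Ind_{W_L}^W$ applied to the analogous complex for $L$, by the standard compatibility of Springer sheaves with parabolic induction; dually, $\Hom_{W_L}$-multiplicities in $\Res^W_{W_L}\r^G_z$ are read off from stalks.

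The heart of the argument is then a stalk computation at $z = (x,v)\in\CO$. Taking the stalk at $z$ of the relevant complex and using proper base change, one gets $H^*(Y_{z}^{P},\Ql)$ where $Y_z^P = \{gP^{\th}\mid g\iv z\in\pi_P\iv(\CX^L\uni)\}$, fibered over $\CX^L\uni$ with fibres $Y_{z,z'}$; stratifying by $L^{\th}$-orbits $\CO'\ni z'$ and using the Springer decomposition for $L$ (which expresses the cohomology of $\pi_P\iv(\CO')$-fibres in terms of $\r^L_{z'}$), one obtains in top degree $2d_{z,z'}$ exactly $\r^L_{z'}\otimes H^{2d_{z,z'}}(Y_{z,z'},\Ql)$. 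Comparing with the degree-$2d_{\CO}$ part of $H^*(\pi_1\iv(z),\Ql)\simeq\r^G_z$ (as in (5.4.3)) and matching the $W_L$-actions gives $\lp\Res^W_{W_L}\r^G_z,\r^L_{z'}\rp_{W_L} = \dim H^{2d_{z,z'}}(Y_{z,z'},\Ql)$. Finally, by the dimension estimate (6.1.1) and a purity/parity argument as in the proof of Theorem 5.4 (each relevant $Y_{z,z'}$ having all top components of the same dimension $d_{z,z'}$, with no odd cohomology interfering), $\dim H^{2d_{z,z'}}(Y_{z,z'},\Ql) = |S_{z,z'}|$, the number of irreducible components of dimension $d_{z,z'}$.

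The main obstacle I anticipate is proving the compatibility of the exotic Springer sheaf $K_1$ with parabolic induction along $P$ — that is, identifying the $W_L$-module structure on the pushforward from $\wt\CX^P\uni$ with $\Ind_{W_L}^W$ applied to the Springer sheaf of $L$, at the level of the $\Ql[W_n]$-action coming from Theorem 4.2 rather than just as abstract local systems. This requires checking that the algebra map $\Ql[W_L]\to\End$ of the intermediate complex is compatible with the inclusion $\Ql[W_L]\hookrightarrow\Ql[W_n]$ and with the Galois-covering description of 3.4--3.5, which in turn rests on the fact (used in Lemma 5.2) that the $W_n$-action restricts correctly on the torus side $T^{\io\th}\reg$; carrying this through the relative setting over $\CX^L\uni$, with the extra $V$-directions bookkept via the varieties $\wt\CX_m$, is the technically delicate part. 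A secondary difficulty is the parity vanishing needed for the last step, which should follow from the semismallness of $\pi_1$ together with the fact that $\pi_1\iv(z)$ and the $Y_{z,z'}$ are built from flag-variety-type pieces with only even cohomology, but must be verified in the presence of the connectedness of $Z_H(z)$ (Lemma 2.3(ii)) to rule out nontrivial equivariant twists.
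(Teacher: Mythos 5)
Your overall route is the same as the paper's: factor $\pi$ (and hence $\pi_1$) through the partial resolution attached to $P$, show that the intermediate pushforward realizes $\Ind_{W_L}^{W}$ applied to the exotic Springer sheaf of $L$, and then read off the multiplicity from a top-degree direct image over $\CO$, reduced to the fibres $Y_{z,z'}$ by the dimension estimates of Proposition 5.7. Your plan for the "main obstacle" (parabolic-induction compatibility checked first on the torus side over the regular semisimple locus, then propagated through the $\CX_m$-filtration) is exactly how the paper proceeds in 6.3--6.4, culminating in (6.4.5)--(6.4.6), with the descent diagram (6.5.2)--(6.5.4) identifying $\IC^P_{\Bmu'}$ with the pullback of $\IC(\CX^L_{m-1},\CL^L_{\Bmu'})$ so that the Springer correspondence for $L$ can be invoked.

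The genuine gap is at your "heart of the argument". Stratifying the fibre by $L^{\th}$-orbits does not directly give "in top degree exactly $\r^L_{z'}\otimes H^{2d_{z,z'}}(Y_{z,z'},\Ql)$": the complex $\IC^P_{\Bmu'}$ has nonzero stalks on the strata lying over $\ol\CO{}'\setminus\CO'$, and a priori these could contribute in the same degree; moreover, even after excluding them one only obtains a surjection, i.e. the inequality $\lp\wt V_{\Bmu},\wt V_{\Bmu'}\rp\le x_{\Bmu,\Bmu'}$. The paper closes both holes without any purity: the degree estimate (6.6.5), which combines Proposition 5.7(ii) with the strict bound on IC-stalk degrees furnished by Theorem 5.4(ii) applied to $L$, kills the boundary strata in degree $2d_{\CO}+c$ and yields the surjectivity (6.6.4); the reverse inequality is then forced globally by the count (6.6.6), $\sum_{\Bmu'}x_{\Bmu,\Bmu'}\dim\wt V_{\Bmu'}=\dim\wt V_{\Bmu}$, obtained from the spectral sequence of $\pi=\pi''\circ\pi'$ and the partition of $\wt\CX$ into the pieces ${\pi'}\iv(D_{\Bmu'})$. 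Your appeal to a "purity/parity argument as in Theorem 5.4" is both unproven and unnecessary: the last step $x_{\Bmu,\Bmu'}=|S_{z,z'}|$ uses only that compactly supported cohomology in degree $2d$ of a variety of dimension at most $d$ counts its $d$-dimensional irreducible components, together with the principal $P^{\th}$-bundle $\wh\CU_z\to f\iv(z)$ and the $L^{\th}$-equivariance of $f_1$; note also that it must be $H_c$, not ordinary cohomology, in your final identification.
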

 
\para{6.3.}
The theorem will be proved in 6.7 after some preliminaries.
Let $\wt L$ be the subgroup of $G$ generated by $L$ and 
$B \cap Z_G(T^{\io\th})$. Then $\wt L$ is $\th$-stable, and 
$\wt L^{\th} \simeq (B_2 \times L^{\th})/T_2$, where 
under the isomorphism 
$Z_G(T^{\io\th}) \simeq GL_2 \times \cdots \times GL_2$, 
$B_2$ is the Borel subgroup of $SL_2$ 
corresponding to the first factor (contained in $B$), 
and $T_2$ is a maximal torus 
of $B_2$ contained in $L^{\th}$. 
We define varieties
\begin{align*}
\wt\CY^L &= \{(x,v, g\wt L^{\th}) \in 
    G^{\io\th}\reg \times V \times H/\wt L^{\th}
     \mid (g\iv xg, g\iv v) \in \CY^L \}, \\
\CY^L &= \bigcup_{g \in \wt L^{\th}}g(T^{\io\th}\reg \times M_n).
\end{align*}
Then the map $\psi : \wt\CY \to \CY$ is decomposed as 
\begin{equation*}
\begin{CD}
\psi : \wt\CY @>\psi'>>  \wt\CY^{L} @>\psi''>>  \CY,
\end{CD}
\end{equation*}
where $\psi': (x,v, g(B^{\th} \cap Z_H(T^{\io\th})) \mapsto 
   (x,v, g\wt L^{\th})$ under the second expression in (3.1.2), 
and $\psi'': (x,v,g\wt L^{\th}) \mapsto (x,v)$.
Following the notations in 3.1, 3.2, for each $I \subset [1,n]$ 
such that $|I| = m$, we also define 
varieties $\wt \CY_I^{L}, \CY_I^L$; 
\begin{align*}
\wt\CY_I^{L} &= \{ (x,v, g\wt L^{\th}) \in 
  G^{\io\th}\reg \times V \times H/\wt L^{\th} 
      \mid (g\iv xg, g\iv v) \in \CY_I^L \} 
     \simeq H \times{}^{\wt L^{\th}} \CY_I^L,  \\ 
\CY_I^L &= \bigcup_{g \in \wt L^{\th}}g(T^{\io\th}\reg \times M_I).
\end{align*}
Note that in the case where $m \ge 1$, $\CY^L_I$ coincides 
with $\CY^L_{[1,m]}$ if 
$1 \in I$, and coincides with $\CY^L_{[2,m+1]}$ if 
$1 \notin I$.
Then $\wt\CY^L_I$ coincides with $\psi'(\wt\CY_I)$ for any $I$, and 
coincides with $\wt\CY^L_{[1,m]}$ or $\wt\CY^L_{[2,m+1]}$
according to the case where $1 \in I$ or $ 1\notin I$ if $|I| \ne 0$.
Now the map $\psi_I : \wt\CY_I \to \CY_m^0$ is decomposed as
\begin{equation*}
\tag{6.3.1}
\begin{CD}
\psi_I : \wt\CY_I @>\psi_I'>> \wt\CY_I^{L} @>\psi_I''>> \CY_m^0, 
\end{CD}
\end{equation*}
where $\psi_I'$ is the restriction of $\psi'$ on $\wt\CY_I$, and 
$\psi_I''$ is the restriction of $\psi''$ on $\wt\CY_I^L$.
Put $\wh\CY_I^L = H \times^{Z_H(T^{\io\th})_J}(T^{\io\th}\reg \times M_I)$, 
where
$J = I \cup \{1\}$.  Then 
$Z_H(T^{\io\th})_{J} \subset \wt L^{\th}$ and 
one can define a map $\e_I': \wh \CY^L_I \to \wt\CY_I^L$
by $(x,v, gZ_H(T^{\io\th})_{J}) \mapsto (x,v, g\wt L^{\th})$.
Now $\psi_I'$ is decomposed as 
\begin{equation*}
\tag{6.3.2}
\begin{CD}
\psi_I': \wt\CY_I @>\xi'_I>>  \wh \CY^L_I  @>\e_I'>> \wt\CY_I^{L}, \\ 
\end{CD}
\end{equation*}
where 
$\xi_I': (x,v, g(B^{\th} \cap Z_H(T^{\io\th})) 
  \mapsto (x,v, gZ_H(T^{\io\th})_J)$.
Here $\e_I'$ is a finite Galois covering with group $\CW_{I,L}$, where
$\CW_{I,L}$ is a subgroup of $\CW_L \simeq S_{n-1}$ such that 
$\CW_{I,L} \simeq S_{I - \{1\}} \times S_{I'}$ if $J =  I$, and 
$\CW_{I,L} \simeq S_{I} \times S_{I' - \{1\}}$ if $J \ne  I$.
Moreover, $\xi_I'$ is a $\BP_1^{n-m}$-bundle 
(resp. $\BP_1^{n-m-1}$-bundle) if $J = I$ (resp. $J \ne I$).
It follows that $\End ((\e_I')_*\Ql) \simeq \Ql[\CW_{I,L}]$ and 
we have 
\begin{equation*}
\tag{6.3.3}
(\e_I')_*\Ql \simeq 
          \bigoplus_{\r' \in \CW\wg_{I,L}}
                       \r' \otimes \CL_{\r'},
\end{equation*}
where $\CL_{\r'} = \Hom_{\CW_{I,L}}(\r', (\e_I')_*\Ql)$
is an irreducible local system on $\wt\CY^L_I$.
This implies  that 
\begin{equation*}
\tag{6.3.4}
(\psi_I')_*\Ql \simeq \bigoplus_{\r' \in \CW\wg_{I,L}}
                         H^{\bullet}(\BP_1^{n-m-\ve})
                                \otimes \r'\otimes \CL_{\r'},
\end{equation*}
where $\ve = $ 0 or 1 according to the case $J = I$ or 
$J \ne I$.  (In the case where $I = \emptyset$, we understand that
$\ve = 1$ and $\CW_{I,L} = \CW_L$).
Since $\psi_I = \psi''_I\circ \psi_I'$, 
we see that 
\begin{equation*}
(\psi_I)_*\Ql \simeq (\psi_I'')_*(\psi_1')_*\Ql 
    \simeq \bigoplus_{\r' \in \CW\wg_{I,L}} 
    H^{\bullet}(\BP_1^{n-m-\ve})\otimes  \r' \otimes (\psi_I'')_*\CL_{\r'}.
\end{equation*}
Since 
$\psi_I = \e_I\circ\xi_I$,
by using the decomposition (3.4.3) and (3.5.1), we have
\begin{equation*}
\tag{6.3.5}
(\psi''_I)_*\CL_{\r'} \simeq \bigoplus_{\r \in \CW\wg_I}
        H^{\bullet}(\BP_1^{\ve}) \otimes 
           \Hom_{\CW_{I,L}}(\r', \r)\otimes \CL_{\r}.
\end{equation*}
\par
We put $I_1 = [1,m]$ and $I_2 = [2,m+1]$. 
Let $\psi_m'$ be the restriction of $\psi'$  on $\wt\CY_m^+$, and 
let  $\psi_m'': \wt\CY^{L,+}_m = (\psi'')\iv(\CY_m^0) \to \CY_m^0$
be the restriction of $\psi''$.  
Then $\wt\CY^{L,+}_m = \wt\CY^L_{I_1}\coprod \wt\CY^L_{I_2}$ gives 
the decomposition to irreducible components.
We have 
\begin{equation*}
\begin{CD}
\psi_m : \wt\CY_m^+ @>\psi_m'>>  \wt\CY^{L,+}_m   @>\psi_m''>> \CY_m^0.
\end{CD}
\end{equation*}
Note that $\CW_L \simeq S_{n-1}$ is the stabilizer of 1 in $\CW$ under 
the identification $\CW \simeq S_n$.  
Let $\wt\CW_L, \wt\CW_{I,L}$ be the corresponding 
subgroups of $\wt\CW$.   
Now (6.3.4) implies, by a similar argument as in 3.5, that 
\begin{equation*}
\tag{6.3.6}
\begin{split}
(\psi'_m)_*\Ql \simeq &\bigoplus_{\r' \in \CW\wg_{I_1, L}}
\Ind_{\wt\CW_{I_1,L}}^{\wt\CW_L}(H^{\bullet}(\BP_1^{n-m}) 
   \otimes \r')\otimes \CL_{\r'} \\
&\oplus \bigoplus_{\r' \in \CW\wg_{I_2,L}}
    \Ind_{\wt\CW_{I_2,L}}^{\wt\CW_L}(H^{\bullet}(\BP_1^{n-m-1})
           \otimes \r')\otimes \CL_{\r'}
\end{split}
\end{equation*}
for $m \ge 1$, 
where $\CL_{\r'}$ is a local system on $\wt\CY^L_{I_{\ve}}$ 
corresponding to
$\r' \in \CW\wg_{I_{\ve},L}$ for $\ve = 1$ or 2.
While $(\psi'_m)_*\Ql$ for $m = 0$ is given by $(\psi_I')_*\Ql$ 
for $I = \emptyset$.
\par
Now $\CW\wg_{I_1,L}$ is parametrized by a subset of 
$\CP_{n-1,2}$ consisting of $\Bmu'$ such that $m(\Bmu') = m-1$, 
and we denote $\CL_{\r'}$ by $\CL_{\Bmu'}$ 
if $\r'\in \CW\wg_{I_1,L}$ 
corresponds to $\Bmu' \in \CP_{n-1,2}$.  
Let $\wt\CY^L_m$ be the closure of 
$\wt\CY^L_{[1,m]}$ in $\wt\CY^L$.
Then $\wt\CY_m^L = \coprod_{m'< m}\wt\CY_{m'}^{L,+} 
            \coprod \wt\CY^L_{[1,m]}$.
(The closure of $\wt\CY^L_{[2,m+1]}$ in $\wt\CY^L$ is given by 
$\coprod_{1 \le m' \le m}\wt\CY^L_{[2, m'+1]}$.)
Put $d^L_m = \dim \wt\CY^L_m$.
Then $\wt\CY^L_{[1,m]}$ is an open dense smooth
subset of $\wt\CY^L_m$, and we consider 
the intersection cohomology $\IC(\wt\CY^L_m, \CL_{\Bmu'})$.
By using a similar argument as in the proof of Proposition 3.6, 
one can show that 
\begin{equation*}
\tag{6.3.7}
\psi'_*\Ql[d_n^L] \simeq \bigoplus_{\Bmu' \in \CP_{n-1,2}}
     \wt V_{\Bmu'} 
     \otimes \IC(\wt\CY^{L}_{m(\Bmu')+1},
               \CL_{\Bmu'})[d^L_{m(\Bmu')+1}],
\end{equation*}
where $d_n^L = \dim \wt\CY^{L}$. 
Moreover, $\IC(\wt\CY^L_{m(\Bmu')+1}, \CL_{\Bmu'})$ is a constructible 
sheaf on $\wt\CY^L_{m(\Bmu')+1}$.
\par
On the other hand, in view of (6.3.5), we have
\begin{equation*}
(\psi''_m)_*\CL_{\r'}
     \simeq \begin{cases}
              \bigoplus_{\r \in \CW_{I_1}\wg}
                \Hom_{\CW_{I_1,L}}(\r',\r)\otimes \CL_{\r}
       &\quad\text{ if } \r' \in \CW\wg_{I_1,L}, \\
               \bigoplus_{\r \in \CW_{I_2}\wg}
                 H^{\bullet}(\BP_1)\otimes 
                    \Hom_{\CW_{I_2,L}}(\r', \r)\otimes \CL_{\r}
       &\quad\text{ if } \r' \in \CW\wg_{I_2,L},               
\end{cases}
\end{equation*}
Let $\IC^L_{\Bmu'} = 
\IC(\wt\CY^{L}_{m(\Bmu')+1}, \CL_{\Bmu'})[d^L_{m(\Bmu')+1}]$. 
By applying the argument in the proof of Proposition 3.6 
to the above formula, we obtain 
\begin{equation*}
\tag{6.3.8}
\begin{split}
  \psi''_*(\IC^L_{\Bmu'})&[d_n - d_n^L] \\
  &\simeq
   \bigoplus_{\Bmu \in \CP_{n,2}}
    \Hom_{\wt\CW_L}(\wt V_{\Bmu'}, \wt V_{\Bmu}) \otimes 
         \IC(\CY_{m(\Bmu)}, \CL_{\Bmu})[d_{m(\Bmu)}].
\end{split}
\end{equation*}
\para{6.4.}
Recall the map $\pi : \wt\CX \to \CX$ in 4.1. 
In this subsection, we follow the notation in 4.3, 4.5.
We define varieties 
\begin{align*}
\wt\CX^P &= \{ (x,v, gP^{\th}) \in G^{\io\th} \times V 
  \times H/P^{\th} \mid (g\iv xg, g\iv v) \in \CX^P\}, \\
\CX^P &= \bigcup_{g \in P^{\th}}g(B^{\io\th} \times M_n).
\end{align*}
We define $\pi': \wt\CX \to \wt\CX^P$, $\pi'': \wt\CX^P \to \CX$
by $\pi'(x,v, gB^{\th}) = (x,v, gP^{\th})$, 
$\pi''(x,v, gP^{\th}) = (x,v)$.  Then $\pi = \pi''\circ \pi''$, 
and we obtain the following commutative diagram.

\begin{equation*}
\tag{6.4.1}
\begin{CD}
\wt\CY  @>\psi'>> \wt\CY^L  @>\psi''>>  \CY  \\
   @Vj_0VV        @Vj_1VV         @VVj_2V   \\
\wt\CX   @>\pi'>>  \wt\CX^P  @>\pi''>>   \CX,  \\
\end{CD}
\end{equation*}
where $j_0, j_2$ are natural inclusion maps, and 
$j_1 : (x,v, g\wt L^{\th}) \mapsto (x,v, gP^{\th})$. 
Put $\wt\CX_m^{P,+} = (\pi'')\iv\CX_m^0$.  Then we have 
a commutative diagram 
\begin{equation*}
\tag{6.4.2}
\begin{CD}
\wt\CY_m^+  @> \psi_m'>>  \wt\CY_m^{L,+} @> \psi_m''>>  \CY_m^0 \\
@VVV    @VVV     @VVV      \\
\wt\CX_m^+   @>\pi_m'>>  \wt\CX_m^{P,+} @>\pi_m''>>  \CX_m^0,
\end{CD}
\end{equation*}
where $\pi_m'$ is the restriction of $\pi'$ on $\wt\CX_m^+$, 
$\pi_m''$ is the restriction of $\pi''$ on $\wt\CX_m^{P.+}$,
and vertical maps are restrictions of the corresponding 
vertical maps in (6.4.1).   Since $\pi_m = \pi_m''\circ \pi_m'$
is proper, $\pi_m'$ is proper. 
We note that $\pi_m''$ is also proper.  In fact, since
$\CX^P$ is the image of $P^{\th} \times^{B^{\th}}(B^{\io\th} \times M_n)$
under the map $\pi$, $\CX^P$ is closed in $\CX$.  Hence 
$\pi'': \wt\CX^P \simeq H\times^{P^{\th}}\CX^P \to \CX$ is proper, 
and so $\pi''_m$ is proper. 
\par 
Recall the decomposition $\wt\CX_m^+ = \coprod_{I}\wt\CX_I$.
We denote by $\wt\CX^P_I$ the image of $\wt\CX_I$ by $\pi_m'$.  
Then $\wt\CX^P_I$ coincides with $\wt\CX^P_{[1,m]}$ 
(resp. $\wt\CX^P_{[2,m+1]}$) if $1 \in I$ (resp. $1 \notin I$).
Thus $\wt\CX_m^{P,+} = \wt\CX^{P}_{I_1} \coprod \wt\CX^{P}_{I_2}$, 
and $\wt\CY^L_{I_{\ve}}$ is open dense in $\wt\CX^{P}_{I_{\ve}}$ for 
$\ve = $ 1 or 2. 
\par  
We now show the following formula.
\begin{equation*}
\tag{6.4.3}
\begin{split}
(\pi'_m)_*\Ql \simeq &\bigoplus_{\r' \in \CW\wg_{I_1, L}}
\Ind_{\wt\CW_{I_1,L}}^{\wt\CW_L}(H^{\bullet}(\BP_1^{n-m}) 
   \otimes \r')\otimes \IC(\wt\CX^{P}_{I_1}, \CL_{\r'}) \\
&\oplus \bigoplus_{\r' \in \CW\wg_{I_2,L}}
    \Ind_{\wt\CW_{I_2,L}}^{\wt\CW_L}(H^{\bullet}(\BP_1^{n-m-1})
           \otimes \r')\otimes \IC(\wt\CX^{P}_{I_2}, \CL_{\r'}).
\end{split}
\end{equation*}
In fact, since $\pi_m'$ is proper, by the decomposition theorem, 
$(\pi'_m)_*\Ql$ can be written as a direct sum of the complexes 
of the form $A[i]$, where $A$ is a simple perverse sheaf on 
$\wt\CX_m^{P,+}$ with some degree shift $i$.
Note that $(\pi_m')_*\Ql|_{\wt\CY_m^{L,+}} \simeq (\psi_m')_*\Ql$, 
and the decomposition of $(\psi_m')_*\Ql$ as a semisimple complex 
is given in (6.3.6). Hence in order to prove (6.4.3), it is enough 
to show that $\wt\CY_m^{L,+} \cap \supp A \ne \emptyset$ for any 
direct summand $A[i]$.  Since $\pi_m''$ is proper, again by the 
decomposition theorem, $(\pi_m'')_*A[i]$ can be written 
as a direct sum of the complexes $B[j]$, where $B$ is a simple
perverse sheaf on $\CX_m^0$.
Since $\supp A$ is irreducible, we may assume that $\supp A$ 
is contained in $\wt\CX^{P}_{I_1}$ or $\wt\CX^P_{I_2}$. 
Suppose that $\wt\CY_m^{L,+} \cap \supp A  = \emptyset$.  
Then there exists
$m' < m$ such that $\wt\CY_{m'}^{L,+} \cap \supp A$ is open dense 
in $\supp A$. 
By applying $\psi''_{m'}$ on it, we see that
$\dim \supp B = \dim \CX_{m'}^0 < \dim \CX_m^0$. 
But Proposition 4.8 asserts that $\dim \supp B = \dim \CX_m^0$ 
for any simple component $B$ in $(\pi_m)_*\Ql$. 
Hence $\wt\CY_m^{L,+} \cap \supp A \ne \emptyset$, and (6.4.3) 
holds.
\par
The above argument also shows that 
\begin{equation*}
\tag{6.4.4}
\begin{split}
(\pi_m'')_*&\IC(\wt\CX^P_{I}, \CL_{\r'}) \\
   &\simeq
     \begin{cases}
       \bigoplus_{\r \in \CW_{I_1}\wg}\Hom_{\CW_{I_1,L}}(\r',\r)
                       \otimes \IC(\CX_m^0, \CL_{\r})  
         &\quad\text{ if } I = I_1, \\
       \bigoplus_{\r \in \CW_{I_2}\wg}H^{\bullet}(\BP_1)\otimes 
          \Hom_{\CW_{I_2,L}}(\r', \r)\otimes  \IC(\CX_m^0,\CL_{\r})
          &\quad\text{ if } I = I_2.        
     \end{cases}
\end{split}
\end{equation*}
\par
Let $\wt\CX^P_m$ be the closure of $\wt\CX^P_{[1,m]}$ in 
$\wt\CX^P$.  Then $\wt\CX^P_m = (\coprod_{m' < m}\wt\CX_{m'}^{P,+})
  \coprod \wt\CX^P_{[1,m]}$ and 
$\wt\CX^P_m \simeq H \times^{P^{\th}}\CX^P_m$, where 
$\CX^P_m = \bigcup_{g \in P^{\th}}g(B^{\io\th} \times M_m)$.  
(The closure of $\wt\CX^P_{[2,m+1]}$
in $\wt\CX^P$ is given by $\coprod_{1 \le m' \le m}\wt\CX^P_{[2, m'+1]}$.)
Now by a similar argument as in the last step of Theorem 4.2
(see 4.9), we have the following formulas from (6.3.7) and (6.3.8);
\begin{align*}
\tag{6.4.5}
\pi'_*\Ql[d_n^L] &\simeq \bigoplus_{\Bmu' \in \CP_{n-1,2}}
    \wt V_{\Bmu'}\otimes \IC(\wt\CX^P_{m(\Bmu')+1}, \CL_{\Bmu'})
                       [d^L_{m(\Bmu')+1}]  \\ 
\tag{6.4.6}
\pi''_*(\IC^P_{\Bmu'})[a]  
     &\simeq  \bigoplus_{\Bmu \in \CP_{n,2}}
           \Hom_{\wt \CW_L}(\wt V_{\Bmu'}, \wt V_{\Bmu})
              \otimes \IC(\CX_{m(\Bmu)}, \CL_{\Bmu})[d_{m(\Bmu)}],
\end{align*}
where $\IC^P_{\Bmu'} = \IC(\wt\CX^P_{m(\Bmu')+1}, \CL_{\Bmu'})$
and $a = d_n - d_n^L + d_{m(\Bmu')+1}^L$.

\para{6.5.}
Let $\wt\CX^P\uni 
  = \{ (x,v, gP^{\th}) \in \wt\CX^P \mid x \in G^{\io\th}\uni \}$.
Let $\CO' = \CO_{{\Bmu'}^{\bullet}}$ be the $L^{\th}$-orbit 
in $L^{\io\th}\uni \times V_L$ 
corresponding to $\wt V_{\Bmu'}$ under the Springer correspondence 
for $L$.  We define a variety 
\begin{equation*}
D = \{ (x,v, gP^{\th}) \in \wt\CX_m^P \mid 
          (g\iv xg, g\iv v) \in \pi_P\iv(\ol\CO')\}, 
\end{equation*}
where $\pi_P : P^{\io\th}\uni \times V \to L^{\io\th}\uni \times V_L$ 
is as in 5.5, and $m = m(\Bmu')+1$.
Let $\IC^P_{\Bmu'}$ be as in 6.4.  
The following fact holds. 

\begin{equation*}
\tag{6.5.1}
\supp \IC_{\Bmu'}^P \cap \wt\CX\uni^P \subset D.
\end{equation*}
We show (6.5.1). 
Let $\CX_{m-1}^L = \bigcup_{\ell \in L^{\th}}
                   \ell(B_L^{\io\th} \times M^L_{m-1})$, 
where $B_L = B \cap L$ is a Borel subgroup of $L$ containing $T$, and 
$M^L_{m-1}$ is the image of $M_m$ under the projection
$V \to V_L$. 
We have a diagram 
\begin{equation*}
\tag{6.5.2}
\begin{CD}
\wt\CX_{m}^P  @< p_1 <<  H \times \CX_{m}^P @>p_2 >> \CX^L_{m-1},
\end{CD}
\end{equation*}
where $p_1$ is the natural map 
$ H \times \CX^P_{m} \to 
   \wt\CX_{m}^P \simeq 
      H\times^{P^{\th}}\CX_{m}^P$, and  
$p_2$ is the restriction to $H \times \CX_{m}^P$ of the map 
$H \times P^{\io\th} \times V \to L^{\io\th} \times V_L$, 
$(g, x, v) \mapsto (\ol x, \ol v)$. (Here $x \to \ol x$ is the 
natural projection $P^{\io\th} \to L^{\io\th}$ and 
$v \mapsto \ol v$ is the projection $V \to V_L$.)
Then we have
\par\medskip\noindent
(a) \ $p_1$ is a principal $P^{\th}$-bundle, 
\par\noindent
(b) \ $p_2$ is a locally trivial fibration with fibre isomorphic 
to $H \times U_P^{\io\th}\times \Bk$.
\par\medskip\noindent
In fact, (a) is clear from the definition.  
Since $M_m$ is $U_P^{\th}$-stable, 
we have 
\begin{equation*}
\CX_{m}^P = \bigcup_{\ell \in L^{\th}}\ell(B^{\io\th}\times M_{m}) 
   \simeq U_P^{\io\th} \times \Bk \times 
     \bigcup_{\ell \in L^{\th}}\ell(B_L^{\io\th} \times M^L_{m-1}), 
\end{equation*}
and (b) follows.  
\par
We consider the complex $\IC_{\Bmu'}^P$ on $\wt\CX_{m}^P$. 
By (a), (b), both of $p_1, p_2$ are smooth maps with connected fibre.
It follows that there exists a simple perverse sheaf $A_{\Bmu'}$ on 
$\CX_{m-1}^L$ such that $p_1^*\IC_{\Bmu'}^P \simeq p_2^*A_{\Bmu'}$ 
up to shift.   Note that $\CX_{m-1}^L$ is a variety constructed from 
$(L,V_L)$ defined in a similar way as $\CX_m$ from $(G,V)$.  
Let $\IC(\CX^L_{m-1},\CL^L_{\Bmu'})$ be the complex on $\CX^L_{m-1}$
appeared in 4.1 with respect to $L$.
We show that 
\begin{equation*}
\tag{6.5.3}
A_{\Bmu'} \simeq \IC(\CX_{m-1}^L, \CL^L_{\Bmu'})[\dim \CX_{m-1}^L].
\end{equation*}
\par
For each $I =  [1,m]$ put 
\begin{align*}
\wt\CY_I\nat &=   \wt L^{\th} \times^{B^{\th}\cap 
         Z_H(T^{\io\th})}(T^{\io\th}\reg \times M_I), \\
\wh\CY_I\nat &= \wt L^{\th} \times^{Z_H(T^{\io\th})_I}
             (T^{\io\th}\reg \times M_I).
\end{align*}
Then $\wt\CY_I \simeq H\times^{\wt L^{\th}}\wt\CY_I\nat$,  
$\wh\CY_I \simeq H\times^{\wt L^{\th}}\wh\CY_I\nat$.
Also we have $\wt\CY_I^L \simeq H\times ^{\wt L^{\th}}\CY_I^L$
by 6.3.
We  put 
\begin{align*}
\wt\CY_{L,I} &= L^{\th}\times^{B_L^{\th}\cap Z_{L^{\th}}(T^{\io\th})}
                         (T^{\io\th}\reg \times M^L_{I'}), \\
\wh\CY_{L,I} &= L^{\th}\times^{Z_{L^{\th}}(T^{\io\th})_I}
                         (T^{\io\th}\reg \times  M^L_{I'}),  \\
\CY^0_{L,m-1} &= \bigcup_{\ell \in L^{\th}}\ell(T^{\io\th}\reg 
                       \times M_{I'}^L).
\end{align*}
where $M^L_{I'}$ is the image of $M_I$ under the natural map 
$V \to V_L$ with $I' = [2,m]$, and 
$Z_{L^{\th}}(T^{\io\th})_I = Z_H(T^{\io\th})_I \cap L^{\th}$.  
Then we have a commutative diagram

\begin{equation*}
\tag{6.5.4}
\begin{CD}
\wt\CY_I @< \wt q_1<< H \times \wt\CY_I\nat @>\wt q_2 >>  \wt\CY_{L, I} \\   
@V\xi_I VV            @VV \a_IV           @VV \xi_{L,I}V   \\
\wh\CY_I @< \wh q_1<<  H \times \wh\CY_I\nat  @>\wh q_2>>  \wh\CY_{L,I} \\
@V\e'_I VV            @VV \b_I V           @VV\e_{L,I} V    \\ 
\wt\CY_I^L  @< q_1 <<  H \times \CY^L_I  @> q_2 >>  \CY^0_{L,m-1},
\end{CD}
\end{equation*}
where $\wt q_1, \wh q_1, q_1$ and $\a_I, \b_I$ are defined naturally.
$\wt q_2$ is defined by 
$(g, \ell*(t, v)) \mapsto \ol \ell*(t, \ol v)$,  where 
$\ell \mapsto \ol \ell$ is the projection $\wt L^{\th} \to L^{\th}$, 
and $v \mapsto \ol v$ is the map $M_I \to  M^L_{I'}$, and the maps
$\wh q_2, q_2$ are defined similarly.  
$\xi_{L,I}, \e_{L,I}$ are defined in a similar way as $\xi_I, \e_I$ 
for $\wt\CY_I, \wh\CY_I$.
Then 
$\wt q_1, \wh q_1, q_1$ are principal $\wt L^{\th}$-bundles, 
and $\wt q_2, \wh q_2, q_2$ are principal $H \times \Bk^*$-bundles.
Moreover, $\a_I$ is a $\BP_1^{n-m}$-bundle, and $\b_I$ is a finite
Galois covering with Galois group $\CW_{L,I}$.  All the squares 
in the diagram are cartesian squares.
\par
Note that $\wt\CY_{L,I} \simeq \Bk^* \times \wt\CY^{(n-1)}_{I'}$, 
where $\wt\CY_{I'}^{(n-1)}$ is an object for $Sp_{2n-2}$ with 
$I' = [2,m] \subset [2, n]$, similar to $\wt\CY_I$ for $Sp_{2n}$. 
Similar formulas hold for $\wh\CY_{L,I}, \CY^0_{L,m-1}$, and the maps 
$\xi_I, \e_I$ are compatible with the situation for $Sp_{2n-2}$.
Thus one can define a local system $\CL^L_{\r'}$ on $\CY^0_{m-1}$ 
with respect to $Sp_{2n-2}$.  It follows from the property of the 
diagram (6.5.4), one sees that 
$q_1^*\CL_{\r'} \simeq q_2^*\CL^L_{\r'}$. 
Note that $\wt\CY^L_I$ is open dense in $\wt\CX_m^P$, 
$\CY_{L, m-1}^0$ is open dense in $\CX^L_{m-1}$, and the 
lowest row in (6.5.4) is the restriction of the diagram  
(6.5.2).  This shows that the restriction of $A_{\Bmu'}$ on 
$\CY_{L, m-1}^0$ gives a local system $\CL^L_{\r'}$ 
corresponding to $\Bmu'$.  Hence (6.5.3) holds. 
\par
Now by the Springer correspondence for $L$, the support of 
the restriction of $A_{\Bmu'}$ on $L^{\io\th}\uni \times V_L$ 
is contained in $\ol \CO'$.
Then (6.5.1) follows from (6.5.2) and (6.5.3).

\para{6.6.}
Let $\CO = \CO_{\Bmu^{\bullet}}$ be the $H$-orbit 
in $G^{\io\th} \times V$ corresponding 
to $\wt V_{\Bmu}$ under the Springer correspondence. 
Let $d_{\CO}$ be as in 6.1.   
In view of Theorem 5.4, (5.4.3) implies that
$\CH^{2d_{\CO}}(\pi_*\Ql)|_{\CO} \simeq \wt V_{\Bmu} \otimes \Ql$,
where $\Ql$ is the constant sheaf on $\CO$. 
Then (6.4.6) implies that 
\par\medskip\noindent
(6.6.1) \ 
The inner product $\lp \wt V_{\Bmu}, \wt V_{\Bmu'}\rp$ coincides with
the rank of the constant sheaf 
$\CH^{2d_{\CO} + c}\pi''_*(\IC^P_{\Bmu'})|_{\CO}$, where 
$c = d^L_{m(\Bmu')+1} - d_n^L$. 
\par\medskip
Then by (6.5.1), we have
\par\medskip\noindent
(6.6.2) \
$\lp \wt V_{\Bmu}, \wt V_{\Bmu'}\rp$ coincides with 
the rank of the constant sheaf 
$\CH^{2d_{\CO}+c}(\pi''|_D)_!(\IC^P_{\Bmu'}|_D)|_{\CO}$. 
\par\medskip
Let $D^0 = \{(x,v, gP^{\th}) \in \wt \CX^P_m \mid 
                g\iv(x,v) \in \pi_P\iv(\CO')\}$ 
be an open subset of $D$.
Let $x_{\Bmu, \Bmu'}$ be the rank of the constant sheaf 
$\CH^{2d_{\CO}+c}(\pi''|_{D^0})_!(\IC_{\Bmu'}^P|_{D^0})|_{\CO}$.
We want to show that
\begin{equation*}
\tag{6.6.3}
\lp \wt V_{\Bmu}, \wt V_{\Bmu'}\rp = x_{\Bmu, \Bmu'}.
\end{equation*}
First we show that 
\par\medskip\noindent
(6.6.4) \ The natural map 
$\CH^{2d_{\CO}+c}(\pi''|_{D^0})_!(\IC_{\Bmu'}^P)|_{D^0})|_{\CO} 
   \to  \CH^{2d_{\CO}+c}(\pi''|_{D})_!(\IC_{\Bmu'}^P)|_D)|_{\CO}$ 
is surjective.
\par\medskip
In order to prove (6.6.4), it is enough to show that 
\begin{equation*}
\CH^{2d_{\CO}+c}(\pi''|_{D - D^0})_!(\IC^P_{\Bmu'}|_{D - D^0})|_{\CO} = 0,
\end{equation*}
which is equivalent to the statement that 
\begin{equation*}
\BH^{2d_{\CO}+c}_c({\pi''}\iv(z) \cap (D - D^0), \IC_{\Bmu'}^P) =0
\end{equation*}
for any $z \in \CO$.
For any $L^{\th}$-orbit $\CO'_1 \subset \ol\CO' - \CO'$, put 
$D_{\CO'_1} = \{ (x,v,gP^{\th}) \in \wt\CX_m^P 
                \mid g\iv(x,v) \in \pi_P\iv(\CO'_1)\}$.
Then $D - D^0$ can be partitioned into locally closed pieces 
$D_{\CO_1'}$,  Thus it is enough to show that 
\begin{equation*}
\BH^{2d_{\CO}+c}_c({\pi''}\iv(z) \cap D_{\CO'_1}, \IC_{\Bmu'}^P) = 0
\end{equation*}
for any such a piece $\CO'_1$.
The last equality will follow from the following statement 
by a hypercohomology spectral sequence.
\begin{equation*}
\tag{6.6.5}
H^i_c({\pi''}\iv(z) \cap D_{\CO_1'}, \CH^j(\IC^P_{\Bmu'})) \ne 0 
 \ \Rightarrow \ i+j < 2d_{\CO} +c. 
\end{equation*}
We show (6.6.5).  
The hypothesis implies that 
\begin{equation*}
i \le 2 \dim ({\pi''}\iv(z) \cap D_{\CO'_1}) \le (\dim \CX\uni - \dim \CO) 
  - (\dim {\CX}\uni^L - \dim \CO'_1)
 \end{equation*}
by Proposition 5.7 (ii). It also implies that 
$\CH^j(\IC_{\Bmu'}^P)|_{D_{\CO_1'}} \ne 0$. 
By (6.5.3), this last condition is equivalent to the condition that
$\CH^j\IC(\CX^L_{m-1}, \CL^L_{\Bmu'})|_{\CO_1'} \ne 0$.
By Theorem 5.4 (ii) applied to $L$, 
this is equivalent to 
$\CH^j\IC(\ol\CO',\Ql)[a]|_{\CO'_1} \ne 0$
with $a = \dim \CO' - \dim \CX^L\uni - 
              \dim \CX^L_{m(\Bmu')} + \dim \CX^L$.
It follows that 
\begin{equation*}
j < \dim {\CX}\uni^L - \dim \CO_1' + \dim \CX^L_{m(\Bmu')} - \dim \CX^L.
  \end{equation*} 
Here we note that 
$\dim \CX^L_{m(\Bmu')} - \dim \CX^L = d_{m(\Bmu')+1}^L - d_n^L$ by
(6.5.2) since a similar diagram holds replacing $\CX$ by $\CY$.
Hence (6.6.5) holds and so (6.6.4) follows.   
\par
Now (6.6.4) implies that 
$\lp \wt V_{\Bmu}, \wt V_{\Bmu'}\rp \le x_{\Bmu, \Bmu'}$.
Since $\sum_{\Bmu' \in \CP_{n-1,2}}
 \lp \wt V_{\Bmu}, \wt V_{\Bmu'}\rp \, \dim \wt V_{\Bmu'} 
        = \dim \wt V_{\Bmu}$, in order to prove (6.6.3), it is 
enough to show that 
\begin{equation*}
\tag{6.6.6}
\sum_{\Bmu' \in \CP_{n-1,2}}x_{\Bmu,\Bmu'}
      \dim \wt V_{\Bmu'} = \dim \wt V_{\Bmu}.
\end{equation*} 
Let $\d$ be as in 6.1.
By (6.5.3) and Theorem 5.4 (ii), 
$\CH^j(\IC_{\Bmu'}^P)|_{D^0} = 0$ unless $j = -a$. 
This implies that $x_{\Bmu,\Bmu'}$ is the rank of the constant sheaf 
\begin{equation*}
\tag{6.6.7}
\CH^{2d_{\CO} -2\d}(\pi''|_{D^0})_!
((\CH^{-a}\IC_{\Bmu'}^P)|_{D^0})|_{\CO}
\end{equation*}
since $2d_{\CO} +c +a = 2d_{\CO} - 2\d$. 
It follows from (6.4.5) that 
$R^{2\d}\pi'_!\Ql|_{D^0} \simeq \wt V_{\Bmu'}\otimes 
                         (\CH^{-a}\IC_{\Bmu'}^P)|_{D^0}$. 
This implies that 
$x_{\Bmu,\Bmu'}\dim \wt V_{\Bmu'}$ is the 
rank of the constant sheaf 
\begin{equation*}
R^{2d_{\CO} - 2\d}(\pi''|_{D^0})_!(R^{2\d}\pi'_!\Ql|_{D^0})|_{\CO}.
\end{equation*}
By the spectral sequence of the composition $\pi''\circ \pi'$, 
the last formula is the same as the constant sheaf 
$R^{2d_{\CO}}(\pi|_{{\pi'}\iv(D^0)})_!\Ql|_{\CO}$.            
If we put $D_{\Bmu'} = D^0$, 
$\wt\CX = \coprod_{\Bmu'}{\pi'}\iv(D_{\Bmu'})$ gives a partition 
of $\wt\CX$ into locally closed pieces.  Hence by the long exact sequence 
associated to $R^*\pi_!$, we see that 
$\sum_{\Bmu'}x_{\Bmu,\Bmu'}\dim \wt V_{\Bmu'}$ coincides with the 
rank of the constant sheaf $R^{2d_{\CO}}\pi_*\Ql|_{\CO}$, which is equal to
$\dim \wt V_{\Bmu}$.  This proves (6.6.6), and (6.6.3) follows. 

\para{6.7.}
We are now ready to prove Theorem 6.2.
We follow the notation in 6.1.
Then $\CU \subset D^0$ and the restriction of $\pi''$ on $\CU$ 
coincides with $f$.
It follows from (6.6.7) that 
$x_{\Bmu,\Bmu'}$ coincides with the rank of the constant sheaf 
$R^{2d_{\CO}-2\d}(\pi''|_{\CU})_!(\CH^{-a}\IC_{\Bmu'}^P|_{\CU})|_{\CO}$.
Since $\CH^{-a}\IC_{\Bmu'}^P|_{\CU}$ is a constant sheaf $\Ql$, 
$x_{\Bmu,\Bmu'}$ is equal to the rank of the constant sheaf 
$R^{2d_{\CO}-2\d}f_!\Ql$ on $\CO$, hence is equal to
$\dim H_c^{2d_{\CO} - 2\d}(f\iv(z), \Ql)$ for $z \in \CO$.   
By Proposition 5.7 (ii), we know that 
$\dim f\iv(z) \le d_{\CO} - \d$ for any $z \in \CO$.
It follows that $x_{\Bmu,\Bmu'}$ is equal to the number of 
irreducible components in $f\iv(z)$ of dimension $d_{\CO} - \d$. 
Since $\wh \CU_z \to f\iv(z)$ is a principal $P^{\th}$-bundle, 
$x_{\Bmu,\Bmu'}$ coincides with the number of irreducible components 
of $\wh \CU_z$ of dimension $d_{\CO} - \d + \dim P^{\th}$.
Recall the map $f_1 : \wh \CU_z \to \CO'$ in 6.1 such that 
$f_1\iv(z') = Y_{z,z'}$ for any $z' \in \CO'$.  
Since $f_1$ is $L^{\th}$-equivariant, 
$x_{\Bmu, \Bmu'}$ coincides with the number of irreducible 
components of $f_1\iv(z')$ of dimension $d_{z,z'}$.
In view of (6.6.3), this completes the proof of Theorem 6.2.
\par\bigskip

\section{Determination of Springer correspondence}

The explicit description of the Springer correspondence
was given in Kato [Ka2], by computing Joseph polynomials. 
In this section, we shall give an alternate approach 
based on the restriction theorem (Theorem 6.2). 

\begin{thm}  
Under the notation in Theorem 5.4, we have $\Bmu^{\bullet} = \Bmu$,
namely, the Springer correspondence 
is given by $\CO_{\Bmu} \lra \wt V_{\Bmu}$ for each $\Bmu \in \CP_{n,2}$.  
\end{thm}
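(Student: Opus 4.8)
The plan is to prove Theorem 7.1 by induction on $n$, using the restriction theorem (Theorem 6.2) as the main engine, exactly in the spirit of how one pins down the ordinary Springer correspondence once a combinatorial restriction rule is available. The base case $n=0$ (or $n=1$) is trivial. For the inductive step I assume $\Bmu'^{\bullet}=\Bmu'$ for all $\Bmu'\in\CP_{n-1,2}$ with respect to the group $Sp_{2n-2}$, which we realise as $L^{\th}\simeq GL_1\times Sp_{2n-2}$ sitting inside $H=Sp_{2n}$ as in Section 6. Under this identification the Weyl groups are $W_n\supset W_L\simeq W_{n-1}$, and Theorem 6.2 gives
\begin{equation*}
\langle \Res^{W_n}_{W_{n-1}}\r^G_z,\ \r^L_{z'}\rangle_{W_{n-1}} = |S_{z,z'}|
\end{equation*}
where $z\in\CO_{\Bmu^{\bullet}}$ corresponds to $\wt V_{\Bmu}$ and $z'\in\CO_{\Bmu'^{\bullet}}=\CO_{\Bmu'}$ corresponds to $\wt V_{\Bmu'}$.

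The strategy is then: (1) compute the representation-theoretic side $\langle\Res^{W_n}_{W_{n-1}}\wt V_{\Bmu},\ \wt V_{\Bmu'}\rangle$ purely combinatorially --- this is the classical branching rule for Weyl groups of type $C$, which says that $\Res^{W_n}_{W_{n-1}}\wt V_{\Bmu}$ is the sum of $\wt V_{\Bmu'}$ over those $\Bmu'=(\mu'^{(1)},\mu'^{(2)})$ obtained from $\Bmu=(\mu^{(1)},\mu^{(2)})$ by removing a single box from either $\mu^{(1)}$ or $\mu^{(2)}$ (each such $\Bmu'$ occurring with multiplicity one); (2) compute the geometric side $|S_{z,z'}|$, i.e. the number of top-dimensional irreducible components of $Y_{z,z'}$, for $z\in\CO_{\Bmu}$ and $z'\in\CO_{\Bmu'}$ in the actual orbits (not the unknown $\CO_{\Bmu^{\bullet}}$), using the explicit parametrisation of $H$-orbits via $A$-conjugacy classes (Lemma 1.5) and the description of $\CO_{\Bla}$ in 2.1 in terms of the pair of Jordan types $(\la^{(1)},\la^{(2)})$; (3) show that this geometric multiplicity is again $1$ precisely when $\Bmu'$ is obtained from $\Bmu$ by removing one box from $\mu^{(1)}$ or $\mu^{(2)}$, and $0$ otherwise. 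Granting (1)--(3), the two multiplicity matrices $\bigl(\langle\Res\wt V_{\Bmu},\wt V_{\Bmu'}\rangle\bigr)$ and $\bigl(|S_{z,z'}|\bigr)_{z\in\CO_{\Bmu},\,z'\in\CO_{\Bmu'}}$ coincide; since by induction the $\Bmu'$-indexing on the right is the correct (identity) Springer labelling for $W_{n-1}$, the restriction theorem forces the $\Bmu$-indexing on the left to be the identity labelling for $W_n$ as well. More precisely: Theorem 6.2 says the matrix $\bigl(|S_{z,z'}|\bigr)$ with $z\in\CO_{\Bmu^{\bullet}}$ equals $\bigl(\langle\Res\wt V_{\Bmu},\wt V_{\Bmu'}\rangle\bigr)$; comparing with the geometric computation in step (3) applied to $\CO_{\Bmu^{\bullet}}$ directly, one reads off that the $\Bmu$-th row of the branching matrix equals the $\Bmu^{\bullet}$-th row of the (purely geometric) "remove a box" matrix. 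Because distinct rows of the branching matrix are distinct (the matrix, suitably ordered by dominance, is unitriangular), this determines $\Bmu^{\bullet}=\Bmu$.

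For step (2), the key computation is the following. Write $z=(x,v)$ with $x=y\th(y)^{-1}$, $y\in A_{\unip}$, so that $A\simeq GL_n$ acts on $M_n$ and $(y,v)$ has type $\Bmu$ in the sense of 2.1; similarly $z'$ lives in the $GL_1\times Sp_{2n-2}$-variety with type $\Bmu'$, where the $GL_1$-component records a single box. Unwinding the definition of $Y_{z,z'}$ and its dimension bound $d_{z,z'}$ from 6.1, and using that all centralisers $Z_H(x,v)$ are connected (Lemma 2.3(ii)), one reduces the count of top components to a count in the $GL_n$-side: it becomes the number of $E^y_A$-stable lines, or more precisely one-dimensional quotients, compatible with the prescribed Jordan-type data, which by a direct linear-algebra argument is $1$ exactly in the "remove one box from $\mu^{(1)}$ or $\mu^{(2)}$" cases. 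I expect this to parallel closely the corresponding computation in Lusztig's type-$A$ restriction theorem [L2] together with the enhanced-nilpotent-cone combinatorics of [AH], so it should be routine though somewhat lengthy.

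The main obstacle will be step (2)/(3): carrying out the component count for $Y_{z,z'}$ cleanly and matching it exactly with the branching rule, in particular checking that the top-dimensional components are in bijection with single-box removals with multiplicity exactly one (no extra components, no coincidences of dimension from different removal types). A secondary point requiring care is the bookkeeping in passing between the $GL_1\times Sp_{2n-2}$ picture and the double-partition labelling of 2.1 --- one must verify that "remove a box from $\mu^{(1)}$" corresponds on the geometric side to the $GL_1$-factor absorbing a box out of the first Jordan block datum, and "remove a box from $\mu^{(2)}$" to it absorbing a box out of the second, consistently with the parametrisation fixed in [AH, Theorem 6.1]. Once these two matching statements are in place, the inductive conclusion $\Bmu^{\bullet}=\Bmu$ is immediate from Theorem 6.2 and the unitriangularity of the branching matrix, and Remarks 5.5 then follow.
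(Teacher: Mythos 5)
Your inductive framework (induction on $n$, the Levi $L^{\th}\simeq GL_1\times Sp_{2n-2}$, Theorem 6.2 as the engine, the type-$C$ branching rule, and the fact that the set of one-box removals determines $\Bmu$) is sound and is indeed the skeleton of the paper's argument. But the core of your plan — steps (2)/(3), the \emph{exact} determination of $|S_{z,z'}|$ for all pairs $(\Bmu,\Bmu')$, i.e.\ that $Y_{z,z'}$ has exactly one top-dimensional component when $\Bmu'$ is a single-box removal of $\Bmu$ and none otherwise — is precisely the hard geometric content, and you do not prove it; you only assert that it "reduces to counting $E^y_A$-stable lines" and "should be routine". That reduction is not justified: $Y_{z,z'}$ is cut out inside $H=Sp_{2n}$ by conditions involving the symplectic form, the vector $v$, and the unipotent radical $U_P$, and even the much weaker statement needed (see below) requires the delicate centralizer computation of Lemma 7.3, where $Z_G(x)=C\ltimes R$ is decomposed and the $\th$-fixed parts of $C\cap P_i\cap Q$ and $R\cap P_i\cap Q$ are counted equation by equation. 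In particular, the vanishing $|S_{z,z'}|=0$ for non-removal pairs would require a strict inequality $\dim Y_{z,z'}<d_{z,z'}$ that nothing in Sections 5--6 supplies, and no argument for it is sketched. As written, the proposal's conclusion rests entirely on an unproved claim.

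It is worth noting that the full count is not needed, and the paper's proof deliberately avoids it: for $z\in\CO_{\Bmu}$ and each one-box removal $\Bmu'$ one exhibits a single explicit isotropic line $\lp w_i\rp$ (with $w_i=v_{q_i,1}$ or $v'_{p_i,\nu_{[i]}}$) whose stabilizer $P_i$ gives a point of $\CU_z$, and Lemma 7.3 together with $d_{\Bmu}-d_{\Bmu'}=2q_i-2$ or $2q_i-1$ shows its $Z_H(z)$-orbit has dimension exactly $d_{\Bmu}-d_{\Bmu'}$; by 6.7 this yields only the one-sided bound $\lp\Res^{W_n}_{W_{n-1}}\r_{\Bmu},\wt V_{\Bmu'}\rp\ge 1$. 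Multiplicity-freeness of restriction from $W_n$ to $W_{n-1}$ upgrades this to equality, and then $\Res\,\r_{\Bmu}$ is identified with $\Res\,\wt V_{\Bmu}$; since an irreducible $W_n$-module is determined by its restriction to $W_{n-1}$ when $n\ge 3$ or its degree is $\ge 2$, one concludes $\r_{\Bmu}\simeq\wt V_{\Bmu}$, the residual one-dimensional cases at $n=2$ being settled by Lemma 5.2. If you want to salvage your proposal, the realistic fix is to replace your steps (2)/(3) by exactly this weaker existence statement (one component of the right dimension), which is what Lemma 7.3 is designed to deliver, and to add the small-rank/degree-one discussion that your matrix-comparison argument would otherwise have to absorb.
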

\para{7.2.}
Before proving the theorem, we need a lemma.  First we prepare 
some notation.  Take 
$\Bmu = (\mu^{(1)}, \mu^{(2)}) \in \CP_{n,2}$, and 
put $\nu = \mu^{(1)} + \mu^{(2)}$ (see 2.1). We write $\nu$ as   
$\nu = (\nu_1, \dots, \nu_a) \in \CP_n$ with $\nu_a > 0$. 
Let us fix integers $a_1, \dots, a_{\ell} > 0$ such that 
$\sum_{k=1}^{\ell}a_k = a$ by the condition that 
\begin{equation*}
\nu_1 = \cdots = \nu_{a_1} > \nu_{a_1 + 1} = \cdots = \nu_{a_1 + a_2} 
     > \nu_{a_1 + a_2 + 1} = \cdots  
\end{equation*}
We denote by $\nu_{[k]}$ the constant value $\nu_j$ for 
$a_1 + \cdots + a_{k-1} < j \le  a_1 + \cdots + a_{k}$.
For such $j$, $\mu^{(1)}_j$ is also constant and we denote
it by $\mu^{(1)}_{[k]}$.
\par
Take $z = (x,v) \in \CO_{\Bmu}$, and assume that $x = y\th(y)\iv$ 
with $y \in A\uni$ and that $v \in M_n$.  Then 
$(y,v) \in A\uni \times M_n$ is of type $\Bmu$, and by [AH], one 
can find a Jordan basis 
$\{v_{i,j} \mid 1 \le i \le a, 1 \le j \le \nu_i \}$ of $y-1$ in $M_n$
such that $(y-1)v_{i,j} = v_{i, j-1}$ for $j > 1$ and 
$(y-1)v_{i,j} = 0$ for $j = 1$,  
and that  
$v = \sum_{i=1}^{\ell} v_{p_i, \mu^{(1)}_{[i]}}$, where 
$p_i = a_1 + \cdots a_{i-1}+1$. 
(Note that in [AH], the normal form 
for $\CO_{\Bmu}$ is given by $(y, v')$ with  
$v' = \sum_{i=1}^a v_{i, \mu^{(1)}_i}$.  This element is $A$-conjugate 
to our $(y,v)$.) 
Let $\{v'_{i,j}\}$ be a Jordan basis of $y' = \th(y)\iv$ in $M_n'$
($M_n'$ is the subspace of $V$ spanned by $f_1, \dots, f_n$)
such that $(y'-1)v'_{i,j} = v'_{i, j+1}$ for $j < \nu_i$ and 
$(y'-1)v'_{i,j} = 0$ for $j = \nu_i$, 
and that $\lp v_{i,j}, v'_{i',j'}\rp = 0$ unless
$i = i', j = j'$. 
\par
Let $P$ be the stabilizer of the partial flag 
$\lp v_{1,1}\rp \subset \lp v_{1,1}\rp^{\perp}$ in $G$.  
Then $P$ is an $\th$-stable parabolic subgroup of $G$ as in 
6.1, and $H/P^{\th}$ is identified with the projective space 
$\BP(V)$.   
We define $w_i = v_{q_i,1}$ or  $w_i = v'_{p_i, \nu_{[i]}}$, where 
$q_i = a_1 + \cdots + a_i$.   
Then $w_i$ is an $x$-stable vector in $V$.  
Let $P_i$ be the stabilizer of the flag 
$\lp w_i \rp \subset \lp w_i \rp^{\perp}$ in $G$.  Then $P_i$ 
is $\th$-stable, and $P_i^{\th}$ is the stabilizer of the line 
$\lp w_i \rp$ in $H$.    
We show the following lemma.

\begin{lem}  
Under the notation as above, 
\begin{align*}
\dim Z_H(z) \cap P_i  &= \begin{cases}
  \dim Z_H(z) - 2q_i + 2 &\quad\text{ if } w_i = v_{q_i,1}, \\
  \dim Z_H(z) -2q_i + 1 
          &\quad\text{ if } w_i = v'_{p_i, \nu_{[i]}}.
                           \end{cases}
\end{align*}
\end{lem}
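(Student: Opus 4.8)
The plan is to compute $\dim(Z_H(z)\cap P_i)$ directly, using the explicit Jordan basis structure recorded in 7.2 together with the dimension formula for $Z_H(z)$ coming from Lemma 2.3. First I would reduce the problem to a linear-algebra computation inside $\Fg = \End(V)$. Since $Z_H(z)$ is connected (Lemma 2.3 (ii)), it suffices to work with $E_H^z = \Lie Z_H(z)$, and $\Lie(Z_H(z)\cap P_i) = E_H^z \cap \Fp_i$ where $\Fp_i = \Lie P_i$ is the stabilizer in $\Fg$ of the flag $\lp w_i\rp \subset \lp w_i\rp^\perp$. The condition that $g \in \End(V)$ lies in $\Fp_i$ is simply $g(w_i) \in \lp w_i\rp$ together with $g(\lp w_i\rp^\perp)\subseteq \lp w_i\rp^\perp$; since $w_i$ is an eigenvector (it is an $x$-stable vector in a $-1$-eigenspace situation, and $g \in E_H^z$ preserves the form, so the second condition is automatic once the first holds for self-adjoint-type $g$). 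Thus the key quantity is the codimension in $E_H^z$ of the subspace $\{g \in E_H^z \mid g(w_i) \in \lp w_i\rp\}$.

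The core computation is then to evaluate the rank of the linear map $E_H^z \to V/\lp w_i\rp$ given by $g \mapsto g(w_i) \bmod \lp w_i\rp$. I would handle this via the same Cayley-transform trick used in the proof of Lemma 2.3: transfer to $\Fh^+$ and back, so that $E_H^z$ can be analyzed as the centralizer Lie algebra of $(x,v)$, which decomposes according to the $L^\th$-factors and the $GL_n$-side data $(y,v)$. Concretely, using the Jordan basis $\{v_{i,j}\}$ on $M_n$ and $\{v'_{i,j}\}$ on $M_n'$, one knows $E_{L^\th}^x v \oplus E_{\th(A)}^{y'}v'$ sits inside $E_H^z$ (the argument of Lemma 2.2/2.3), and the evaluation-at-$w_i$ map can be understood block by block. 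The two cases $w_i = v_{q_i,1}$ (the socle vector of a Jordan block on the $M_n$-side) versus $w_i = v'_{p_i,\nu_{[i]}}$ (the top vector of a block on the $M_n'$-side) differ precisely because of the constraint $v = \sum_j v_{p_j,\mu^{(1)}_{[j]}}$: the presence of $v$ in the first few blocks forces one fewer (or one more) degree of freedom. I expect the count of the image to come out to $2q_i - 2$ in the first case and $2q_i - 1$ in the second — the factor $2$ reflecting the symplectic doubling $V \simeq M_n \oplus M_n'$, and the $\pm 1$ discrepancy reflecting whether $w_i$ is a lowest or highest weight vector of its block relative to how $v$ meets the $E_A^y$-span.

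The main obstacle will be the careful bookkeeping of which elements of $E_H^z$ move $w_i$ out of the line $\lp w_i\rp$, given the constraint imposed by $v$. In particular one must correctly identify $E_H^z$ (not merely the sub-Lie-algebra $E_{L^\th}^x$) and check that no further ``cross'' endomorphisms mixing $M_n$ and $M_n'$ contribute unexpectedly to the evaluation map; this is where the self-adjointness condition $x^* = x$ and the pairing $\lp v_{i,j}, v'_{i',j'}\rp = 0$ unless $(i,j)=(i',j')$ must be used decisively. Once the rank of $g \mapsto g(w_i) \bmod \lp w_i\rp$ is pinned down, the lemma follows immediately by subtracting from $\dim Z_H(z)$ and passing back from Lie algebras to groups via connectedness. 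I would organize the writeup as: (1) reduce to Lie algebras and to the evaluation map; (2) Cayley-transform reduction to computing inside $E_A^y \oplus E_{\th(A)}^{y'}$-type data; (3) the explicit rank computation in the two cases, using the Jordan basis and the shape of $v$; (4) conclude.
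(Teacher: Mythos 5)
Your opening reduction is sound: since elements of $\Lie H = \Fg^{\th}$ are anti-self-adjoint for the symplectic form, $g w_i \in \lp w_i\rp$ indeed forces $g(\lp w_i\rp^{\bot}) \subseteq \lp w_i\rp^{\bot}$, and a Cayley-transform argument as in Lemma 2.3 (the line $\lp w_i \rp$ is preserved by $Z \mapsto (I+Z)(I-Z)\iv$) legitimately converts the lemma into computing the rank of the evaluation map $\{g \in E_H^x \mid gv = 0\} \to V/\lp w_i\rp$, $g \mapsto gw_i$. But that rank computation \emph{is} the lemma, and you never perform it: you only state that you ``expect'' the answer to be $2q_i-2$ resp.\ $2q_i-1$. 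The heuristic you offer (a factor $2$ from the doubling $V = M_n \oplus M_n'$, and a $\pm 1$ depending on whether $w_i$ is a socle or top vector of its block) does not account for where the number $q_i = a_1 + \cdots + a_i$ actually comes from: the centralizer of $x$ moves $w_i$ into \emph{all} Jordan block groups $k \le i$ (those with $\nu_{[k]} \ge \nu_{[i]}$), not just the block containing $w_i$, and the constraint $gv=0$, which couples the blocks $p_1,\dots,p_\ell$ through $v = \sum_j v_{p_j,\mu^{(1)}_{[j]}}$, then cuts this down in a way that has to be tracked block by block. Nothing in your sketch shows this bookkeeping can be done short of an explicit description of the centralizer.

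That explicit description is exactly what the paper's proof supplies: it writes $Z_G(x) = C \ltimes R$ with $C \simeq \prod_k GL_{2a_k}$ $\th$-stable, so $Z_H(x) = C^{\th} \ltimes R^{\th}$ with $C^{\th} \simeq \prod_k Sp_{2a_k}$, and computes $\dim (C \cap P_i \cap Q)^{\th}$ by an explicit change of symplectic basis in each factor (identifying the relevant subgroup with a parabolic-type group, giving $2a_i^2 - 3a_i + 2 - \ve$ in the $i$-th factor), and $\dim (R \cap P_i \cap Q)^{\th}$ by counting the linearly independent linear equations imposed by $gv = v$, $g\lp v\rp^{\bot} = \lp v\rp^{\bot}$, $g\lp w_i\rp = \lp w_i\rp$, with care about how these equations pair off when restricting to $\th$-fixed points. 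Your proposal, in effect, would have to reproduce both of these counts (upper and lower bounds on the image of the evaluation map) to pin down the rank; as written, the decisive step is asserted rather than proved, so the proposal has a genuine gap even though its framework is compatible with, and essentially parallel to, the paper's argument.
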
 

\begin{proof}
Put $Q = \{ g \in G \mid gv = v, g\lp v\rp^{\perp} = \lp v \rp^{\perp}, 
          g|_{V/\lp v\rp^{\perp}} = \id\}$.  Then $Q$ is 
a $\th$-stable subgroup of $G$, and we have 
$Z_H(z) \cap P_i = Z_{P^{\th}_i \cap Q^{\th}}(x) = 
 (Z_{P_i \cap Q}(x))^{\th}$. 
Note that the structure of $Z_G(x)$ and $Z_H(x)$ are described 
as follows (cf. the proof of Proposition 2.3.6 in [BKS]); 
$Z_G(x) = C \ltimes R$, where 
$C$ is a subgroup of $G$ generated by $g$ such that 
\begin{align*}
gv_{i,j} &= \sum_{\substack{ k \\ \nu_k = \nu_i}}
                 (a_kv_{k,j} + b_kv'_{k,\nu_i - j + 1}), \\
gv'_{i,j} &= \sum_{\substack{ k \\ \nu_k = \nu_i}}
                 (a'_kv_{k,\nu_i - j +1} + b'_kv'_{k,j}),
\end{align*}
and $R$ is the unipotent radical of $Z_G(x)$.   Then 
$C \simeq \prod_{k=1}^{\ell}GL_{2a_k}$.
$C$ and $R$ are $\th$-stable, and so 
$Z_H(x) = C^{\th}\ltimes R^{\th}$.  Hence
$C^{\th} \simeq \prod_{k=1}^{\ell}Sp_{2a_k}$.
Since 
\begin{equation*}
\tag{7.3.1}
Z_{P^{\th}_i \cap Q^{\th}}(x) = 
(C \cap P_i \cap Q)^{\th} \ltimes (R \cap P_i \cap Q)^{\th},
\end{equation*}
we compute $\dim (C\cap P_i \cap Q)^{\th}$  and 
$\dim (R \cap P_i \cap Q)^{\th}$
separately.  
For $k = 1, \dots, \ell$, 
let $V_k$ be the symplectic space 
with $\dim V_k = 2a_k$.  We fix a symplectic basis 
$e^{(k)}_1, \dots, e^{(k)}_{m_k}, f^{(k)}_1, \dots, f^{(k)}_{m_k}$ of $V_k$.
Let $C_k, D_k$ be the subgroups of $GL(V_k) \simeq GL_{2a_k}$ defined by 
\begin{align*}
C_k &= \{ g \in GL(V_k) \mid gv_0 = v_0, 
            g\lp v_0\rp^{\perp} = \lp v_0\rp^{\perp},
            g|_{V_k/\lp v_0 \rp^{\perp}} = \id\},  \\
D_k &= \{ g \in C_k \mid g\lp w_0 \rp = \lp w_0\rp, 
                 g\lp w_0\rp^{\perp} = \lp w_0\rp^{\perp}   \}
\end{align*}
where $v_0 = e_1^{(k)}$, and 
$w_0 = e_{a_k}^{(k)}$ or $w_0 = f_1^{(k)}$.  
Note that $D_k = C_k$ if $v_0 = w_0$.
We denote by $\th$ the involution on $GL_{2a_k}$ defined in a simlar
way as the case of $G$. Then $C_k, D_k$ are $\th$-stable, and we have
\begin{equation*}
\tag{7.3.2}
(C \cap P_i \cap Q)^{\th} \simeq D^{\th}_i \times \prod_{k \ne i}C^{\th}_k
\end{equation*}
Since $C^{\th}_k \simeq (\{ 1\} \times Sp_{2a_k-2})\ltimes U_1$, 
where $U_1$ is the unipotent radical of a parabolic subgroup of 
$Sp_{2a_k}$ whose Levi subgroup is isomorphic to $GL_1 \times Sp_{2a_k-2}$.
It follows that 
\begin{equation*}
\tag{7.3.3}
\dim C^{\th}_k = 2a_k^2 - a_k.
\end{equation*}
Next we show that 
\begin{equation*}
\tag{7.3.4}
\dim D^{\th}_k = \begin{cases}
   2a_k^2 - 3a_k + 2 &\text{ if $w_0 = e_{m_k}^{(k)}$ and $a_k \ge 2$}, \\
   2a_k^2 - 3d_k + 1 &\text{ if } w_0 = f_1^{(k)}.
            \end{cases}
\end{equation*}
\par
First assume that $w_0 = e_{a_k}^{(k)}$ and $a_k \ge 2$.  
Then $v_0 \in \lp w_0\rp^{\perp}$.
We define a new basis $h_1, \dots, h_{2a_k}$ of $V_k$ as follows;
\begin{equation*}
h_j = \begin{cases} 
            v_0   &\quad j = 1, \\
            w_0   &\quad j = 2, \\
            e_{j-1}^{(k)} &\quad 3 \le j \le a_k, \\  
            f_{j+1-a_k}^{(k)}
                        &\quad a_k+1 \le j \le 2a_k - 1,  \\
            f_1^{(k)}  &\quad j = 2a_k.  
      \end{cases}
\end{equation*}
 Then we have
\begin{align*} 
\lp v_0\rp^{\perp} &= \lp h_1, \dots, h_{2a_k-1}\rp\, , \\
\lp w_0\rp^{\perp} &= \lp h_1, \dots, h_{2a_k-2}, h_{2a_k} \rp\, , \\
\lp w_0\rp^{\perp} \cap \lp v_0\rp^{\perp}
                   &= \lp h_1, \dots, h_{2a_k-2}\rp\, ,  
\end{align*}
and the condition for $g \in D_k$ is given by 
\begin{align*}
g h_1  &=  h_1, \\
g \lp h_2 \rp  &= \lp h_2 \rp\, , \\  
g\lp h_1 \dots h_{2a_k-2}\rp &= \lp h_1, \dots h_{2a_k-2}\rp, \\
g\lp h_1, \dots, h_{2a_k-2}, h_{2a_k}\rp &= 
           \lp h_1, \dots, h_{2a_k-2}, h_{2a_k}\rp, \\
gh_{2a_k} &\in h_{2a_k} + \lp h_1, \dots, h_{2a_k-1}\rp.
\end{align*}
We have
   $D_k^{\th} \simeq (GL_1 \times Sp_{2a_k-4})\ltimes U_2$, 
where $U_2$ is the unipotent radical of a parabolic subgroup
of $Sp_{2a_k}$ whose Levi subgroup is isomorphic to 
$GL_2 \times Sp_{2a_k-4}$.  
The first formula in (7.3.4) follows from this.
\par
Next assume that $w_0 = f_1^{(k)}$. 
Then $v_0 \notin \lp w_0\rp^{\perp}$. 
We define a new basis $h_1, \dots, h_{2a_k}$ as follows;
\begin{equation*}
h_j = \begin{cases}
          v_0  &\quad j = 1, \\
          e_j^{(k)}  &\quad 2 \le j \le a_k, \\
          f_{j + 1- a_k}^{(k)} 
                     &\quad a_k+1 \le j \le 2a_k-1, \\
          w_0  &\quad j = 2a_k
       \end{cases}
\end{equation*}
Then we have
\begin{align*}
\lp v_0\rp^{\perp} &= \lp h_1, \dots, h_{2a_k-1}\rp, \\
\lp w_0\rp^{\perp} &= \lp h_2, \dots, h_{2a_k} \rp, \\
 \lp v_0\rp^{\perp} \cap \lp w_0\rp^{\perp}
                  &= \lp h_2, \dots, h_{2a_k-1}\rp, 
\end{align*}
and the condition for $g \in D_k$ is given by 
\begin{align*}
gh_1 &= h_1, \\
g\lp h_2, \dots, h_{2a_k-1}\rp &= \lp h_2, \dots, h_{2a_k-1}\rp, \\
gh_{2a_k} &= h_{2a_k}.
\end{align*}
Then $D^{\th}_k \simeq Sp_{2a_k-2}$, and the second formula
in (7.3.4) follows. 
\par
Now (7.3.2) implies, by (7.3.3) and (7.3.4), that
\begin{align*}
\tag{7.3.5}
\dim (C \cap P_i \cap Q)^{\th} &= 
     \sum_{k \ne i} 
            (2a_k^2 - a_k) + (2a_i^2 -3a_i + 2 - \ve) \\
       &= \dim C^{\th} -2a - 2a_i +2 -\ve, 
\end{align*}
where $\ve = 0$ (resp. $\ve = 1$) if $w_i = v_{q_i,1}$
(resp. $w_i = v'_{p_i, \nu_{[i]}}$). 
Note that this formula holds even if $a_i = 1$ and $\ve = 1$.
\par
Next we show that 
\begin{equation*}
\tag{7.3.6}
\dim (R \cap P_i \cap Q)^{\th} = 
       \dim R^{\th} - 2|\mu^{(1)}| + 2a - 2\sum_{k=1}^{i-1}a_k.
\end{equation*}
For each $i$, 
choose a pair $(i',j')$ such that $i' > i$ or $i' \le i, j' <\nu_i$. 
Then the assignment $v_{i,j} \mapsto v_{i,j} + c_{i,i',j'}v_{i',j'}$
$(c_{i,i'j'} \in \Bk)$ gives rise to a unique element in $R$, 
 which is denoted by $g(c_{i,i',j'})$.  Then 
$\prod_{i,i',j'}g(c_{i,i',j'}) \lra (c_{i,i',j'})_{i,i',j'}$ 
gives an isomorphism
between $R$ and an affine space  $\BA^c$, where $c = \dim R$
(the product is taken under a suitable order).
Then for $g \in R$, the condition $gv = v$ is given by 
a system of linear equations with respect to the variables 
$(c_{i,i',j'})$, one equation for each $v_{i, j}$ such that
$j < \mu^{(1)}_i$, and for each $v'_{i,j}$ such that   
$\nu_i - j < \mu^{(1)}_i$.
It follows that the number of such 
equations is equal to $\sum_{k=1}^{\ell}2a_k(\mu^{(1)}_{[k]}-1)$, 
and they are linearly independent.  
On the other hand, for $g \in R$, the condition 
$g\lp v\rp^{\perp} = \lp v \rp^{\perp}$   
is given by a system of linear equations, one equation for each
$v_{i,j}$ such that $\nu_i - j < \mu^{(1)}_i$, and for each 
$v'_{i,j}$ such that $j < \mu^{(1)}_i$, 
together with $\ell-1$ linear equations arising from 
the $\ell-1$ dimensional space 
$\lp v'_{p_i, \mu^{(1)}_{[i]}}\mid 1 \le i \le \ell\, \rp  
   \cap \lp v \rp^{\perp}$. 
(The condition $g|_{V/\lp v\rp^{\perp}} = \id$ is then 
automatically satisfied since $g \in R$.)
Hence the number of equations is given by 
$\sum_{k=1}^{\ell}2a_k(\mu^{(1)}_{[k]}-1) + (\ell-1)$.
By a similar argument, the condition for $g \in R$ to be 
$g\lp w_i \rp = \lp w_i \rp$ 
(resp. $g\lp w_i \rp^{\perp} = \lp w_i\rp^{\perp}$)  
is both given by a system of linear equations whose number is
equal to $2a_1 + \cdots + 2a_{i-1}$.  Since all of those linear 
equations are linearly independent, we see that 

\begin{equation*}
\dim (R \cap P_i \cap Q) = \dim R - 4|\mu^{(1)}| + 4a -(\ell-1) 
      - 4\sum_{k=1}^{i-1}a_k.
\end{equation*}
If we pass to $(R \cap P_i \cap Q)^{\th}$,  
the linear equation corresponding to $v_{i,j}$ with $j < \mu^{(1)}_i$
and the equation corresponding to $v'_{i,j}$ with $j < \mu^{(1)}_i$
gives one equation, and similarly for $v_{i,j}$ and $v'_{i,j}$ with 
$\nu_i-j < \mu_i^{(1)}$.  This phenomenon also occurs for 
the linear equations with re spec to $P_i$, 
and we obtain the half of those linear equations.
However, the $(\ell - 1)$ linear equations does not give 
a restriction on $R^{\th}$.  Hence we obtain (7.3.6).
\par
The lemma now follows from (7.3.5) and (7.3.6), if we notice 
that $\dim Z_H(z) = \dim Z_H(x) - 2|\mu^{(1)}|$ (see the proof of 
Lemma 2.3). 
\end{proof}

\para{7.4.}
Take $\Bmu \in \CP_{n,2}$ with $\nu = \mu^{(1)} + \mu^{(2)}$.
We follow the notation in 7.2.  
Let $\Bmu'= ({\mu'}^{(1)}, {\mu'}^{(2)}) \in \CP_{n-1,2}$
be such that 
\par
(i) the Young diagram ${\mu'}^{(1)}$ is obtained 
from the Young diagram $\mu^{(1)}$ by removing one node, or
\par 
(ii) ${\mu'}^{(2)}$ is obtained from $\mu^{(2)}$ by removing one node.
\par\noindent
We assume that the removing node is contained in $q_i$-th row in each 
case, i.e., ${\mu'}^{(1)}_{q_i} = \mu^{(1)}_{q_i}-1$ or
${\mu'}^{(2)}_{q_i} = \mu^{(2)}_{q_i} -1$.   
Let $d_{\Bmu} = \nu_H - \dim \CO_{\Bmu}/2$ and 
$d_{\Bmu'} = \nu_{H'} - \dim \CO_{\Bmu'}/2$, where $H' = Sp_{2n-2}$.
We note that 
\begin{equation*}
\tag{7.4.1}
d_{\Bmu} - d_{\Bmu'} = \begin{cases}
             2q_i -2  &\quad\text{ case (i)}, \\
             2q_i -1  &\quad\text{ case (ii)}.
                        \end{cases}
\end{equation*}
In fact, by Lemma 2.3, we have $d_{\Bmu} = n + 2n(\Bmu) - |\mu^{(1)}|$, 
and a similar formula holds for $d_{\Bmu'}$.
Since $|\mu^{(1)}| - |{\mu'}^{(1)}| = 1$ or 0 according to the case 
(i) or (ii), and $n(\Bmu) - n(\Bmu') = q_i - 1$, we obtain (7.4.1).
\par  
Take $z = (x,v) \in \CO_{\Bmu}$ as in 7.2, and put    
$w_i = v_{q_i,1}$
in case (i), and $w_i = v'_{p_i,\nu_{[i]}}$ in case (ii).
Let $x'$ be the linear transformation on the symplectic space  
$\ol V_i = \lp w_i\rp^{\perp}/\lp w_i\rp$.  We have 
$v \in \lp w_i \rp^{\perp}$ (note that $\mu^{(2)}_{q_i} \ne 0$ in case (ii)),
and let $v'$ be the image of $v$ on $\ol V_i$.
Then one can check that $z' = (x',v') \in \CO_{\Bmu'}$.   
\par
We are now ready to prove the theorem.  We consider the variety 
$f\iv(z)$ appeared in 6.7 instead of considering the variety 
$Y_{z,z'}$ in 6.1.
Let $\CU_z = \{gP^{\th} \in H/P^{\th} 
           \mid g\iv z \in \pi_P\iv(\CO_{\Bmu'})\}$ 
which is isomorphic to $f\iv(z)$ for $\CO' = \CO_{\Bmu'}$.  
Let $P_i$ be the stabilizer 
of the flag $\lp w_i\rp \subset \lp w_i\rp^{\perp}$ in $G$.
Then $g_iP^{\th} \in \CU_z$ for $P_i = g_iPg_i\iv$.  $Z_H(z)$ acts on 
$\CU_z$ from the left, and we consider the $Z_H(z)$ orbit $Y_i$ of
$g_iP^{\th}$ in $\CU_z$.   
By Lemma 7.3 and (7.4.1), we have 
\begin{equation*}
\tag{7.4.2}
\dim Y_i = \dim Z_H(z) - \dim (Z_H(z) \cap P_i) = d_{\Bmu} - d_{\Bmu'}.
\end{equation*}
By 6.7 (an equivalent form of Theorem 6.2), 
all the irreducible components in $\CU_z$ have dimension 
$\le d_{\Bmu} - d_{\Bmu'}$, and the number of irreducible components 
of $\CU_z$ of dimension $d_{\Bmu} - d_{\Bmu'}$ coincides with the 
multiplicity $\lp \r_{\Bmu}, \r_{\Bmu'}\rp$ . 
By (7.4.2), the closure of $Y_i$ gives an irreducible component of 
$\CU_z$ of the required dimension.  It follows that 
$\lp \r_{\Bmu}, \r_{\Bmu'}\rp \ge 1$.  
Since it is known that the restriction of
an irreducible representation of $W_n$ to $W_{n-1}$ is multiplicity free, 
we see that $\lp \r_{\Bmu},\r_{\Bmu'}\rp = 1$.   
By induction on the rank of $G$, we 
may assume that $\r_{\Bmu'} \simeq \wt V_{\Bmu'}$.  Then our result shows 
that the restriction of $\r_{\Bmu}$ on $W_{n-1}$ coincides with that of 
$\wt V_{\Bmu}$.  Since the irreducible representation of $W_n$ is determined
completely by its restriction on $W_{n-1}$ if $n \ge 3$ or if 
its degree $\ge 2$,   
we see that $\r_{\Bmu} \simeq \wt V_{\Bmu}$ in this case.
Hence we may assume that $n = 2$, and we have only to distinguish $\r_{\Bmu}$
for $\Bmu = ((2),-), ((1^2),-)$ and for $\Bmu = (-,(2)), (-,(1^2))$.
But by Lemma 5.2, we already know that $\r_{\Bmu} = \wt V_{\Bmu}$ for 
$\Bmu = ((1^2), -), (-, (1^2))$, and so the remaining cases are determined.
This completes the proof of Theorem 7.1. 

\para{7.5.}
We give some examples of the Springer correspondence.
Take $(x, v) \in \CX\uni$ such that $x = y\th(y)\iv$, where
$y \in A$ is regular unipotent, and that $v, yv, y^2v, \cdots$ 
spans $M_n$.  Then $(x,v) \in \CO_{\Bmu}$ with 
$\Bmu = ((n);\emptyset)$, the open dense orbit in $\CX\uni$.
The corresponding irreducible representation $\wt V_{\Bmu}$ is 
the identity representation of $W_n$.    
Take $(x,v) \in \CX\uni$ such that $x$ is the identity element 
in $GL(V)$ and $v = 0$.  Then the orbit $\CO_{\Bmu}$ containing $(x,v)$
is given by $\Bmu = (\emptyset; (1^n))$.  $\wt V_{\Bmu}$ is the 
sign representation of $W_n$. 
More generally, let $x = y\th(y)\iv$, where the Jordan type of 
$y \in A$ is given by $\nu = (\nu_1, \nu_2, \dots) \in \CP_n$.
Take $(x,0) \in \CX\uni$.  Then $(x,0) \in \CO_{\Bmu}$ with
$\Bmu = (\emptyset; \nu)$. $\wt V_{\Bmu}$ is the extension 
of the irreducible $S_n$-module $V_{\nu}$, where 
$(\BZ/2\BZ)^n$  acts trivially on it.  
On the other hand, take the same $x$, and let $v \in M_n$ 
such that $\dim\lp v, yv, y^2v,\dots, \rp = \nu_1$.
Then $(x,v) \in \CO_{\Bmu}$ with $\Bmu = (\nu; \emptyset)$.
$\wt V_{\Bmu}$ is the extension of $V_{\nu}$, where 
each factor $\BZ/2\BZ$ of $(\BZ/2\BZ)^n$ acts non-trivially on it.

\remark{7.6.}
By making use of Theorem 5.14, one can show that the correspondence
$\la \mapsto \la^{\bullet}$ for $\la \in \CP_n$  
in the formula (1.14.2) in Proposition 1.14 is identical, i.e.,
for the map $\pi'_1 : \wt G^{\io\th}\uni \to G^{\io\th}\uni$ 
(the map $\pi_1$ in 1.10. We changed the notation to distinguish this with 
$\pi_1 : \wt\CX\uni \to \CX\uni$), we have 
\begin{equation*}
\tag{7.5.1}
(\pi'_1)_*\Ql[\dim G^{\io\th}\uni] \simeq 
        H^{\bullet}(\BP_1^n)\otimes \bigoplus_{\la \in \CP_n}
            V_{\la}\otimes \IC(\ol\CO_{\la},\Ql)[\dim \CO_{\la}]. 
\end{equation*} 
In fact, 
since $G^{\io\th} \simeq G^{\io\th}\times \{ 0\}, 
\wt G^{\io\th} \simeq \wt\CX_m^+$ 
for $m = 0$ under the notation of Section 4, $(\pi_1')_*\Ql$ 
coincides with the restriction of 
$(\pi_m)_*\Ql = (\ol\pi_m)_*\Ql$ (for $m = 0$)
to $\CX\uni$ up to shift.  Here by Proposition 4.8 (or rather 
by (4.9.1)), we have 
\begin{equation*}
\tag{7.5.2}
(\ol\pi_0)_*\Ql \simeq \bigoplus_{\la \in \CP_n}
       H^{\bullet}(\BP_1^n)\otimes V_{\la}\otimes \IC(\CX_0, \CL_{\la}).
\end{equation*}  
By Theorem 5.4 (ii), together with Theorem 7.1, we see that
\begin{equation*}
\tag{7.5.3}
\IC(\CX_0, \CL_{\la})|_{\CX\uni} \simeq \IC(\ol\CO_{\la},\Ql)
\end{equation*}
up to shift.  (Here we identify $\CO_{\la} \subset G^{\io\th}\uni$ 
with $\CO_{(-;\la)} \subset \CX\uni$ 
under the closed embedding 
$G^{\io\th}\uni \simeq G^{\io\th}\uni \times \{ 0\} \hra \CX\uni$.) 
(7.5.1) follows from (7.5.2) and (7.5.3).   

\par\bigskip\bigskip

\newpage
\par\vspace{1cm}
\noindent
T. Shoji \\
Graduate School of Mathematics, Nagoya University \\ 
Chikusa-ku, Nagoya 464-8602, Japan  \\
E-mail: \verb|shoji@math.nagoya-u.ac.jp|
\par\bigskip\bigskip\noindent
K. Sorlin \\
L.A.M.F.A, CNRS UMR 7352, 
Universit\'e de Picardie-Jules Verne \\
33 rue Saint Leu, F-80039, Amiens, Cedex 1, France \\
E-mail : \verb|karine.sorlin@u-picardie.fr|

\begin{thebibliography}{[DJMu]}
\bibitem [AH] {AH} P. Achar and A. Henderson; 
Orbit closures in the enhanced nilpotent cone, 
Adv. in Math. {\bf 219} (2008), 27-62. Corrigendum, ibid. 
{\bf 228} (2011), 2984-2988.
\par
\bibitem [BBD] {BBD} A. Beilinson, J. Bernstein and P. Deligne; 
Faisceaux pervers, Analyse et topologie sur les espace singulier, I, 
Ast\'erisque {\bf 100}, (1982), 5-171.
\par
\bibitem [BKS] {BKS} E. Bannai, N. Kawanaka and S.-Y. Song;
The character table of the Hecke algebra 
$\CH(GL_{2n}(\Fq), Sp_{2n}(\Fq))$, J. Algebra, {\bf 129} (1990),
320--366.
\par
\bibitem [BM] {BM} W. Borho and R. MacPherson; Representations des
groupes de Weyl et homologie d'intersection pour les varietes 
nilpotentes, Comptes rendus. Acad. Sci. Paris, {\bf 292}, 
(1981), 707-710. 
\par
\bibitem [BR] {BR} P. Bardsley and R.W. Richardson; Etale 
slices for algebraic transforamtion groups in characteristic 
$p$, Proc. London Math. Soc., {\bf 51} (1985), 295-317.  
\par
\bibitem [DLP] {DLP} C. De Concini, G. Lusztig and C. Procesi; 
Homology of the zero-set of a nilpotent vector field of a flag manifold,
J. Amer. Math. Soc. {\bf 1} (1988), 15 - 34. 
\par
\bibitem [FGT] {FGT} M. Finkelberg, V. Ginzburg and R. Travkin; 
Mirabolic affine Grassmannian and chartacter sheaves, 
Selecta Math. {\bf 14} (2009), 607-628.
\par
\bibitem [Gi] {Gi} V. Ginzburg; Admissible modules on a symmetric 
space, Ast\'erisque {\bf 173-174} (1989), 199-255. 
\par
\bibitem [Gr] {Gro} I. Grojonowski; Character sheaves on symmetric spaces, 
Ph.D. thesis, MIT, 1992. 
\par
\bibitem [H1] {H2} A. Henderson; Fourier transform, parabolic induction,
and nilpotent orbits, Transformation groups, {\bf 6}, (2001), 
353-370.
\par
\bibitem [HT] {HT} A. Henderson and P.E. Trapa; The exotic 
Robinson-Schensted correspondence, preprint, arXiv:1111.5050v1. 
 
\bibitem [Ka1] {K1} S. Kato; An exotic Deligne-Langlands correspondence
for symplectic groups, Duke Math. J. {\bf 148} (2009), 306-371.
\par
\bibitem [Ka2] {K2} S. Kato; Deformations of nilpotent cones and 
Springer correspondence, Amer. J. of Math. {\bf 133} (2011), 519-553. 
\par
\bibitem [Kl] {kl} A.A. Klyachko; Models for the complex representations 
of the groups $GL(n,q)$, Math. USSR-Sb. {\bf 48} (1984), 365-379. 
\par
\bibitem [L1] {L1} G. Lusztig; Green polynomials and singularities 
of unipotent classes, Adv. in Math. {\bf 42} (1981), 169-178.
\par
\bibitem [L2] {L2} G. Lusztig; Intersection cohomology complexes 
on a reductive group, Invent. Math.{\bf 75} (1984), 205-272.
\par
\bibitem [L3] {L3} G. Lusztig; Character sheaves II, Adv. in Math. 
{\bf 57} (1985), 226-265.
\par
\bibitem [L4] {L4} G. Lusztig: On the character values of finite 
Chevalley groups at unipotent elements, J. Algebra, {\bf 104}
(1986), 146-194.
\par
\bibitem [R] {R} R.W. Richardson; Orbits, invariants, 
and representations associated to involutions of reductive groups, 
Invent. Math. {\bf 66} (1982), 287-312. 
\par
\bibitem [S1] {S1} T. Shoji; Geometry of orbits and Springer 
correspondence, Ast\'erisque {\bf 168} (1988), 61-140.
\par
\bibitem [S2] {S2} T.Shoji; Green functions attached to limit symbols,
Representation theory of algebraic groups and quantum groups, 
Adv. Stu. Pure Math. vol. {\bf 40}, Math. Soc. Japan, Tokyo 2004, 443-467. 
\par
\bibitem [T] {T} R. Travkin; Mirabolic Robinson-Schensted correspondence,
Selecta Math. {\bf 14} (2009), 727-758.
\par
\bibitem [V] {V} T. Vust. Op\'eration de groupes r\'eductifs dans 
un type de c\^ones presque homog\'enes, Bull. Soc. Math. France 
{\bf 102} (1974), 327-334  

\end{thebibliography}
 \end{document}